\newtheorem{theorem}{Theorem}[section]
\theoremstyle{definition}
\newtheorem{proposition}[theorem]{Proposition}
\newtheorem{lemma}[theorem]{Lemma}
\newtheorem{definition}[theorem]{Definition}
\newtheorem{remark}[theorem]{Remark}
\newtheorem{conjecture}[theorem]{Conjecture}
\newtheorem{example}[theorem]{Example}
\def\red#1{\textcolor[rgb]{1.00,0.00,0.00}{[#1]}}%
\def\BZ{\mathbbm Z}
\def\BQ{\mathbbm Q}
\def\BR{\mathbbm R}
\def\la{\langle}
\def\ra{\rangle}
\def\sgn{\mathrm{sgn}}
\def\be{\begin{equation}}
\def\ee{\end{equation}}
\newcommand{\Sk}{\mathcal{S}}
\def\js{\mathrm{js}}
\def\jx{\mathrm{jx}}
\newcommand*{\pdash}{\rule[0.5ex]{1.5em}{1.5pt}}
\newcommand{\jwproj}{\vcenter{\hbox{\includegraphics[scale=.1]{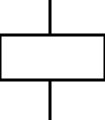}}}}
\def\TL{\mathrm{TL}}
\def\tw{\mathrm{tw}}
\def\bs{\mathrm{bs}}
\def\R{\mathbb{R}}
\def\rv{\mathcal{r}}
\def\TR{$\mathsf{TR}$}
\def\oo{{\mathrm{o}}}
\begin{document}

\title{The slope conjecture for Montesinos knots}

\author{Stavros Garoufalidis}
\address{Max Planck Institute for Mathematics \\
       Vivatsgasse 7, 53111 Bonn, GERMANY \newline
        {\tt \url{http://people.mpim-bonn.mpg.de/stavros}}}
\email{stavros@mpim-bonn.mpg.de}
\author{Christine Ruey Shan Lee}
\address{Department of Mathematics \\
         University of South Alabama \\
         Mobile, AL 36608, USA \newline 
         {\tt \url{https://sites.google.com/a/southalabama.edu/crslee/home}}}
\email{crslee@southalabama.edu}
\author{Roland van der Veen}
\address{Mathematisch Intstituut \\
         Leiden University, Leiden \\
         Niels Bohrweg 1\\
         The Netherlands \newline 
         {\tt \url{http://www.rolandvdv.nl}}}
\email{r.i.van.der.veen@math.leidenuniv.nl}
\thanks{
%The author was supported in part by a National Science Foundation
%grant DMS-0805078. \\
%\newline
1991 {\em Mathematics Classification.} Primary 57N10. Secondary 57M25.
\newline
{\em Keywords and phrases: knot, Jones polynomial, Jones slope,
quasi-polynomial, pretzel knots, essential surfaces, incompressible surfaces.
}
}

\date{\today} 

\begin{abstract}
The slope conjecture relates the degree of the colored Jones polynomial
of a knot to boundary slopes of essential surfaces. 
We develop a general approach that matches a state-sum formula for the colored Jones polynomial with
the parameters that describe surfaces in the complement.
We apply this to Montesinos knots proving the slope conjecture for Montesinos knots, with some restrictions.
\end{abstract}

\maketitle

{\footnotesize
\tableofcontents
}

\begin{comment}
Our aim is
to prove the slope conjecture for Montesinos knots, and to match parameters
of a state-sum formula for the colored Jones polynomial of such knots with
the parameters that describe their corresponding incompressible surfaces
via the Hatcher-Oertel algorithm.
\end{comment}

%%%%%%%%%%%%%%%%%%%%%%%%%%%%%%%%%%%%%%%%%%%%%%%%%%%%%%%%%%%%%%%%%%%%%%%%%%
%%%%%%%%%%%%%%%%%%%%%%%%%%%%%%%%%%%%%%%%%%%%%%%%%%%%%%%%%%%%%%%%%%%%%%%%%%

\section{Introduction} 
\label{sec.intro}

\subsection{The slope conjecture and the case of Montesinos 
knots}

The slope conjecture relates one of the most important knot invariants,  
the colored Jones polynomial, to essential surfaces in the knot complement \cite{Ga:slope}. 
More precisely, the growth of the degree as a function of the color determines boundary slopes. 
Understanding the topological information that the polynomial detects in the knot is a central problem in quantum topology. The conjecture suggests the polynomial can be studied through surfaces, which are fundamental objects in 3-dimensional topology. 

Our philosophy is that the connection follows from a deeper correspondence between terms in an expansion of the polynomial and surfaces. This would potentially lead to a purely topological definition of quantum invariants. The coefficients of the polynomial should count isotopy classes of surfaces, much like in the case of the 3D-index \cite{DDG13}. As a first test of this principle, we focus on the slope conjecture for Montesinos knots. In this case Hatcher-Oertel~\cite{HO} provides a description of the set of essential surfaces of those knots. In particular they give an effective algorithm to compute the set of boundary slopes of incompressible and $\partial$-incompressible surfaces in the complement of such knots.

\begin{comment}
Our approach not only gives a proof of the strong slope conjecture,
but also hints at a deeper relation between
the colored Jones polynomial of a knot and incompressible surfaces. This
relation may be extended to a wider class of arborescent knots~\cite{BS} and highly-twisted knots, but describing it as clearly as possible was the motivation for the restriction of our paper.
\end{comment}
\begin{comment} 
The aim of this paper is to prove the slope conjecture for all 
Montesinos knots~\cite{Montesinos}. To avoid some technicalities we are forced to restrict to
a particular class of Montesinos knots but we believe the general case is no different,
excluding some exceptional cases. On the one hand, Hatcher-Oertel~\cite{HO} provides 
a description of the set of essential surfaces of 
those knots, in particular, an effective algorithm to compute the set of boundary 
slopes of incompressible surfaces in such knots.
\end{comment} 
We provide a state-sum formula for the colored Jones polynomial that allows us to match the parameters of the terms of the sum that contribute to the degree of the polynomial with the parameters that describe the locally essential surfaces. The key innovation of our state sum is that we are able to identify those terms that actually contribute to the degree. The resulting degree function is piecewise-quadratic, allowing application of quadratic integer programming methods.

We interpret the curve systems formed by intersections with essential surfaces on a Conway sphere enclosing a rational tangle in terms of these degree-maximizing skein elements in the state sum. In this paper we carry out the matching for Montesinos knots but the state-sum \eqref{eq.sum} is valid in general. In fact using this framework, one could determine the degree of the colored Jones polynomial and find candidates for corresponding essential surfaces in many new cases beyond Montesinos knots.

While the local theory works in general, fitting together the surfaces in each tangle to obtain a (globally) essential surface has yet to be done. The behavior of the colored Jones polynomial under gluing of tangles has similar patterns, which may be explored in future work. 

\begin{comment}
Using a mixture of fusion and skein theory,  The mix is incompressible for
treating cases with more than three rational tangles.
\end{comment}

The Montesinos knots, together with some well-understood 
algebraic knots, are knots that have small Seifert fibered 2-fold branched 
covers~\cite{Montesinos,Zieschang}. For our purposes, we will not use this
abstract definition, and instead construct Montesinos links by inserting rational tangles into pretzel knots. 
More precisely, a Montesinos link is the closure of a list of rational tangles arranged as in 
Figure~\ref{f.mtangle} and concretely as in Figure~\ref{f.montesinos}. See Definition \ref{d.Montesinos}.

\begin{figure}[!htpb]
\centering
\def \svgwidth{.4\columnwidth}
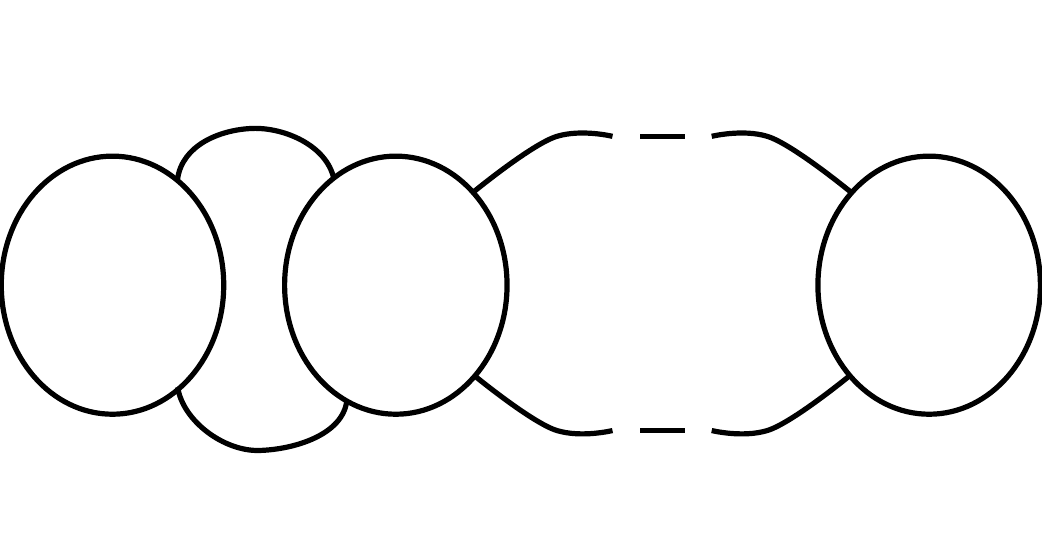
\caption{A Montesinos link.} 
\label{f.mtangle}
\end{figure}

\begin{figure}[!htpb]
\centering
\def \svgwidth{.4\columnwidth}
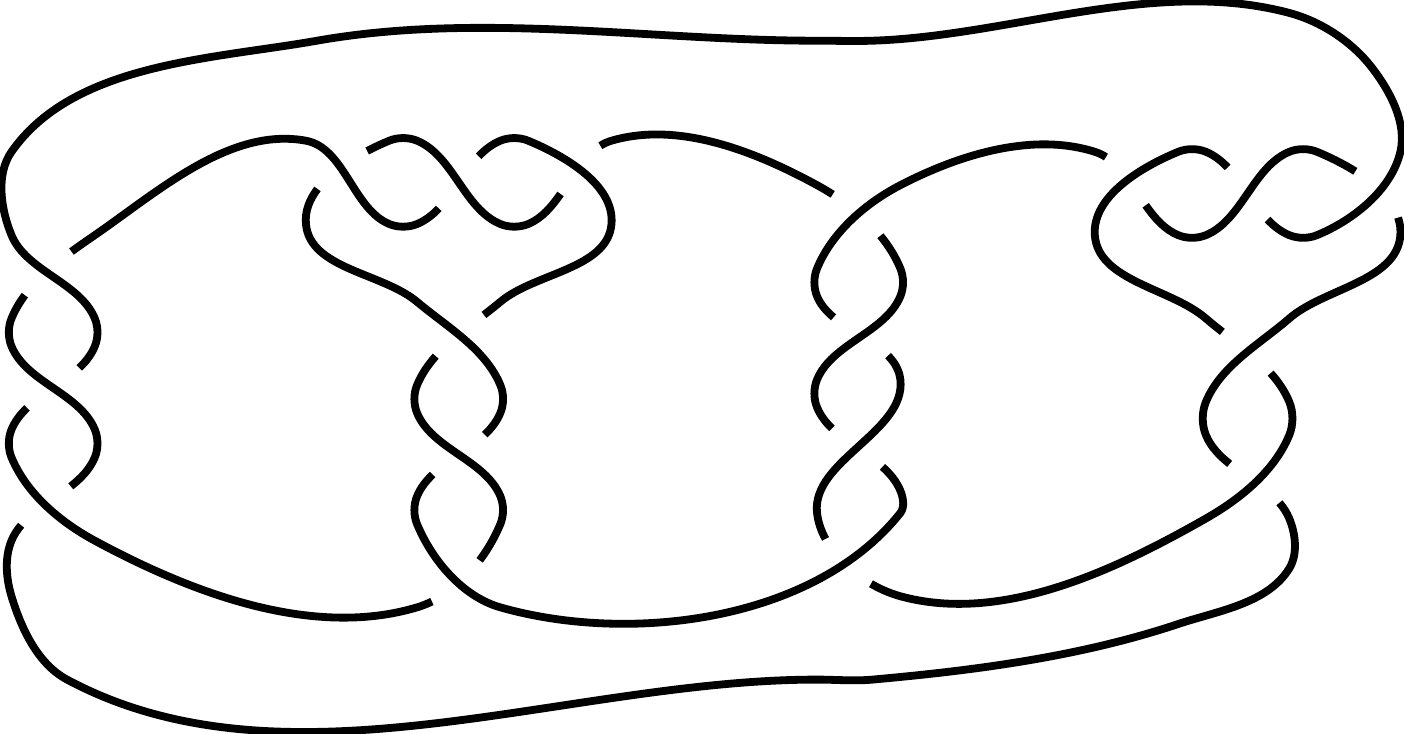
\caption{The Montesinos link
$K(-\frac{1}{3}, -\frac{3}{10}, \frac{1}{4}, \frac{3}{7})$.} 
\label{f.montesinos}
\end{figure}

Rational tangles are determined by rational 
numbers, see Section \ref{sec.RT}, thus a Montesinos link 
$K(r_0,r_1,\dots,r_m)$ is encoded by a list of rational numbers
$r_i \in \BQ$. Note that $K(r_0, r_1, \ldots, r_{m})$ is a knot if and only if either there is only one even denominator, or, there is no even denominator and the number of odd numerators is odd. When $r_i=1/q_i$ is the inverse of an integer, the Montesinos link
$K(1/q_0,\dots,1/q_m)$ is also known as the pretzel link $P(q_0,\dots,q_m)$.

%Since the upper left and lower left corners of a rational
%tangle $T_r$, defined in~\eqref{eq.Tr} are connected by a strand if and
%only if the denominator of $r$ is even, it follows that when 
%$K(r_0,r_1,\dots,r_m)$ is a knot, at most one of the denominator of $r_i$
%can be even, and since the Montesinos knot is invariant under cyclic
%permutation of the $r_i$, it follows that we can assume that the denominators
%of $r_i$ for $i=1,\dots,m$ are odd

\subsection{Our results}
\label{sub.results}

Recall the colored Jones polynomial $J_{K,n}(v) \in\BZ[v^{\pm 2}]$ of a 
knot $K$ colored by the $n$-dimensional irreducible 
representation of $\mathfrak{sl}_2$~\cite{Tu}. See Definition \ref{defn:cjp}. 
Our variable $v$ for the colored Jones polynomial is
related to the skein theory variable $A$ \cite{Pr91} and to the Jones variable 
$q$~\cite{Jo} by $v=A^{-1}=q^{-\frac{1}{4}}$. With our conventions,
if $3_1=P(1,1,1)$ denotes the left-hand trefoil, then 
$J_{3_1,2}(v)= v^{18}-v^{10}-v^{6}-v^2$.
For the $n$-colored unknot we get $J_{O,n} = \frac{v^{2n}-v^{-2n}}{v^{2}-v^{-2}}$.

Let $\delta_K(n)$ denote the maximum $v$-degree of the colored Jones 
polynomial $J_{K,n}(v)$. It follows that
$\delta_K(n)$ is a quadratic quasi-polynomial~\cite{Ga2}. In other words, for every knot $K$ there exists an $N_K\in\mathbb{N}$ such that for $n>N_K$:
\be
\delta_K(n) = \js_K(n) n^2 + \jx_K(n) n + c_K(n)  
\ee
where $\js_K$, $\jx_K$, and $c_K$ are periodic functions. 
%Let $\overline{K}$ be the mirror image of $K$, (resp. $\js_{\overline{K}}$) (resp. $jx_{\overline{K}}$)

\begin{conjecture} (The strong slope conjecture)\\
\label{con.ss}
For any knot $K$ and any $n>N_K$, there is an $n'$ and an essential surface $S \subset S^3\setminus K$ with $|\partial S|$ boundary components,  such that the boundary slope of $S$ equals  $\js_K(n)=p/q$ (reduced to lowest terms and with the assumption $q>0$), and $\frac{2\chi(S)}{q|\partial S|} = \jx_K(n')$. 
\end{conjecture}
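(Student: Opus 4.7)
The plan is to prove the strong slope conjecture for a Montesinos knot $K = K(r_0,\ldots,r_m)$ by matching a state-sum expansion of the colored Jones polynomial term-by-term with the Hatcher-Oertel description of essential surfaces. First I would expand each rational tangle in the standard diagram using fusion with Jones-Wenzl idempotents, producing a state sum
\[
J_{K,n}(v) = \sum_{\mathbf{k}} C_{\mathbf{k}}(v),
\]
where the multi-index $\mathbf{k} = (k_0,\ldots,k_m)$ records the admissible internal color appearing on the Conway sphere that bounds the $i$-th rational tangle, and $C_{\mathbf{k}}(v)$ factors as a product of local tangle contributions whose $v$-degree can be written explicitly in terms of the continued-fraction expansion of $r_i$. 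The key innovation advertised in the introduction is to identify precisely which $\mathbf{k}$ survive in the top degree after cancellation, so this first step must produce a cancellation-free subsum rather than merely an upper bound.

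Second, I would compute $\deg_v C_{\mathbf{k}}(v)$ as an explicit function of $\mathbf{k}$ and $n$. Because each local contribution is quadratic in the $k_i$, the degree function is piecewise-quadratic in $\mathbf{k}$, with the pieces labeled by sign patterns (intervals in which each $k_i$ lies). On each piece, maximizing over $\mathbf{k}$ for fixed $n$ becomes a quadratic integer program which, for a Montesinos knot, can be solved in closed form, yielding a maximizer $\mathbf{k}^{\ast}(n)$ whose coordinates depend quasi-linearly on $n$. Substituting back would give
\[
\delta_K(n) = \js_K(n)\, n^2 + \jx_K(n)\, n + c_K(n),
\]
with $\js_K(n)$ and $\jx_K(n)$ read off from explicit formulas in the $r_i$ and the residue of $n$ modulo the period.

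Third, on the topological side, I would run the Hatcher-Oertel algorithm for each tangle, parameterizing locally essential surfaces by edge-path systems on the diagram of rationals with poles; their boundary slope and their contribution to $\chi(S)$ and $|\partial S|$ are again piecewise-quadratic functions of edge-path data. I would then set up an explicit dictionary between the state-sum optimizer $\mathbf{k}^{\ast}(n)$ and an edge-path system $\Gamma^{\ast}(n)$ in each tangle, and verify that under this dictionary the slope formula of Hatcher-Oertel recovers $\js_K(n)$, while $\tfrac{2\chi(S)}{q|\partial S|}$ recovers $\jx_K(n')$ for an appropriate $n'$. Given that the state sum is designed with this matching in mind, the verification should be a direct comparison of two piecewise-quadratic formulas with matching domain decompositions.

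The hard part will be the global gluing: a system of locally essential surfaces, one per tangle, need not assemble into a globally essential surface with the predicted slope, and compatibility across the Conway spheres imposes nontrivial constraints on the allowed $\mathbf{k}^{\ast}$. This is precisely the obstacle flagged in the introduction ("fitting together the surfaces in each tangle to obtain a (globally) essential surface has yet to be done"), and it is presumably what forces the theorem to be stated with restrictions — likely sign conditions on the $r_i$, parity conditions on the numerators and denominators, or a nondegeneracy hypothesis guaranteeing uniqueness of the state-sum optimizer. Under such restrictions, the proof should reduce to a finite combinatorial check that $\Gamma^{\ast}(n)$ glues admissibly across each Conway sphere and that the resulting global surface realizes the predicted boundary slope and $\jx_K$-value, thereby establishing Conjecture~\ref{con.ss} for the allowed family of Montesinos knots.
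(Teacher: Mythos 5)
Your proposal reproduces the paper's high-level plan, but there is an important technical divergence and a genuine gap that the paper resolves and you do not.

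The divergence: you propose a \emph{pure} fusion expansion, inserting a Jones–Wenzl color $k_i$ at every Conway sphere and writing $J_{K,n} = \sum_{\mathbf{k}} C_{\mathbf{k}}(v)$ with each $C_{\mathbf{k}}$ a product of local tangle evaluations. The paper deliberately does \emph{not} do this. It splits the diagram as $K = N(K_-\oplus K_+)$, applies fusion and untwisting only to the single negative twist region $K_-$ (producing the parameter $k_0$), and takes an ordinary Kauffman-bracket state sum on $K_+$. The parameters $k_i$ for $i\geq 1$ are not fusion colors at all; they are $\lceil c_i(\sigma)/2\rceil$ where $c_i(\sigma)$ counts local through strands of a Kauffman state $\sigma$ that are also global through strands. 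This mixed fusion/skein approach is the ``key innovation'' the introduction advertises: it lets one organize states by $(c,k)$, identify \emph{minimal} and \emph{taut} states in pyramidal position (Theorem~\ref{t.pyramid}, Lemma~\ref{lem.minimal}), and prove (Theorem~\ref{thm:cancelling}) that the taut states in each class $st(c,k)$ pairwise cancel except for the rightmost minimal state, so the top degree is actually attained. A pure fusion sum as you describe would make each $C_{\mathbf{k}}(v)$ a single quantum $6j$-type product; controlling the sign of the leading coefficient and proving non-cancellation of distinct $\mathbf{k}$-terms in that setting is precisely what is not known beyond the 2-fusion case, and is what the paper's skein-theoretic machinery is built to avoid.

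The gap: you note that the first step ``must produce a cancellation-free subsum rather than merely an upper bound,'' but you do not say how, and this is where essentially all the hard analysis lives. The paper's answer is twofold: (i) the pyramidal-position characterization of minimal/taut states, which makes the degree function $\delta(n,k)$ an honest quadratic (Equation~\eqref{eq.Deltank}) rather than merely an upper bound; and (ii) the parity hypotheses ($q_i$ odd, $m$ even) that force the leading coefficients at all lattice optimizers to have the same sign (Cases 2b and 3 of Section~\ref{sub.application}). Without (ii), distinct lattice optimizers of the quadratic program could cancel, and indeed Lemma~\ref{lem.exceptional} isolates the small exceptional family where this is a genuine danger. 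Your step 2 asserts a unique maximizer $\mathbf{k}^*(n)$, but the QIP of Proposition~\ref{prop.QIP} generically has several lattice optimizers per residue class; the paper never needs uniqueness, only sign coherence. Finally, your step 4 suggests the global gluing is a ``finite combinatorial check'' that $\Gamma^*(n)$ glues admissibly; in the paper this is not checked case by case but is discharged wholesale by Hatcher–Oertel's incompressibility criterion (Theorem~\ref{thm:incompressible}), applied to the two explicit candidate surfaces $S(M,x^*)$ and $R$ once Lemma~\ref{l.HOm} shows $s(q)\leq 0$ guarantees the needed integers $K_0, q$ exist. Without specifying that criterion, your plan leaves the incompressibility of the matched surface unverified.
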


The number $q|\partial S|$ is called the \emph{number of sheets} of $S$, denoted by $\#S$, and $\chi(S)$ is the Euler characteristic of $S$. See the discussion at the beginning of Section \ref{sec.incompressible} for the definition of an essential surface and boundary slope. 
We call a value of the function $\js_K$ a \emph{Jones slope} and a value of the function $\jx_K$ a \emph{normalized
Euler characteristic}. The original slope conjecture is the part of Conjecture \ref{con.ss} that concerns the interpretation of $\js_K$ as boundary slopes \cite{Ga:slope}, while the rest of the statement is a refinement by \cite{KT15}. The reader may consult these two sources  \cite{Ga:slope}, \cite{KT15} for additional background.
By considering the mirror image $\overline{K}$ of $K$ and the formula $J_{K,n}(v^{-1}) = J_{\overline{K},n}(v)$, the strong slope conjecture is equivalent to the statement in \cite{KT15} that includes the behavior of the minimal degree.

The slope conjecture and the strong slope conjecture were established for many knots including alternating knots, adequate 
knots, torus knots, knots with at most 9 crossings, 2-fusion knots (in this case only the slope conjecture is proven), graph 
knots, near-alternating knots, and most 3-tangle pretzel knots and 
3-tangle Montesinos knots~\cite{Ga:slope,FKP,GV, LV:3pretzel, MT, BMT18, L, LLY,Howie}. 
However the general case remains intractable and most proofs simply
compute the quantum side and the topology side separately, comparing only the end results.

Since the strong slope conjecture is known for adequate knots 
\cite{Ga:slope, FKP, FKP13}, we will ignore the Montesinos knots which are 
adequate. When $m\geq 2$, a non-adequate Montesinos knot $K(r_0, r_1, \ldots, r_m)$ has precisely one negative or positive tangle \cite[p.529]{LT}. Without loss of generality we need only to consider $\js_K(n)$ and $\jx_K(n)$ for a Montesinos knot with precisely one negative tangle. The positive tangle case follows from taking mirror image.

Before stating our main result on Montesinos knots we start with the case of pretzel knots as they are the basis for our argument. In fact Theorem~\ref{thm.0} is the bulk of our work. For $P(q_0, \ldots, q_m)$ to be a knot, at most one tangle has an even number of crossings, and if each tangle has an odd number of crossings, then the number of tangles has to be odd. In the theorem below, the condition on the parities of the $q_i$'s and the number of tangles may be dropped if one is willing to exclude an arithmetic sub-sequence of colors $n$.

\begin{theorem}
\label{thm.0} 
Fix an $(m+1)$-vector $q$ of odd integers $q= (q_0,\dots, q_m)$ with $m\geq 2$ even and $q_0 < -1< 1 <q_1,\dots, 
q_m$.
Let $P=P(q_0,\dots,q_m)$ denote the corresponding pretzel knot. Define rational functions $s(q), s_1(q) \in \BQ(q)$: 
\begin{equation}
\label{eq.sqq}
s(q) =  1 +q_0 +\frac{1}{\sum_{i=1}^m (q_i-1)^{-1}} , \qquad
s_1(q) = \frac{\sum_{i=1}^m(q_i+q_0-2)(q_i-1)^{-1}}{\sum_{i=1}^m (q_i-1)^{-1}} \,.
\end{equation}
For all $n> N_K$ we have:
\newline
\rm{(a)} 
If $s(q)<0$, then the strong slope conjecture holds with
\be
\label{eq.jsxP}
\js_P(n) = -2s(q), \qquad 
\jx_P(n) = -2s_1(q)+4s(q)-2(m-1). 
\ee 
In particular, $\js_P(n)$ and $\jx_P(n)$ are constant functions. 
\newline
\rm{(b)} If $s(q)=0$, then the strong slope conjecture holds with
\be 
\js_P(n) = 0, \qquad
\jx_P(n) =\begin{cases} -2(m-1) & \text{if} \,\, s_1(q) \geq 0 \\
-2s_1(q)-2(m-1) & \text{if} \,\, s_1(q) < 0
\end{cases} \,.
\ee 
In particular, $\js_P(n)$ and $\jx_P(n)$ are constant functions. 

\begin{comment}
$$
\red{\min_{\substack{k_i \geq 0 \\ \sum_i k_i \leq n}} \delta(k)} 
= \begin{cases} s_0(q)(n) & \text{if} \,\, s_1(q) \geq 0 \\
s_1(q)n+ s_0(q)(n) & \text{if} \,\, s_1(q) < 0
\end{cases} \,.
$$
\end{comment}

\noindent \rm{(c)} If $s(q)>0$, then the strong slope conjecture holds with
\be 
\js_P(n) = 0, \qquad
\jx_P(n) =-2(m-1).  
\ee
In particular, $\js_P(n)$ and $\jx_P(n)$ are constant functions. 

\begin{comment}
$$
\red{\min_{\substack{k_i \geq 0 \\ \sum_i k_i \leq n}} \delta(k)} 
= \begin{cases} -\frac{s_1(q)^2}{2s(q)}  & \text{if} \,\, s_1(q) < 0 \\
s_0(q)(n) & \text{if} \,\, s_1(q) \geq 0 \,.
\end{cases} 
$$
\end{comment}
\end{theorem}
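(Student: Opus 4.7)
The plan is to split the proof into a quantum-side computation that gives $\delta_P(n)$ exactly, and a topological-side construction that produces essential surfaces realizing the claimed slope and normalized Euler characteristic; the two sides are then matched. On the quantum side, I would start from the state-sum formula \eqref{eq.sum} applied to $P(q_0,\dots,q_m)$. Each twist region of length $q_i$ contributes a summation variable $k_i$ and a cabled skein element whose $v$-degree, after expansion to color $n$, is a quadratic polynomial in $k_i$ and $n$. Multiplying across tangles and closing up, one obtains a function $\delta(k;n)$ on lattice points $k=(k_0,\dots,k_m)$ with $k_i\ge 0$ and $\sum_i k_i\le n$, whose maximum over the lattice is $\delta_P(n)$ \emph{provided} the maximizing states do not cancel. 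The parity hypotheses on the $q_i$ and on $m$, combined with the sign conditions $q_0<-1<1<q_i$, are exactly what one needs to ensure there is a unique top-degree lattice state (or a set of states with leading coefficients of the same sign), so the passage from $\max_k \delta(k;n)$ to $\delta_P(n)$ is justified.

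Next I would carry out the quadratic optimization of $\delta(k;n)$. Treating $k$ as a real vector and ignoring $\sum_i k_i\le n$ temporarily, the first-order conditions are linear in $k,n$ and yield a unique critical point $k^*(n)$ with $k_i^*=\alpha_i n+\beta_i$ for explicit $\alpha_i,\beta_i\in\BQ(q)$. Substituting back, $\delta(k^*;n)$ takes the form $-2s(q)n^2+(\text{linear in }n)+O(1)$, and a direct but somewhat lengthy computation matches the linear coefficient with the quantities $-2s_1(q)+4s(q)-2(m-1)$ appearing in \eqref{eq.jsxP}. The trichotomy of the theorem arises from the feasibility of $k^*$ inside the simplex for large $n$: when $s(q)<0$ the Hessian is negative definite and $k^*$ is interior, giving case (a); when $s(q)>0$ the unconstrained optimum lies outside the feasible region and one checks directly that the maximum is attained at $k=0$ (the untwisted contribution), giving case (c). The case $s(q)=0$ is the most delicate: the quadratic part degenerates along a ray, the optimization reduces to a linear program, and the sign of $s_1(q)$ determines whether the linear optimum is interior (giving the second branch of (b)) or on the face $k=0$ (giving the first branch).

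On the topological side, I would use the Hatcher--Oertel algorithm to produce, for each case, an essential surface $S$ whose boundary slope equals $-2s(q)$ and whose quantity $2\chi(S)/(\#S)$ equals the $\jx_P(n)$ of the theorem. The Hatcher--Oertel description of essential surfaces in a pretzel (in fact Montesinos) complement as gluings of rational-tangle subsurfaces along edgepaths gives boundary slope as an explicit sum of edgepath contributions; the expression $s(q)=1+q_0+(\sum(q_i-1)^{-1})^{-1}$ has exactly the harmonic-sum shape produced by choosing a single non-vertical edgepath in each positive tangle (meeting a common vertical coordinate) together with the standard edgepath in the negative tangle. Computing $\chi(S)$ and $\#S$ for this edgepath system and comparing with the formulas in \eqref{eq.jsxP} yields the matching in case (a). In cases (b) and (c), the Jones slope is $0$, and the matching surface is Seifert-like; the different branches of $\jx_P$ in (b) correspond to two distinct edgepath systems, selected by $\sgn(s_1(q))$, that are degenerate limits of the (a)-family.

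The step I expect to be the main obstacle is the no-cancellation claim on the quantum side: showing that the lattice maximizers of $\delta(k;n)$ contribute a nonzero top coefficient to $J_{P,n}(v)$. This requires a careful sign analysis of the Kauffman-bracket/Jones--Wenzl expansions of each tangle, and is what the parity and sign hypotheses on the $q_i$ are really designed to control; without them one has to discard an arithmetic subprogression of colors. The secondary obstacle is the degenerate case $s(q)=0$, where both the quantum optimization and the Hatcher--Oertel matching bifurcate simultaneously according to $\sgn(s_1(q))$, and one must verify that the two bifurcations agree branch-by-branch.
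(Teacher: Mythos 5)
Your overall architecture is the same as the paper's: a state sum whose tight-parameter terms have a piecewise-quadratic degree $\delta(n,k)$, a quadratic optimization producing the trichotomy in $\sgn s(q)$, and a Hatcher--Oertel edgepath system (one fractional edge ending at $\langle 0\rangle$ in each positive tangle, a long descending path in the negative tangle, plus a reference Seifert-type surface for the zero-slope cases) whose twist number and Euler characteristic reproduce $-2s(q)$ and $-2s_1(q)+4s(q)-2(m-1)$. You also correctly identify the no-cancellation claim as the crux and attribute it to the parity hypotheses; the paper discharges it by computing the leading sign $(-1)^{q_0(n-k_0)+n+k_0+\sum_i(n-k_i)(q_i-1)}$ of each taut contribution and observing it is independent of $k$ when all $q_i$ are odd.

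However, your optimization step as written would fail. The degree function \eqref{eq.Deltank} has no $nk_i$ cross terms, so if you genuinely ignore the constraint $\sum_i k_i\le n$, the first-order conditions give a stationary point that is \emph{independent of} $n$ (and in the indefinite directions the unconstrained problem is unbounded, since $q_0+1<0$ while $q_i-1>0$); you cannot obtain $k_i^*=\alpha_i n+\beta_i$ this way. The $n$-dependence of the maximizer in case (a) comes precisely from the constraint being \emph{active}: the correct route (the paper's) is to optimize on each slice $\sum_i k_i=t$ by Lagrange multipliers, which is a convex separable problem yielding $x_i^*(t)=\tfrac{(q_i-1)^{-1}}{\sum_j(q_j-1)^{-1}}t+O(1)$, and then to minimize the resulting univariate quadratic $Q_0(t)=s(q)t^2+s_1(q)t$ over $t\in[0,n]$. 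When $s(q)<0$ this is concave in $t$ and the optimum sits on the boundary face $t=n$, not in the interior; when $s(q)>0$ it is convex and the optimum is at $t=0$ or at a bounded $t\approx -s_1(q)/(2s(q))$ (not necessarily at $k=0$ as you assert, though either way the value is $O(1)$ and the conclusion stands). You should also say a word about passing from the real optimum to the lattice optimum: the maximizers $x_i^*(n)$ are generally irrational, and one needs the quasi-linearity of lattice optimizers (the paper's Graver-basis argument, Proposition~\ref{prop.QIP}) to see that the discrepancy only affects the constant term, hence neither $\js_P$ nor $\jx_P$.
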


Next, we consider the case of Montesinos knots. Recall that by applying Euclid's algorithm, every rational number $r$ has 
a unique positive continued fraction expansion $r=[b_0,\dots,b_{\ell'}]$, see \eqref{eq:poscfe}, with $\ell' < \infty$, $b_0 \in \mathbb{Z}$, $|b_j| \geq 1$ for $1\leq j \leq \ell'-1$, $|b_{\ell'}| \geq 2$, and $b_j$'s all of the same sign as $r$. From this we define an even length continued fraction expansion $[a_0, \ldots, a_{\ell_r}]$ of $r$ to be equal to $[b_0,\dots,b_{\ell'}]$ if $\ell'$ is even, and we define it to be equal to $[b_0,\dots,b_{\ell'}-1, 1]$ (resp. $[b_0,\dots,b_{\ell'}+1, -1]$) if $\ell'$ is odd and $r> 0$ (resp. $r<0)$ . Note  $[a_0, \ldots, a_{\ell_r}]$ is well-defined. We will call $[a_0, \ldots, a_{\ell_r}]$ the unique even length positive continued fraction expansion for $r$. Define $r[j]=a_j$ for 
$j=0,\dots,\ell_r$, and define 
$$
[ r ]_{e} = \sum_{3\leq j\leq \ell_r, \ j=\text{even}} r[j], \quad
[r ]_{o} = \sum_{3\leq j\leq \ell_r, \ j=\text{odd}} r[j], \quad
 [ r] =  [ r ]_{e} + [ r ]_{o} \,.
$$  
For example, the fraction $63/202 = [0, 3, 4, 1, 5, 2]$  has the unique even length positive continued fraction expansion $[0, 3, 4, 1, 5, 1, 1]$. Adding up all the partial quotients of the continued fraction expansion with even indices $\geq 3$,  we get $[63/202]_e = 5+1 = 6$. Similarly, adding up all the partial quotients with odd indices $\geq 3$, we get $[63/202]_o = 1+1 = 2$.

 Given a Montesinos knot $K(r_0, \ldots, r_m)$, define $D_K$ to be the diagram obtained by summing rational tangle diagrams corresponding to the unique even length positive continued fraction expansion for each $r_i$, and then taking the numerator closure. See Section \ref{sec.RT} for how a rational tangle diagram is assigned to a continued fraction expansion of a rational number and definitions for the tangle sum and numerator closure. 

By the classification of Montesinos knots by \cite{Bon79},  and the existence and use of reduced diagrams of Montesinos links \cite{LT} based on the classification, we will further restrict to Montesinos knots $K(r_0, \ldots, r_m)$ where $|r_i| < 1$ for all $0 \leq i \leq m$. See Section \ref{ss.Mclassify} for the discussion of why we may do so without loss of generality. 

Let $(r_0,\dots,r_m) \in \BQ^{m+1}$ denote a tuple
of rational numbers, and let $(q_0,\dots,q_m) \in \BZ^{m+1}$ denote the
associated tuple of integers where $q_i=r_i[1]+1$ for $1\leq i \leq m$ and \[q_0 = \begin{cases} &r_0[1] -1 \text{ if $\ell_{r_0} = 2$ and $r_0[2]=-1$, 
\text{and}} \\ 
 	& r_0[1] \text{ otherwise }  \end{cases} \]  from the unique even length positive continued fraction expansion of $r_i$'s. Again, for the following theorem the condition on the parities of the $q_i$'s and the number of tangles ($m\geq 2$ even) may be dropped if one is willing to exclude an arithmetic sub-sequence of colors $n$, thus proving a weaker version of the conjecture for all Montesinos knots.
\begin{theorem} 
\label{thm.1} 
Let $K=K(r_0, r_1, \ldots, r_{m})$ be a Montesinos knot such that 
$r_0 <0$, $r_i >0$ for all $1 \leq i\leq m$, and $|r_i| < 1$ for all $0\leq i \leq m$ with $m\geq 2$ even.
Suppose $q_0 <-1<1<q_1,\ldots, q_m$ are all odd, and $q'_0$ is an integer that is defined to be 0 if $r_0 = 1/q_0$, and defined to be $r_0[2]$ otherwise. Let $P=P(q_0, \dots, q_m)$ be the associated pretzel knot, and let $\omega(D_K)$, $\omega(D_P)$ denote the writhe of $D_K$, $D_P$ with orientations. Then the strong slope conjecture holds. For all $n>N_K$ we have:
\begin{align*}
\label{eq.jsxM}
\js_K(n) &= \js_P(n) -q'_0- [r_0] -\omega(D_P) + \omega(D_K)+ \sum_{i=1}^m (r_i[2]-1) +  \sum_{i=1}^m [ r_i ] , \\
\jx_K(n) &= \jx_P(n) -2\frac{q'_0}{r_0[2]}+2 [r_0]_o -2 \sum_{i=1}^m (r_i[2]-1) -2 \sum_{i=1}^m [r_i ]_e. 
\end{align*}
In particular, $\js_P(n)$ and $\jx_P(n)$ are constant functions. 

\end{theorem}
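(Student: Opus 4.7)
The plan is to reduce Theorem~\ref{thm.1} to Theorem~\ref{thm.0} by isolating how the deeper levels of each rational tangle's continued fraction expansion modify both sides of the strong slope conjecture. The state sum \eqref{eq.sum} expresses $J_{K,n}(v)$ as a sum over indices attached to each crossing block of each rational tangle; for a pretzel the only indices are $(k_0, \ldots, k_m)$ with one per tangle, whereas for a Montesinos knot each rational tangle $r_i$ with unique even-length positive continued fraction expansion $[a_0, \ldots, a_{\ell_{r_i}}]$ contributes additional internal summation indices $k_{i,j}$ for $j \geq 2$. I would first analyze each rational tangle locally, computing how the maximal $v$-degree depends on the $k_{i,j}$, and show that the piecewise-quadratic optimization separates: the optimum over the internal indices is determined entirely by $r_i[2]$, $[r_i]_e$, $[r_i]_o$ (and the ambient color $n$), producing an explicit additive correction to the effective quadratic form that remains on the outer indices $(k_0, \ldots, k_m)$.

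Having peeled off the internal contributions, the outer optimization over $(k_0, \ldots, k_m)$ is governed by the same quadratic integer program treated in Theorem~\ref{thm.0} for the associated pretzel $P = P(q_0, \ldots, q_m)$ with $q_i = r_i[1]+1$ (and the specified adjustment for $q_0$). Comparing the writhes of $D_K$ and $D_P$ then accounts for the discrepancy between the unnormalized and normalized colored Jones polynomials, supplying the $\omega(D_K) - \omega(D_P)$ term in $\js_K(n)$, while the $q'_0/r_0[2]$ and $[r_0]_o$ adjustments in $\jx_K(n)$ emerge from the asymmetric role played by the single negative tangle. Collecting these pieces yields the formulas for $\js_K(n)$ and $\jx_K(n)$ in terms of $\js_P(n)$ and $\jx_P(n)$ supplied by Theorem~\ref{thm.0}.

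On the topological side, I would produce essential surfaces realizing these slopes using the Hatcher--Oertel algorithm. Each rational tangle enclosed by a Conway sphere carries a curve system determined by its intersection with an essential surface, and this curve system is the topological counterpart of the degree-maximizing skein element in the local state sum. The partial quotients $r_i[j]$ for $j \geq 2$ correspond to saddle moves inside the tangle, contributing to boundary slope and Euler characteristic in exactly the way the quantum calculation predicts. Matching the two sides tangle-by-tangle and then summing, with the pretzel case providing the baseline already verified in Theorem~\ref{thm.0}, gives the slope $\js_K(n)$, the sheet number $\#S$, and the Euler characteristic $\chi(S)$ required by Conjecture~\ref{con.ss}.

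The main obstacle I expect is the separation step: showing that the internal optimizations within each rational tangle genuinely decouple from the outer optimization, so that the Montesinos degree equals the pretzel degree plus explicit local corrections. The technical difficulty is to rule out interference between tangle levels by carefully tracking the inequalities that determine which state-sum terms survive after Jones--Wenzl fusion, and verifying that the degree-maximizing configuration at each level remains optimal after gluing. The writhe bookkeeping between $D_K$ and $D_P$, together with the case analysis for $r_0$ when $\ell_{r_0} = 2$ and $r_0[2] = -1$, requires close attention, but these are bookkeeping rather than conceptual obstructions.
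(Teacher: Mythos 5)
Your proposal is essentially the same strategy the paper follows, so I will just flag how the paper implements the step you correctly identify as the obstacle. You frame the reduction to the pretzel case as a separation of variables: introduce internal summation indices $k_{i,j}$ for the deeper levels of each continued fraction and show they optimize independently of the outer $(k_0,\ldots,k_m)$. The paper does not introduce new parameters and then show they decouple; instead it observes that the ``appendage'' tangle $V$ (everything beyond the first two levels, and beyond the first level for $r_0$) is a disjoint union of alternating tangles, so its all-$B$ Kauffman state strictly dominates any other resolution on $V$ (Lemma~\ref{l.addadequate}, via Lemma~\ref{l.split}). The internal ``optimization'' therefore collapses to a single state that contributes a $k$-independent constant ($c(V)n^2 + 2n\,o(V_B)$), and the outer optimization is literally unchanged, not merely effectively unchanged. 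This is organized via the \TR-move framework (Lemma~\ref{l.rmove}) and an intermediate reduction to special Montesinos knots (Theorem~\ref{thm.2Delta}), where the first two levels $r_i[1], r_i[2]$ are absorbed into a slightly modified quadratic form whose $k$-dependent part still agrees with the pretzel. On the topological side your tangle-by-tangle matching is exactly how Lemma~\ref{l.HOm2} and the surrounding computations work: the edge-paths for $S(M,x^*)$ and $R$ for $K$ extend those for $P$ by prepending segments, so $\tw(S(M,x^*))-\tw(R)$ for $K$ equals the corresponding pretzel difference, and the remaining slope/$\chi$ corrections are pinned down by viewing the reference surface as a state surface. One small caution: the $\omega(D_K)-\omega(D_P)$ term does not reflect a normalized/unnormalized discrepancy but just the writhe correction in the framing factor $((-1)^nv)^{\omega(D)n(n+2)}$ applied to two different diagrams.
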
 
\begin{example} 
Consider the Montesinos knot $K = K(-\frac{46}{327}, 
\frac{35}{151}, 
\frac{5}{31}, \frac{16}{35} ,  \frac{1}{5})$. Applying Theorem \ref{thm.0} and \ref{thm.1}, we compute the Jones slope $\js_K$ by using Euclid's algorithm to obtain the unique even length continued fraction expansion for each rational number in the definition of $K$.
We have for the first rational number $-46/327$, 
\[ -\frac{46}{327} = 0+ \frac{1}{-\frac{327}{46}} =0+ \frac{1}{-7+(-\frac{5}{46})} =0+\frac{1}{-7+\frac{1}{-\frac{46}{5}}} = 0+ \frac{1}{-7+\frac{1}{-9+(-\frac{1}{5})}} = [0, -7,-9, -5].  \] This is of odd length, so the unique even length continued fraction expansion for $ -\frac{46}{327}$  is 
\[ -\frac{46}{327} = [0, -7,-9, -4, -1].\] 
The rational numbers together with their unique even length continued fractions expansions are  
\[ -\frac{46}{327}=[0, -7, -9, -4, -1], \
\frac{35}{151}=[0, 4, 3, 5, 2], \
\frac{5}{31}=[0, 6, 5], \ \frac{16}{35} = [0, 2, 5, 2, 1], \ \frac{1}{5}=[0, 4, 1].\] 
The associated pretzel knot is 
$P(-7, 5, 7, 3, 5)$. 
Theorem \ref{thm.0} applied to the pretzel knot gives that 
\[s(q) = -\frac{36}{7} < 0 \text{ and } s_1(q) = -\frac{32}{7}.   \] 
So
\[\js_P(n) = (-2)(-\frac{36}{7}) =  \frac{72}{7} \text{ and } \jx_P(n) = -2(-\frac{32}{7}) + 4(-\frac{36}{7}) - 2(4-1) = -\frac{122}{7}.  \] 
Dunfield's program \cite{Dunfield:table}, which computes boundary slopes and other topological properties of essential surfaces for a Montesinos knot based on Hatcher and Oertel's algorithm, produces an essential surface $S$ whose boundary slope equals $\js_{P}(n) = -2s(q) = 72/7$,  and such that $2\chi(S)/(7|\partial S|) = \jx_P =  -122/7$. 
Now we compute $\js_K(n)$ and $\jx_K(n)$ using Theorem \ref{thm.1}. To aid in presentation, we replace each symbol in the equations in the theorem by the number computed from the example. 
We have 
\begin{align*}
\js_K(n) &= \underbrace{\js_P(n)}_{72/7} -\underbrace{r_0[2]}_{-9} -  \underbrace{[r_0]}_{-4+-1} - \underbrace{\omega(D_P)}_{-13} + \underbrace{\omega(D_K)}_{-43}+ \underbrace{\sum_{i=1}^m (r_i[2]-1)}_{(2)+(4)+(4)} + \underbrace{\sum_{i=1}^m [r_i]}_{(5+2)+(2+1)} = \frac{100}{7}. \\ 
\jx_K(n) &= \underbrace{\jx_P(n)}_{-122/7} -2+2 \underbrace{ [r_0]_o}_{-4} -2 \underbrace{\sum_{i=1}^m (r_i[2]-1)}_{(2)+(4)+(4)} -2 \underbrace{\sum_{i=1}^m [r_i]_e}_{(2)+(1)}= -\frac{374}{7}. 
\end{align*} 
For the Montesinos knot, Dunfield's program also produces an essential surface $S$ which realizes the strong slope conjecture, with boundary slope 100/7 and $2\chi(S) / 7|\partial S| = -374/7. $
\end{example}

\subsection{Plan of the proof}
\label{sub.plan}

We divide the proof of Theorem \ref{thm.0} and Theorem \ref{thm.1} into two parts, first concerning the claims regarding the degree of the colored Jones polynomial, and the second concerning the existence of essential surfaces realizing the strong slope conjecture. 

First we use a mix of skein theory and fusion, reviewed in Section \ref{ss.skeinandcjp}, to find a formula for the degree of the dominant
terms in the resulting state sum for the colored Jones polynomial in Section \ref{sec.cj}.
Using quadratic integer programming techniques we determine the maximal degree of these dominant terms 
in Section \ref{sec.QIP}, and this is applied to find the degree of the colored Jones polynomial for the pretzel knots we consider in Section \ref{sub.application}. In Section \ref{sec.cjmontsinos} we determine the degree of the colored Jones polynomial for the Montesinos knots we consider in Theorem \ref{thm.1} by reducing to the pretzel case.  Finally, we work out the relevant surfaces using the Hatcher-Oertel algorithm in Section \ref{sec.incompressible}, and we match the growth rate of the degree of the quantum invariant with the topology, using the analogy drawn between the parameters of the state sum and the parameters for the Hatcher-Oertel algorithm by Lemma \ref{l.HOm}. We explicitly describe the essential surfaces realizing the strong slope conjecture in Sections \ref{sec.jslopem} and \ref{sec.jslopemm}, and the proofs of Theorem \ref{thm.0} and Theorem \ref{thm.1} are completed in Section \ref{subsec.jslopemp} and Section \ref{subsec.jslopemmp}, respectively.

%%%%%%%%%%%%%%%%%%%%%%%%%%%%%%%%%%%%%%%%%%%%%%%%%%%%%%%%%%%%%%%%%%%%%%%%%%
%%%%%%%%%%%%%%%%%%%%%%%%%%%%%%%%%%%%%%%%%%%%%%%%%%%%%%%%%%%%%%%%%%%%%%%%%%

\section{Preliminaries}
\subsection{Rational tangles} \label{sec.RT}

Let us recall how to describe rational tangles by rational numbers
and their continued fraction expansions. Originally studied by Conway \cite{Cn}, this material is well-known and may be found for instance in~\cite{KL,BS}. An $(m, n)$-tangle is an embedding of a finite collection of arcs and circles into $B^3$, such that the endpoints of the arcs lie in the set of $m+n$ points on $\partial B^3=S^2$. We consider tangles up to isotopy of the ball $B^3$ fixing the boundary 2-sphere. The integer $m$ indicates the number of points on the upper hemisphere of $S^2$, and the integer $n$ indicates the number of points on the lower hemisphere. We may isotope a tangle so that its endpoints are arranged on a great circle of the boundary 2-sphere $S^2$, preserving the upper/lower information of endpoints  from the upper/lower hemisphere. A tangle diagram is then a regular projection of the tangle onto the plane of this great circle. We represent tangles by tangle diagrams, and we will refer to an $(m, m)$-tangle as an $m$-tangle. Our building blocks of rational tangles are the horizontal and the vertical 2-tangles shown below, called elementary tangles in \cite{KL}.
\begin{itemize}
\item 
A \emph{horizontal tangle} has $n$ horizontal half-twists (i.e., crossings) 
for $n\in \mathbb{Z}$. 
\begin{figure}[!htpb]
\centering \def \svgwidth{.5\columnwidth}
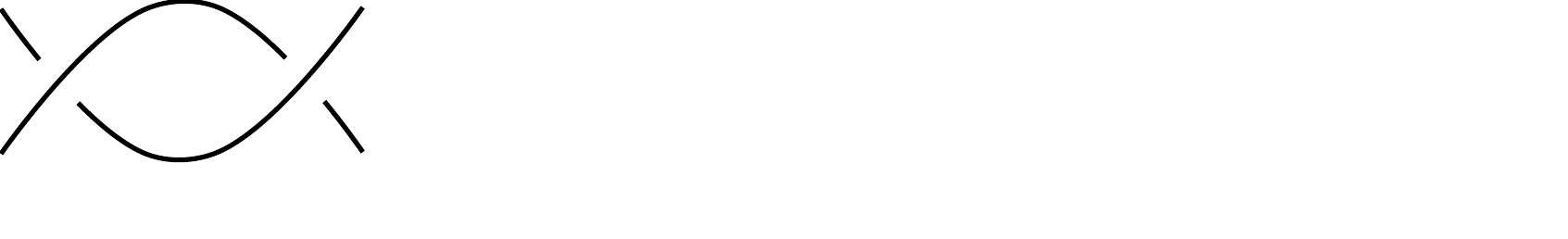
\end{figure}
\item A \emph{vertical tangle} has $n$ vertical half-twists (i.e., crossings)
for $n\in \mathbb{Z}$. 
\begin{figure}[H]
\centering \def \svgwidth{.5\columnwidth}
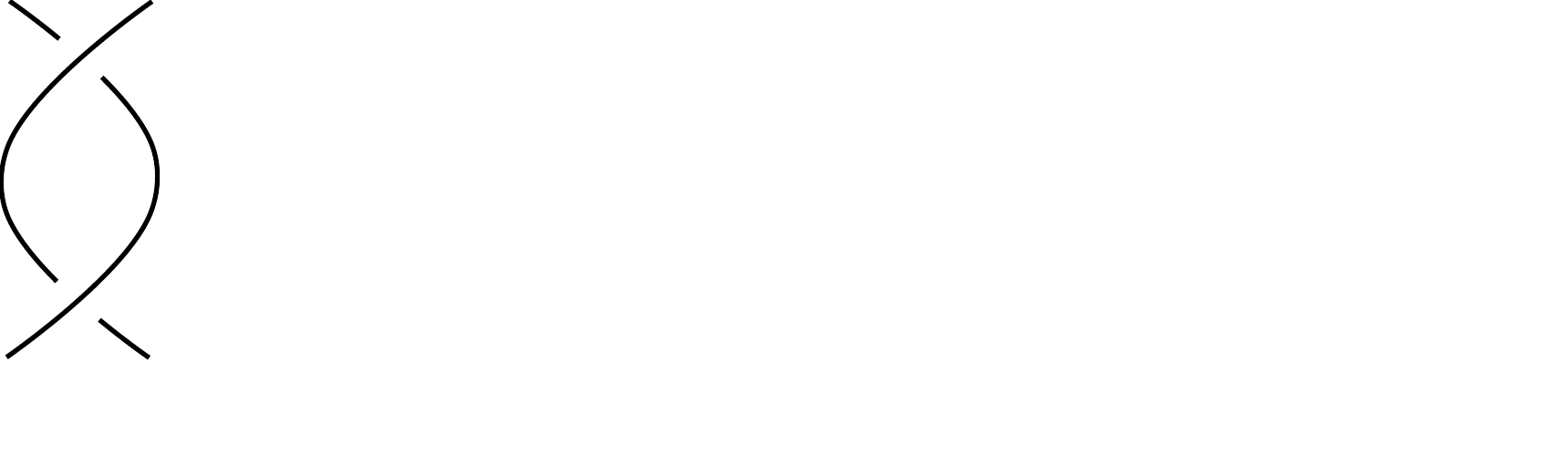
\end{figure}
\end{itemize}

The horizontal tangle with 0 half-twists will be called the 0 tangle, and the vertical tangle with 0 half-twists will be called the $\infty$ tangle. 

\begin{definition}A rational tangle is a 2-tangle that can be obtained by applying a finite number of consecutive twists of neighboring endpoints to the 0 tangle and the $\infty$ tangle. 
\end{definition}

For $2m$-tangles we define tangle addition, denoted by $\oplus$, and tangle multiplication, denoted by $*$, as follows in Figure \ref{f.tam}. We also define the numerator closure of a $2m$-tangle as a knot or link obtained by joining the two sets of $m$ endpoints in the upper hemisphere, and by joining the two sets of $m$ endpoints in the lower hemisphere.  
\begin{figure}[!htpb]
\centering 
\def \svgwidth{.9\columnwidth}
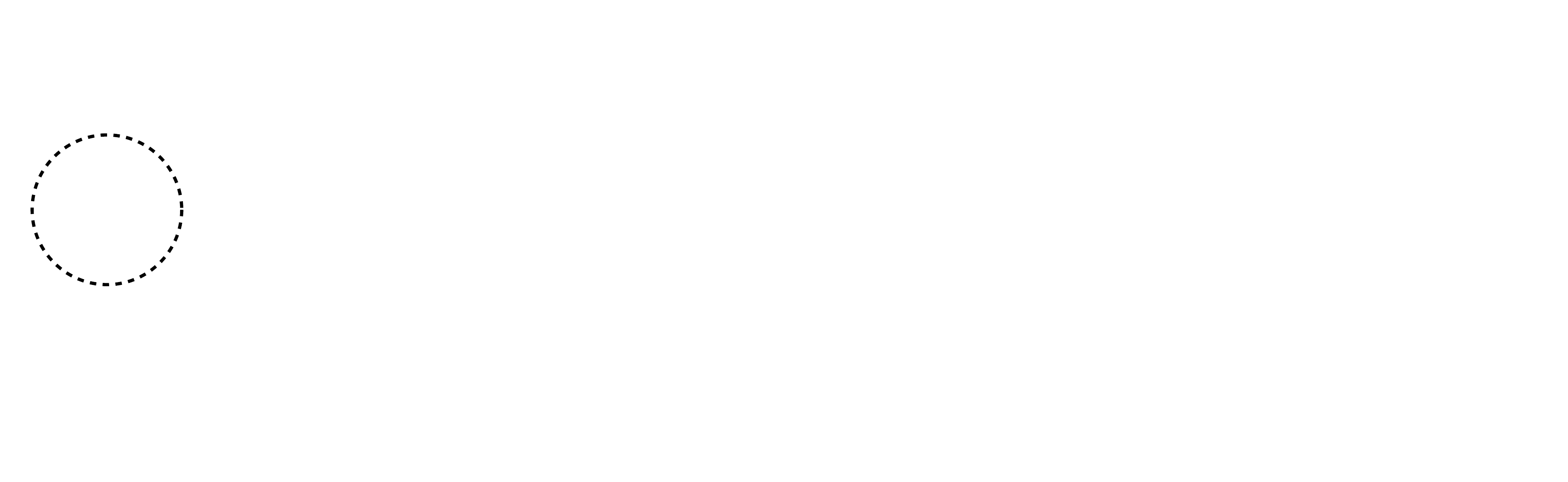
\caption{\label{f.tam} $2m$-tangle addition, multiplication, and numerator closure. }
\end{figure}

The following theorem is paraphrased from \cite{KL} with changes in notations for the elementary rational tangles. 
\begin{theorem}{\cite[Lemma 3]{KL}}
Every rational tangle can be isotoped to have a diagram in standard form, obtained by consecutive additions of horizontal tangles only on the right (or only on the left) and consecutive multiplications by vertical tangles only at the bottom (or only at the top), starting from the 0 tangle or the $\infty$ tangle. 

 More precisely, every rational tangle diagram may be isotoped to have the algebraic presentation
  \begin{equation} \label{e.algt1} (((a_\ell * \frac{1}{a_{\ell-1}})\oplus a_{\ell-2})* \cdots * \frac{1}{a_{1}}) \oplus a_0, \end{equation}
  if $\ell$ is even, or 
    \begin{equation} \label{e.algt2} (((\frac{1}{a_{\ell}}\oplus a_{\ell-1}) * a_{\ell-2})\oplus \cdots * \frac{1}{a_{1}}) \oplus a_0, \end{equation}
  if $\ell$ is odd, where $a_j \in \mathbb{Z}$ for $0 \leq j \leq \ell$, and $a_j \not=0$ for $1\leq j \leq \ell$.   
  \end{theorem}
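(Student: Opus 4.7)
The plan is to prove the theorem by induction on the number of elementary twists used to construct the rational tangle $T$ from either the $0$ tangle or the $\infty$ tangle. For the base case, a tangle built with zero twists is already in one of the standard forms \eqref{e.algt1}--\eqref{e.algt2} with $\ell=0$ (taking $a_0=0$ for the $0$ tangle and the analogous single-entry presentation starting from $\frac{1}{0}=\infty$).

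For the inductive step, write $T = \tau(T')$ where $T'$ is built from one fewer twist and, by induction hypothesis, admits a standard-form algebraic presentation. The new elementary twist $\tau$ acts on one of four neighboring pairs of endpoints on $\partial B^3$: top, bottom, left, or right. Before doing case analysis I would establish two reduction lemmas that cut this down to two:

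\textbf{Reduction Lemma 1.} A horizontal twist attached to the top pair of endpoints of $T'$ is isotopic, through an ambient isotopy of $B^3$ fixing $\partial B^3$ setwise, to a horizontal twist attached to the bottom pair. The isotopy slides the half-twist vertically around the outside of $T'$; this works because the endpoints on the boundary $2$-sphere may be freely permuted by an isotopy of $S^2$ compatible with the tangle.

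\textbf{Reduction Lemma 2.} Analogously, a vertical twist on the left pair is isotopic to one on the right pair.

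Granted these, we may assume $\tau$ is either a right-horizontal twist, giving $T = T' \oplus \varepsilon$ with $\varepsilon=\pm 1$, or a bottom-vertical twist, giving $T = T' * \tfrac{1}{\varepsilon}$.

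In Case A, if the outermost operation of the standard form of $T'$ is $\oplus a_0$, then by associativity of addition with a horizontal tangle we merge $(\cdots\oplus a_0)\oplus \varepsilon = \cdots\oplus(a_0+\varepsilon)$, keeping the same $\ell$. If instead the outermost operation of $T'$ is a multiplication $*\tfrac{1}{a_1}$, we append a new outer layer $\oplus \varepsilon$, increasing $\ell$ by one and preserving the alternation required by \eqref{e.algt1}--\eqref{e.algt2}. Case B is symmetric: either merge two consecutive vertical twists via $*\tfrac{1}{a_1}*\tfrac{1}{\varepsilon}=*\tfrac{1}{a_1+\varepsilon}$, or append a new outer factor $*\tfrac{1}{\varepsilon}$, again compatibly with the alternating structure. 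In every case $T$ admits a standard form of type \eqref{e.algt1} or \eqref{e.algt2}, which completes the induction.

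The main obstacle, and the real content of the proof, is the justification of the two reduction lemmas: one must exhibit an explicit ambient isotopy of $B^3$ that migrates a twist from one side of $T'$ to the opposite side. Diagrammatically this is intuitive (rotate or flip the ambient disk containing the new crossing around $T'$), but the argument must be made carefully using that rational tangles have no closed components, so that the strands of $T'$ cannot obstruct the isotopy. Once these symmetries are in place the combinatorics of combining or appending twists in the continued-fraction presentation is immediate, and the algorithmic nature of the proof even yields the presentation constructively from any sequence of elementary twists building $T$.
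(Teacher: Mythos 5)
The paper does not prove this statement; it is quoted from Kauffman--Lambropoulou \cite[Lemma 3]{KL}, so there is no internal argument to compare against. Your outline (induct on the number of building twists, merge or append the new twist into the continued--fraction form) is the right skeleton and does match the standard KL argument. You are also right that the two ``reduction lemmas'' are where all the content lives. The gap is that your justification of those lemmas does not hold up.

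You claim the twist can be slid from one side of $T'$ to the other ``because the endpoints on the boundary $2$-sphere may be freely permuted by an isotopy of $S^2$ compatible with the tangle,'' and that the key hypothesis is that rational tangles have no closed components. Neither point is correct. First, tangle isotopy is isotopy of $B^3$ fixing $\partial B^3$ \emph{pointwise}; an isotopy that moves the four endpoints around $S^2$ changes the tangle's isotopy class, and if one were allowed to permute endpoints freely the argument would apply to \emph{every} $2$-tangle, not just rational ones. Second, absence of closed components is not the relevant property: a sum of two distinct nontrivial rational tangles has no closed components, yet is generally not invariant under the $180^\circ$ rotations that transfer a twist from left to right or top to bottom. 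What is actually needed is the \emph{flip lemma}: a rational tangle $T'$ is isotopic, rel boundary, to its images under the $180^\circ$ rotations about the horizontal and vertical in-plane axes. This is a genuine theorem specific to rational tangles, proved in KL by a separate induction on the number of twist regions (each single twist region is visibly flip-invariant, and one shows the flip can be propagated through a composition of twist regions). Your inductive hypothesis that $T'$ is in standard form does not by itself give you flip-invariance of $T'$, so the reduction lemmas cannot be obtained for free from the outer induction. To repair the argument you would need either to prove the flip lemma first, or to run a joint induction establishing both canonical form and flip-invariance simultaneously.
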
 
  
Recall the notation of the positive continued fraction 
expansion~\cite{KL,BS}:
\begin{equation} 
\label{eq:poscfe} 
[a_0, \ldots, a_\ell] = 
a_0+\cfrac{1}{a_1+\cfrac{1}{a_2+\cfrac{1}{a_3+\cdots +\cfrac{1}{a_\ell}}}} 
\end{equation} 
for integers $a_j \not= 0$ of the same sign for $1\leq j \leq \ell$ and $a_0 \in \mathbb{Z}$.
We define the rational number $r$ associated to a rational tangle in standard form with algebraic expression \eqref{e.algt1} or \eqref{e.algt2} to be 
  \[ r = [a_0, \ldots, a_\ell] . \] 
  
 Conversely, given a positive continued fraction expansion of a rational number $r =[a_0, \ldots, a_{\ell}]$ we may obtain a diagram of a rational tangle given by the corresponding algebraic expression \eqref{e.algt1} or \eqref{e.algt2}. See Figure \ref{f.tangleex} for an example. 

\begin{figure}[!htpb]
\centering
\includegraphics[height=0.20\textheight]{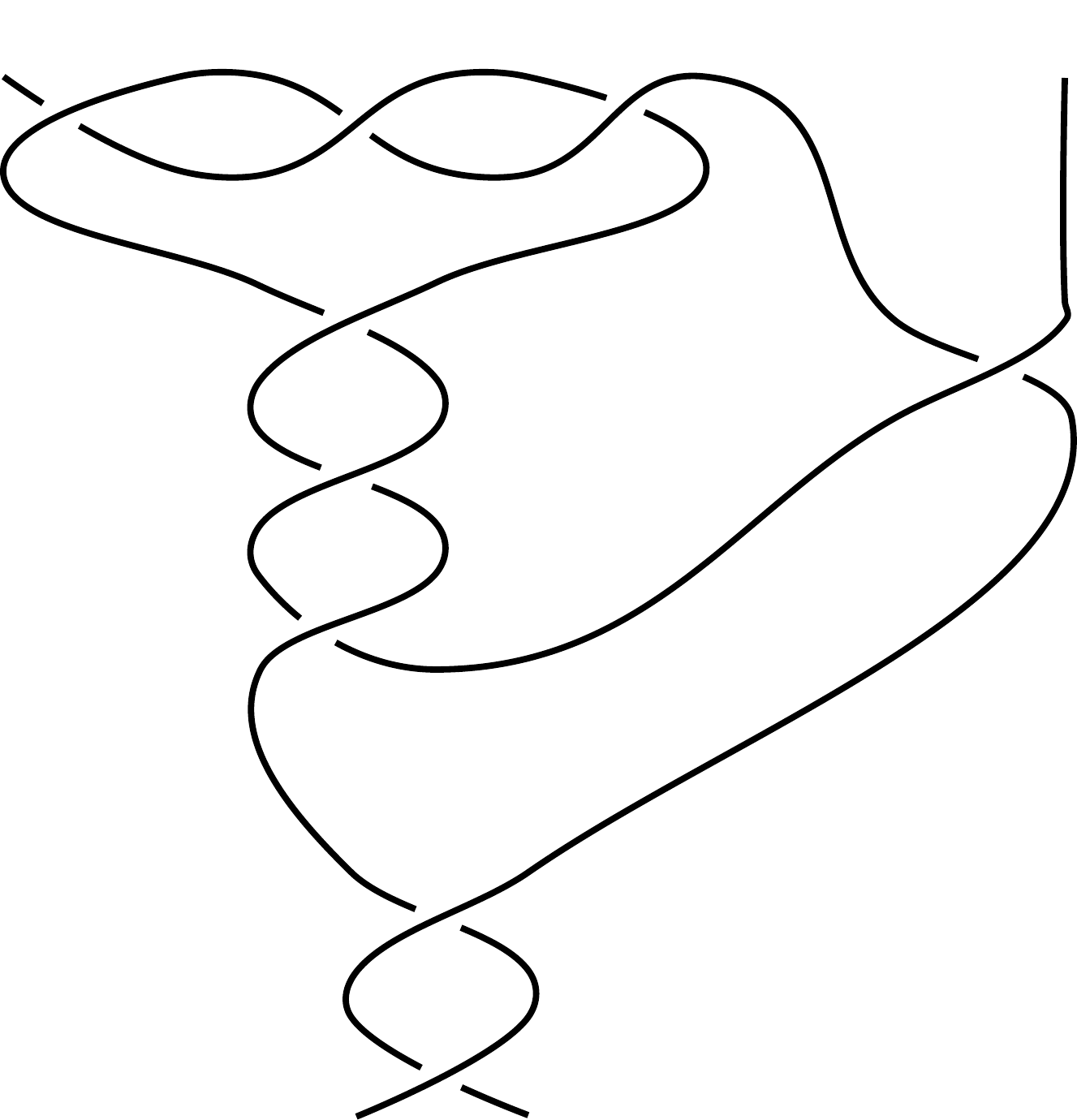}
\caption{\label{f.tangleex} A rational tangle diagram $T$ associated to the continued 
fraction expansion $[0, 2, 1, 3, 3]=13/36$. }
\end{figure}  
 
 A rational tangle is determined by their associated rational number to a standard diagram by the following theorem. 
 \begin{theorem}{\cite{Cn}} \label{t.rtcfe}
 Two rational tangles are isotopic if and only if they have the same associated rational number. 
 \end{theorem}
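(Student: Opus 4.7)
The plan is to prove both directions simultaneously by constructing a topological invariant $F(T)\in\BQ\cup\{\infty\}$ of a rational tangle $T$ which coincides, for every standard diagram, with the rational number read off from its continued fraction expansion. Given such an $F$, the ``only if'' direction is immediate since $F(T)=F(T')$ whenever $T$ is isotopic to $T'$, and the ``if'' direction reduces to showing that all standard diagrams with the same associated rational are mutually isotopic.

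For the ``if'' direction I would argue as follows. Any two continued fraction expansions of the same rational number are related by a finite sequence of the elementary moves $[a_0,\dots,a_{\ell}]=[a_0,\dots,a_{\ell}\mp 1,\pm 1]$ (and their inverses), already used in the paper to define the unique even length expansion. At the diagram level such a move replaces the last $a_\ell$ twists in a horizontal (resp.\ vertical) region by $a_\ell\mp 1$ twists followed by a vertical (resp.\ horizontal) region containing a single $\pm 1$ twist; this is a planar isotopy combined with one Reidemeister II move. Iterating, any two standard diagrams of the same rational are isotopic, so the tangle class depends only on the rational.

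For the ``only if'' direction I would take the double branched cover approach. Let $\widetilde{B}\to B^3$ be the double cover of $B^3$ branched along the four endpoints of the tangle; then $\widetilde{B}\cong S^1\times D^2$, and the preimage of $\partial B^3$ is a torus $\Sigma$ equipped with two canonical simple closed curves $m_0,m_\infty$ obtained by lifting arcs in $\partial B^3$ realizing the $0$ tangle and the $\infty$ tangle. For a rational tangle $T$ the double cover $\widetilde{B}(T)$ of the pair $(B^3,T)$ is again a solid torus, and I define $F(T)$ to be the slope of its meridian with respect to the framing $(m_0,m_\infty)$. Because an isotopy of $T$ rel $\partial B^3$ lifts to an isotopy of $\widetilde{B}(T)$ fixing $\Sigma$ pointwise, the meridian slope is preserved and $F$ is a genuine invariant.

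To close the argument I would verify by induction on $\ell$ that $F$ of the standard tangle associated to $[a_0,\dots,a_\ell]$ equals $[a_0,\dots,a_\ell]\in\BQ\cup\{\infty\}$. The base cases $F(0\text{-tangle})=0$ and $F(\infty\text{-tangle})=\infty$ are immediate, and the inductive step reduces to two transformation formulas: $F(T\oplus a)=F(T)+a$ and $F(T*(1/a))=F(T)/(1+a\,F(T))$ for every integer $a$. These match the nested structure of a continued fraction, and together with the two base cases they yield $F([a_0,\dots,a_\ell])=[a_0,\dots,a_\ell]$ by induction on $\ell$. The main obstacle is precisely this pair of formulas: one must trace how horizontal and vertical twists in $B^3$ lift to Dehn twists of $\Sigma$ along $m_0$ and $m_\infty$ respectively, which requires careful bookkeeping of the framing coming from the covering involution. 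Once these are in place, both implications follow.
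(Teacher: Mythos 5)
The paper itself does not prove this statement; it attributes it to Conway and refers the reader to \cite[Theorem 3]{KL}. Your proposal is therefore a genuinely different route from the one the paper endorses. You take the double branched cover approach (in the tradition of Conway and Montesinos): realize the double cover of $(B^3,T)$ over a rational tangle as a solid torus, define $F(T)$ as the slope of its meridian disk relative to the framing $(m_0,m_\infty)$ on the covering torus $\Sigma$, argue that $F$ is an isotopy invariant, that $F$ equals the continued fraction value on standard diagrams via the two transformation formulas, and that standard diagrams of a fixed rational are related by the last-digit continued-fraction move. The Kauffman--Lambropoulou proof that the paper cites stays entirely diagrammatic: it isolates the flype as the generating move for rational tangle isotopy and computes the fraction directly from the diagram (e.g.\ via the bracket or Conway polynomial), never passing to the branched cover. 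Your route is more geometric and conceptually clean, at the price of the 3-manifold bookkeeping needed to see how horizontal and vertical twists lift to Dehn twists of $\Sigma$ and to normalize the framing against the covering involution; the diagrammatic route avoids that machinery at the cost of a longer combinatorial verification.

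Two small points to tighten. The phrase ``double cover of $B^3$ branched along the four endpoints'' is not quite right, since four points are not a codimension-two branch locus in $B^3$; what you mean is that $\Sigma$ is the double cover of $\partial B^3\cong S^2$ branched over the four punctures, while $\widetilde{B}(T)$ is the double cover of $B^3$ branched over the two arcs of $T$. And the move $[a_0,\dots,a_\ell]=[a_0,\dots,a_\ell\mp 1,\pm 1]$ at the diagram level is realized by rotating a single crossing through $90^\circ$ to transfer it between a horizontal and a vertical twist region, not by a Reidemeister~II move (no crossings cancel). With those adjustments, and once the two transformation formulas $F(T\oplus a)=F(T)+a$ and $F(T*(1/a))=F(T)/(1+aF(T))$ are actually established, the argument is complete and correct.
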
 
See \cite[Theorem 3]{KL} for a proof of this statement. 

\begin{definition} \label{d.Montesinos}
A \emph{Montesinos link} $K(r_0, r_1, \ldots, r_m)$ is a link that admits a diagram $D$ obtained by summing rational tangle diagrams $T_{c_0}, T_{c_1}, \ldots, T_{c_m}$ then taking the numerator closure: 
\[D = N(((T_{c_0} \oplus T_{c_1})  \oplus T_{c_2})\oplus \cdots  \oplus T_{c_m}). \] Here $c_i$ for each $0\leq i \leq m$ is a choice of a positive continued fraction expansion of $r_i$, and $T_{c_i}$ is the rational tangle diagram constructed based on $c_i$ via \eqref{e.algt1} or \eqref{e.algt2}, depending on whether the length of $c_i$ is even or odd, respectively. 
\end{definition} 
Note that a different choice of positive continued fraction expansion for each $r_i$ in the sum of Definition \ref{d.Montesinos} produces a different diagram of the same knot by Theorem \ref{t.rtcfe}. To simplify our arguments, we will fix a diagram for the Montesinos knot $K(r_0, r_1, \ldots, r_m)$ by specifying the choice of a positive continued fraction expansion for each rational number $r_i$.

\subsection{Classification of Montesinos links} \label{ss.Mclassify}
The book \cite{BZ} has a complete account of the classification of Montesinos links, originally due to Bonahon \cite{Bon79}. The following version of the classification theorem comes from \cite{FKP13}. 

\begin{theorem}{\cite[Theorem 12.29]{BZ}} \label{t.Mclassify} Let $K(r_0, \ldots,  r_m)$ be a Montesinos link such that $m\geq 3$ and $r_0, \ldots, r_m \in \mathbb{Q}\setminus \mathbb{Z}$. Then $K$ is determined up to isomorphism by the rational number $\sum_{i=0}^m r_m$ and the vector $((r_0 \mod 1) , (r_1 \mod 1), \ldots, (r_m \mod 1))$, up to cyclic permutation and reversal of order.  
\end{theorem}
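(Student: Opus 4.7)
The plan is to reduce the statement to the uniqueness theorem for Seifert fibered 3-manifolds by passing to the double cover $\Sigma_2(K)$ of $S^3$ branched along $K$. For a single rational tangle with fraction $r = \beta/\alpha$ (in lowest terms with $|r|<1$, so $0 < |\beta| < \alpha$), the double cover of $B^3$ branched over the tangle is a solid torus whose meridional slope on the boundary torus is encoded by $r$. Gluing the $m+1$ solid tori associated to the tangles of $K(r_0,\ldots,r_m)$, together with the identifications that complete the numerator closure of the Montesinos diagram, one shows that $\Sigma_2(K)$ is Seifert fibered over $S^2$ with exactly $m+1$ exceptional fibers whose Seifert invariants are $(\alpha_i,\beta_i \bmod \alpha_i)$ and whose rational Euler number equals, up to a universal sign, $\sum_{i=0}^m r_i$.

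Having identified $\Sigma_2(K)$ together with its Seifert structure, I would invoke the classification of Seifert fibered 3-manifolds. By the work of Waldhausen, Orlik, and Scott, a Seifert fibered space over $S^2$ with at least three exceptional fibers (which is our case since $m\geq 3$) admits an essentially unique Seifert fibration up to isotopy, and the unordered multiset of Seifert invariants together with the rational Euler number constitutes a complete homeomorphism invariant. Translating these invariants back through the double-cover construction yields exactly the unordered multiset $\{r_i \bmod 1\}$ and the sum $\sum r_i \in \BQ$.

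To descend from homeomorphisms of double branched covers to equivalences of links in $S^3$, I would argue equivariantly: uniqueness of the Seifert fibration forces the covering involution on $\Sigma_2(K)$ to be determined up to conjugation, so any homeomorphism of $\Sigma_2(K)$ that matches invariants can be chosen to commute with the involution and hence descends to a homeomorphism of pairs $(S^3,K)$. Combined with the Smith conjecture and the equivariant sphere theorem, this produces the sought isotopy. The ``cyclic permutation and reversal of order'' come out naturally: once a base orbifold $S^2$ is fixed up to diffeomorphism, its $m+1$ exceptional points sit on a circle (the image of the gluing spheres of the tangles), and the ambient diffeomorphisms realizable as symmetries of the Montesinos diagram are precisely the dihedral ones — cyclic rotation around the necklace of tangles and reflection of it.

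The main obstacle will be the descent step combined with the bookkeeping of degenerate cases. Uniqueness of the Seifert fibration on $\Sigma_2(K)$ fails in a handful of small situations (certain lens-space-like covers and manifolds with multiple Seifert structures), so ruling these out is where the hypotheses $r_i \in \BQ\setminus\BZ$ (to force every fiber to be genuinely exceptional) and $m\geq 3$ (to guarantee at least three exceptional fibers, triggering rigidity) are essential. One must also carefully track orientations on $S^3$ so that $\sum r_i$, and not merely $\sum r_i \bmod 1$, emerges as an invariant; equivalently, one must separate the rational Euler number data from the local orbit invariants, which is ultimately what distinguishes the theorem from the easier invariant $\sum r_i \bmod 1$.
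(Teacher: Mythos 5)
The paper offers no proof of this statement: it is quoted verbatim from Burde--Zieschang \cite[Theorem 12.29]{BZ} (originally due to Bonahon \cite{Bon79}), so there is nothing internal to compare your argument against. Your outline is the standard route — the same one taken in the cited reference — namely: identify the double branched cover $\Sigma_2(K)$ as a Seifert fibration over $S^2$ with unnormalized invariants $r_i$, invoke rigidity of the fibration, and descend equivariantly. Two points need repair. First, your rigidity claim as stated is false: a Seifert fibered space over $S^2$ with exactly three exceptional fibers need not have a unique fibration (the prism manifolds, base $S^2(2,2,n)$, also fiber over $\mathbb{RP}^2$), which is precisely why the $m=2$ case of the classification has exceptions. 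What saves you is that $m\geq 3$ gives $m+1\geq 4$ exceptional fibers, and with the hypothesis $r_i\notin\BZ$ every fiber is genuinely exceptional; you should state the rigidity for four or more fibers, not three. Second, and more seriously, the descent step does not follow formally from uniqueness of the fibration as you assert. Uniqueness of the fibration lets you isotope a homeomorphism to be fiber-preserving, but you still need (i) that the covering involution itself can be isotoped to a fiber-preserving one (Meeks--Scott, or Tollefson/Waldhausen in the Haken case), and (ii) a classification of fiber-preserving involutions with quotient $S^3$, showing any two are conjugate by a fiber-preserving homeomorphism. This is where the actual work in Bonahon's and Burde--Zieschang's proofs lives, and it is also exactly where the dihedral symmetry (cyclic permutation and reversal) is extracted, rather than from a picture of exceptional points on a circle. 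As an outline your proposal identifies the correct architecture and the correct danger zones, but the involution-rigidity step is asserted rather than proved.
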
 
 We will work with \emph{reduced}  diagrams for Montesinos knots as studied by Lickorish and Thistlethwaite \cite{LT}. Here we follow the exposition of \cite[Chapter 8]{FKP13}.  
\begin{definition}
Let $K$ be a Montesinos link.  A diagram is called a \emph{reduced Montesinos diagram} of $K$ if it is the numerator closure of the sum of rational angles $T_0, \ldots, T_m$ corresponding to rational numbers $r_0, \ldots, r_m$ with $m\geq 2$, and both of the following hold: 
\begin{enumerate}
\item Either all of the $r_i$'s have the same sign, or $0< |r_i| < 1$ for all $i$. 
\item For each $i$, the diagram of $T_i$ comes from a positive continued fraction expansion $[a_0, a_1, \ldots, a_{\ell_{i}}]$ of $r_i$ with the nonzero $a_j$'s all of the same sign as $r_i$. 
\end{enumerate} 
\end{definition} 
It follows as a consequence of the classification theorem that every Montesinos link $K(r_0, \ldots, r_m)$ with $m \geq 2$ has a reduced diagram. For example, if $r_i < 0$ while $r_{i'}  \geq 1$, we can subtract 1 from $r_{i'}$ and add 1 to $r_i$ until condition (1) is satisfied. This does not change the link type of the Montesinos link by Theorem \ref{t.Mclassify}. Since we are focused on Montesinos links with precisely one negative tangle we may assume that $0 < |r_i| <1$. Thus $r_i[0] = 0$ for all $0 \leq i \leq m$.

\subsection{Skein theory and the colored Jones polynomial} \label{ss.skeinandcjp}
We consider the skein module of properly embedded tangle diagrams on an oriented surface $F$
with a finite (possibly empty) collection of points specified on the 
boundary $\partial F$. This will be used to give a definition of the colored Jones polynomial from a diagram of a link.  For the original reference for skein modules 
see \cite{Pr91}. We will follow Lickorish's approach \cite[Section 13]{Lic97} 
except for the variable substitution (our $v$ is his $A^{-1}$ to avoid 
confusion with the $A$ for a Kauffman state). See \cite{Oh01} for how the skein theory gives the colored Jones polynomial, also known as the quantum $\mathfrak{sl}_2$ invariant. The word ``color" refers to the weight of the irreducible representation where one evaluates the invariant.

\begin{definition} 
\label{defn.skein}
Let $v$ be a fixed complex number. The linear skein module $\Sk(F)$ of 
$F$ is a vector space of formal linear sums over $\mathbb{C}$, of 
unoriented and properly-embedded tangle diagrams in $F$, considered up to isotopy of $F$ 
fixing $\partial F$, and quotiented by the \emph{skein relations} 
\begin{enumerate}[(i)]
\item 
$D \sqcup \vcenter{\hbox{\includegraphics[scale=.10]{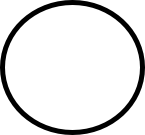}}} 
= (-v^{-2} - v^{2}) D$, and 
\item 
$ \vcenter{\hbox{\includegraphics[scale=.2]{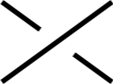}}} 
= v^{-1} \ \vcenter{\hbox{\includegraphics[scale=.2]{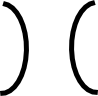}}} 
\ + v \ \vcenter{\hbox{\includegraphics[scale=.2]{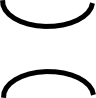}}} \ .
$
\end{enumerate}  
Here $ \vcenter{\hbox{\includegraphics[scale=.10]{figures/circ.png}}} $ denotes the unknot and $D \sqcup \vcenter{\hbox{\includegraphics[scale=.10]{figures/circ.png}}} $ is the disjoint union of the diagram $D$ with an unknot. Relation (ii) indicates how we can write a diagram with a crossing as a sum of two diagrams with coefficients in rational functions of $v$ by locally replacing the crossing by the two splicings on the right. 
\end{definition} 

We consider the linear skein module $\mathcal{S}(D^2, n, n')$ of 
the disk $D^2$ with $n+n'$-points specified on its boundary, 
where the boundary is viewed as a rectangle with $n$ marked points above 
and $n'$ marked points below. We will use this to decompose link diagrams into tangles. By the skein relations in Definition \ref{defn.skein}, every element in  $\mathcal{S}(D^2, n, n')$ is generated by crossingless matchings between the $n$ points on top and $n'$ points below. 
For crossingless matchings $D_1 \in \Sk(D^2, n, n')$ and $D_2 \in \Sk(D^2,n', n'')$, there is a natural multiplication 
operation $D_1\times D_2 \in \Sk(D^2, n, n'')$ defined by identifying the bottom boundary of $D_1$ 
with the top boundary of $D_2$ and matching the $n'$ common boundary points. Extending this by linearity to all elements in $\mathcal{S}(D^2, n, n)$ 
makes it  into an algebra $\TL^n_{n}$, called 
\emph{Temperley-Lieb algebra}. For the original references see \cite{TL71, Kau:TL}. 
We will simply write $\TL_n$ for $\TL_n^n$. There is a natural identification of $2n$-tangles with diagrams in $\TL_{2n}$. Pictorially, a non-negative integer such as $n$ next to a strand represents $n$ parallel strands. 

%For the precise definitions of semi-adequacy ($A$/$B$-adequacy) of a link based on the Kauffman state graphs of its link diagrams, see the original reference \cite{LT} and \cite{FKP13}. 

As an algebra, $\TL_n$  is generated by a 
basis $\{ |_n, e^{1}_n, \ldots, e^{n-1}_{n}\} $, where $|_n$ is the identity with 
respect to the multiplication, and $e^i_n$ is a crossingless tangle diagram 
as specified below in Figure \ref{fig:TLgen}. 

\begin{figure}[H]
\def \svgwidth{.5\columnwidth} 
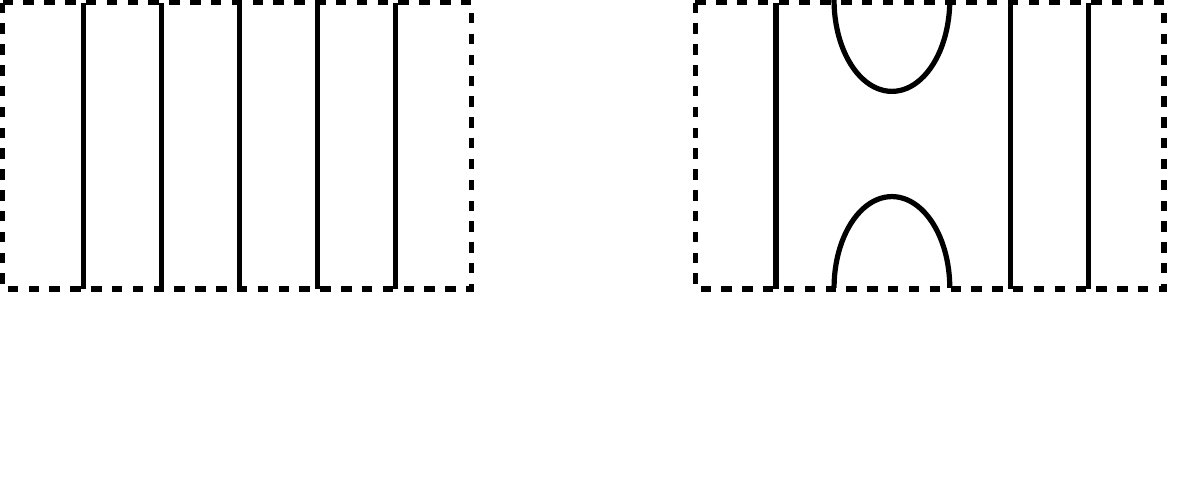
\caption{ \label{fig:TLgen} An example of the identity element $|_n$ (left) and 
a generator $e^i_n$ (right) of $\TL_n$ for $n=5$ and $i=2$.}
\end{figure}

Suppose that $v^{-4}$ is not a $k$th root of unity for $k\leq n$. There is 
an element, which we will denote by $\jwproj_n$, in $\TL_n$ called the 
$n$th \emph{Jones-Wenzl idempotent}.  For the original reference where the idempotent was 
defined and studied, see \cite{Wenzl}. Whenever $n$ is specified we will 
simply refer to this element as the Jones-Wenzl idempotent. 

The element $\jwproj_n$ is uniquely defined  by the 
following properties. (Note $\jwproj_1 = |_1$. )

\begin{enumerate}[(i)]
\item 
$\jwproj_n \times e^i_n = e^i_n \times \jwproj_n =0$ for $1\leq i \leq n-1$. 
\label{list:prop1}
\item 
$\jwproj_n -|_n $ belongs to the algebra generated by 
$\{e^1_n, e^2_n,\ldots, e^{n-1}_n\}$. 
\item 
$\jwproj_n \times \jwproj_n = \jwproj_n$. 
\item 
The image of \ $\jwproj_n$ in $\mathcal{S}(\mathbb{R}^2)$, obtained by 
embedding the disk $D^2$ in the plane and then joining the $n$ 
boundary points on the top with those on the bottom with $n$ disjoint 
planar parallel arcs outside of $D^2$, is equal to
\[ 
\frac{(-1)^{n}(v^{-2(n+1)}-v^{2(n+1)})}{v^{-2}-v^2} \cdot 
\text{the empty diagram in $\mathbb{R}^2$}.
\]
We will denote the rational function multiplying the empty diagram by $\triangle_n$. 
\label{list:prop4}
\end{enumerate}

\begin{definition}
\label{defn:cjp}
Let $D$ be a diagram of a link $K\subset S^3$ with $k$ components. 
For each component $D_i$ for $i \in \{1,\ldots, k\}$ take an annulus
$A_i = S^1\times I$ containing $D$ via the blackboard framing. Let $\mathcal{S}(S^1\times I)$ be the 
linear skein module of the annulus with no points marked on its boundary, and
let 
\[ 
f_D: \underbrace{\Sk(A_1) \times \cdots \times 
\Sk(A_k)}_{\text{Cartesian product}} \rightarrow \Sk(\mathbb{R}^2)     
\] 
be the map which sends a $k$-tuple of elements $(s_1 \in \Sk(A_1), \ldots, s_k \in \Sk(A_k))$ to 
$\Sk(\mathbb{R}^2)$ by immersing in the plane the  collection of skein elements in $\Sk(A_i)$ such that the over- and under-crossings of components of $D$ 
are the over- and under-crossings of the annuli.  For $n\geq 1$, the 
\emph{$n+1$th unreduced colored Jones polynomial} $J_{K, n+1}(v)$ may be defined as 
\[ 
J_{K, n+1}(v) := ((-1)^nv)^{\omega(D)(n^2+2n)} (-1)^n\left\langle 
f_D\underbrace{\left(\vcenter{\hbox{\def \svgwidth{.15\columnwidth} 
%% Creator: Inkscape inkscape 0.92.2, www.inkscape.org
%% PDF/EPS/PS + LaTeX output extension by Johan Engelen, 2010
%% Accompanies image file 'jwprojca.pdf' (pdf, eps, ps)
%%
%% To include the image in your LaTeX document, write
%%   \input{<filename>.pdf_tex}
%%  instead of
%%   \includegraphics{<filename>.pdf}
%% To scale the image, write
%%   \def\svgwidth{<desired width>}
%%   \input{<filename>.pdf_tex}
%%  instead of
%%   \includegraphics[width=<desired width>]{<filename>.pdf}
%%
%% Images with a different path to the parent latex file can
%% be accessed with the `import' package (which may need to be
%% installed) using
%%   \usepackage{import}
%% in the preamble, and then including the image with
%%   \import{<path to file>}{<filename>.pdf_tex}
%% Alternatively, one can specify
%%   \graphicspath{{<path to file>/}}
%% 
%% For more information, please see info/svg-inkscape on CTAN:
%%   http://tug.ctan.org/tex-archive/info/svg-inkscape
%%
\begingroup%
  \makeatletter%
  \providecommand\color[2][]{%
    \errmessage{(Inkscape) Color is used for the text in Inkscape, but the package 'color.sty' is not loaded}%
    \renewcommand\color[2][]{}%
  }%
  \providecommand\transparent[1]{%
    \errmessage{(Inkscape) Transparency is used (non-zero) for the text in Inkscape, but the package 'transparent.sty' is not loaded}%
    \renewcommand\transparent[1]{}%
  }%
  \providecommand\rotatebox[2]{#2}%
  \ifx\svgwidth\undefined%
    \setlength{\unitlength}{130.91213437bp}%
    \ifx\svgscale\undefined%
      \relax%
    \else%
      \setlength{\unitlength}{\unitlength * \real{\svgscale}}%
    \fi%
  \else%
    \setlength{\unitlength}{\svgwidth}%
  \fi%
  \global\let\svgwidth\undefined%
  \global\let\svgscale\undefined%
  \makeatother%
  \begin{picture}(1,0.94154636)%
    \put(0,0){\includegraphics[width=\unitlength,page=1]{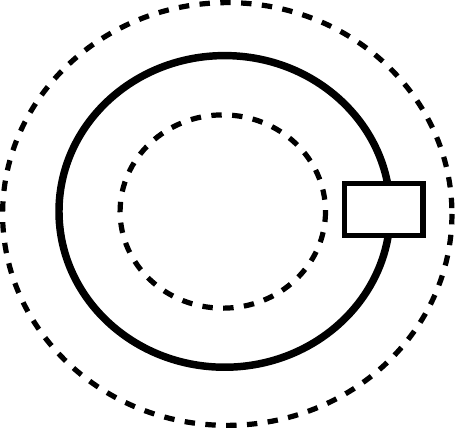}}%
    \put(0.36964248,0.04204078){\color[rgb]{0,0,0}\makebox(0,0)[lb]{\smash{$n$}}}%
  \end{picture}%
\endgroup%
}},  
\vcenter{\hbox{\def \svgwidth{.15\columnwidth} 
}}, \cdots, 
\vcenter{\hbox{\def \svgwidth{.15\columnwidth} 
}} \hspace{.3cm} 
\right)}_{k \text{ times }} \right\rangle, 
\] 
\end{definition} 
\noindent where $\langle \Sk \rangle$ for a linear skein element in $\mathcal{S}(\mathbb{R}^2)$ 
is the polynomial in $v$ multiplying the empty diagram after resolving 
crossings and removing disjoint circles of $\Sk$ using the skein relations. This is called the \emph{Kauffman bracket} 
of $\Sk$. To simplify notation, we will write 
\[ 
 D^{n} =f_D\left(\vcenter{\hbox{\def \svgwidth{.15\columnwidth} 
}},  
\vcenter{\hbox{\def \svgwidth{.15\columnwidth} 
}}, \cdots, 
\vcenter{\hbox{\def \svgwidth{.15\columnwidth} 
}} \hspace{.3cm} \right)
.\] 

A Kauffman state \cite{Kau87}, which we will denote by $\sigma$, is a choice of the $A$- or $B$-resolution 
at a crossing of a link diagram. 

\begin{figure}[!htpb]
\centering
\def \svgwidth{.5\columnwidth}
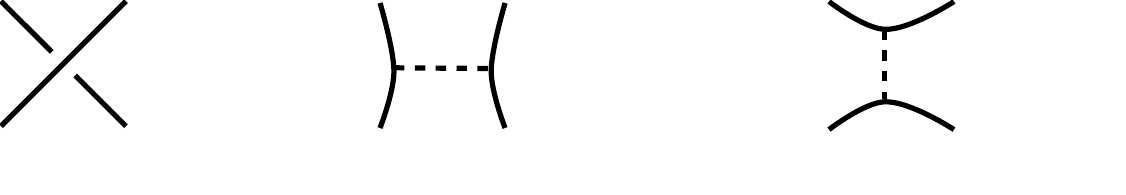
\caption{$A$- and $B$-resolutions of a crossing. The dashed segment records the location where the crossing was.}
\label{fig:abres}
\end{figure} 

\begin{definition}
Let $\sigma$ be a Kauffman state on a skein element with crossings, define 
\[\sgn(\sigma) =  (\# \text{ of $B$-resolutions of $\sigma$})-(\# \text{ of $A$-resolutions of $\sigma$}).   \] 
This quantity keeps track of the number of $A$- and $B$-resolutions chosen by $\sigma$. 
\end{definition}

\begin{definition}
Given a skein element $\Sk$ with crossings in $\Sk(\mathbb{R}^2)$, the \emph{$\sigma$-state} denoted by $\Sk_{\sigma}$ is the set of disjoint arcs and circles, possibly connecting Jones-Wenzl idempotents, resulting from applying a Kauffman state $\sigma$ to $\Sk$. The \emph{$\sigma$-state graph}  $\Sk^G_{\sigma}$  is the set of disjoint arcs and circles, possibly connecting Jones-Wenzl idempotents, resulting from applying a Kauffman state $\sigma$ to $\Sk$ along with (dashed) segments recording the original locations of the crossings as shown in Figure \ref{fig:abres}. 
\end{definition}

We summarize standard techniques and formulas for computing the colored Jones polynomial using Definition \ref{defn:cjp} that are used in this paper. 
Given a diagram $D^n$ decorated with a single Jones-Wenzl idempotent from a link, a \emph{state sum} for the Kauffman bracket $\langle D^n \rangle$ of $D^n$ is an expansion of $\langle D^n \rangle$ into a sum over skein elements $(D^n)_{\sigma}$ resulting from applying a Kauffman state $\sigma$ on a subset of crossings in $D^n$. As an example, one can compute the second colored Jones polynomial of the trefoil knot $3_1$ by writing down the following state sum in Figure \ref{f.trefoilss}. 
\begin{figure}[H]
\begin{center}
\def \svgwidth{\columnwidth}
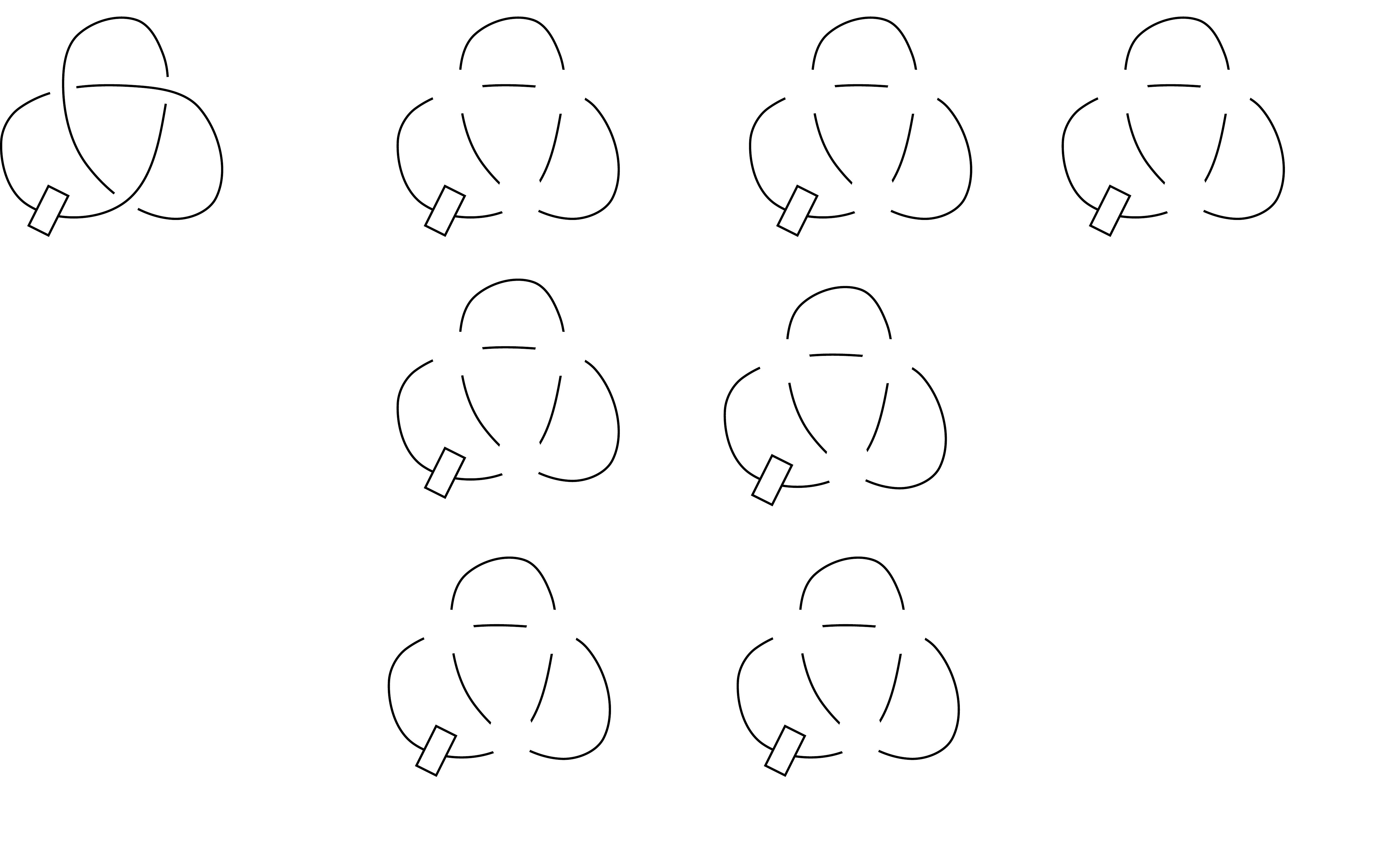
\end{center}
\caption{\label{f.trefoilss} A state sum for the 2nd colored Jones polynomial of the left-hand trefoil 
$3_1$. In this example, Kauffman states are taken over the set of all crossings of the diagram.}
\end{figure} 
 We are left with disjoint arcs and circles connecting the Jones-Wenzl idempotent. These may be removed by applying skein relations and by applying properties of the idempotent to obtain the polynomial. Note that we can also write down a state sum for a skein element with crossings which may be decorated by Jones-Wenzl idempotents. 

Since we are interested in bounding degrees of the Kauffman brackets of skein elements in the state sum, we will define a few more relevant combinatorial quantities and gather some useful results. 

The \emph{degree} of a rational function $L(v)$, denoted by $\deg_v(L(v))$,  is the maximum power of $v$ in the formal Laurent series expansion of $L(v)$ with finitely many positive degree terms. 

Let $\Sk_{\sigma}$ be a skein element coming from applying a Kauffman state $\sigma$ to a skein element $\Sk$ with crossings and decorated by Jones-Wenzl idempotents in $\Sk(\mathbb{R}^2)$. Then
$\overline{\Sk_{\sigma}}$ is the set of disjoint circles obtained from $\Sk_\sigma$ by replacing
all idempotents with the identity. 

\begin{definition}
A \emph{sequence} $s$ of states starting at $\sigma_1$ and ending at 
$\sigma_f$ on a set of crossings in a skein element $\Sk$ is a finite 
sequence of Kauffman states $\sigma_1, \ldots, \sigma_f$, where $\sigma_{i}$ 
and $\sigma_{i+1}$ differ on the choice of the $A$- or $B$-resolution at only 
one crossing $x$, so that $\sigma_{i+1}$ chooses the $A$-resolution at $x$ 
and $\sigma_i$ chooses the $B$-resolution.
\end{definition}

Let $s=\{\sigma_1, \ldots, \sigma_f \}$ be a sequence of states starting 
at $\sigma_1$ and ending at $\sigma_f$. In each step from $\sigma_i$ to 
$\sigma_{i+1}$ either two circles of $\overline{\Sk_{\sigma_i}}$ merge into 
one or a circle of $\overline{\Sk_{\sigma_i}}$ splits into two. When two 
circles merge into one as the result of changing the $B$-resolution to 
the $A$-resolution, the number of circles of the skein element decreases by 1 
while the sign of the state decreases by 2. More precisely, let 
$\Sk_{\sigma}$ be the skein element resulting from applying the Kauffman state 
$\sigma$, we have 
\begin{equation*} 
\sgn(\sigma_{i+1}) + \deg_v \langle \overline{\Sk_{\sigma_{i+1}}}\rangle = 
\sgn(\sigma_{i}) + \deg_v \langle \overline{\Sk_{\sigma_{i}}}\rangle -4 \,, 
\end{equation*}
when a pair of circles merges from $\overline{\Sk_{\sigma_{i}}}$ to 
$\overline{\Sk_{\sigma_{i+1}}}$. This immediately gives the following corollary.

\begin{lemma} 
\label{l.split}
Let $s=\{\sigma_1, \ldots, \sigma_f\}$ be a sequence of states on 
a skein element $\Sk$ with crossings, then 
\[
\sgn(\sigma_{1}) + \deg_v \langle \overline{\Sk_{\sigma_{1}}}\rangle 
= \sgn(\sigma_{f}) + \deg_v \langle \overline{\Sk_{\sigma_{f}}}\rangle 
\] 
if and only if a circle is split from $\overline{\Sk_{\sigma_i}}$ to 
$\overline{\Sk_{\sigma_{i+1}}}$ for every $1\leq i \leq f-1$. Otherwise 
\[
\sgn(\sigma_{1}) + \deg_v \langle \overline{\Sk_{\sigma_{1}}}\rangle 
> \sgn(\sigma_{f}) + \deg_v \langle \overline{\Sk_{\sigma_{f}}}\rangle .
\] 
\end{lemma}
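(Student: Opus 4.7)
The plan is to iterate the single-step computation the paper has already done for the merge case. The key observation is that every elementary move in the sequence switches exactly one crossing from its $B$-resolution to its $A$-resolution, so $\sgn(\sigma_{i+1}) = \sgn(\sigma_i) - 2$ in every step, and the passage from $\overline{\Sk_{\sigma_i}}$ to $\overline{\Sk_{\sigma_{i+1}}}$ either fuses two circles into one (a merge) or cuts one circle into two (a split). These are the only two possibilities, since changing a single resolution on a collection of disjoint loops acts as surgery on two arcs belonging to one or two loops.

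Next I would compute the effect on $\deg_v\langle \overline{\Sk_{\sigma}}\rangle$. Because $\overline{\Sk_\sigma}$ is a disjoint union of embedded circles with no idempotents, skein relation (i) of Definition \ref{defn.skein} together with the normalization $\langle\emptyset\rangle=1$ gives
\[
\deg_v \langle \overline{\Sk_\sigma}\rangle \;=\; 2\,\#\bigl(\text{circles in }\overline{\Sk_\sigma}\bigr).
\]
Hence in a split step $\deg_v\langle\overline{\Sk_{\sigma_{i+1}}}\rangle = \deg_v\langle\overline{\Sk_{\sigma_i}}\rangle + 2$, which combined with the sign drop of $2$ leaves the quantity $\sgn(\sigma_i) + \deg_v\langle\overline{\Sk_{\sigma_i}}\rangle$ unchanged. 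In a merge step $\deg_v$ drops by $2$ and the sign drops by $2$, giving the total decrease of $4$ already recorded in the displayed equation preceding the lemma.

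Finally I would telescope. Summing the per-step change over $i=1,\dots,f-1$ yields
\[
\bigl(\sgn(\sigma_f)+\deg_v\langle\overline{\Sk_{\sigma_f}}\rangle\bigr)
\;-\;\bigl(\sgn(\sigma_1)+\deg_v\langle\overline{\Sk_{\sigma_1}}\rangle\bigr)
\;=\; -4m,
\]
where $m$ is the number of merge steps in the sequence. The right-hand side equals $0$ iff $m=0$, that is, iff every step is a split, and is strictly negative otherwise. This is exactly the dichotomy claimed by the lemma, so the proof is complete.

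There is essentially no obstacle: the only point requiring care is justifying the formula $\deg_v\langle C\rangle = 2\,\#(\text{components of }C)$ for a disjoint union of circles $C$, which is immediate from the skein relations since each circle multiplies the bracket by $-v^{-2}-v^2$, a polynomial of top degree $2$ with positive leading term. No cancellation can lower this degree across the union, so the telescoping argument goes through cleanly.
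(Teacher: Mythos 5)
Your argument is exactly the one the paper has in mind: it computes the per-step change $\sgn + \deg_v$ (zero for a split, $-4$ for a merge) using $\deg_v(-v^{-2}-v^2)^k = 2k$, and then telescopes over the sequence, which is precisely why the paper calls the lemma an immediate corollary of the displayed equation preceding it. The proposal is correct and takes the same route.
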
 
 
 We will also use standard fusion and untwisting formulas involving skein elements decorated by Jones-Wenzl idempotents for which 
one can consult \cite{Lic97} and the original reference \cite{MV}.
\begin{figure}[H]

\def \svgwidth{.8\columnwidth}
\centering
\begin{equation} \label{eq.fusion}
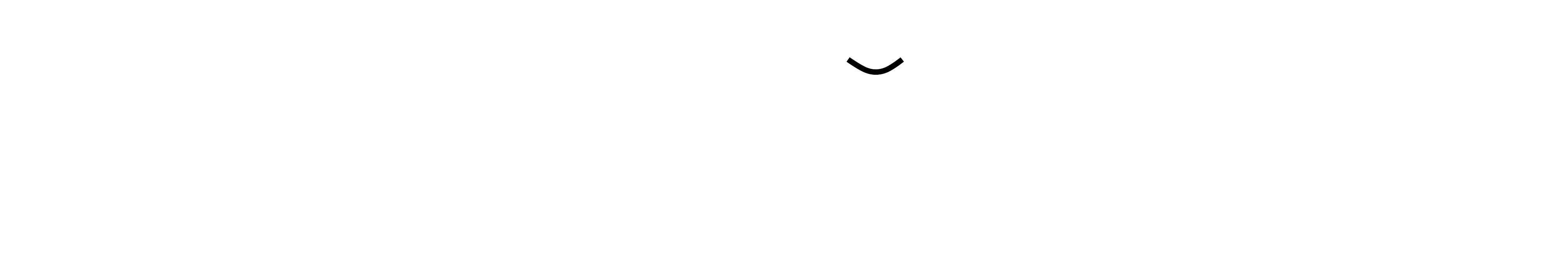
\end{equation} 
\caption{\label{f.fusion} Fusion formula: the skein element which locally looks like the left-hand side is equal to the sum of skein elements on the right-hand side with corresponding local replacements.  }
\end{figure}

\begin{figure}[H]
\centering
\def \svgwidth{.8\columnwidth}
\begin{equation} \label{eq.untwisting}
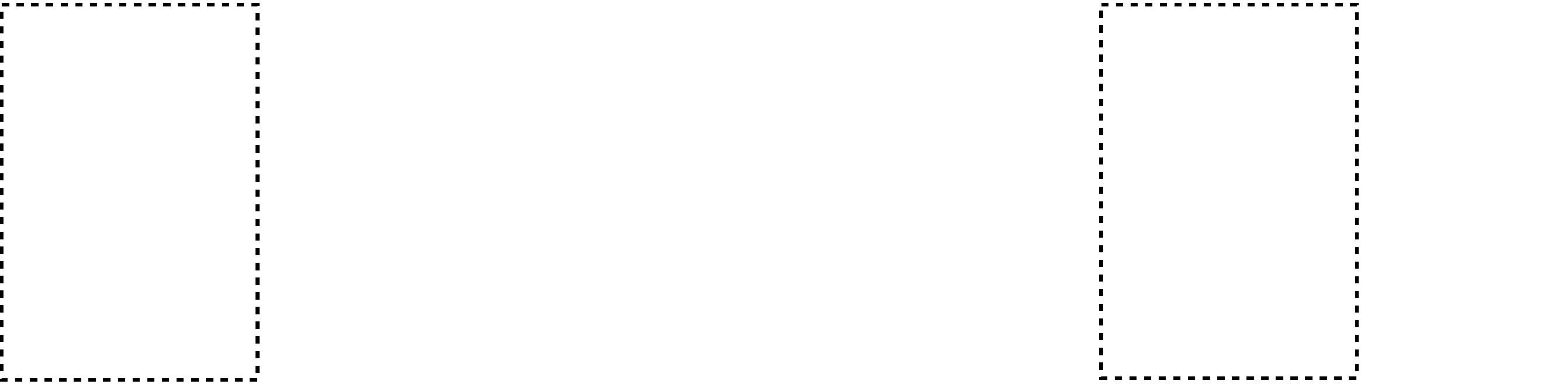
\end{equation} 
\caption{Untwisting formula: the skein element which locally looks like the left-hand side is equal to the skein element on the right-hand side with the local replacement.  }
\end{figure} 

We say that a triple $(a, b, c)$ of non-negative integers is admissible if $a+b+c$ is even and $a\leq b+c$, $b \leq c+a$, and $c \leq a+b$. For $k$ a non-negative integer, let $\triangle_{k}! := \triangle_k \triangle_{k-1} \cdots \triangle_1$, with the convention that $\triangle_0 = \triangle_{-1}  =1$. In Figure \ref{f.fusion} above, the function $\theta(a, b, c)$ is defined by 
\[\theta(a, b, c) = \frac{\triangle_{x+y+z}!\triangle_{x-1}!\triangle_{y-1}!\triangle_{z-1}!}{\triangle_{y+z-1}!\triangle_{z+x-1}! \triangle_{x+y-1}!},   \]
where $x, y$, and $z$ are determined by $a = y+z$, $b = z+x$, and $c = x+y$.

%%%%%%%%%%%%%%%%%%%%%%%%%%%%%%%%%%%%%%%%%%%%%%%%%%%%%%%%%%%%%%%%%%%%%%%%%%
%%%%%%%%%%%%%%%%%%%%%%%%%%%%%%%%%%%%%%%%%%%%%%%%%%%%%%%%%%%%%%%%%%%%%%%%%%

%%%%%%%%%%%%%%%%%%%%%%%%%%%%%%%%%%%%%%%%%%%%%%%%%%%%%%%%%%%%%%%%%%%%%%%%%%
%%%%%%%%%%%%%%%%%%%%%%%%%%%%%%%%%%%%%%%%%%%%%%%%%%%%%%%%%%%%%%%%%%%%%%%%%%

\section{The colored Jones polynomial of pretzel knots}
\label{sec.cj}

From this point on we will always consider the \emph{standard diagram} $K$ when referring to the pretzel knot
$K=P(q_0, \ldots, q_{m})$, with $|q_i|>1$. 
 Throughout the section the integer $n\geq 2$ is fixed, and we will illustrate graphically using the example $P(-5, 3, 3, 3, 5)$. 

The colored Jones polynomial for a fixed $n$ of a knot is by Definition \ref{defn:cjp} the Kauffman bracket of the $n$-blackboard cable ($n$-cable for short) of a diagram of $K$ decorated by a Jones-Wenzl idempotent, multiplied by a monomial in $v$ raised to the power of the writhe of the diagram with orientation. We write the colored Jones polynomial as $$J_{K,n+1}(v) = ((-1)^nv)^{\omega(K)n(n+2)}(-1)^n\langle K^n \rangle.$$ 

The Jones-Wenzl idempotent is a sum of tangle diagrams with coefficients  rational functions of $v$ in the algebra $\TL_n$.  A skein element in $\TL^n_{n'}$ decorated by Jones-Wenzl idempotents is thus also a sum of tangle diagrams with coefficients rational functions of $v$ by locally replacing idempotent with its sum. We extend the tangle sum operation $\oplus$ to skein elements $\Sk$ in  $\TL_{2n}$ decorated by Jones-Wenzl idempotents, written
\[\Sk = \sum_{T \in \TL_{2n}} s(v)T,  \]
as 
\[\Sk \oplus  \Sk' = \sum_{T, T' \in \TL_{2n}}  s(v)s'(v)T \oplus T'. \]  
Graphically, this will be the same as joining the top right and bottom right $2n$-strands of $\Sk$ to the top left and bottom left $2n$-strands to $\Sk'$ as in Figure \ref{f.tam}, except with the presence of the idempotent and possibly crossings indicating that this is actually a sum of such diagrams in $\TL_{2n}$. Similarly, we extend the numerator closure to skein elements in $\TL_{2n}$. 

We will represent the diagram $K^n = N(K_-^n\oplus K_+^n)$ as the numerator closure of the sum of two $2n$-tangles decorated by Jones-Wenzl idempotents, with the label $n$ indicating the number of parallel strands. This decomposition of $K^n$ reflects the original splitting of $K = N(K_- \oplus K_+)$ into two 2-tangles $K_-$ and $K_+$. A twist region is a vertical 2-tangle with a nonzero number of crossings all of the same sign. Let $K_-$ be the negative twist region consisting of $-q_0$ crossings, and $K_+$ the rest of the diagram $K$. For a fixed $n$ double the idempotents in $K^n$ so that four are framing the $n$-cable of the negative twist region consisting of $-q_0$ crossings, and four are framing the $n$-cable of the rest of the knot diagram.  The $2n$-tangle $K_-^n$ is the $n$-cable of  $K_-$ along with the four idempotents, and
$K_+^n$ is the rest of $K^n$, which is the $n$-cable of $K_+$, also decorated with four idempotents. See the middle figure in Figure \ref{f.statedecomp}. 

\begin{figure}[H]
\centering
\def \svgwidth{\columnwidth}
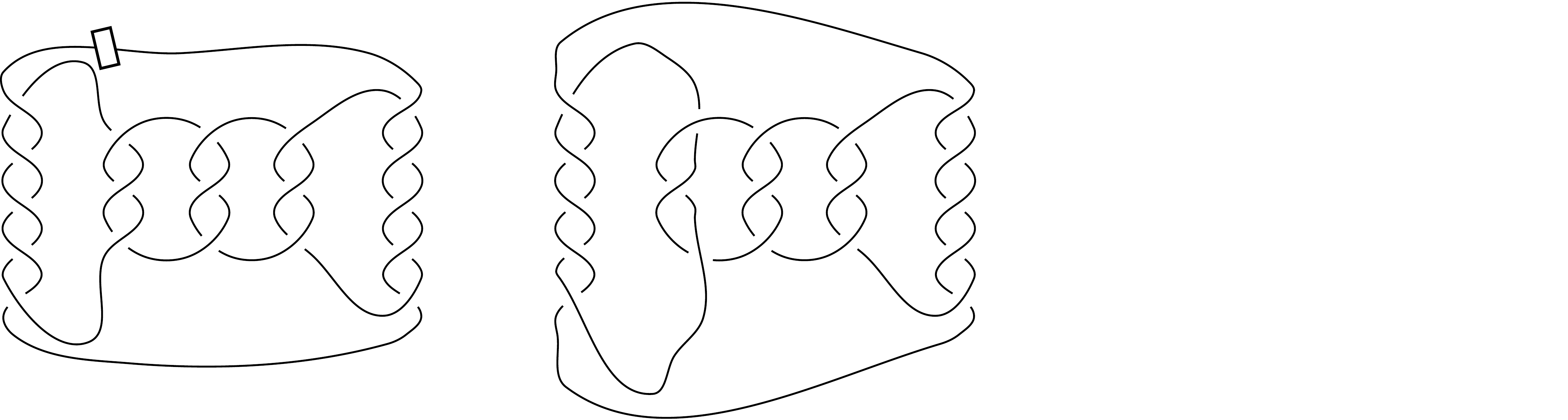
\caption{From left to right: $K^n$, doubling the idempotents and the splitting $K^n =  N(K^n_- \oplus   K^n_+)$, and $N(I_{k_0}\oplus (K_+^n)_{\sigma_A})$, where $\sigma_A$ is the Kauffman state that chooses the $A$-resolution on all the crossings in $K_+^n$. The dotted boxes enclose the skein elements in $\Sk(D^2, 2n, 2n)$, which are sums of $2n$-tangles. }
\label{f.statedecomp}
\end{figure} 

It is convenient to compute the bracket of these $2n$-tangles first. 
For any tangle $T$ write $\langle T^n \rangle$ to mean
cabling each component by a Jones-Wenzl idempotent of order $n$ and evaluating in the Temperley-Lieb algebra $\TL_ {2n}$ using the Kauffman bracket.

We write $\langle{K_-^n}\rangle = \sum_{k_0}G_{k_0}(v)I_{k_0}$ for 2n-tangles $I_{k_0}$ with four Jones-Wenzl idempotents of size $n$
connected in the middle by two Jones-Wenzl idempotents of size $2k_0$ arranged in an $I$-shape using the fusion and untwisting formulas. Apply the fusion formula \eqref{eq.fusion} to two strands of $K^n_-$ going into (or coming out of) the $n$-cabled negative twist region. Then, apply the untwisting formula \eqref{eq.untwisting} to get rid of all the negative crossings. The function $G_{k_0}(v) = \frac{\triangle_{2k_0}}{\theta(n ,n, 2k_0)} ((-1)^{n-k_0}v^{2n-2k_0 + n^2-2k_0^2})^{q_0}$ is a rational function that is the product of two coefficient functions in $v$ multiplying the replacement skein elements. 
The other tangle $K_+^n$ is expanded into a state sum by taking Kauffman states over all the crossings in $K_+^n$, leaving the four Jones-Wenzl idempotents of size $n$. Let $(K_+^n)_\sigma$ denote the skein element resulting from applying a Kauffman state $\sigma$ to all the crossings of $K_+^n$. Then, 
$\langle{K_+^n}\rangle = \sum_\sigma v^{\sgn(\sigma)} \langle (K_+^n)_\sigma \rangle$ as discussed in Section \ref{ss.skeinandcjp}. The state sum we consider is indexed by pairs $(k_0,\sigma)$ and we write
\be \label{eq.sum}
\langle K^n \rangle = \sum_{(k_0,\sigma)}G_{k_0}(v)v^{\sgn(\sigma)}\langle N(I_{k_0} \oplus (K^n_+)_\sigma) \rangle.  \ee
See the rightmost figure of Figure \ref{f.statedecomp} for an example of $N(I_{k_0}\oplus (K^n_+)_\sigma)$.
Using the notion of through strands, we collect like terms together in our state sum.

\begin{definition} 
\label{def.TL}
Consider the Temperley-Lieb algebra $\TL^n_{n'}$ with $n$ inputs and
$n'$ outputs.
%Recall that a basis of this algebra consists of diagrams with no crossings and no circle components. 
Let $T$ be an element of $\TL^n_{n'}$ with no crossings. Viewing $\partial D^2$ as a square, an arc in $T$ with one endpoint 
on the top boundary of the disk $D^2$ defining $\TL^n_{n'}$ and another endpoint on the bottom boundary  is called a \emph{through strand} of $T$. 
\end{definition}

We can organize states $(k_0,\sigma)$ according to the number of through strands at various levels. The \emph{global} number of through strands of $\sigma$, denoted by $c=c(\sigma)$, is the number of through strands of $(K^n_+)_\sigma$ in $\TL_{2n}$ inside the box framed by four idempotents in $K_+^n$, see Figure \ref{fig.TLts} for examples of Kauffman states $(K^n_+)_{\sigma}$ and their through strands.
\begin{figure}[ht]
\def \svgwidth{.35\columnwidth}
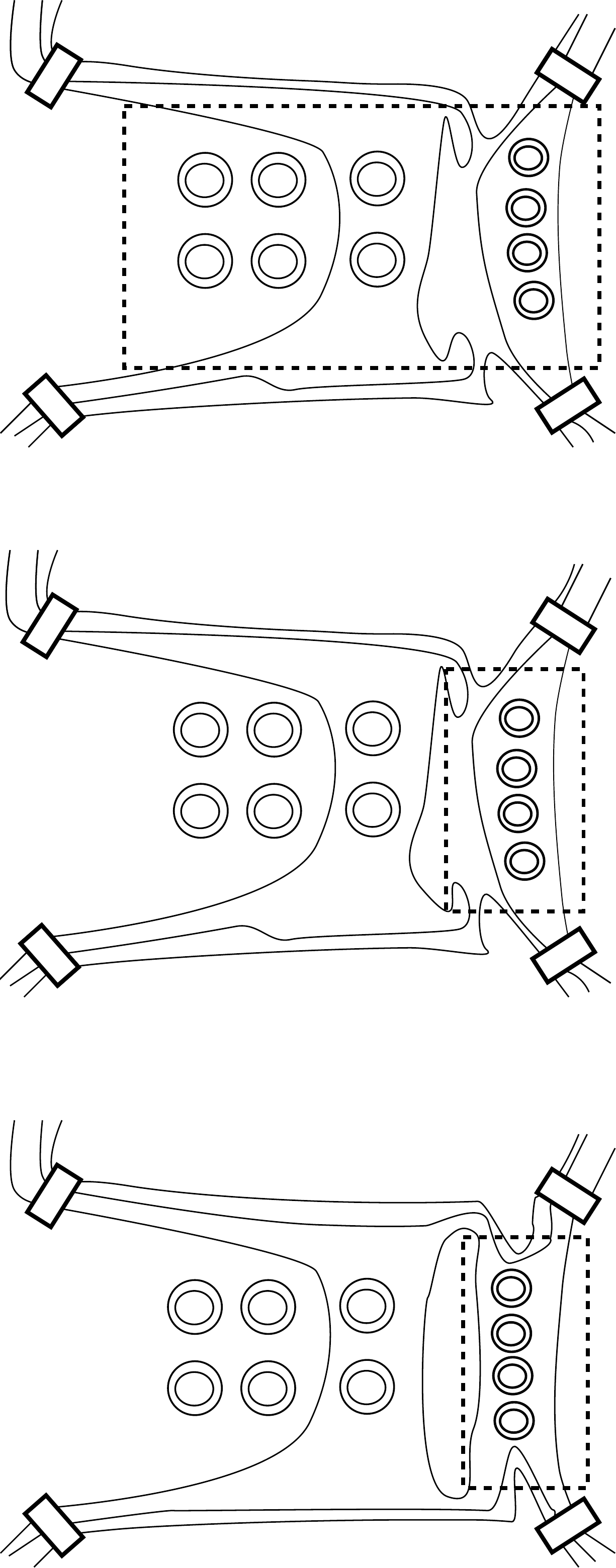
\caption{ \label{fig.TLts} Top: an example of $(K^n_+)_\sigma$ with $n=3$ and $c(\sigma) = 4$. Middle: when restricting $\sigma$ to $i=4$th twist region, we have $c_4(\sigma)  = k_4(\sigma)= 2$. Bottom: we show an example of a state $\sigma$ where $c_4(\sigma) = 1$ and therefore $k_4(\sigma) = \lceil \frac{1}{2} \rceil = 1$.} 
\end{figure}

For $1 \leq i \leq m$, we will also define $c_i(\sigma)$ to be the number of $i$th \emph{local} through strands when restricting $\sigma$ to the $i$th twist region, that are also global through strands. The parameter $k_i$ corresponding to a Kauffman state $\sigma$ for each twist region $q_i$  will be defined as
$k_i(\sigma) = \lceil \frac{c_i(\sigma)}{2} \rceil$. The intuition for these parameters is that they will be used to bound the degree of each term in the state sum relative to each other, which is crucial to determining the degree of the $n$th colored Jones polynomial $J_{K, n+1}$.

With the notation $k = (k_0,\dots,k_m)$ we set 
\be
\mathcal{G}_{c, k} = \sum_{k_0}\sum_{\sigma: k_i(\sigma)=k_i, c(\sigma) = c}G_{k_0}(v) v^{\sgn(\sigma)}\langle N(I_{k_0}\oplus (K^n_+)_\sigma) \rangle. \label{eq.ssum}
\ee

Note $0\leq k_i \leq n$ and define the parameters $c, k$ to be \emph{tight} if $k_0 = k_1+\dots + k_m= \frac{c}{2}$. We prove the following theorem. 
\begin{theorem}
\label{thm.Delta}
Assume $|q_i|>1$ and write $\langle K^n \rangle = \sum_{c, k} \mathcal{G}_{c, k}$ using \eqref{eq.sum} and \eqref{eq.ssum}.
For tight $c, k$ we have
$\mathcal{G}_{c, k} = (-1)^{q_0(n-k_0)+n+k_0
+\sum_{i=1}^{m}(n-k_i)(q_i-1)} v^{\delta(n, k)}+l.o.t.$\footnote{The abbreviation $l.o.t.$ means lower order terms in $v$.} and $\delta(n,k) = $ 
\be
\label{eq.Deltank}
-2\left((q_0+1)k_0^2+\sum_{i=1}^m (q_i-1) k_i^2 
+\sum_{i=1}^m(-2+q_0+q_i)k_i-\frac{n(n+2)}{2}\sum_{i=0}^m q_i+ (m-1)n\right).
\ee
If $c,k$ are not tight then there exists a tight pair $c',k'$ (coming from some Kauffman state) such that $\deg_v \mathcal{G}_{c,  k} < \deg_v \mathcal{G}_{c',  k'}$.
\end{theorem}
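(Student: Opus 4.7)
The plan is to evaluate each summand in \eqref{eq.ssum} separately. Concretely, for each pair $(c,k)$ I would (i) read off $\deg_v G_{k_0}(v)$ from its closed form, (ii) analyze each positive twist region locally to identify the states $\sigma$ that maximize $\sgn(\sigma)+\deg_v\langle\overline{(K^n_+)_\sigma}\rangle$ for prescribed local through-strand data $c_i$, and (iii) evaluate the closure bracket $\langle N(I_{k_0}\oplus(K^n_+)_\sigma)\rangle$ for those optimal $\sigma$. The leading-degree formula for tight $(c,k)$ then follows by summing contributions, and the non-tight case is handled separately using Lemma~\ref{l.split}.

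For step (i), using $\deg_v\triangle_j=2j$ and the closed form of $\theta(n,n,2k_0)$ (which, with $x=y=k_0$, $z=n-k_0$, is an explicit ratio of quantum factorials), $\deg_v G_{k_0}(v)$ is a quadratic polynomial in $k_0$ and $n$, with leading sign $(-1)^{q_0(n-k_0)}$ times a positive sign coming from the $\theta$-net. For step (ii), a direct state-sum analysis in $\TL_{2n}$ shows that in a single positive twist region with $q_i$ crossings between two $\jwproj_n$'s, the local state maximizing $\sgn_i+\deg_v\langle\overline{(K^n_+)_\sigma}\rangle_i$ subject to having $c_i$ through strands is essentially unique (modulo circle-splits) and contributes a specific quadratic in $k_i$ with sign $(-1)^{(n-k_i)(q_i-1)}$. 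For step (iii), tightness $k_0=\sum_i k_i=c/2$ ensures that the $2k_0$ through strands exiting $I_{k_0}$ match perfectly the through strands returning from the positive tangles through the four size-$n$ idempotents in $K^n_+$, so the resulting closed skein reduces by Jones-Wenzl property (iv) to a product of $\triangle_n$'s without further merges. Summing (i)--(iii) and tracking signs should reproduce $\delta(n,k)$ in \eqref{eq.Deltank} together with the exponent $q_0(n-k_0)+n+k_0+\sum_i(n-k_i)(q_i-1)$ of the leading sign.

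For the non-tight case, I would invoke Lemma~\ref{l.split}. Given any non-tight $\sigma$, I construct a canonical tight state $\sigma'$ together with a sequence of single-crossing resolution flips from $\sigma$ to $\sigma'$; Lemma~\ref{l.split} then says $\sgn+\deg_v\langle\overline{(K^n_+)_\cdot}\rangle$ is non-increasing along the sequence, with equality precisely when a circle is split rather than merged. The main claim is that tightness failure forces at least one merge: if $k_0\neq \sum_i k_i$ or $c\neq 2k_0$, then some through strand of $(K^n_+)_\sigma$ must cap back onto a $\jwproj_n$, which by property (i) either annihilates the bracket or requires a circle-merge to undo. Combined with the quadratic $k_0$-dependence of $\deg_v G_{k_0}$, this gives $\deg_v\mathcal{G}_{c,k}<\deg_v\mathcal{G}_{c',k'}$.

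The main obstacle I anticipate is the combinatorial bookkeeping in steps (ii)--(iii): the local state contributions couple across twist regions through the shared size-$n$ idempotents, and extracting the exact sign factor $(-1)^{n+k_0}$ in the theorem statement requires carefully tracking the signs in the untwisting formula \eqref{eq.untwisting} together with the $\theta$ coefficient from the fusion \eqref{eq.fusion}. A secondary subtlety is the parity convention $k_i=\lceil c_i/2\rceil$: for $c_i$ odd there is a leftover single through strand in the $i$-th region that must be balanced elsewhere, and the tightness constraint $k_0=\sum_i k_i=c/2$ implicitly forces $\sum_i c_i$ to be even, so the odd-$c_i$ analysis should be subsumed into the non-tight argument.
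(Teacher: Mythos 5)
Your outline captures the skeleton of the paper's strategy (organize states by through-strand parameters, bound degrees twist-region by twist-region, use Lemma~\ref{l.split} for the non-tight case), but it skips the step that carries most of the technical weight: controlling cancellation among the many degree-maximizing states. Your phrase ``essentially unique (modulo circle-splits)'' is exactly where the gap lies. For fixed tight $(c,k)$ there are \emph{many} taut states, not one, and the sum $\mathcal{G}_{c,k}$ is over all of them. The paper's Lemma~\ref{l.taut} describes them (each is a leftmost minimal state $\tau$ together with an arbitrary subset of the auxiliary set $F_\tau$ of circle-splitting flips), and Theorem~\ref{thm:cancelling} shows that when you sum the corresponding brackets, the binomial-weighted sums $\sum_j\binom{|F_\tau|}{j}v^{\sgn(\tau)-2j}(-v^2-v^{-2})^{o(A_\tau)+j}$ have \emph{strictly smaller} degree whenever $F_\tau\ne\emptyset$, so the only survivor at top degree is the rightmost minimal state. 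Without an argument of this type you cannot conclude that $\mathcal{G}_{c,k}$ has a nonzero coefficient at degree $\delta(n,k)$: your step of ``summing (i)--(iii)'' only bounds the degree of each summand, not of the sum.

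Two secondary issues. First, step (iii) as you describe it is not how the bracket is evaluated: after resolving, the skein $N(I_{k_0}\oplus(K_+^n)_\sigma)$ is a planar diagram with several Jones--Wenzl idempotents connected by arcs, and property~(iv) of the idempotent does not reduce it to a product of $\triangle_n$'s. The paper instead observes that for a minimal state $\tau$ with $k_0\le\sum_i k_i$ the skein $\Sk(k_0,\tau)$ is \emph{adequate} (no circle of $\overline{\Sk(k_0,\tau)}$ traverses an idempotent location twice) and then applies \cite[Lem.~4]{Arm13} to replace the idempotents by identities when computing the degree; this is the mechanism that makes the degree count reduce to counting circles of $\overline{\Sk(k_0,\tau)}$. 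Second, your ``direct state-sum analysis in $\TL_{2n}$'' needs to be the pyramidal-position structure theorem (Theorem~\ref{t.pyramid}, built on \cite[Lem.~3.7]{L}); it is not only local to each twist region but also constrains the top and bottom strips $\Sk^t,\Sk^b$ where the tangles couple through the shared size-$n$ idempotents, and it is what makes both the degree computation for minimal states and the non-tight inequality (Lemma~\ref{l.excludeodd}) go through. Your non-tight argument also omits the vanishing case $k_0>\sum_i k_i$ (Lemma~\ref{l.zero}).
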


This theorem will be used in the next section to find the actual degree of $J_{K, n+1}$
using quadratic integer programming.

\subsection{Outline of the proof of Theorem \ref{thm.Delta}}
Let $c, k$ be tight and let $st(c, k)$ be the set of states $(k_0,\sigma)$ with $c(\sigma) = c$ and $k_i(\sigma)=k_i$ for all $1\leq i\leq m$. A state in $st(c,  k)$ is said to be \emph{taut} if its term $G_{k_0}(v)v^{\sgn(\sigma)}\langle N(I_{k_0} \oplus (K^n_+)_\sigma) \rangle$ in \eqref{eq.ssum} maximizes the $v$-degree within $st(c, k)$. For any fixed tight $c, k$ we plan to construct all taut states. The first examples  we construct will be \emph{minimal states}, from which we will derive all taut states. A state in $st(c, k)$ is minimal if it chooses the least number of $A$-resolutions.

We will first show that minimal states are characterized by having a certain configuration, or position, on the set of crossings where they choose the $A$-resolution, called \emph{pyramidal}. This will also be used to show that  $c, k$ not tight implies $\deg_v \mathcal{G}_{c,  k} < \deg_v \mathcal{G}_{c',  k'}$ for some tight pair $c', k'$.

Then, with the construction of all taut states from minimal states, we show that  $\delta(n, k)$ is the degree of a taut state with parameters $k$, and
\[\mathcal{G}^{taut}_{c, k \text{ tight }} =(-1)^{q_0(n-k_0)+n+k_0
+\sum_{i=1}^{m}(n-k_i)(q_i-1)} v^{\delta(n, k)} + l.o.t.,  \] where  $\mathcal{G}_{c, k \text{ tight }}^{taut}$ is the double sum of $\mathcal{G}_{c, k}$ only over taut states with tight $c, k$.
 This will lead to
\[\mathcal{G}_{c, k \text{ tight}}  = (-1)^{q_0(n-k_0)+n+k_0
+\sum_{i=1}^{m}(n-k_i)(q_i-1)} v^{\delta(n, k)} + l.o.t. \]
and conclude Theorem \ref{thm.Delta}.

\subsection*{Conventions for representing a Kauffman state}
Throughout the rest of Section \ref{sec.cj}, we will indicate schematically a crossingless skein element $\Sk_{\sigma}$, resulting from applying a Kauffman state to a skein element $\Sk$ with crossings,  
by the following convention. Let $\Sk^G_{B}$ be the all-$B$ state graph of $\Sk$. For a Kauffman state $\sigma$ let $A_{\sigma}$ 
be the set of crossings of $\Sk$ on which $\sigma$ chooses the 
$A$-resolution, and define $|A_{\sigma}|$ to be the number of crossings in $A_{\sigma}$. The skein element $\Sk_{\sigma}$ is represented by $\Sk_B^G$ with 
colored edges, such that the edge in $\Sk_B^G$ corresponding to a crossing 
in $A_{\sigma}$ is colored red, and all other edges remain black. The skein element $\Sk_{\sigma}$ may then be 
recovered from $\Sk^G_B$ by a local replacement of two arcs with a dashed segment. See Figure 
\ref{fig:localreplacement} below.

\begin{figure}[!htpb]
\centering
\def \svgwidth{.25\columnwidth}
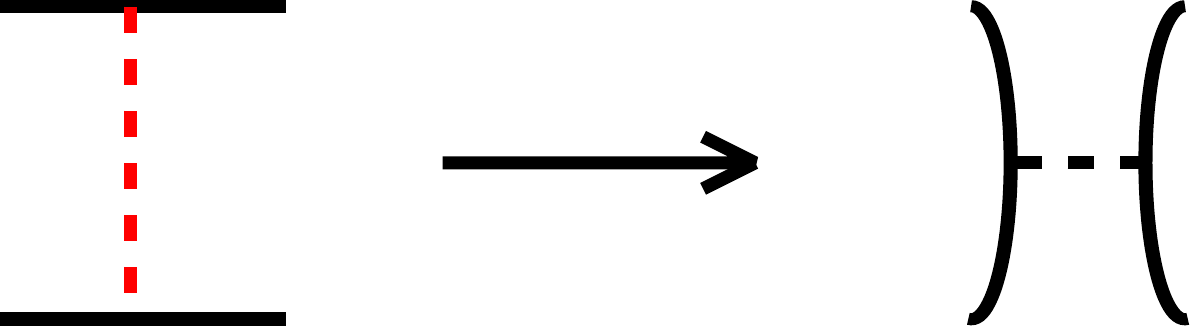
\caption{A red edge in $\Sk^G_B$ indicates the choice of the $A$-resolution for a Kauffman state $\sigma$ on $\Sk$.}
\label{fig:localreplacement} 
\end{figure}

\subsection{Simplifying the state sum and pyramidal position for crossings}

We will denote by $\Sk(k_0, \sigma)$ the skein element $N(I_{k_0} \oplus (K^n_+)_{\sigma})$ as in \eqref{eq.ssum}. 
\begin{lemma} \label{l.zero} Fix $(k_0, \sigma)$ determining a skein element $\Sk(k_0, \sigma)$ with $k_i = k_i(\sigma)$ and $c=c(\sigma)$.  
 If $k_0 > \sum_{i=1}^m k_i$, then $\Sk(k_0, \sigma)$ = 0. 
\end{lemma}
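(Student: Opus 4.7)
My plan is to show that the hypothesis $k_0>\sum_{i=1}^m k_i$ forces a turnback on the central $2k_0$-Jones--Wenzl idempotent inside $I_{k_0}$, which by the defining property of the idempotent annihilates $\Sk(k_0,\sigma)=N(I_{k_0}\oplus(K^n_+)_\sigma)$.

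First I would unwind the structure of $I_{k_0}$. It has a central $2k_0$-idempotent joined to two trivalent vertices of type $(n,n,2k_0)$, one at the top and one at the bottom; each vertex has $n-k_0$ horizontal strands joining the two $n$-edges and $k_0$ strands from each $n$-edge merging into the $2k_0$-edge. After tangle sum and numerator closure, the two $n$-edges of the top trivalent vertex are identified with the top-left and top-right $n$-idempotents of $(K^n_+)_\sigma$ -- one via the $\oplus$-junction and the other via the over-arc of $N$ -- and symmetrically for the bottom trivalent vertex. Every cup of $(K^n_+)_\sigma$ joining its top-left to its top-right $n$-idempotent therefore becomes an external cap between the two $n$-edges of the top trivalent vertex, and analogously on the bottom.

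Next I would count cups in $(K^n_+)_\sigma$. Since the JW relation $\jwproj_n\cdot e^i_n=0$ forbids any cup within a single corner $n$-idempotent, every top cup spans the top-left and top-right corners, and similarly at the bottom. If $b_T$ is the number of top cups, then the $2n$ top strands split as $2b_T$ ends consumed by cups and $c=c(\sigma)$ ends continuing down, giving $b_T=n-c/2$. I then invoke the standard admissibility fact of skein theory that the $(n,n,2k_0)$-trivalent vertex composed with $b$ external caps between its two $n$-edges equals a nonzero scalar multiple of the $(n-b,\,n-b,\,2k_0)$-trivalent vertex when $(n-b,\,n-b,\,2k_0)$ is admissible -- i.e.\ when $b\le n-k_0$ -- and vanishes otherwise, any excess cap forcing a turnback on the central $2k_0$-strand. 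Applied to the top trivalent vertex with $b=b_T=n-c/2$, non-vanishing requires $c\ge 2k_0$.

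Finally I would bound $c$ in terms of the $k_i$. Any global through strand of $(K^n_+)_\sigma$ is a simple arc with net vertical displacement equal to one; its intersection with each twist region $T_i^n$ is a disjoint union of local through strands and local cups, and since the net displacement is one, the arc must cross at least one twist region as a local through strand. Hence each global through strand contributes at least one to $\sum_{i=1}^m c_i$, so $c\le\sum_{i=1}^m c_i$. Since $k_i=\lceil c_i/2\rceil$ gives $c_i\le 2k_i$, we obtain $c\le 2\sum_{i=1}^m k_i$. Under the hypothesis $k_0>\sum_{i=1}^m k_i$ this forces $c<2k_0$, contradicting $c\ge 2k_0$ and proving $\Sk(k_0,\sigma)=0$.

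The main obstacle is making the trivalent-plus-cap admissibility statement fully rigorous within the skein calculus; I would prove it by expanding the $(n,n,2k_0)$-trivalent vertex via its defining crossingless matching on three JW-idempotents and then pushing each external cap through the adjacent $\jwproj_n$ using the JW recursion, observing that any cap beyond the $n-k_0$ internal horizontal strands creates a turnback on the central $\jwproj_{2k_0}$ which the relation $\jwproj_{2k_0}\cdot e^i_{2k_0}=0$ annihilates.
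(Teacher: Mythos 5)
Your proof is correct and follows essentially the same route as the paper: both arguments first observe that $c\le 2\sum_{i=1}^m c_i/2\le 2\sum_{i=1}^m k_i$, so the hypothesis forces $k_0>c/2$, and then conclude that a $2k_0$-Jones--Wenzl idempotent supported by fewer than $2k_0$ through strands is annihilated by a turnback. The only difference is that the paper outsources this last vanishing statement to a citation of \cite[Lemma 3.2]{L}, whereas you prove it directly via the cap/trivalent-vertex admissibility argument, which is the standard justification.
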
 
\begin{proof}
Note that $\sum_{i=1}^m k_i \geq \frac{c}{2}$. Thus if $k_0 > \sum_{i=1}^m k_i$, then $k_0 > \frac{c}{2}$, and the lemma follows from \cite[Lemma 3.2]{L}.  
\end{proof} 

With the information of through strands $c(\sigma)$ and $\{k_i(\sigma)\}$, we describe the structure of $A_{\sigma}$ for a Kauffman state $\sigma$. It is necessary to introduce a labeling of the crossings with respect to their positions in the all-$B$ Kauffman state graph $\Sk^G(k_0, B)=N(I_{k_0}\oplus (K^n_+)^G_{B})$.

We first further decompose $K^n_+ = \Sk^t \times \Sk^w \times \Sk^b$ 
where $\times$ is the multiplication by stacking in $\TL$, and let the crossings 
contained in those skein elements be denoted by $C^{t}$, $C^{w}$, and $C^{b}$, 
respectively. See Figure \ref{fig:decomp} for an example.

\begin{figure}[ht]
\centering
\def\svgwidth{.5\columnwidth}
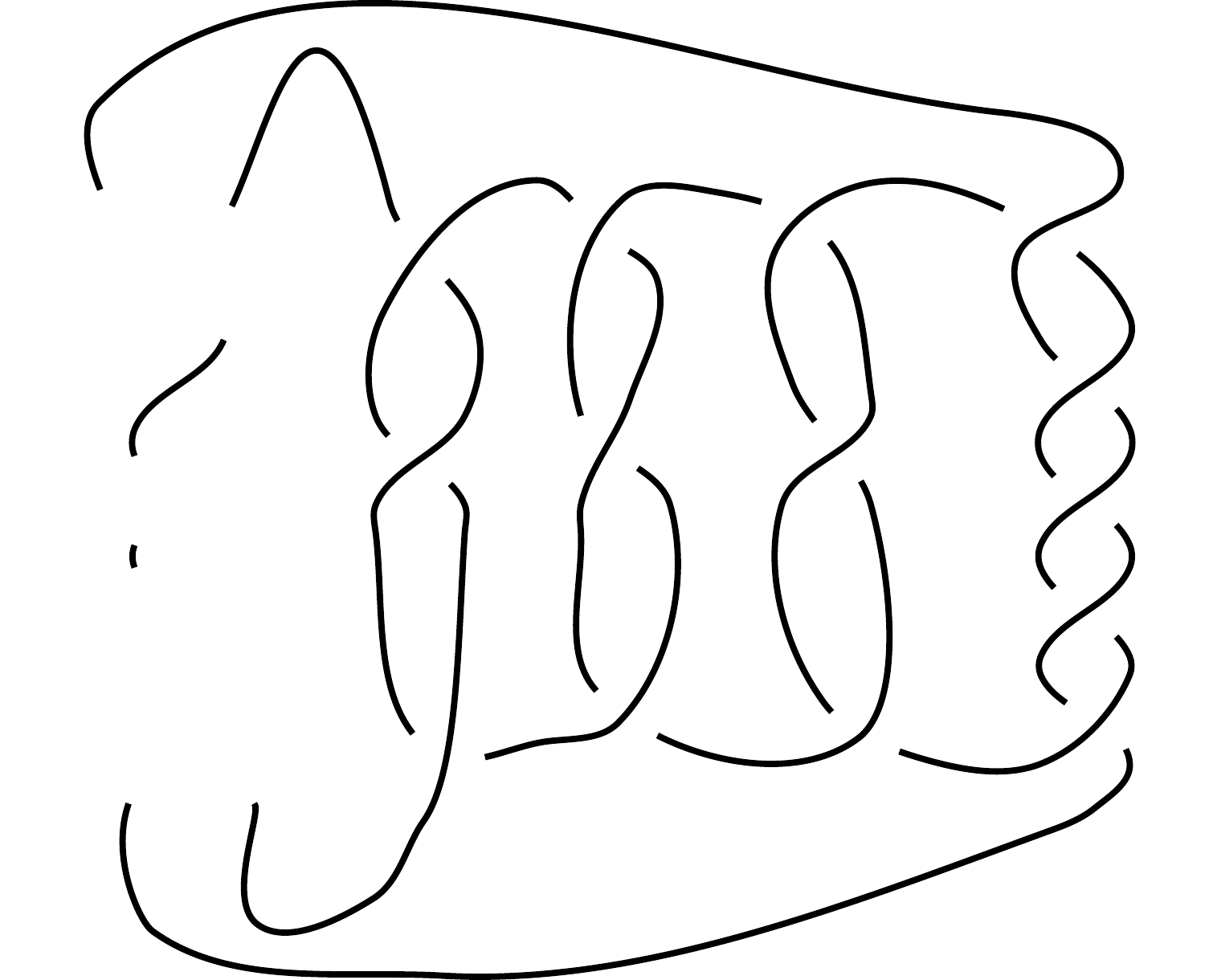
\caption{Skein element $\Sk = N(I_{k_0} \oplus (\Sk^t \times \Sk^w \times \Sk^b))$
of the pretzel knot $P(-5, 3, 3, 3, 5)$. We have $\Sk^t \in \TL^{2n}_{2mn}$, $\Sk^w \in \TL_{2mn}$, and $\Sk^b \in \TL^{2mn}_{2n}$ where $m=4$. } 
\label{fig:decomp}
\end{figure}
See Figure \ref{f.sbsm} for a guide to the labeling. The skein element $(K_+^n)_B$ consists of $n$ arcs on top in the region defining $\Sk^t$,  $n$ arcs on the bottom in the region defining $\Sk^b$, and $q_i-1$ sets of $n$ circles for the $i$th twist region in the region defining $\Sk^w$. The $n$ upper arcs are labeled by $S^u_1, \ldots, S^u_n$, and the $n$ lower arcs are labeled by $S^{\ell}_1, \ldots, S^{\ell}_n$, respectively. $C^{u}_j$ is the set of crossings whose corresponding segments in $(K_+^n)^G_B$ lie between the arcs $S_j^{u}$ and $S_{j+1}^u$. Similarly we define $C^{\ell}_j$ by reflection. 

For the crossings in the region defining $\Sk^w$, we divide each set of $n$ state circles into upper and lower half arcs as also shown in Figure \ref{f.sbsm}, and use an additional label $s$ for $1\leq s \leq q_i$. Thus the notation $C^{\ell, s}_{i, j}$, where $1\leq s \leq q_i$ for each twist region with $q_i$ crossings  and $1\leq j \leq n$ indicating a circle in the $n$-cable, means the crossings between the state circles $S^{\ell, s}_{i, j}$ and  $S^{\ell, s}_{i, j+1}$, see Figure \ref{f.sbsm}.

\begin{figure}[ht]
\centering
\footnotesize{
\def\svgwidth{.9\columnwidth}
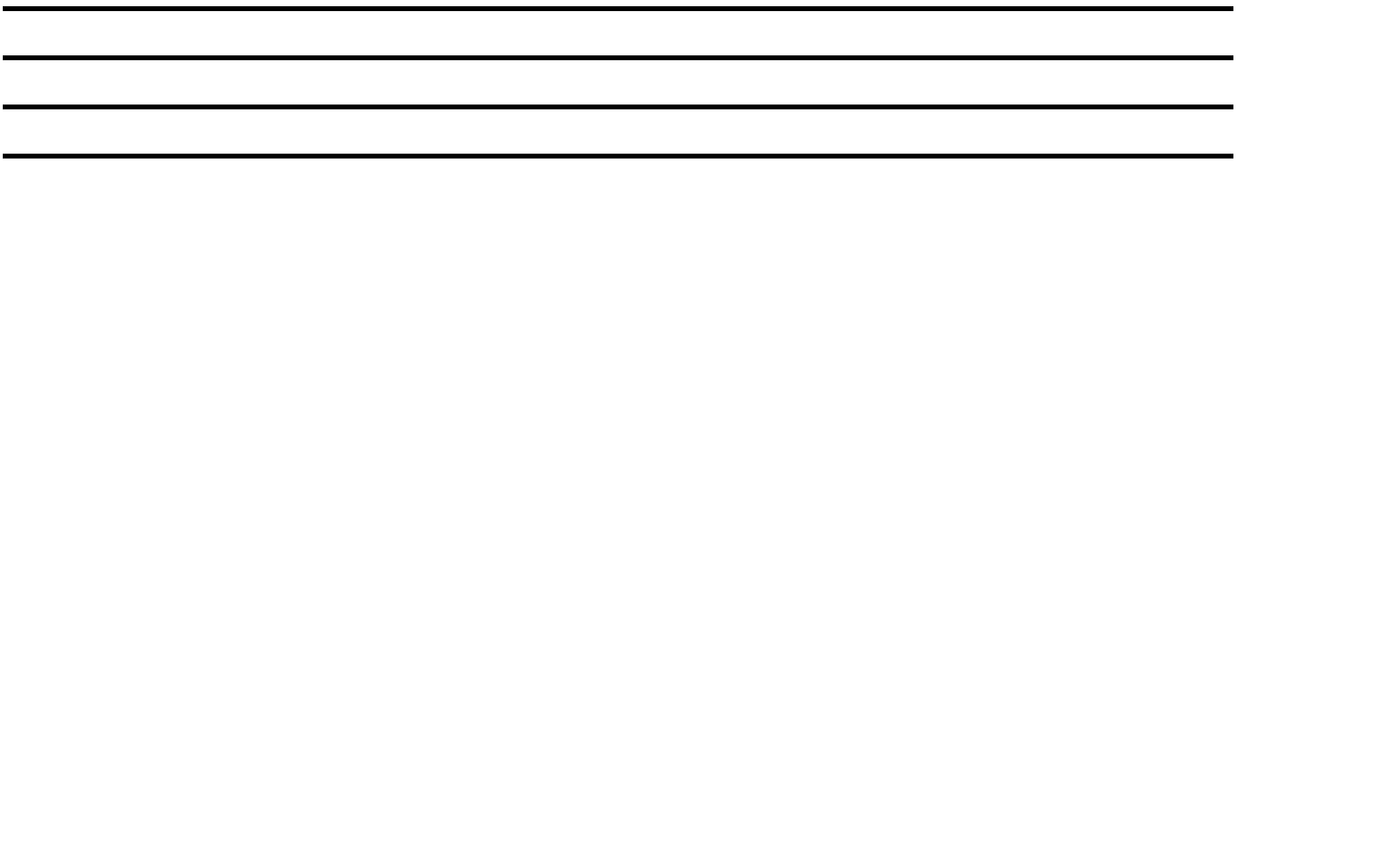}
\caption{Labeling of crossings, arcs, and circles from applying the all-$B$ state to $K^n_+$. In this example $n=4$. }
\label{f.sbsm}
\end{figure}  

It is helpful to see a local picture at each $n$-cabled crossing in $K_+^n$. 
\begin{figure}[H]
\centering
\footnotesize{
\def\svgwidth{.35\columnwidth}
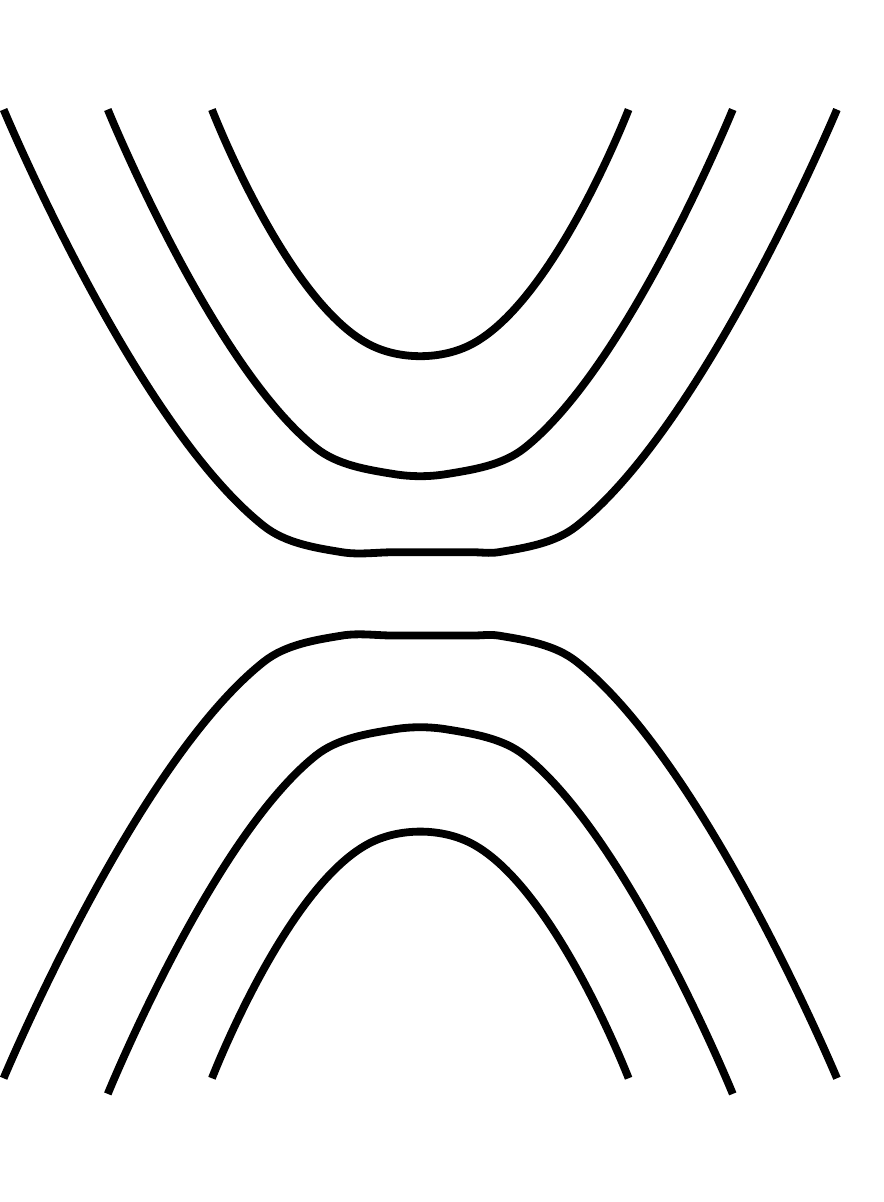}
\caption{Local labeling of $n^2$ crossings on the all-$B$ state of an $n$-cabled crossing. In this example $n=3$.}
\label{f.markings}
\end{figure}

 The goal of this subsection is to prove the following theorem. 
\begin{theorem} \label{t.pyramid}
Suppose a skein element $\Sk(k_0, \sigma)$ has parameters $k_i = k_i(\sigma)$ and $c=c(\sigma)$. Then there is a subset  $A'_{\sigma} \subseteq A_{\sigma}$ of crossings on which the Kauffman state $\sigma$ chooses the $A$-resolution,  such that
we have $A'_{\sigma} = A_{\sigma}^t \cup A_{\sigma}^w \cup A_{\sigma}^b$ denoting the crossings in the regions determining $\Sk^t$, $\Sk^{w}$, and $\Sk^{b}$, respectively, and the following conditions are satisfied.
\begin{itemize}

\item[(i)] $|A_{\sigma}^{w}| = \sum_{i=1}^m (q_i -2 )k_i^2$. The set $A^{w}_{\sigma} = \cup_{i=1}^m \cup_{s=1}^{q_i} \cup_{j=n-k_i+1}^{n} (u^s_{i, j} \cup \ell^s_{i, j})$ is a union of crossings with $u^s_{i,j} \subset C^{u,s}_{i, j}$ and $\ell^s_{i, j} \subset C^{\ell,s}_{i, j}$, such that 
\begin{itemize}
\item 
For each $n-k_i+1\leq j \leq n$, $u_{i, j}^s$, $\ell_{i, j}^s$ each has $j-n+k_i$ crossings. 
\item 
For each $n-k_i+2\leq j \leq n$ and a pair of crossings $x, x'$ in $u^s_{i, j}$ 
(resp. $\ell^s_{i, j}$) whose corresponding segments $e, e'$ in $(K_+^n)^G_B$ are adjacent (i.e., there is no other edge in $u^s_{i, j}$ 
between $e$ and $e'$), there is a crossing $x''$ in $u^s_{i, j-1}$ (resp. 
$\ell^s_{i, j-1}$), where the end of the corresponding segment $e''$ on 
$S^{u, s}_{i, j}$ (resp. $S^{\ell, s}_{i, j}$) lies between the ends of $e$ and $e'$.
\end{itemize} 
\item[(ii)] $|A_{\sigma}^t| = |A_{\sigma}^b| = \frac{c^2/4-c/2+ \sum_{i=1}^m  (k^2_i+ k_i)}{2}$.  The set $A_{\sigma}^t = \cup_{j=n-c/2+1}^n u_j$ is a union of crossings $u_j \subset C_j^u$, and the set $A_{\sigma}^b = \cup_{j=n-c/2+1}^n \ell_j$ is a union of crossings $\ell_j \subset C_j^\ell$ satisfying: 
\begin{itemize}
\item 
For $n-\frac{c}{2}+1 \leq j \leq n$, $u_j$ (resp. $\ell_j$) has $j-n+\frac{c}{2}$ crossings.
\item 
For each $n-\frac{c}{2}+2\leq j \leq n$ and a pair of crossings $x, x'$ in $u_j$ (resp. $\ell_j$) 
whose corresponding segments $e, e'$ in $(K_+^n)^G_B$ are 
adjacent (i.e., there is no other crossing in $u_j$ whose corresponding segment is between $e$ and $e'$), there is a crossing $x''$ in $u_{j-1}$ (resp. $\ell_{j-1}$), where the end of the 
corresponding segment $e''$ on $S^u_{j}$ (resp. $S^{\ell}_j$) lies between the ends of $e$ and 
$e'$.
\end{itemize}
\end{itemize} 
\end{theorem}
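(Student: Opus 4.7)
The plan is to extract the subset $A'_\sigma$ as a canonical \emph{minimal} collection of $A$-resolutions that must be present in any state $\sigma$ achieving the prescribed through-strand counts $k_i(\sigma)=k_i$ and $c(\sigma)=c$. In other words, I will show a lower-bound structure: to produce $k_i$ local through strands in each twist region and $c$ global through strands exiting the $\Sk^t$ and $\Sk^b$ boxes, the state $\sigma$ is \emph{forced} to $A$-resolve at least a triangular (pyramidal) set of crossings in each of the three regions. Once these forced crossings are identified inside $A_\sigma$, I declare their union to be $A'_\sigma$.

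First I would carry out the local analysis inside $\Sk^w$. In the all-$B$ state graph $(K^n_+)^G_B$, each twist region contributes $q_i-1$ sets of $n$ nested state circles with no through strands. Changing a single crossing from $B$ to $A$ either splits or merges neighboring arcs, and a short combinatorial argument (analogous to \cite[Lemma 3.2]{L}) shows that producing $k_i$ local through strands at level $i$ requires at least one $A$-resolved crossing in the innermost position on each of the $q_i-2$ middle bigons, then a second one nested outside the first, and so on. Tracking this inductively over the $n$-cable indexed by $j$ forces the triangular pattern $u^s_{i,j}, \ell^s_{i,j}$ of sizes $j-n+k_i$ for $n-k_i+1 \le j \le n$, and also forces the interleaving condition on adjacent layers (since otherwise two adjacent $A$-resolutions in layer $j$ would merge back into nested arcs in layer $j-1$, destroying a through strand). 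Summing the sizes gives $|A^w_\sigma|=\sum_{i=1}^m (q_i-2)k_i^2$.

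Next I would treat $\Sk^t$ and $\Sk^b$. A global through strand of $(K^n_+)_\sigma$ must cross the upper band $\Sk^t$, and in the all-$B$ state the upper band consists of $n$ nested arcs $S^u_1, \dots, S^u_n$ with zero through strands. To open $c$ through strands in the $2n$-strand box framed by the top idempotents, $\sigma$ must $A$-resolve a pyramid of crossings in $\Sk^t$, where the number and layout of the forced $A$-resolutions is dictated simultaneously by $c$ and by how the local $k_i$ through strands from each $\Sk^w$ region feed into the top band. A direct count of the triangular arrangement (together with the additional crossings required to route the $k_i$-bundles to the appropriate positions on the top idempotent) yields $|A^t_\sigma|=(c^2/4-c/2+\sum(k_i^2+k_i))/2$, and by symmetry the same holds for $A^b_\sigma$. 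The nested/interleaving condition again follows because an inner bigon can only be opened after the outer one has been, and any violation would merge two through strands into a capped arc inside the $n$-idempotent, killing the count.

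The main obstacle is proving that the triangular lower bound in $\Sk^t$ and $\Sk^b$ is achievable \emph{simultaneously} with the local $\Sk^w$ pyramids, i.e.\ that the forced $A$-resolutions in different regions are compatible and genuinely lie in $A_\sigma$ rather than being counted twice or conflicting with each other. The cleanest way to handle this is to argue region by region using the local picture of Figure~\ref{f.markings}: at each $n$-cabled crossing the contributions to $A^t, A^w, A^b$ occupy disjoint quadrants $C^u_j, C^{u,s}_{i,j}, C^{\ell,s}_{i,j}, C^\ell_j$, so the pyramids can be constructed independently and then unioned. Once this disjointness is in place, the existence of $A'_\sigma$ with all claimed properties reduces to the lower-bound statements established in the first two paragraphs, completing the proof.
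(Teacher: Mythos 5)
Your proposal follows essentially the same strategy as the paper: the pyramidal subset $A'_\sigma$ is shown to be \emph{forced} by the through-strand parameters, arguing region by region ($\Sk^w$ via the single $n$-cabled crossing result, $\Sk^t$ by the same argument applied to the top strip, $\Sk^b$ by reflection), and the three pyramids union cleanly because the crossing sets $C^u_j$, $C^{u,s}_{i,j}$, $C^{\ell,s}_{i,j}$, $C^\ell_j$ live in disjoint pieces of the decomposition $\Sk^t\times\Sk^w\times\Sk^b$. One small correction: the single-crossing pyramid lemma you want is \cite[Lemma 3.7]{L} (stated in the paper as Lemma~\ref{l.pyramid}), not \cite[Lemma 3.2]{L}, which is the vanishing criterion used in Lemma~\ref{l.zero}.
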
 
It follows that $|A'_{\sigma}| = |A_{\sigma}^t|+|A_{\sigma}^w|+|A_{\sigma}^b| = \frac{c^2}{4}-\frac{c}{2}+ \sum_{i=1}^m (k^2_i+ k_i) + \sum_{i=1}^m (q_i-2)k_i^2$. The set of crossings $A'_{\sigma}$ is said to be in \emph{pyramidal position}.
\begin{proof} 
Statement (i) is a direct application to every set of $n$-cabled crossings in each twist region of $\Sk^w$ of the following result from \cite{L}. 

\begin{lemma}\cite[Lem. 3.7]{L} 
\label{l.pyramid}
Let $\Sk$ be a skein element in $\TL_{2n}$ consisting of 
a single $n$-cabled positive crossing $x^n$ with labels as shown in 
Figure \ref{f.markings}. 

If $(x^n)_{\sigma}$ for a Kauffman state $\sigma$ on $x^n$ has  $2k$ through 
strands, then $\sigma$ chooses the $A$-resolution on a set of $k^2$ crossings 
$C_{\sigma}$ of $x^n$, where $C_{\sigma} = \cup_{j=n-k+1}^n (u_j \cup \ell_j)$ is 
a union of crossings $u_j \subseteq C^u_j$ and $\ell_j \subseteq C^{\ell}_j$, 
such that

\begin{itemize}
\item 
For each $n-k+1\leq j \leq n$, $u_j$, $\ell_j$ each has $j-n+k$ crossings. 
\item 
For each  $n-k+2\leq j \leq n$, and a pair of crossings $x, x'$ in $u_j$ 
(resp. $\ell_j$) whose corresponding segments $c, c'$ in the all-$B$ 
state of $x^n$ are adjacent (i.e., there is no other edge in $C_{\sigma}$ 
between $c$ and $c'$), there is a crossing $x''$ in $u_{j-1}$ (resp. 
$\ell_{j-1}$), where the end of the corresponding segment $c''$ on 
$S^u_{j}$ (resp. $S^{\ell}_j$) lies between the ends of $c$ and $c'$.
\end{itemize} 
\end{lemma}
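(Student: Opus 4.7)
The plan is to prove Lemma~\ref{l.pyramid} by induction on $k$, exploiting the planar structure of the $n \times n$ grid of crossings in the $n$-cabled positive crossing $x^n$ and an analysis of how $B$-to-$A$ resolution changes alter the number of through strands, analogous to Lemma~\ref{l.split} for circles.

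First I would make the grid structure explicit: the $n^2$ crossings of $x^n$ partition into $C^u_1,\dots,C^u_{n-1}$, the shared central set $C^u_n = C^{\ell}_n$ containing the innermost crossings, and $C^{\ell}_1,\dots,C^{\ell}_{n-1}$, as read off from Figure~\ref{f.markings}. In the all-$B$ state the $n$ nested arcs $S^u_1,\dots,S^u_n$ pair off the $2n$ upper endpoints of $x^n$ among themselves, and similarly $S^{\ell}_1,\dots,S^{\ell}_n$ pair off the lower endpoints, so there are zero through strands and the base case $k=0$ holds with $C_\sigma = \emptyset$ matching $|C_\sigma|=0^2=0$. The key local calculation is that switching a single crossing from the $B$- to the $A$-resolution creates a new pair of through strands precisely when that crossing is positioned to break the innermost nested pair of arcs on both the upper and lower sides: the $A$-resolution severs these arcs and reconnects their endpoints vertically, producing through strands only when the crossing lies on the boundary between the already-through region and the next innermost pair of arcs.

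For the inductive step, suppose $(x^n)_\sigma$ produces $2k$ through strands. The innermost pair of horizontal arcs still present in $(x^n)_\sigma$ is at level $n-k+1$ from the outside, and breaking this pair to form the $k$-th pair of through strands forces the $A$-resolution at a unique innermost crossing of $C^u_{n-k+1}$ and of $C^{\ell}_{n-k+1}$, giving $|u_{n-k+1}| = |\ell_{n-k+1}| = 1$. Stripping away these outer-layer $A$-resolutions and the corresponding through strands yields a reduced configuration with $2(k-1)$ through strands, to which the inductive hypothesis applies, establishing the rest of the pyramidal structure together with the adjacency condition (each $A$-resolution in row $j$ is sandwiched between two adjacent $A$-resolutions in row $j+1$). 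The count $|C_\sigma|=k^2$ verifies as $(k-1)^2 + (2k-1)$, where the $2k-1$ newly added crossings consist of $k-1$ in the upper half, $k-1$ in the lower half, and exactly one new crossing in the shared central row $C^u_n = C^{\ell}_n$ (where $u_n = \ell_n$ jointly grow by a single crossing rather than two).

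The main obstacle is the \emph{rigidity} statement: that \emph{every} Kauffman state $\sigma$ producing $2k$ through strands has exactly this pyramidal configuration, with no extra $A$-resolutions whose through-strand contributions cancel. Since $(x^n)_\sigma$ is a planar non-crossing matching of $4n$ boundary points, the through strands are forced to nest strictly from the outside in, and any $A$-resolution outside the pyramidal pattern would either split a through strand into two non-through arcs (reducing the count below $2k$) or, by planarity, be required to coincide with a crossing already in the pyramidal set. Making this rigidity argument precise through a careful planar matching enumeration, and thereby excluding all non-pyramidal configurations producing $2k$ through strands, is the heart of the proof.
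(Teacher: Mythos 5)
The first thing to note is that the paper itself supplies no proof of this lemma: it is quoted verbatim from \cite[Lem.~3.7]{L} and used as a black box inside the proof of Theorem \ref{t.pyramid}, so your proposal has to stand on its own as a complete argument, and it does not. You explicitly defer the decisive step (``making this rigidity argument precise \dots is the heart of the proof''), and that is precisely where the content of the lemma lies. The parts you do carry out --- the base case, the observation that the central row $C^u_n=C^{\ell}_n$ is shared so that the layers contribute $(k-1)^2+(2k-1)=k^2$ --- are correct but routine bookkeeping; the forcing of the layered structure is only asserted via an appeal to ``planarity forces nesting.''

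More seriously, the statement you set out to prove is stronger than the lemma and is false, so the announced strategy (``excluding all non-pyramidal configurations producing $2k$ through strands'') cannot be completed even in principle. The lemma, read as the paper uses it (Theorem \ref{t.pyramid} produces a subset $A'_\sigma\subseteq A_\sigma$, and the proof of Lemma \ref{lem.minimal} only needs $A_\tau\subseteq A_\sigma$), asserts that the $A$-resolved set \emph{contains} a pyramidal set of $k^2$ crossings; it constrains nothing else about $A_\sigma$. Already for $n=2$ (labels as in Figure \ref{f.markings}), take the state that $A$-resolves one crossing of the central row $C^u_2=C^{\ell}_2$ together with the unique crossing of $C^u_1$ and $B$-resolves the rest: one still gets exactly two through strands, together with one upper and one lower turn-back arc and no closed circle, yet $A_\sigma$ is not pyramidal --- it merely contains the pyramidal singleton. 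The extra $A$-resolution neither destroys a through strand nor splits off a circle; it only re-routes which top endpoint is matched with which bottom endpoint. This refutes your claimed dichotomy, and it also shows the crossing you call the ``unique innermost crossing'' of row $n-k+1$ is not unique: the whole pyramid can be translated horizontally, only the interleaving condition being forced (this freedom is exactly what the paper later exploits with leftmost minimal states and the sets $F_\tau$). A viable proof must target the containment statement directly: for instance, show that each of the $k$ nested through strands must cross every band between consecutive arcs $S^u_j$ (resp.\ $S^{\ell}_j$) for $j\geq n-k+1$, that each such band crossing forces a distinct $A$-resolved crossing in the corresponding row $C^u_j$ (resp.\ $C^{\ell}_j$), and that planarity forces the interleaving of these crossings between adjacent rows --- while leaving the remaining crossings of $A_\sigma$ unconstrained.
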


The same proof applies to the crossings in the strip $\Sk^{t}$ to show the existence of a set of crossings $A^t_{\sigma}$ satisfying (ii), see Figure \ref{f.sbs}. Reflection with respect to the horizontal axis will show (ii) for $\Sk^b$.

\begin{figure}[!htpb]
\centering
\def\svgwidth{.3\columnwidth}
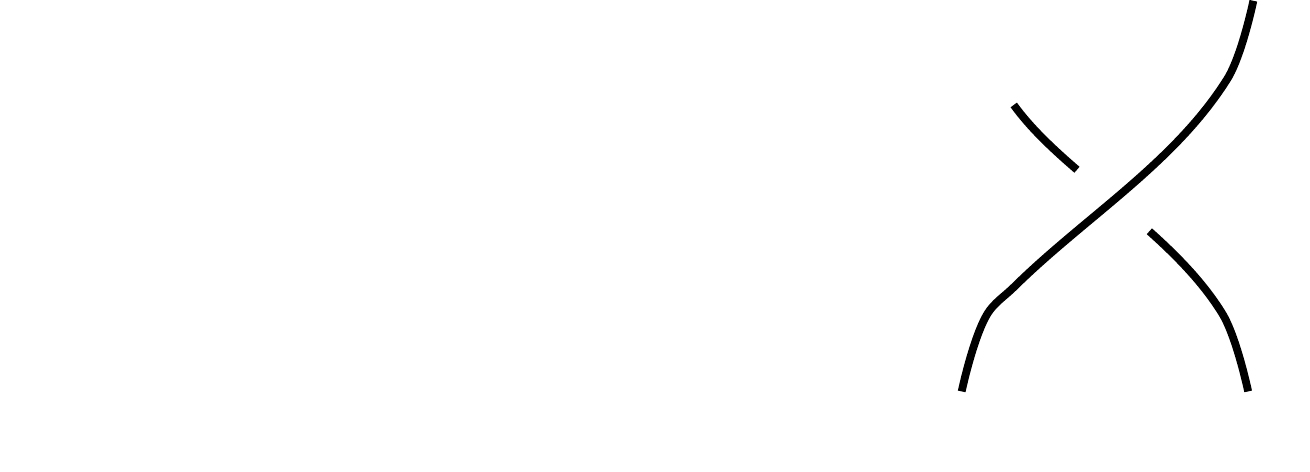
\caption{The arrow indicates the direction from left to right of the crossings in $\Sk^t$.}
\label{f.sbs}
\end{figure} 
\end{proof}

We will now apply what we know about the crossings on which a state $\sigma$ chooses the $A$-resolution from Theorem \ref{t.pyramid} to construct degree-maximizing states for given global through strands $c(\sigma)$ and parameters $\{k_i(\sigma)\}$. See Figure \ref{f.skeinex} for an example of a pyramidal position of crossings.
\begin{figure}[ht]
\def \svgwidth{.4\columnwidth}
%% Creator: Inkscape 1.0beta1 (32d4812, 2019-09-19), www.inkscape.org
%% PDF/EPS/PS + LaTeX output extension by Johan Engelen, 2010
%% Accompanies image file 'Skeinexample.pdf' (pdf, eps, ps)
%%
%% To include the image in your LaTeX document, write
%%   \input{<filename>.pdf_tex}
%%  instead of
%%   \includegraphics{<filename>.pdf}
%% To scale the image, write
%%   \def\svgwidth{<desired width>}
%%   \input{<filename>.pdf_tex}
%%  instead of
%%   \includegraphics[width=<desired width>]{<filename>.pdf}
%%
%% Images with a different path to the parent latex file can
%% be accessed with the `import' package (which may need to be
%% installed) using
%%   \usepackage{import}
%% in the preamble, and then including the image with
%%   \import{<path to file>}{<filename>.pdf_tex}
%% Alternatively, one can specify
%%   \graphicspath{{<path to file>/}}
%% 
%% For more information, please see info/svg-inkscape on CTAN:
%%   http://tug.ctan.org/tex-archive/info/svg-inkscape
%%
\begingroup%
  \makeatletter%
  \providecommand\color[2][]{%
    \errmessage{(Inkscape) Color is used for the text in Inkscape, but the package 'color.sty' is not loaded}%
    \renewcommand\color[2][]{}%
  }%
  \providecommand\transparent[1]{%
    \errmessage{(Inkscape) Transparency is used (non-zero) for the text in Inkscape, but the package 'transparent.sty' is not loaded}%
    \renewcommand\transparent[1]{}%
  }%
  \providecommand\rotatebox[2]{#2}%
  \newcommand*\fsize{\dimexpr\f@size pt\relax}%
  \newcommand*\lineheight[1]{\fontsize{\fsize}{#1\fsize}\selectfont}%
  \ifx\svgwidth\undefined%
    \setlength{\unitlength}{404.54679234bp}%
    \ifx\svgscale\undefined%
      \relax%
    \else%
      \setlength{\unitlength}{\unitlength * \real{\svgscale}}%
    \fi%
  \else%
    \setlength{\unitlength}{\svgwidth}%
  \fi%
  \global\let\svgwidth\undefined%
  \global\let\svgscale\undefined%
  \makeatother%
  \begin{picture}(1,2.03514025)%
    \lineheight{1}%
    \setlength\tabcolsep{0pt}%
    \put(0,0){\includegraphics[width=\unitlength,page=1]{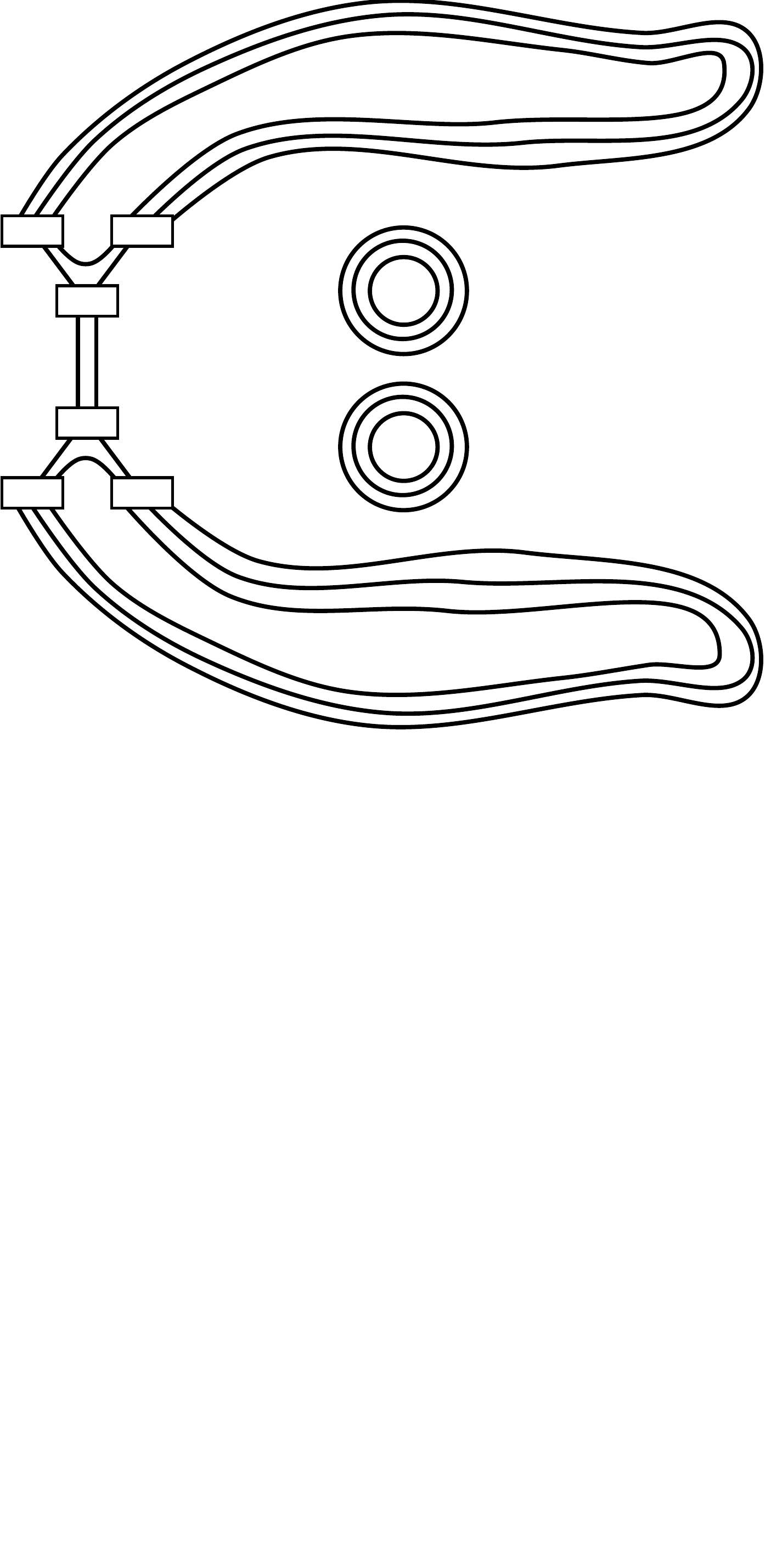}}%
    \put(0.02349275,1.56018241){\color[rgb]{0,0,0}\makebox(0,0)[lt]{\lineheight{0}\smash{\begin{tabular}[t]{l}$k_0$\end{tabular}}}}%
    \put(0,0){\includegraphics[width=\unitlength,page=2]{Skeinexample.pdf}}%
    \put(0.03545798,0.47134628){\color[rgb]{0,0,0}\makebox(0,0)[lt]{\lineheight{0}\smash{\begin{tabular}[t]{l}$k_0$\end{tabular}}}}%
    \put(0,0){\includegraphics[width=\unitlength,page=3]{Skeinexample.pdf}}%
    \put(0.14669357,0.47134628){\color[rgb]{0,0,0}\makebox(0,0)[lt]{\lineheight{0}\smash{\begin{tabular}[t]{l}$k_0$\end{tabular}}}}%
    \put(0.13472834,1.56018241){\color[rgb]{0,0,0}\makebox(0,0)[lt]{\lineheight{0}\smash{\begin{tabular}[t]{l}$k_0$\end{tabular}}}}%
    \put(0,0){\includegraphics[width=\unitlength,page=4]{Skeinexample.pdf}}%
  \end{picture}%
\endgroup%

\caption{\label{f.skeinex} A minimal state $\tau$ is shown with $n=3$ and $c(\tau)=6$ global through strands. In the top picture one can see the pyramidal position of the crossings $A_{\tau}$ as described by Theorem \ref{t.pyramid}. The skein element $\Sk(k_0,\tau)$ with $k=(k_0, 0, 0, 2, 1)$ resulting from applying $\tau$ is shown below.} 
\end{figure}

\subsection{Minimal states are taut and their degrees are $\delta(n,k)$}
The contribution of the state $(k_0,\sigma)$ to the state sum is $G_{k_0}(v)v^{\sgn(\sigma)} \langle N(I_{k_0} \oplus (K^n_+)_{\sigma}) \rangle$ as in \eqref{eq.ssum}.  We denote its $v$-degree by $d(k_0,\sigma)$.

Recall the skein element $\Sk(k_0,\sigma) =  N(I_{k_0} \oplus (K^n_+)_{\sigma})$. Also recall $A_\sigma$ denotes the set of crossings on which $\sigma$ chooses the $A$-resolution, and $|A_\sigma|$ is the number of crossings in $A_\sigma$. A \emph{minimal state} with tight parameters $c, k$ (i.e., $k_0=k_1+\cdots+k_m=\frac{c}{2}$) has the least $|A_\sigma|$ in $st(c, k)$. Let $\oo(A_\sigma)$
denote the number of circles of $\overline{\Sk(k_0,\sigma)}$, which is the skein element obtained by replacing all the Jones-Wenzl idempotents in $\Sk(k_0,\sigma)$ by the identity, respectively.

\begin{lemma} \label{l.mindegree}
A minimal state $(k_0,\tau)$ with $c(\tau)$ through strands and tight $c, k$ has $A_{\tau}$ in pyramidal position as specified in Theorem  \ref{t.pyramid} and distance $|A_\tau|$ from the all-$B$ state given by 
\[
|A_{\tau}| = 2\left( (\sum_{i=1}^m k_i)\frac{(\sum_{i=1}^m k_i-1)}{2} 
+ \sum_{i=1}^{m} \frac{k_i(k_i+1)}{2} 
\right) + \sum_{i=1}^m (q_i-2) k_i^2 \,.
\] Moreover, 
\be
\label{eq.tauDnk}
G_{k_0}(v)v^{\sgn(\sigma)} \langle N(I_{k_0} \oplus (K^n_+)_{\sigma}) \rangle = (-1)^{q_0(n-k_0)+n+k_0
+\sum_{i=1}^{m}(n-k_i)(q_i-1)}v^{\delta(n, k)}+l.o.t. 
\ee
\end{lemma}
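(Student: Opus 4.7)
The plan is to break the lemma into three pieces: the pyramidal structure and size of $A_\tau$, the degree contribution
\[ d(k_0,\tau) \;=\; \deg_v G_{k_0}(v) + \sgn(\tau) + \deg_v \langle N(I_{k_0}\oplus (K^n_+)_\tau)\rangle, \]
and the leading sign.

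For the first piece, Theorem \ref{t.pyramid} already produces, for any state $\sigma$ with parameters $k=(k_0,\dots,k_m)$ and through-strand count $c$, a subset $A'_\sigma \subseteq A_\sigma$ in pyramidal position with $|A'_\sigma|=\tfrac{c^2}{4}-\tfrac{c}{2}+\sum_{i=1}^m (k_i^2+k_i)+\sum_{i=1}^m(q_i-2)k_i^2$. Since $\tau$ minimizes $|A_\tau|$ in $st(c,k)$, we must have $A_\tau=A'_\tau$; what remains is to check that the pyramidal configuration alone is enough to realize $c,k$, which follows from the same arguments that proved Theorem \ref{t.pyramid}: removing a crossing from the pyramid either merges two circles in $\overline{\Sk(k_0,\tau)}$ (lowering the through-strand count) or violates the adjacency condition, so any strictly smaller $A_\tau$ would change the parameters. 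Substituting $c/2=k_0=\sum_{i=1}^m k_i$ from tightness into $|A'_\tau|$ yields the closed form stated in the lemma after straightforward algebra.

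For the degree, I would handle each term separately. From $\deg_v \triangle_k=2k$ together with the explicit $\triangle!$-product formula for $\theta(n,n,2k_0)$, a short computation gives $\deg_v \tfrac{\triangle_{2k_0}}{\theta(n,n,2k_0)}=2k_0-2n$, hence
\[ \deg_v G_{k_0}(v)=2k_0-2n+q_0\bigl(n^2+2n-2k_0-2k_0^2\bigr). \]
Since the total number of crossings in $K^n_+$ is $n^2\sum_{i=1}^m q_i$, we have $\sgn(\tau)=n^2\sum_i q_i-2|A_\tau|$. For the remaining factor, the key observation is that pyramidality channels exactly $2k_i$ through-strands across the $i$-th twist region and $2k_0$ through-strands into each of the top and bottom pyramids, so $(K^n_+)_\tau$ is a standard crossingless network: the four size-$n$ idempotents absorb $n-k_i$ returning arcs on each side of the $i$-th twist region, the leftover strands in each twist region close into $q_i-1$ nested families producing $\triangle_{n-k_i}$-type contributions, and the remaining $2k_0$ strands on each side match exactly the size-$2k_0$ idempotents of $I_{k_0}$ after numerator closure, producing a $\theta$-network. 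Using property (iv) of the Jones--Wenzl idempotent and the leading-degree formulas for $\triangle_k$ and $\theta(a,b,c)$, the leading $v$-degree of the bracket can be read off. Summing the three contributions and applying tightness, the $n^2$-terms cancel against the $\tfrac{n(n+2)}{2}\sum_i q_i$-term in $\delta(n,k)$, the quadratics in $k_0$ and $k_i$ come from $G_{k_0}$ and $|A_\tau|$, and the linear-in-$k_i$ and $(m-1)n$ corrections come from the $\triangle$-evaluations, matching $\delta(n,k)$ exactly.

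The sign is tracked through the factor $(-1)^{(n-k_0)q_0}$ in $G_{k_0}(v)$, the leading sign $(-1)^k$ of each $\triangle_k$, and one sign $(-1)^{n-k_i}$ contributed per each of the $q_i-1$ closed nested families in the $i$-th twist region; using that each $q_i$ is odd, this collapses to $(-1)^{q_0(n-k_0)+n+k_0+\sum_i(n-k_i)(q_i-1)}$ after cancellation of the $\triangle_{2k_0}/\theta(n,n,2k_0)$ sign against the pair of $\triangle_{n-k_i}$-signs. The main obstacle will be the bracket evaluation $\langle N(I_{k_0}\oplus(K^n_+)_\tau)\rangle$: although the skein element is crossingless, correctly identifying which arcs are absorbed by which idempotent, which loops close off with a $\triangle_{n-k_i}$ rather than a $\triangle_n$ or $\triangle_{2k_0}$, and how the top and bottom pyramids feed $2k_0$ strands into $I_{k_0}$ requires careful diagrammatic bookkeeping. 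The algebra in the remaining steps is routine once this network is correctly identified.
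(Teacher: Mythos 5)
Your plan diverges from the paper at the decisive step, and it diverges in a way that leaves the hard part undone. You propose to evaluate $\langle N(I_{k_0}\oplus(K^n_+)_\tau)\rangle$ directly as a crossingless idempotent network, reading off its leading degree from $\triangle$- and $\theta$-evaluations. The paper never attempts this. Instead it observes that the skein element $\Sk(k_0,\tau)$ is \emph{adequate} when $k_0\le\sum_i k_i$ (no circle of $\overline{\Sk(k_0,\tau)}$ passes twice through a location where an idempotent sits), and then invokes Armond's degree lemma to conclude that the top $v$-degree of the bracket with idempotents equals the top $v$-degree of the bracket after replacing every idempotent by the identity. That reduction collapses the whole computation to counting circles of $\overline{\Sk(k_0,\tau)}$ and adding $\sgn(\tau)$ plus the fusion/untwisting contribution from $G_{k_0}$. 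Adequacy is the load-bearing idea; it is absent from your proposal.

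Without it, your plan has a genuine gap, and you essentially flag it yourself (``the main obstacle will be the bracket evaluation''). It is not automatic that $(K^n_+)_\tau$, glued to $I_{k_0}$ by the numerator closure, decomposes into a product of $\triangle$'s and $\theta$-networks whose leading degrees you can simply add; the arcs absorbed by the four size-$n$ idempotents, the $2k_0$ strands feeding into $I_{k_0}$, and the nested families in the twist regions couple to one another, and a priori the evaluation involves tetrahedral (6j-type) coefficients rather than a clean product. Making your route rigorous would require doing exactly the diagrammatic bookkeeping you defer, and even then you would be re-deriving (the top-degree part of) Armond's lemma by hand in this special case. The description ``$2k_0$ through-strands into each of the top and bottom pyramids'' is also imprecise: the global through-strand count is $c=2k_0$ total, and the matching with $I_{k_0}$ happens once through the numerator closure, not twice. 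The pyramidal-position and sign bookkeeping you outline are fine and essentially match the paper, but the degree computation as proposed is incomplete; importing the adequacy argument (or an equivalent statement that the top-degree contribution survives the idempotent-to-identity replacement) is what is needed to close it.
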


\begin{proof}
Observe that minimal states $\tau$ have corresponding crossings
$A_\tau$ in pyramidal position. Moreover, if $A_\tau$ is pyramidal, then $|A_\tau|$ determines the number of circles $\oo(A_\tau)$. The skein element $\Sk(k_0,\tau)$ is adequate as long as 
$k_0\leq \sum_{i=1}^m k_i$. This means that no circles of $\overline{\Sk(k_0, \tau)}$ goes through a location where there was an idempotent twice. 
 Thus by \cite[Lem. 4]{Arm13}, we have 
\[
\deg_v v^{\sgn(\tau)}\langle \Sk(k_0,\tau) \rangle 
= \deg_v  v^{\sgn(\tau)} \langle \overline{\Sk(k_0,\tau)} \rangle\,, 
\] 
and we simply need to determine the number of circles in 
$\overline{\Sk(k_0,\tau)}$ and $\sgn(\tau)$  in order to compute the degree 
of the Kauffman bracket. This is completely specified by the pyramidal position of $A_{\tau}$ by just applying the Kauffman state. With the assumption that $k_0=\sum_{i=1}^m k_i = \frac{c}{2}$ since $c, k$ is tight, the degree is then
\begin{align*} 
d(k_0,\tau) %&
%\deg \left[ v^{q_1(2n-2k_0+\frac{2n^2-4k_0^2}{2})}
%\frac{\triangle_{2k_0}}{\theta(n, n, 2k_0)} 
%v^{\sgn(\tau)}\langle \overline{D_{a, \tau}} \rangle \right] \\ 
&= \underbrace{\sum_{i=1}^m q_in^2 - 2(2\left( 
\frac{\left(\sum_{i=1}^m k_i \right)(\left(\sum_{i=1}^m k_i \right)-1)}{2} 
+ \sum_{i=1}^{m} \frac{k_i(k_i+1)}{2} \right) 
+ \sum_{i=1}^m (q_i-2) k_i^2)}_{\sgn(\tau)}  \\ 
&+ \underbrace{2\left(2n-(\left(\sum_{i=1}^m k_i \right)-k_0)
+\sum_{i=1}^{m}(n-k_i)(q_i-1)\right)}_{2o(A_{\tau})} \\ 
&+ \underbrace{q_0(2n-2k_0+\frac{2n^2-4k_0^2}{2})
+2k_0-2n}_{\text{fusion and untwisting}}.
\end{align*} 
The sign of the leading term is given by 
\begin{align*}
(-1)^{\underbrace{q_0(n-k_0)+n+k_0}_{\text{fusion and untwisting}} +\underbrace{o(A_\tau)}_{\text{number of circles}}}  &= (-1)^{q_0(n-k_0)+n+k_0
+\sum_{i=1}^{m}(n-k_i)(q_i-1)} . \end{align*}
\end{proof}

\begin{lemma}
\label{lem.minimal} 
 Minimal states are taut. In other words, given $c, k$ tight, we have 
 $$\max_{\sigma: c(\sigma)= c, k_i(\sigma) = k_i}  d(k_0, \sigma) = d(k_0, \tau), $$ where $\tau$ is a minimal state with $c(\tau) = c$ and $k_i(\tau) = k_i$. 
\end{lemma}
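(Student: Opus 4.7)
The plan is to compare an arbitrary state $(k_0,\sigma)\in st(c,k)$ to a suitably chosen minimal state $(k_0,\tau)$ by constructing a sequence of intermediate Kauffman states that connects them, then invoking Lemma~\ref{l.split} together with Armond's adequacy result \cite[Lem.~4]{Arm13}. Since $k_0$ is fixed throughout the maximization, the factor $G_{k_0}(v)$ is common to both sides and the statement reduces to
\[
\sgn(\sigma)+\deg_v\langle \Sk(k_0,\sigma)\rangle \;\leq\; \sgn(\tau)+\deg_v\langle \Sk(k_0,\tau)\rangle.
\]

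First, I would apply Theorem~\ref{t.pyramid} to $\sigma$ to extract a subset $A'_\sigma\subseteq A_\sigma$ of crossings in pyramidal position. A routine count shows that under the tight condition $\sum_{i=1}^m k_i=c/2$, the cardinality $|A'_\sigma|$ agrees exactly with the number $|A_\tau|$ computed in Lemma~\ref{l.mindegree}. Define $\tau$ to be the minimal state whose $A$-resolution set equals $A'_\sigma$; then by construction $A_\tau\subseteq A_\sigma$, and the local pyramidal configuration in each twist region (together with those in $\Sk^t$ and $\Sk^b$) forces $c(\tau)=c$ and $k_i(\tau)=k_i$, so $(k_0,\tau)\in st(c,k)$.

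Next, order the crossings of $A_\sigma\setminus A_\tau$ arbitrarily and form a sequence of Kauffman states $\tau=\tau_1,\tau_2,\ldots,\tau_f=\sigma$ where $\tau_{j+1}$ differs from $\tau_j$ by switching exactly one such crossing from its $B$- to its $A$-resolution. Applying Lemma~\ref{l.split} to this sequence (working with the idempotent-free versions $\overline{\Sk(k_0,\tau_j)}$) gives
\[
\sgn(\tau)+\deg_v\langle\overline{\Sk(k_0,\tau)}\rangle \;\geq\; \sgn(\sigma)+\deg_v\langle\overline{\Sk(k_0,\sigma)}\rangle.
\]
Finally, by Lemma~\ref{l.mindegree} the skein element $\Sk(k_0,\tau)$ is adequate (tightness forces $k_0=\sum_i k_i$, so no state circle of $\overline{\Sk(k_0,\tau)}$ revisits any Jones-Wenzl idempotent), so \cite[Lem.~4]{Arm13} upgrades the standard inequality $\deg_v\langle\Sk(k_0,\tau)\rangle\leq\deg_v\langle\overline{\Sk(k_0,\tau)}\rangle$ to an equality for $\tau$, while providing the general inequality $\deg_v\langle\Sk(k_0,\sigma)\rangle\leq\deg_v\langle\overline{\Sk(k_0,\sigma)}\rangle$ for $\sigma$. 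Chaining the three relations yields $d(k_0,\sigma)\leq d(k_0,\tau)$, and equality is achieved at $\sigma=\tau$, proving the lemma.

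The delicate step is the first one: verifying that the pyramidal subset produced by Theorem~\ref{t.pyramid} is not merely a subset of $A_\sigma$ of the right cardinality but is the $A$-resolution set of a genuine minimal state sharing the $(c,k)$-parameters with $\sigma$. Once this combinatorial bookkeeping is in place, the telescoping degree comparison is automatic from Lemma~\ref{l.split} and Armond's lemma.
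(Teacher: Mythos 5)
Your proof is correct and follows the same structure as the paper's: extract a contained minimal state $\tau$ with matching $(c,k)$-parameters via Theorem~\ref{t.pyramid}, then compare degrees along a monotone sequence of states using Lemma~\ref{l.split}. You are somewhat more explicit than the paper's terse three-line proof about the degree-comparison chain --- in particular you spell out how Armond's adequacy lemma is used to convert between $\deg_v\langle\Sk(k_0,\tau)\rangle$ and $\deg_v\langle\overline{\Sk(k_0,\tau)}\rangle$ on the $\tau$ side while only the general $\leq$ inequality is available on the $\sigma$ side, a point the paper leaves implicit by deferring to Lemma~\ref{l.mindegree}.
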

\begin{proof}
Note that for \emph{any} state $\sigma$ with corresponding skein element $\Sk(k_0,\sigma)$, we have 
\[
A_{\tau} \subseteq A_{\sigma} 
\] 
for a minimal state $\tau$ with the same parameter set $(c, k)$ by Theorem \ref{t.pyramid}, and $d(k_0,\tau) = d(k_0,\tau')$ for two minimal states $\tau, \tau'$ with the same parameters $c(\tau) = c(\tau')$ and $k_i(\tau) = k_i(\tau')$ by Lemma \ref{l.mindegree}. This implies $d(k_0,\sigma) \leq d(k_0, \tau)$ by Lemma \ref{l.split}.
\end{proof}

\subsubsection{Constructing minimal states}

\begin{lemma} \label{l.constructm}
A minimal state exists for any tight $c, k$, where $c$ is an even integer between $0$ and $2n$ and $k_0 = \sum_{i=1}^m k_i = \frac{c}{2}$. 
\end{lemma}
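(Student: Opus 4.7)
The plan is to explicitly construct a Kauffman state $\tau$ with the desired parameters by placing $A$-resolutions in the pyramidal configuration described in Theorem~\ref{t.pyramid}, then verify that the global and local through-strand counts come out correctly.

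First I would build the state locally on each twist region. Fix $1\le i\le m$. In the $i$th twist region, for each $s=1,\dots,q_i$, choose the $A$-resolution on exactly the set of crossings
\[
\bigcup_{j=n-k_i+1}^{n}\bigl(u^{s}_{i,j}\cup \ell^{s}_{i,j}\bigr)
\]
in the pyramidal position described in Theorem~\ref{t.pyramid}(i), and the $B$-resolution elsewhere. By Lemma~\ref{l.pyramid} applied to each cabled crossing of the twist region, this produces exactly $2k_i$ local through strands at every level $s$, so that $c_i(\tau)=2k_i$ and hence $k_i(\tau)=k_i$.

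Next I would build the state on the strips $\Sk^{t}$ and $\Sk^{b}$. Using the tight assumption $c=2\sum_{i=1}^m k_i$, place $A$-resolutions in the sets
\[
A^{t}_{\tau}=\bigcup_{j=n-c/2+1}^{n} u_j,\qquad
A^{b}_{\tau}=\bigcup_{j=n-c/2+1}^{n} \ell_j,
\]
exactly as specified in Theorem~\ref{t.pyramid}(ii), with $|u_j|=|\ell_j|=j-n+c/2$. The same pyramidal argument as in Lemma~\ref{l.pyramid}, applied to the horizontal strips $\Sk^t$ and $\Sk^b$ (cf.\ Figure~\ref{f.sbs}), shows that exactly $c=2k_0$ through strands enter the web of twist regions from the top and exit from the bottom. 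Since $k_0=\sum_{i=1}^m k_i$, these $c$ through strands are precisely those needed so that every local through strand in a twist region is also a global through strand, giving $c(\tau)=c$ as required.

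Finally I would verify minimality. The state $\tau$ just constructed has $A_\tau=A^{t}_\tau\cup A^{w}_\tau\cup A^{b}_\tau$ in pyramidal position, so by Theorem~\ref{t.pyramid} and Lemma~\ref{l.mindegree} the cardinality $|A_\tau|$ attains the value
\[
|A_\tau|=2\!\left(\!\tfrac{(\sum_i k_i)(\sum_i k_i-1)}{2}+\sum_{i=1}^{m}\tfrac{k_i(k_i+1)}{2}\!\right)+\sum_{i=1}^{m}(q_i-2)k_i^2,
\]
and any other state in $st(c,k)$ must contain $A_\tau$ in its $A$-resolution set by the pyramidal characterization, so $\tau$ indeed minimizes $|A_\sigma|$ in $st(c,k)$. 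The only obstacle worth worrying about is a compatibility issue: that the pyramidal $A$-sets in the top and bottom strips route through strands into exactly the top/bottom $k_i$ positions of each cabled twist region, as required for them to be counted as global through strands in every twist. This is precisely where the tight hypothesis $k_0=\sum_i k_i=c/2$ is used, and a short bookkeeping argument on strand routing in the pyramidal configuration confirms the match.
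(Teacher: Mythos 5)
Your overall strategy is the same as the paper's: exhibit a state whose $A$-resolved crossings sit in the pyramidal position of Theorem~\ref{t.pyramid}, then invoke the cardinality count to get minimality. But the step you flag at the very end as ``a short bookkeeping argument on strand routing'' is not a loose end --- it is the entire content of the lemma. The paper itself dismisses the local existence (pyramidal sets with $2k_i$ through strands in each twist region, and in each of $\Sk^t$, $\Sk^b$ separately) as ``not hard to see,'' and states explicitly that \emph{what remains to be shown} is that these local choices can be made mutually compatible, i.e.\ that the $c$ through strands produced in the top and bottom strips actually connect to the $2k_i$ local through strands of the $i$th twist region for every $i$ simultaneously. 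Your proposal asserts this compatibility but gives no mechanism for it. The paper's mechanism is a concrete greedy construction: take the \emph{leftmost} pyramidal configuration at the level $C^u_n$ adjacent to the twist regions and at the first level $C^{\ell,1}_{i,j}$ of each twist region; then, moving away from the twist regions through $C^u_{n-1}, C^u_{n-2},\dots$, at each level choose the leftmost crossing whose segment ends between those of two adjacent already-chosen crossings one level down. The leftmost property of the previous level is exactly what guarantees such a crossing exists at the next level, and the bottom strip is handled by reflection. Without some such argument your construction could fail to produce a consistent state in which every local through strand is a global one, so as written the proof has a gap at its decisive point.

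A smaller issue: in your minimality verification you claim that any other state in $st(c,k)$ ``must contain $A_\tau$ in its $A$-resolution set.'' Theorem~\ref{t.pyramid} only guarantees that any such state contains \emph{some} pyramidal subset $A'_\sigma$ of the stated cardinality, not your particular $A_\tau$ (indeed minimal states are not unique --- the paper later speaks of leftmost versus other minimal states). The correct conclusion, $|A_\sigma|\geq |A'_\sigma| = |A_\tau|$, still gives minimality of $\tau$, but the containment as you state it is false.
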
 

\begin{proof} 
It is not hard to see that at an $n$-cabled crossing $x^n$ in a twist region with $q_i$  crossings in $\Sk^{w}$, for any $0 \leq k_i\leq n$  there is always a minimal state 
giving $2k_i$ through strands. For an $n$-cabled crossing $x^n$ in 
$\Sk^t$ or $\Sk^b$, it is also not hard to see that we may take 
the pyramidal position for the minimal state for the upper 
half (or bottom half, for $\Sk^b$) of each crossing in $x^n$ in $C^u_n$ (or $C^{\ell}_n$) and in $C^{\ell, s}_{i, j}$ (or $C^{u, s}_{i, j}$) for each twist region.  

What remains to be shown is that a minimal state \emph{overall} always exists, given the set of parameters $\{k_i\}$ and $c$ total through strands for crossings in the top and bottom strips delimited by
$\{S^u_{j}\}_{j=1}^n$ and $\{ S^{\ell}_j \}_{j=1}^n$. To see this, we take 
the leftmost position for the crossings $x^n$ in $(\cup_{i=1}^m \cup^n_{j=1} C^{\ell, 1}_{i, j} )\cup C^u_{n}$ with $\{2k_i\}$ through strands, which we already know to exist. Given two crossings $x$ and $x'$ in $C^u_n$ whose corresponding segments in $\Sk(k_0, B)$ have ends on $S^u_n$ we can always find another crossing $x''$ in $C^u_{n-1}$, the end of whose corresponding segment on $S^u_n$ lies between those of $x$ and $x'$, because the previously chosen crossings in $C^u_n$ are leftmost. Pick the leftmost possible and repeat to choose crossings in $C^u_{j}$ for $n-k+1\leq j \leq n-2$. We pick crossings in the bottom strip by reflection. For the remaining $n$-cabled crossings $x^n$ in $\Sk^w$ in a twist region corresponding to $q_i$, any subset of crossings in pyramidal position with $2k_i$ through strands will complete the description of a minimal state satisfying the conditions in the lemma.  
\end{proof} 

\begin{lemma} \label{l.excludeodd} Let $\sigma$ be a state with $c = c(\sigma)$ and $k_i = k_i(\sigma)$ which is not tight, that is, $\sum_{i=1}^m k_i > \frac{c}{2}$ or $k_0 < \frac{c}{2}$, then 
$d(k_0, \sigma) < d(k_0, \tau)$ , where $\tau$ is a minimal state with $c(\tau) = c$ through strands.
\end{lemma}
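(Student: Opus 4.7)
The plan is to handle the two failure modes of tightness separately, in each case constructing a nearby tight state whose contribution strictly dominates.

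First I would treat the case $\sum_{i=1}^{m} k_i(\sigma) > c/2$. By definition $c_i(\sigma) \leq 2 k_i(\sigma)$, so $\sum_i c_i(\sigma) = c < 2\sum_i k_i$, meaning that in some twist region $i$ at least one local through strand of $(K^n_+)_\sigma$ in $\Sk^w$ fails to extend to a global through strand. Any such strand must be capped off by an arc in $\Sk^t$ or $\Sk^b$, and by the pyramidal description in Theorem~\ref{t.pyramid} that capping corresponds to a specific $A$-resolution near the top or bottom strip. I would exhibit an explicit crossing whose resolution can be swapped, producing a new Kauffman state $\sigma'$ with $c(\sigma')=c$ and $k_i(\sigma')=k_i(\sigma)-1$ (and the remaining $k_j$ unchanged). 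Interpreting the swap as a single step in a sequence of Kauffman states as in Lemma~\ref{l.split}, the swap merges two circles of $\overline{\Sk(k_0,\sigma)}$ rather than splitting one, so that $\sgn+\deg_v\langle\overline{\Sk}\rangle$ strictly increases. Iterating down to $\sum_i k_i=c/2$ and invoking Lemma~\ref{lem.minimal} in the tight regime gives the desired strict inequality.

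Next I would treat the case $k_0 < c/2$ (we may assume $\sum_i k_i = c/2$ by the previous case). Here $I_{k_0}$ contains an internal Jones-Wenzl idempotent of size $2k_0 < c$, and I would compare $d(k_0,\sigma)$ with $d(k_0',\sigma)$ for $k_0'=c/2$. Using
\[
G_{k_0}(v) \;=\; \frac{\triangle_{2k_0}}{\theta(n,n,2k_0)}\,\bigl((-1)^{n-k_0}v^{\,2n-2k_0+n^2-2k_0^2}\bigr)^{q_0},
\]
the derivative of the fusion-untwisting exponent $q_0(2n-2k_0+n^2-2k_0^2)+2k_0-2n$ with respect to $k_0$ equals $2(1-q_0)+(-4q_0)k_0$, which is strictly positive because $q_0<-1$. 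The remaining contribution $\langle N(I_{k_0}\oplus(K^n_+)_\sigma)\rangle$ can only increase in degree as $k_0$ rises up to $c/2$, since enlarging $k_0$ reduces the number of times circles of $\overline{\Sk(k_0,\sigma)}$ pass through an idempotent and thus improves adequacy in the sense of \cite{Arm13}. Combining with Lemma~\ref{lem.minimal} at $k_0'=c/2$ gives $d(k_0,\sigma)<d(c/2,\tau)$ for the minimal tight state $\tau$.

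The main obstacle I anticipate is the bookkeeping in the first case: one must identify the precise crossing whose toggle decreases $\sum_i k_i$ by $1$ without changing $c$, and verify that this toggle corresponds to a circle-merging step rather than a circle-splitting step in Lemma~\ref{l.split}. For this I would rely on Theorem~\ref{t.pyramid}: the pyramidal characterization of $A_\sigma'$ localizes exactly where the excess $A$-resolutions live in the top/bottom strips, and supplies an unambiguous candidate crossing whose swap implements the required reduction. Once this combinatorial move is pinned down, the two cases combine to give Lemma~\ref{l.excludeodd}.
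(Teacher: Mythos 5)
Your overall strategy coincides with the paper's: the same two cases, with Theorem~\ref{t.pyramid} and Lemma~\ref{l.split} driving the first and monotonicity in $k_0$ of the fusion--untwisting contribution driving the second; your treatment of $k_0<c/2$ is essentially the paper's argument. The gap is the one you flag yourself, and it is genuine: in the case $\sum_i k_i>c/2$ you need a single crossing whose toggle fixes $c(\sigma)$ and decrements one $k_i$, but such a toggle need not exist. Since $k_i=\lceil c_i/2\rceil$, lowering $k_i$ forces $c_i$ to drop, and because through-strand counts of elements of $\TL_{2n}$ are even, whenever each global through strand traverses a single twist region (so that $\sum_i c_i=c$, an identity you assert but do not justify) any drop in some $c_i$ drags $c$ down by $2$. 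So the iteration cannot proceed one controlled step at a time, and the ``explicit crossing'' you hope to exhibit is not merely bookkeeping.

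The repair is to abandon the step-by-step control altogether, which is what the paper does: Lemma~\ref{l.split} constrains only the two endpoints of a sequence of states, so nothing about the parameters of the intermediate states needs to be tracked. Theorem~\ref{t.pyramid} supplies a pyramidal subset of $A_\sigma$ from which one extracts a minimal state $\tau$ with $A_\tau\subsetneq A_\sigma$ and $k_i(\tau)\le k_i(\sigma)$, strictly for some $i$; switching the excess crossings of $A_\sigma\setminus A_\tau$ from the $B$- to the $A$-resolution one at a time yields a sequence from $\tau$ to $\sigma$ that must contain a merging step, and Lemma~\ref{l.split} then gives $d(k_0,\sigma)<d(k_0,\tau)$ in one stroke. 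With that one-shot comparison substituted for your toggle-by-toggle scheme, your argument becomes the paper's.
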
 
\begin{proof}
For the case $\sum_{i=1}^m k_i > \frac{c}{2}$, we can apply Theorem \ref{t.pyramid} to conclude that there is a minimal state $\tau$ (there may be multiple such states) such that 
\[A_{\tau} \subset A_{\sigma},  \] with $k_i(\tau) \leq k_i(\sigma)$ for each $i$. There must be some $i$ for which $k_i(\tau) < k_i(\sigma)$. Applying the $B$-resolution to the additional crossings to obtain a sequence of states from $\tau$ to $\sigma$, we see that it must contain two consecutive terms that merge a pair of circles. 

If $k_0 < \frac{c}{2}$, since $d(k_0, \sigma)$ increases monotonically in $k_0$ in $G_{k_0}(v)$ from the fusion and untwisting formulas, we can see that $d(k_0, \sigma) < d(c/2, \tau)$. 
\end{proof}

\subsection{Enumerating all taut states}
By Lemma \ref{lem.minimal}, we have shown that every taut state contains a minimal state. Next we show that every taut state is obtained from a unique such minimal state $\tau$ by changing the resolution from $B$-to $A$-on a set of crossings $F_{\tau}$. We show that any taut state $\sigma$ with $c(\sigma) = c(\tau)$ and $k_i(\sigma) = k_i(\tau)$ containing $\tau$ as the \emph{leftmost} minimal state, to be defined below, satisfies $A_{\sigma} = A_{\tau} \cup p$, where $p$ is any subset of $F_{\tau}$.

All the circles here in the definitions and theorems are understood with possible extra labels $u, \ell, s, i, j$ indicating where they are in the regions defining $\Sk^t, \Sk^w$, and $\Sk^b$. To simplify notation we do not show these extra labels. 
\begin{definition} 
\label{d.Ftau} 
For each $x\in A_{\tau}$ between $S_{j-1}$ and $S_{j}$, let $R_x$ be the set 
of crossings to the right of $x$ between $S_{j-1}$ and $S_{j}$, but to 
the left of any $x'\in A_{\tau}$ between $S_{j-2}$ and $S_{j-1}$, and any 
$x''\in A_{\tau}$ between $S_{j}$ and $S_{j+1}$. We define
the following (possibly empty) subset $F_{\tau}$ of crossings of $K^n$.
\[
F_{\tau}:= \cup_{x\in A_{\tau}} R_x \,. 
\] See Figure \ref{f:addededges} and \ref{f:exedges} for examples.
\end{definition} 

\begin{figure}[!htpb]
\centering
\def \svgwidth{.5\columnwidth}
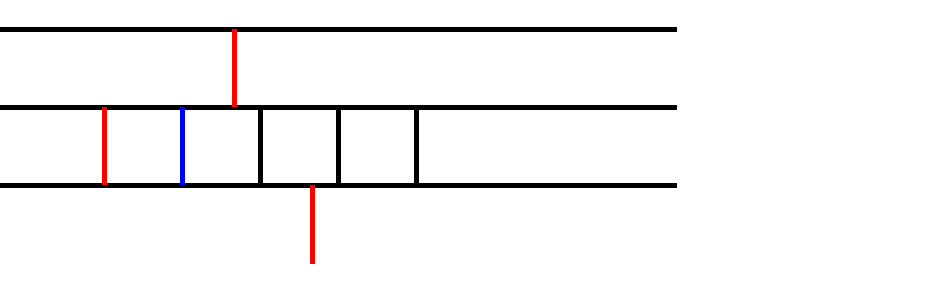
\caption{Only the blue edge  is in $R_x$ because of the presence of the 
top and bottom red edges.}
\label{f:addededges}
\end{figure}

\begin{figure}[!htpb]
\centering
\def \svgwidth{.5\columnwidth}
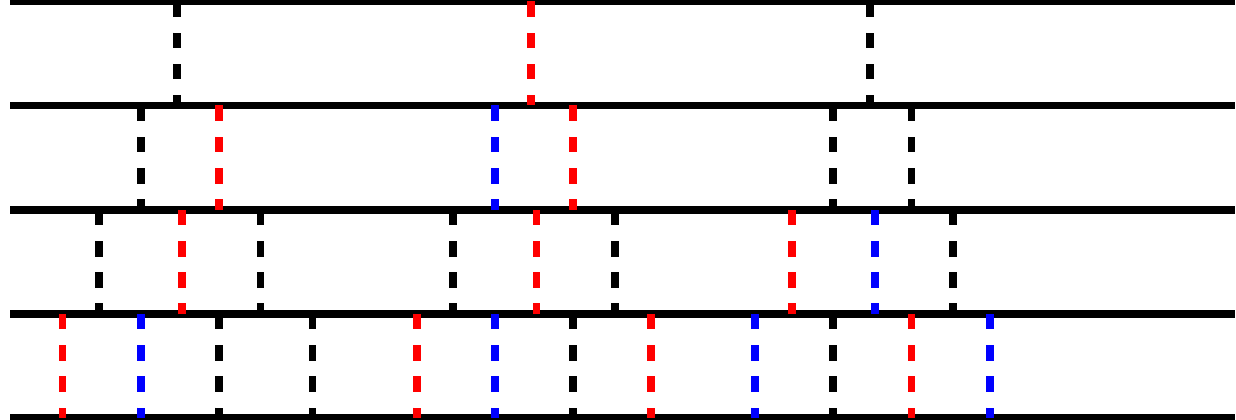
\caption{An example of $F_{\tau}$ with edges shown in blue with the 
minimal state $\tau$ shown as red edges.}
\label{f:exedges}
\end{figure}

\begin{definition} 
\label{def.distance}
Given a set of crossings $C$ of $K^n$, a  crossing $x \in C$, and $1\leq j \leq n$, 
define the \emph{distance $|x|_C$ of a 
crossing $x\in C$ from the left} to be 
\[
|x|_C:= \text{For $x\in C_j$, the \# of edges in 
$\Sk^G(k_0, B)$ to the left of $x$ between $S_j$ and $S_{j+1}$}. 
\]
The \emph{distance} of the set $C$ from the left is defined as
\[
\sum_{ x \in C} |x|_C \,.  
\] 
\end{definition}
Given any state $\sigma$ with tight parameters $c, k$, 
we extract the \emph{leftmost} minimal state $\tau_{\sigma}$ where 
$A_{\tau_{\sigma}} \subseteq A_{\sigma}$, i.e., there is no other minimal state $\tau'$ such that $A_{\tau'}\subset A_{\sigma}$, and the distance of 
$A_{\tau'}$ from the left is less than the distance of $A_{\tau_{\sigma}}$ 
from the left. 
 
\begin{figure}[ht]
\centering
\def \svgwidth{.5\columnwidth}
%% Creator: Inkscape 1.0beta1 (32d4812, 2019-09-19), www.inkscape.org
%% PDF/EPS/PS + LaTeX output extension by Johan Engelen, 2010
%% Accompanies image file 'ntautstateex.pdf' (pdf, eps, ps)
%%
%% To include the image in your LaTeX document, write
%%   \input{<filename>.pdf_tex}
%%  instead of
%%   \includegraphics{<filename>.pdf}
%% To scale the image, write
%%   \def\svgwidth{<desired width>}
%%   \input{<filename>.pdf_tex}
%%  instead of
%%   \includegraphics[width=<desired width>]{<filename>.pdf}
%%
%% Images with a different path to the parent latex file can
%% be accessed with the `import' package (which may need to be
%% installed) using
%%   \usepackage{import}
%% in the preamble, and then including the image with
%%   \import{<path to file>}{<filename>.pdf_tex}
%% Alternatively, one can specify
%%   \graphicspath{{<path to file>/}}
%% 
%% For more information, please see info/svg-inkscape on CTAN:
%%   http://tug.ctan.org/tex-archive/info/svg-inkscape
%%
\begingroup%
  \makeatletter%
  \providecommand\color[2][]{%
    \errmessage{(Inkscape) Color is used for the text in Inkscape, but the package 'color.sty' is not loaded}%
    \renewcommand\color[2][]{}%
  }%
  \providecommand\transparent[1]{%
    \errmessage{(Inkscape) Transparency is used (non-zero) for the text in Inkscape, but the package 'transparent.sty' is not loaded}%
    \renewcommand\transparent[1]{}%
  }%
  \providecommand\rotatebox[2]{#2}%
  \newcommand*\fsize{\dimexpr\f@size pt\relax}%
  \newcommand*\lineheight[1]{\fontsize{\fsize}{#1\fsize}\selectfont}%
  \ifx\svgwidth\undefined%
    \setlength{\unitlength}{375.50505066bp}%
    \ifx\svgscale\undefined%
      \relax%
    \else%
      \setlength{\unitlength}{\unitlength * \real{\svgscale}}%
    \fi%
  \else%
    \setlength{\unitlength}{\svgwidth}%
  \fi%
  \global\let\svgwidth\undefined%
  \global\let\svgscale\undefined%
  \makeatother%
  \begin{picture}(1,2.03476677)%
    \lineheight{1}%
    \setlength\tabcolsep{0pt}%
    \put(0,0){\includegraphics[width=\unitlength,page=1]{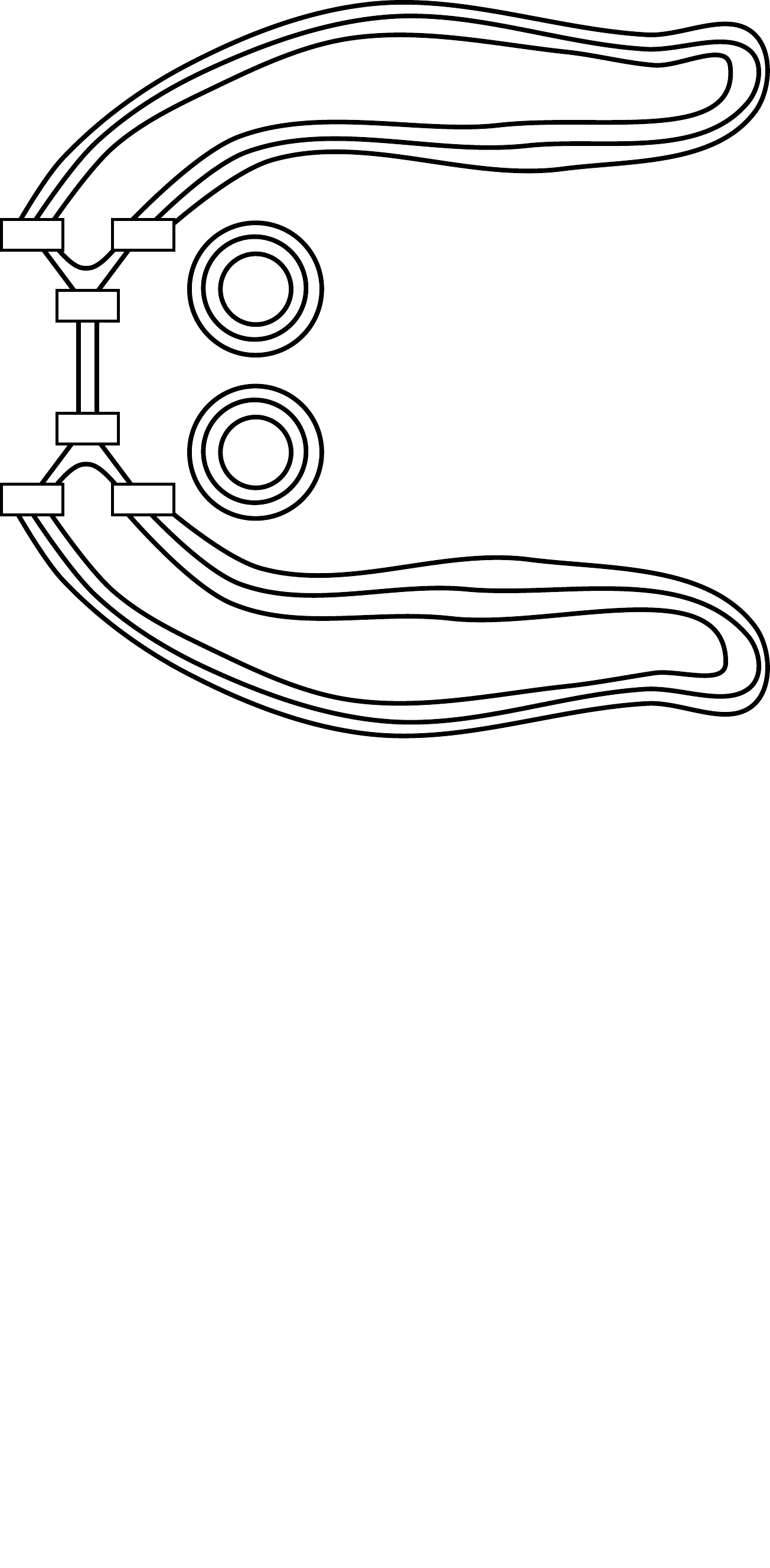}}%
    \put(0.04907679,1.55305157){\color[rgb]{0,0,0}\makebox(0,0)[lt]{\lineheight{0}\smash{\begin{tabular}[t]{l}$k_0$\end{tabular}}}}%
    \put(0,0){\includegraphics[width=\unitlength,page=2]{ntautstateex.pdf}}%
    \put(0.13296381,1.55305157){\color[rgb]{0,0,0}\makebox(0,0)[lt]{\lineheight{0}\smash{\begin{tabular}[t]{l}$k_0$\end{tabular}}}}%
    \put(0,0){\includegraphics[width=\unitlength,page=3]{ntautstateex.pdf}}%
    \put(0.04907679,0.47666848){\color[rgb]{0,0,0}\makebox(0,0)[lt]{\lineheight{0}\smash{\begin{tabular}[t]{l}$k_0$\end{tabular}}}}%
    \put(0,0){\includegraphics[width=\unitlength,page=4]{ntautstateex.pdf}}%
    \put(0.13695843,0.47666848){\color[rgb]{0,0,0}\makebox(0,0)[lt]{\lineheight{0}\smash{\begin{tabular}[t]{l}$k_0$\end{tabular}}}}%
  \end{picture}%
\endgroup%

\caption{On top, a taut state having the same degree as a minimal state but is not 
equal to it. The bottom picture shows the resulting skein element from applying the state. We have $c=6$, $k_1 = 0$, $k_2 = 0$, $k_3 = 2$, and $k_4=1$ as the minimal state in Figure \ref{f.skeinex}, and the 
thickened red edges indicate the difference from a minimal state 
with the same parameters. Choosing the $A$-resolution at each of the 
thickened red edges splits off a circle.}
\label{f:ntautstateex}
\end{figure} 

\begin{lemma} 
\label{l.taut} 
A Kauffman state $\sigma$ with tight parameters $c(\sigma), \{k_i(\sigma)\}$ is taut if and only if $A_{\sigma}$ may be written as 
\[ 
A_{\sigma} = A_{\tau_{\sigma}} \cup p \,  
\] 
where $\tau_{\sigma}$ is the leftmost minimal state from $\sigma$ such that 
$A_{\tau_{\sigma}} \subseteq A_{\sigma}$, and $p$ is a subset of $F_{\tau_{\sigma}}$. See Figure \ref{f:ntautstateex} for an example of a taut state that is not a minimal state, and how it is obtained from the leftmost minimal state that it contains.
\end{lemma}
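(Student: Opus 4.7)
The plan is to leverage Lemma \ref{l.split} by viewing any candidate state $\sigma$ as obtained from its leftmost contained minimal state $\tau_\sigma$ via a sequence of $B$-to-$A$ flips at the crossings in $A_\sigma\setminus A_{\tau_\sigma}$. By Lemma \ref{lem.minimal}, every minimal state with the same tight parameters $c,k$ attains the maximum degree $\delta(n,k)$ of $\mathcal{G}_{c,k}$, so $\sigma$ is taut if and only if its degree equals that of $\tau_\sigma$. Lemma \ref{l.split} then says this holds if and only if every intermediate flip in a sequence from $\tau_\sigma$ to $\sigma$ splits a circle rather than merges two. Thus the lemma reduces to the combinatorial claim: the flips are all splittings if and only if $A_\sigma\setminus A_{\tau_\sigma}\subseteq F_{\tau_\sigma}$.

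For the $(\Leftarrow)$ direction I would order the crossings of $p\subseteq F_{\tau_\sigma}$ strip-by-strip and process them from right to left inside each strip bounded by consecutive state circles $S_{j-1}, S_j$. By Definition \ref{d.Ftau}, a crossing $x\in R_{x_0}$ is geometrically trapped between the red crossings of $A_{\tau_\sigma}$ immediately above (in $C_{j-1}$) and immediately below (in $C_{j+1}$), with $x_0$ to its left. Locally, this forces both strands meeting at the site of $x$ to lie on the same circle of the partial state; flipping $x$ to its $A$-resolution therefore pinches off a new circle. Iterating preserves the degree at every step, so $\sigma$ is taut.

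For the $(\Rightarrow)$ direction, suppose $\sigma$ is taut. Theorem \ref{t.pyramid} provides some $A'_\sigma\subseteq A_\sigma$ in pyramidal position with the given parameters, and among all minimal states $\tau$ with $A_\tau\subseteq A_\sigma$ one can select the unique leftmost one $\tau_\sigma$ using the distance of Definition \ref{def.distance}. Assume for contradiction that the rightmost crossing $x\in A_\sigma\setminus A_{\tau_\sigma}$ fails to lie in $F_{\tau_\sigma}$. Then either $x$ sits strictly to the left of every red crossing in its own strip $C_j$, or $x$ lies past a red crossing in $C_{j-1}$ or $C_{j+1}$. In the first case one could swap $x$ for the leftmost red crossing of $\tau_\sigma$ in $C_j$ and preserve pyramidal position by the adjacency properties of Theorem \ref{t.pyramid}, contradicting leftmost-minimality of $\tau_\sigma$. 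In the second case, after first processing all flips in $F_{\tau_\sigma}\cap A_\sigma$ (which split, by the previous paragraph), the flip at $x$ bridges two arcs lying on distinct circles on either side of a pyramidal barrier, hence merges them; by Lemma \ref{l.split} the degree drops, contradicting tautness.

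The main obstacle will be the careful circle-counting in the $(\Rightarrow)$ direction, specifically verifying that a crossing outside $F_{\tau_\sigma}$ really does merge rather than split after the ``splitting'' flips have all been performed, and that the swap argument in the leftmost-minimality case genuinely produces a new minimal state. This forces an explicit analysis of how the arcs of $(K_+^n)_{\tau_\sigma}$ thread through the $n$-cabled twist regions of $\Sk^w$ and the strips $\Sk^t,\Sk^b$, exploiting the precise adjacency recorded in Theorem \ref{t.pyramid}(i)--(ii). The bookkeeping is made feasible by the fact that pyramidal position already pins down both the locations and the local splitting behavior of red crossings, so the analysis reduces to checking a finite number of local configurations of an added crossing relative to the red pyramid.
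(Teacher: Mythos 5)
Your overall strategy --- reduce to Lemma \ref{l.split} and analyze the flip sequence from $\tau_\sigma$ to $\sigma$ --- is the same as the paper's, and your $(\Leftarrow)$ direction usefully expands the paper's terse ``by construction.'' The $(\Rightarrow)$ direction, however, departs from the paper in two places where the argument as written does not go through, and both are exactly the ``obstacles'' you flagged at the end.

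The swap in your case 1 is not justified. The pyramidal condition of Theorem \ref{t.pyramid} constrains adjacent strips in both directions: adjacent pairs in $u_j$ must flank a crossing of $u_{j-1}$, \emph{and} adjacent pairs in $u_{j+1}$ must flank the crossings of $u_j$. Replacing the leftmost crossing $a_1$ of $u_j$ by some $x$ further left preserves the first constraint (a crossing of $u_{j-1}$ flanked by $(a_1,a_2)$ is still flanked by $(x,a_2)$), but may break the second: if $x$ lands to the left of the leftmost crossing of $u_{j+1}$, then no adjacent pair in $u_{j+1}$ flanks $x$, so the swapped set is not pyramidal. It then does not define a minimal state with the same parameters $(c,k)$, and your contradiction with leftmost-minimality evaporates. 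The paper's case (1) uses a different argument entirely: flipping $x$ on $\tau_\sigma$ yields an intermediate state $\sigma'$ with strictly fewer local through strands, and since all subsequent flips must split circles in order to preserve the degree, one cannot return to the parameters $(c,k)$, so $\sigma$ and $\tau_\sigma$ would disagree on their through-strand counts.

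The flip order in your case 2 also introduces an unverified claim. You process all of $F_{\tau_\sigma}\cap A_\sigma$ first and then assert that the two arcs meeting at $x$ lie on distinct circles of the intermediate state. But the circle structure of $A_{\tau_\sigma}\cup(F_{\tau_\sigma}\cap A_\sigma)$ is considerably more complicated than that of $\tau_\sigma$, and the ``pyramidal barrier'' picture you read off the minimal state is not shown to survive those flips. The paper's choice to flip $x$ \emph{first}, on the clean state $\tau_\sigma$, is the crucial simplification: there, the local configuration of Figure \ref{f:addedgesv} together with the leftmost property of $\tau_\sigma$ shows directly that the flip at $x$ merges a pair of circles, so the degree drops by Lemma \ref{l.split}. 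Adopting that flip order makes the circle bookkeeping local and finite in exactly the way you were hoping for.
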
 

\begin{proof} 
By construction, if a state $\sigma$ is such that 
\[ 
A_{\sigma} = A_{\tau_{\sigma}} \cup p 
\] 
where $p$ is a subset of $F_{\tau_{\sigma}}$, then $\sigma$ is a taut state. 

Conversely, suppose by way of contradfiction that $\sigma$ is taut, 
which means that it has the same parameters $(c, k)$ as its leftmost 
minimal state $\tau_{\sigma}$ with the same degree, but that there is a crossing 
$x\in A_{\sigma}$ and $x\notin F_{\tau_{\sigma}}$. Then there are two cases:
\begin{enumerate}
\item 
$x$ is to the left or to the right of all the edges in $A_{\tau_{\sigma}}$. 
\item 
$x \in C_j$ is between $x', x'' \in C_j$ in $A_{\tau_{\sigma}}$
for some $j$. 
\end{enumerate} 

In both cases we consider the state $\sigma'$ where 
\[
A_{\sigma'} = A_{\tau_{\sigma}}\cup \{x\},
\] 
and we assume that taking the $A$-resolution on $x$ splits off a circle 
from the skein element $\overline{\Sk(k_0,\sigma)}$. Otherwise, by 
Lemma \ref{l.split} applied to a sequence from $\tau_{\sigma}$ to $\sigma$ starting with changing the resolution from $B$ to $A$ on $x$, 
\[
\deg_v v^{\sgn(\sigma)}\langle \overline{\Sk(k_0,\sigma)} 
\rangle < \deg_v v^{\sgn(\tau_{\sigma})}\langle 
\overline{\Sk(k_0,\tau_\sigma)}\rangle \,, 
\] 
a contradiction to $\sigma$ being taut. 

In case (1), the state $\sigma'$ has parameters $(c, k')$ such that 
$\sum_{i=1}^m k_i' <  \sum_i^m k_i$. If each step of a sequence from 
$\sigma'$ to $\sigma$ splits a circle in order to maintain the degree, 
then the parameters for $\sigma$,  and hence the number of global through 
strands of $\Sk(k_0, \sigma)$ will differ from $\Sk(k_0, \tau_{\sigma})$, a contradiction.  

In case (2), we have that $x \notin F_{\tau_{\sigma}}$ must be an edge of the following 
form between a pair of edges $x', x''$ as indicated in the generic local 
picture shown in Figure \ref{f:addedgesv}, since $\tau_{\sigma}$ is assumed to be leftmost.

\begin{figure}[!htpb]
\centering
\def \svgwidth{.3\columnwidth}
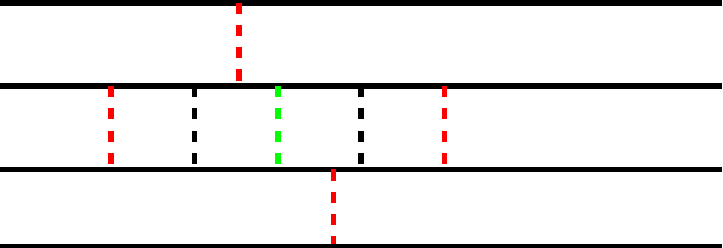
\caption{The crossing $x$ corresponds to the green edge.}
\label{f:addedgesv}
\end{figure}
Choosing the $A$-resolution at $x$ merges a pair of circles in $\overline{\Sk(k_0, \tau_{\sigma})}$ which means that $d(k_0, \sigma) < d(k_0, \tau_{\sigma})$, a contradiction.
\end{proof}

\subsection{Adding up all taut states in $st(c,k)$}

Note that in general there may be many taut states $\sigma$ with
fixed parameters $(c = c(\sigma), k = k(\sigma))$.

\begin{theorem} 
\label{thm:cancelling}
Let $c, k = \{k_i\}_{i=1}^m$ be tight. The sum 
\begin{equation} 
\label{eq:sum} 
\sum_{\sigma \text{ taut} : c(\sigma)=c, k_i(\sigma) = k_i} 
v^{\sgn(\sigma)} \langle \Sk(k_0,\sigma) \rangle  = (-1)^{q_0(n-k_0)+n+k_0
+\sum_{i=1}^{m}(n-k_i)(q_i-1)} v^{d(k_0, \tau)} + l.o.t., 
\end{equation} 
where $\tau$ is a minimal state in the sum.
\end{theorem}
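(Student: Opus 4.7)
The plan is to invoke Lemma \ref{l.taut} to parametrize the taut states and then carry out a sign-cancellation argument via the binomial identity $(1-1)^{|F_\tau|}$.

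First, Lemma \ref{l.taut} presents every taut state $\sigma$ with tight parameters $(c,k)$ uniquely as a pair $(\tau,p)$, where $\tau=\tau_\sigma$ is the leftmost minimal state contained in $\sigma$ and $p\subseteq F_\tau$. From Definition \ref{d.Ftau} and the proof of Lemma \ref{l.taut}, resolving each crossing of $p$ by $A$ splits off one additional circle of $\overline{\Sk(k_0,\tau)}$, so $\sgn(\sigma)=\sgn(\tau)-2|p|$ and $\oo(A_\sigma)=\oo(A_\tau)+|p|$. Combining with Lemma \ref{l.mindegree}, the factor $v^{-2|p|}$ from the change in $\sgn$ cancels against the $v^{2|p|}$ from the extra circles, while each new $(-v^{-2}-v^2)$ contributes the leading monomial $-v^2$, producing an overall factor $(-1)^{|p|}$. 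Hence the leading term of $v^{\sgn(\sigma)}\langle\Sk(k_0,\sigma)\rangle$ equals
\[
(-1)^{|p| + q_0(n-k_0)+n+k_0+\sum_{i=1}^m(n-k_i)(q_i-1)}\, v^{\delta(n,k)}.
\]

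Next, I would partition the sum over taut states according to the leftmost minimal state $\tau$. Because $F_\tau$ consists of crossings lying strictly to the right of $A_\tau$ (Definition \ref{d.Ftau}), adjoining any $p\subseteq F_\tau$ to $A_\tau$ cannot yield a pyramidal subset of $A_\tau\cup p$ lying further to the left; therefore every $p\subseteq F_\tau$ produces a taut state whose leftmost minimal state is still $\tau$. The contribution of these states is then
\[
(\text{common sign})\cdot v^{\delta(n,k)}\sum_{\tau\text{ minimal}}\sum_{p\subseteq F_\tau}(-1)^{|p|}+l.o.t.,
\]
and the inner sum equals $(1-1)^{|F_\tau|}$, which is $1$ if $F_\tau=\emptyset$ and $0$ otherwise.

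Finally, I would identify the minimal states with $F_\tau=\emptyset$: these are exactly those whose pyramidal crossings are packed maximally to the right in every $n$-cabled twist region of $\Sk^w$ and in each of the strips defining $\Sk^t,\Sk^b$. The explicit construction of Lemma \ref{l.constructm}, applied with the \emph{rightmost} rather than the \emph{leftmost} choice at each stage, produces exactly one such minimal state $\tau_*$; its contribution matches the right-hand side of \eqref{eq:sum}. The main obstacle is this last step: verifying that there is precisely one minimal state with $F_\tau=\emptyset$ and confirming that for every $p\subseteq F_\tau$ the pair $(\tau,p)$ corresponds to a taut state whose leftmost minimal state is still $\tau$. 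Both require the rigidity of the pyramidal structure from Theorem \ref{t.pyramid} and a careful analysis of how any rightward shift of a crossing in $A_\tau$ (preserving $(c,k)$) interacts with the $R_x$ constraints in Definition \ref{d.Ftau}.
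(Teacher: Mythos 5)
Your proposal is correct and follows essentially the same route as the paper: partition the taut states by their leftmost minimal state via Lemma \ref{l.taut}, observe that the group indexed by $\tau$ contributes $\sum_{j}\binom{|F_\tau|}{j}v^{\sgn(\tau)-2j}(-v^2-v^{-2})^{\oo(A_\tau)+j}$, whose top-degree coefficients cancel as $(1-1)^{|F_\tau|}$ unless $F_\tau=\emptyset$, leaving only the rightmost minimal state. The "obstacle" you flag (uniqueness of the survivor and stability of the leftmost minimal state under adjoining $p\subseteq F_\tau$) is exactly the point the paper also treats tersely, resolved by the rigidity of the pyramidal position in Theorem \ref{t.pyramid}.
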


We are finally ready to prove Theorem \ref{thm:cancelling}.

\begin{proof}
Every minimal state with parameters $c, k$ may be obtained from the 
leftmost minimal state of the entire set of minimal states $\mathcal{M}$ by transposing to the right. Now we organize the sum \eqref{eq:sum} by 
putting it into equivalence classes of states indexed by the leftmost 
minimal state $\tau_{\sigma}$. We may write

\begin{align*}
\sum_{\sigma \text{ taut} : c(\sigma)=c, k_i(\sigma) = k_i}
v^{\sgn(\sigma)}\langle \Sk(k_0, \sigma) 
\rangle &= \sum_{\text{$\tau$ minimal}} \ \ 
\sum_{\sigma \,: \,  \tau_{\sigma} = \tau} v^{\sgn(\sigma)}\langle \Sk(k_0, \sigma) 
\rangle.
\\
\intertext{By Lemma \ref{l.taut}, this implies}
\sum_{\sigma \text{ taut} : c(\sigma)=c, k_i(\sigma) = k_i}
v^{\sgn(\sigma)}\langle \Sk(k_0, \sigma) 
\rangle &= \sum_{\text{$\tau$ minimal}} 
\sum_{j=0}^{|F_{\tau}|} 
\binom{|F_{\tau}|}{j} v^{\sgn(\tau)-2j}(-v^2-v^{-2})^{o(A_{\tau})+j} \,.
\end{align*}
If $F_{\tau} \not= \emptyset$, then by a direct computation,  
\begin{align*}
\deg_v \left(\sum_{j=0}^{|F_{\tau}|} \binom{|F_{\tau}|}{j} 
v^{\sgn(\tau)-2j}(-v^2-v^{-2})^{o(A_\tau)+j} \right) 
& = \sgn(\tau) + 2o(A_\tau)-4|F_{\tau}| \\ & < 
\deg_v \left( v^{\sgn(\tau)} \langle \Sk(k, \tau) \rangle \right) = \delta(n, k)\,
\end{align*}
by Lemma \ref{l.mindegree}.

Every taut state can be grouped into a nontrivial canceling sum 
except for the rightmost minimal state. Thus it remains and determines the degree of the sum. 
\end{proof} 

\subsection{Proof of Theorem~\ref{thm.Delta}}
\label{sub.Delta}

Recall that $J_{K,n+1} = \sum_{c, k} \mathcal{G}_{c, k}$ and 
\[\mathcal{G}_{c, k} = \sum_{k_0}G_{k_0}(v)\sum_{\sigma: k_i(\sigma)=k_i, c(\sigma)=c} v^{\sgn(\sigma)} \langle N(I_{k_0}\oplus (K^n_+)_\sigma) \rangle\]
By the fusion and untwisting formulas we have 
\[
G_{k_0}(v)= (-1)^{q_0(n-k_0)}
\frac{\triangle_{2k_0}}{\theta(n, n, 2k_0)} 
v^{q_0(2n-2k_0+n^2-k_0^2)}.
\]
We apply the previous lemmas to compute for each $c, k$ the $v$-degree of the sum 
\[
\sum_{\sigma: k_i(\sigma)=k_i, c(\sigma) = c}v^{\sgn(\sigma)}\langle N(I_{k_0}\oplus (K^n_+)_\sigma) \rangle.
\]
When $c, k$ is tight the top degree part of the sum is $\mathcal{G}_{c, k}^{taut}$. By Theorem \ref{thm:cancelling}, we have that the coefficient and the degree of the leading term are given by a minimal state $\tau$ with parameters $c, k$.  The degree is computed to be $\delta(n, k)$ in Lemma \ref{l.mindegree}, which also determines the leading coefficient. 

When $\sigma$ is a state such that $c, k$ is not tight, and $k_0 \geq c(\sigma)/2$ or $k_0 \geq \sum_{i=1}^m k_i(\sigma)$, Lemma \ref{l.zero} says that $\Sk(k_0, \sigma)$ is zero.  Otherwise, Lemma \ref{l.excludeodd} says that there exists a taut state corresponding to a tight $c', k'$ that has strictly higher degree. 
\qed

%%%%%%%%%%%%%%%%%%%%%%%%%%%%%%%%%%%%%%%%%%%%%%%%%%%%%%%%%%%%%%%%%%%%%%%%%%
%%%%%%%%%%%%%%%%%%%%%%%%%%%%%%%%%%%%%%%%%%%%%%%%%%%%%%%%%%%%%%%%%%%%%%%%%%

\section{Quadratic integer programming}
\label{sec.QIP}

In this section we collect some facts regarding real and lattice optimization
of quadratic functions.

\subsection{Quadratic real optimization}
\label{sub.QIPr}

We begin with considering the well-known case of real optimization.

\begin{lemma}
\label{lem.prog1}
Suppose that $A$ is a positive definite $m \times m$ matrix and $b \in \BR^m$.
Then, the minimum
\begin{equation}
\min_{x \in \BR^m} \frac{1}{2} x^t A x + b \cdot x
\end{equation}
is uniquely achieved at $x=-A^{-1}b$ and equals $-\frac{1}{2} b^t A b$.
\end{lemma}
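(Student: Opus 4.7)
The plan is to invoke the standard completion-of-the-square argument for a strictly convex quadratic. Since $A$ is positive definite it is in particular symmetric and invertible, so $A^{-1}$ exists and is also positive definite.

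First I would rewrite the objective function by algebraic manipulation. Setting $x_\star = -A^{-1}b$, a direct expansion gives
\begin{equation*}
\tfrac{1}{2}(x-x_\star)^t A (x-x_\star) = \tfrac{1}{2} x^t A x - x^t A x_\star + \tfrac{1}{2} x_\star^t A x_\star = \tfrac{1}{2} x^t A x + b\cdot x + \tfrac{1}{2} b^t A^{-1} b,
\end{equation*}
using $Ax_\star = -b$ and the symmetry of $A$. Rearranging,
\begin{equation*}
\tfrac{1}{2} x^t A x + b\cdot x = \tfrac{1}{2}(x-x_\star)^t A (x-x_\star) - \tfrac{1}{2} b^t A^{-1} b.
\end{equation*}

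The second step is to observe that, since $A$ is positive definite, the first term on the right-hand side is non-negative and vanishes if and only if $x-x_\star = 0$. This immediately shows that the minimum over $x \in \BR^m$ is uniquely attained at $x = x_\star = -A^{-1}b$ and equals $-\tfrac{1}{2} b^t A^{-1} b$ (which I read as the intended meaning of the right-hand side in the statement).

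There is no substantive obstacle here; the only thing to be careful about is that the symmetry of $A$ is used implicitly when expanding the cross terms and when identifying the gradient. An alternative is to compute $\nabla f(x) = Ax + b$ and note that $A$ positive definite means the Hessian $A$ is positive definite everywhere, so the unique critical point $x_\star = -A^{-1}b$ is the unique global minimum; substituting back recovers the same value. I would give the completion-of-the-square version as it produces the value and the uniqueness simultaneously in a line.
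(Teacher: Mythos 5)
Your completion-of-the-square argument is correct and self-contained, and it differs from the paper's proof, which instead argues that the objective is a proper (coercive) smooth function with $x = -A^{-1}b$ as its unique critical point, and that this critical point is a local (hence, by properness and uniqueness, global) minimum because the Hessian $A$ is positive definite. Your alternative remark at the end is essentially the paper's route. The completion of the square has the advantage of producing the minimizer, the minimal value, and uniqueness in a single identity without appealing to properness or to the local-to-global step; the paper's approach is shorter to state but leaves a bit more to the reader. You also correctly flag that the stated minimal value should read $-\tfrac{1}{2} b^t A^{-1} b$ (as your computation shows), not $-\tfrac{1}{2} b^t A b$; the latter is a typo in the lemma statement. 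One minor point worth making explicit: positive definiteness as used here includes symmetry of $A$, which you invoke when collapsing the cross terms; if $A$ were merely a matrix with $x^t A x > 0$ for $x \neq 0$ but not symmetric, one would replace $A$ by its symmetric part before completing the square.
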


\begin{proof}
The function is proper with the only critical point at $x=-A^{-1}b$ which is
a local minimum since the Hessian of $A$ is positive definite.
\end{proof}

For a vector $v \in \BR^m$, we let $v_i$ for $i=1,\dots, m$ denote its
$i$th coordinate, so that $v=(v_1,\dots,v_m)$. When $v_i$'s are nonzero for all 
$i$, we set $v^{-1}=(v_1^{-1},\dots,v_m^{-1})$.

The next lemma concerns optimization of convex separable 
functions $f(x)$, that is, functions of the form
\be
\label{eq.fabx}
f(x)= \sum_{i=1}^m f_i(x_i), \qquad f_i(x_i)= a_i x_i^2 + b_i x_i
\ee
where $a_i >0$ and $b_i$ are real for all $i$. The terminology follows
Onn~\cite[Sec.3.2]{Onn}. 

\begin{lemma}
\label{lem.prog2}
\rm{(a)} 
Fix a separable convex function $f(x)$ as in~\eqref{eq.fabx} 
and a real number $t \in \BR$. Then the minimum
\begin{equation}
\min \{ f(x) \,\, | \,\, \sum_i x_i=t, \,\, x \in \BR^m \}
\end{equation}
is uniquely achieved at $x^*(t)$ where 
\begin{equation}
\label{eq.x*}
x_i^*(t)= \frac{a_i^{-1} t 
+ \frac{1}{2} \sum_j (b_j-b_i) a_i^{-1} a_j^{-1}}{\sum_j a_j^{-1}}, 
\end{equation}
and
\begin{equation}
\label{eq.fx*}
f(x^*(t)) = \frac{1}{1 \cdot a^{-1}} t^2 
+ \frac{b \cdot a^{-1}}{1 \cdot a^{-1}} t + s_0(a,b) \,
\end{equation}
where $1 \in \BZ^m$ denotes the vector with all coordinates equal to $1$, and $s_0(a, b)$ is a rational function in coordinates $a = (a_1, \ldots, a_m)$ and $b = (b_1, \ldots, b_m)$. 
\newline
\rm{(b)} If $t \gg 0$, then the minimum 
\begin{equation}
\min \{ f(x) \,\, | \,\, \sum_i x_i=t, \,\, x \in \BR^m, \,\, 0 \leq x_i,  \,\,\,
i=1,\dots,m \}
\end{equation}
is uniquely achieved at~\eqref{eq.x*} and given by~\eqref{eq.fx*}.
\end{lemma}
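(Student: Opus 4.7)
The plan is a standard Lagrange multiplier argument together with a strict convexity check, followed by an observation that the unconstrained optimizer is feasible when $t$ is large.

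For part (a), I would form the Lagrangian $L(x,\lambda) = f(x) - \lambda(\sum_i x_i - t)$ and set $\partial L/\partial x_i = 2 a_i x_i + b_i - \lambda = 0$, which gives $x_i = (\lambda - b_i)/(2 a_i)$. Substituting into $\sum_i x_i = t$ yields
\[\lambda = \frac{2t + \sum_j b_j a_j^{-1}}{\sum_j a_j^{-1}},\]
and reinserting this into the expression for $x_i$ recovers \eqref{eq.x*} after a short rearrangement. Uniqueness of the critical point as a minimum follows from strict convexity: the Hessian of $f$ is the positive-definite diagonal matrix $2\,\mathrm{diag}(a_i)$, which remains positive definite when restricted to the affine hyperplane $\{\sum x_i = t\}$, so the function is proper and strictly convex on this hyperplane.

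Next I compute $f(x^*(t))$ by writing $x_i^*(t) = \alpha_i t + \gamma_i$ with $\alpha_i = a_i^{-1}/(1 \cdot a^{-1})$ and $\gamma_i = \tfrac{1}{2}\sum_j (b_j - b_i) a_i^{-1} a_j^{-1}/(1 \cdot a^{-1})$. The $t^2$-coefficient of $\sum_i a_i (x_i^*)^2$ equals $\sum_i a_i \alpha_i^2 = 1/(1 \cdot a^{-1})$, matching \eqref{eq.fx*}. The $t$-coefficient picks up $\sum_i b_i \alpha_i = (b \cdot a^{-1})/(1 \cdot a^{-1})$ from the linear part of $f$ and $2 \sum_i a_i \alpha_i \gamma_i$ from the cross term in $\sum_i a_i (x_i^*)^2$; the latter is proportional to the antisymmetric double sum $\sum_{i,j}(b_j - b_i) a_i^{-1} a_j^{-1}$ and therefore vanishes. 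The remaining $t$-independent contribution defines $s_0(a,b)$.

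For part (b), the leading $t$-coefficient of $x_i^*(t)$ is $\alpha_i = a_i^{-1}/(1 \cdot a^{-1}) > 0$ since all $a_j > 0$. Hence $x_i^*(t) \to +\infty$ as $t \to \infty$, and for $t$ sufficiently large the interior-feasibility condition $x_i^*(t) \geq 0$ holds for every $i$. Consequently the unconstrained optimizer of (a) lies in the feasible region of (b), so the constrained minimum equals the unconstrained one and is still given by \eqref{eq.fx*}. I do not expect any real obstacle in this argument; the only slightly delicate step is verifying the vanishing of the $t$ cross term in part (a), which follows cleanly from the $i \leftrightarrow j$ antisymmetry of $(b_j - b_i)$.
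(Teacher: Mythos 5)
Your proof is correct and follows essentially the same route as the paper: Lagrange multipliers yielding the same formula for $\lambda$ and $x_i^*$, strict convexity for uniqueness, and feasibility of the unconstrained optimizer for $t \gg 0$ in part (b). The only difference is that you spell out the "elementary calculation" of the $t$-coefficient of $f(x^*(t))$ via the antisymmetry of $\sum_{i,j}(b_j-b_i)a_i^{-1}a_j^{-1}$, which the paper leaves implicit.
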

Note that the coordinates of the minimizer $x^*(t)$ are linear functions
of $t$ for $t \gg 0$; we will call such minimizers linear. It is obvious that
the minimal value is then quadratic in $t$ for $t \gg 0$.

\begin{proof}
Let $f(x)=\sum_j a_j x_j^2 + b_j x_j$ and $g(x)=\sum_j x_j$ and
use Lagrange multipliers.
$$
\begin{cases}
\nabla f = \lambda \nabla g \\
g = t \,.
\end{cases}
$$
So, $2 a_j x_j + b_j = \lambda$ for all $j$, hence 
$x_j + b_j/(2 a_j)=\lambda/(2 a_j)$ for all $j$. Summing up, we get 
$t+\sum_j b_j/(2 a_j)=\lambda \sum_j 1/(2 a_j)$. Solving for $\lambda$,
we get $\lambda=\frac{2t+ \sum_j b_j a_j^{-1}}{\sum_j a_j^{-1}}$ and using
$$
x_i=\frac{\lambda-b_i}{2 a_i} 
= \frac{2t+\sum_j (b_j-b_i)a_j^{-1}}{2 a_i \sum_j a_j^{-1}}=
\frac{a_i^{-1} t 
+ \frac{1}{2} \sum_j (b_j-b_i) a_i^{-1} a_j^{-1}}{\sum_j a_j^{-1}}
\,,
$$ 
Equation~\eqref{eq.x*} follows. Observe that $x^*(t)$ is an affine linear
function of $t$. It follows that $f(x^*(t))$ is a quadratic function of $t$.
An elementary calculation by plugging in $x^*$ into $f$ gives~\eqref{eq.fx*} for an explicit rational
function $s_0(a,b)$, which is the portion of $f(x^*(t))$ that does not involve $t$. 

If in addition $t \gg 0$ observe that $x^*(t)=\frac{t}{1 \cdot a^{-1}} a^{-1}
+O(1)$, therefore $x^*(t)$ is in the simplex $x_i \geq 0$ for all $i$
and $\sum_i x_i =t$. The result follows. 
\end{proof}

\subsection{Quadratic lattice optimization}
\label{sub.QIP}

In this section we discuss the lattice optimization problem
\begin{equation}
\label{eq.QIPf}
\min \{ f(x) \,\, | \,\, A x = t, \,\, x \in \BZ^m, \,\, 0 \leq x \leq t \}
\end{equation}
for a nonnegative integer $t$, where $A=(1,1,\dots,1)$ is a $1 \times m$
matrix and $f(x)$ is a convex separable function~\eqref{eq.fabx} with
$a, b \in \BZ^m$ with $a>0$. We will follow the terminology and notation from Onn's book~\cite{Onn}. 
In particular the set $x \in \BZ^m$ satisfying
the above conditions $A x = t$ and $0 \leq x_i \leq t$ is called a feasible set. 
Lemma 3.8 of Onn~\cite{Onn} gives a necessary and sufficient condition 
for a lattice vector $x$ to be optimal. In the next lemma, suppose that a 
feasible $x \in \BZ^m$ is non-degenerate, that is, $x_i <t$ and $x_j>0$ 
for all $i,j$. Note that this is not a serious restriction since otherwise
the problem reduces to a lattice optimization problem of the same shape
in one dimension less.

\begin{lemma}
\cite{Onn}
Fix a feasible $x \in \BZ^m$ which is non-degenerate. Then it is optimal (i.e.,
a lattice optimizer for the problem~\eqref{eq.QIPf}) 
if and only if it satisfies the certificate
\be
\label{eq.cert}
2 (a_i x_i - a_j x_j) \leq (a_i+a_j) -(b_i-b_j) \,.
\ee
\end{lemma}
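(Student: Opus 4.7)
The plan is to rewrite the certificate as a statement about the elementary lattice moves $x \mapsto x+e_j-e_i$ (with $e_\ell$ the $\ell$th standard basis vector in $\BZ^m$) and then, for the harder direction, chain such moves into a monotone path between $x$ and any competitor $y$, using convexity of each $f_\ell$ to propagate the certificate along the path.

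First I would compute the forward difference
$$\Delta_{ij}(x) := f(x+e_j-e_i)-f(x) = 2(a_j x_j - a_i x_i) + (a_i + a_j) + (b_j - b_i),$$
so that \eqref{eq.cert} is exactly the assertion that $\Delta_{ij}(x) \geq 0$ for every ordered pair $i \neq j$. The non-degeneracy assumption $0 < x_\ell < t$ for all $\ell$ guarantees that the perturbed point $x+e_j-e_i$ still lies in the feasible box and still satisfies $A(x+e_j-e_i)=t$. Hence if $x$ is a lattice optimizer for \eqref{eq.QIPf}, each such perturbation can only increase $f$, and the certificate \eqref{eq.cert} follows immediately.

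For the converse, assume \eqref{eq.cert} holds at $x$ and let $y \neq x$ be any feasible lattice point. Because $\sum_\ell(y_\ell - x_\ell)=0$, I would construct a finite sequence $x = x^{(0)}, x^{(1)}, \dots, x^{(L)} = y$ of feasible lattice points, with $L = \tfrac{1}{2}\sum_\ell |x_\ell - y_\ell|$, such that $x^{(k+1)} = x^{(k)} + e_{j_k} - e_{i_k}$ for indices $i_k$ with $x^{(k)}_{i_k} > y_{i_k}$ and $j_k$ with $x^{(k)}_{j_k} < y_{j_k}$. A straightforward induction shows that along this sequence each coordinate $x^{(k)}_\ell$ stays monotonically between $x_\ell$ and $y_\ell$, so all intermediate points remain in the feasible box and, crucially,
$$x^{(k)}_{j_k} \geq x_{j_k} \qquad \text{and} \qquad x^{(k)}_{i_k} \leq x_{i_k}$$
at every step.

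Plugging these two inequalities into the formula for $\Delta$ and using $a_{i_k}, a_{j_k} > 0$ gives
$$\Delta_{i_k j_k}(x^{(k)}) \geq \Delta_{i_k j_k}(x) \geq 0,$$
the last inequality being the assumed certificate \eqref{eq.cert}. Telescoping,
$$f(y) - f(x) = \sum_{k=0}^{L-1} \Delta_{i_k j_k}(x^{(k)}) \geq 0,$$
which establishes optimality of $x$. The only real obstacle is the bookkeeping in the ``if'' direction, namely verifying the monotone invariant that lets one compare $\Delta_{i_k j_k}(x^{(k)})$ with $\Delta_{i_k j_k}(x)$; this is a standard Graver-type test-set argument (as in \cite{Onn}), made manageable here by the fact that the defining matrix $A=(1,\dots,1)$ has Graver basis consisting precisely of the moves $\pm(e_i - e_j)$.
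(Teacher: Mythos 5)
Your argument is correct, and it both matches and strictly contains the paper's proof. The paper's proof is essentially a citation: it invokes Lemma 3.8 of Onn, which states that for a separable convex objective a feasible point is optimal if and only if it is not improved by any element of the Graver basis $G(A)$, computes $G((1,\dots,1))=\{e_j-e_i\}$, and checks that $f(x)\leq f(x+e_j-e_i)$ unwinds to \eqref{eq.cert} — exactly your forward-difference computation $\Delta_{ij}(x)\geq 0$. What you do differently is reprove the nontrivial ``if'' direction of Onn's lemma from scratch in this special case: the monotone augmenting path $x=x^{(0)},\dots,x^{(L)}=y$ with the invariant that each coordinate stays between $x_\ell$ and $y_\ell$, which gives $x^{(k)}_{j_k}\geq x_{j_k}$, $x^{(k)}_{i_k}\leq x_{i_k}$, hence $\Delta_{i_kj_k}(x^{(k)})\geq\Delta_{i_kj_k}(x)\geq 0$ by positivity of the $a_\ell$, and the telescoping sum closes the argument. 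This buys a self-contained, elementary proof that uses only convexity of each $f_\ell$ and the structure of $A=(1,\dots,1)$, at the cost of a page of bookkeeping that the paper avoids by citing the general test-set theorem; the paper's route is shorter but opaque to a reader without access to \cite{Onn}. One minor remark: feasibility of the intermediate points $x^{(k)}$ is not actually needed for the telescoping inequality (the quadratic $f$ is defined on all of $\BZ^m$), though your invariant does deliver it for free; and the non-degeneracy hypothesis is used only in the ``only if'' direction, exactly as in the paper.
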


\begin{proof}
Lemma 3.8 of Onn~\cite{Onn} implies that $x$ is optimal if and only if 
$f(x) \leq f(x+g)$ for all $g \in G(A)$ where $G(A)$ is the Graver basis of
$A$. In our case, the Graver basis is given by the roots of the $A_{m-1}$ 
lattice, i.e., by
$$
G((1,1,\dots,1)) = \{ e_j-e_i \,\,| 1 \leq i,j \leq m, \,\, i \neq j \}.
$$
Let $g=e_j -e_i \in G(A)$ and $f(x)$ as in~\eqref{eq.fabx}. Then 
$f(x) \leq f(x+g)$ is equivalent to~\eqref{eq.cert}. 
\end{proof}

Below, we will call a vector quasi-linear if its coordinates are linear
quasi-polynomials. 

\begin{proposition}
\label{prop.QIP}
\rm{(a)}
Every non-degenerate lattice optimizer $x^*(t)$ of~\eqref{eq.QIPf} is 
quasi-linear of the form
\be
\label{eq.x*lattice}
x^*_i(t) = \frac{a_i^{-1}}{\sum_j a_j^{-1}} t + c_i(t)
\ee
for some $\varpi$-periodic functions $c_i$, where
\be
\label{eq.varpi}
\varpi=\sum_i \prod_{j \neq i} a_j \,.
\ee
\rm{(b)}
When $t \gg 0$ is an integer, the minimum value of~\eqref{eq.QIPf} is 
a quadratic quasi-polynomial
\begin{equation}
\label{eq.fx**}
\frac{1}{1 \cdot a^{-1}} t^2 
+ \frac{b \cdot a^{-1}}{1 \cdot a^{-1}} t + s_0(a,b)(t) 
\end{equation}
where $s_0(a,b)$ is a $\varpi$-periodic function of $t$. \\
\rm{(c)} For all $t>0$ the minimum value of~\eqref{eq.QIPf} is
\begin{equation}
\label{eq.fx***}
\frac{1}{1 \cdot a^{-1}} t^2 
+ \frac{b \cdot a^{-1}}{1 \cdot a^{-1}} t + O(1).
\end{equation}
\end{proposition}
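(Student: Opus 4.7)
The plan is to derive (a), (b), (c) in sequence, with the Graver-basis certificate~\eqref{eq.cert} from the preceding lemma as the central tool. For (a), applying~\eqref{eq.cert} with both index orderings $(i,j)$ and $(j,i)$ yields the symmetric bound
\[
|a_i x_i^*(t) - a_j x_j^*(t)| \leq \tfrac{1}{2}(a_i+a_j)+\tfrac{1}{2}|b_i-b_j|,
\]
which is independent of $t$. Combined with the single constraint $\sum_i x_i^*(t)=t$, elementary algebra forces
\[
x_i^*(t)=\alpha_i t + c_i(t), \qquad \alpha_i=\frac{a_i^{-1}}{\sum_j a_j^{-1}}=\frac{\prod_{j\neq i} a_j}{\varpi},
\]
with each $c_i(t)$ uniformly bounded in $t$.

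To establish $\varpi$-periodicity of the $c_i$, I would introduce the integer shift vector $w\in\BZ^m$ defined by $w_i=\prod_{j\neq i} a_j$. It has two crucial properties: $\sum_i w_i=\varpi$, and $a_i w_i=\prod_j a_j$ is independent of $i$. Consequently the map $x\mapsto x+w$ preserves every expression $a_i x_i - a_j x_j$ appearing in the certificate~\eqref{eq.cert}, as well as the non-negativity and the strict inequality $x_i<t$ (after the shift $t\mapsto t+\varpi$). Thus it sends non-degenerate optimizers for parameter $t$ bijectively to non-degenerate optimizers for $t+\varpi$. Selecting a family of optimizers compatible with this shift yields $c_i(t+\varpi)=c_i(t)$, completing (a).

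For (b), I would substitute $x_i^*(t)=\alpha_i t + c_i(t)$ into $f$ and collect terms by powers of $t$. The $t^2$-coefficient is immediately $\sum_i a_i\alpha_i^2=1/(1\cdot a^{-1})$, and the $b$-terms contribute $\sum_i b_i\alpha_i\cdot t=\frac{b\cdot a^{-1}}{1\cdot a^{-1}}t$. The potentially problematic cross term $2t\sum_i a_i\alpha_i c_i(t)$ vanishes identically, because $a_i\alpha_i=1/(1\cdot a^{-1})$ is independent of $i$, while $\sum_i c_i(t)=0$ follows at once from $\sum_i x_i^*=t$ and $\sum_i\alpha_i=1$. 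The residual $\sum_i\bigl(a_i c_i(t)^2+b_i c_i(t)\bigr)$ is then $\varpi$-periodic by (a) and defines $s_0(a,b)(t)$. For (c), since $\alpha_i>0$ and the $c_i$ are bounded, the lattice optimizer lies in the strict interior of the feasible region once $t$ exceeds some threshold $T$, so (b) applies verbatim for $t\geq T$; the finitely many positive integers $t<T$ contribute only bounded deviations, absorbed into the $O(1)$.

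The main obstacle I anticipate is the periodicity step in (a): optimizers need not be unique, so instead of hoping for a canonical choice I must construct a section of the optimizer-set compatible with the $+w$ shift, and verify that iterating this shift preserves both non-negativity and the certificate throughout, so that the resulting family $\{x^*(t)\}$ genuinely exhibits the claimed quasi-linear form.
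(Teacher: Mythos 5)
Your proposal is correct and follows essentially the same route as the paper: the heart of both arguments is the shift vector $w=(A_1,\dots,A_m)$ with $A_i=\prod_{j\neq i}a_j$, for which $a_iA_i-a_jA_j=0$, so that $x\mapsto x+w$ preserves the Graver-basis certificate~\eqref{eq.cert} and carries optimizers for $t$ to optimizers for $t+\varpi$, after which (b) follows by substitution using $\sum_i c_i(t)=0$ and (c) by absorbing the finitely many small $t$ into the $O(1)$. Your preliminary boundedness estimate from the symmetrized certificate and your explicit attention to selecting a shift-compatible family of (non-unique) optimizers are refinements the paper elides, but they do not change the approach.
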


Part (c) of Proposition \ref{prop.QIP} is what we will apply to the degree of the colored Jones polynomial.
Note that in general there are many minimizers of~\eqref{eq.QIPf}. 
Comparing with~\eqref{eq.x*} it follows that any lattice minimizer 
of~\eqref{eq.QIPf} is within $O(1)$ from the real minimizer.

\begin{proof}
Let $A_i=\prod_{j \neq i} a_j = a_1 \dots \hat{a}_i \dots a_m$, then
$\varpi=A_1+\dots + A_m$. 
Suppose $x^*$ satisfies the optimality criterion~\eqref{eq.cert} and
$A x^* = t$ where $A=(1,1,\dots,1)$. Let $x^{**}= x^* + (A_1,\dots,A_m)$.
Since $a_i A_i - a_j A_j =0$ for $i \neq j$, it follows that
$$
2 (a_i x^*_i - a_j x^*_j) = 2 (a_i x^{**}_i - a_j x^{**}_j) \,.
%\leq (a_i+a_j) -(b_i-b_j) \,.
$$
Hence $x^*$ satisfies the optimality criterion~\eqref{eq.cert} if and only if
$x^{**}$ does. Moreover, $A x^{**} = A x^* + \varpi= t+\varpi$.
Since $a_i^{-1}/(\sum_j a_j^{-1})=A_i/\varpi$, it follows that
every minimizer $x^*(t)$ satisfies the property that 
$x^*_i(t) - \frac{a_i^{-1}}{\sum_j a_j^{-1}} t$ is a $\varpi$-periodic function
of $t$. Part (a) follows. For part (b), write $x^*(t) = 
\frac{t}{1 \cdot a^{-1}} a^{-1} + c(t)$ and use the fact that $A c(t) =0$
to deduce that $f(x^*(t))$ is a quadratic quasi-polynomial of $t$ with
constant quadratic and linear term given by \eqref{eq.sqq}.
For part (c), note that by (b) there is a constant $C>0$ such that we get \eqref{eq.fx***} for all $t>C$ by taking the maximum absolute value of the periodic $s_0(a, b)$.
For $0\leq t\leq C$ both the function $f$ and the quadratic are bounded by a constant so the conclusion still holds. 
\end{proof}

\subsection{Application: the degree of the colored Jones polynomial}
\label{sub.application}

Recall that our aim is to compute the maximum of the degree function $\delta(k)= \delta(n, k)$ of the states in the state sum of the colored Jones polynomial with tight parameters $k_0 = \sum_{i=1}^m k_i$, see Theorem \ref{thm.Delta}. Here $k=(k_0, k_1, \ldots, k_m)$ and $q = (q_0, q_1, \ldots, q_m)$ are $(m+1)$-vectors and we make use of the assumption that $q_i$ is odd for all $0\leq i \leq m$. We will compute the maximum in two steps.  
 
{\bf Step 1:} We will apply Proposition~\ref{prop.QIP} to the function 
$\delta(k)$ (divided by $-2$, and ignoring 
the terms that depend on $n$ and $q=(q_0, \ldots, q_m)$ but not on $k$):
\begin{equation}
\label{eq.dk0}
-\frac{1}{2} \delta(k) 
= \sum_{i=1}^m (q_i-1)k_i^2 + (q_0+1) \Bigl( \sum_{i=1}^m k_i \Bigr)^2
+ \sum_{i=1}^m k_i (-2+q_0+q_i) 
\end{equation}
under the usual assumptions that $q_0 <0$, $q_i >0$ for $i=1,\dots,m$.
We assume that $k=(k_1,\dots,k_m)\in \BZ^m$. Restricting $\delta(k)$ 
to the simplex $k_i \geq 0$ and $t = k_1+\dots + k_m$, 
we apply Proposition~\ref{prop.QIP} (c). It follows that for $t> 0$, 
$$
\min_{\substack{k_i \geq 0 \\ \sum_i k_i \leq n}} \delta(k) = 
\min_{0 \leq t \leq n} Q_0(t) + O(1)\,,
$$
where
\begin{equation}
\label{eq.dt}
\qquad Q_0(t) =
s(q) t^2 +  s_1(q) t\,, 
\end{equation}
and $s(q)$, $s_1(q)$ are given by~\eqref{eq.sqq}. 
%\begin{equation}
%\label{eq.sq}
%s(q) = 1 +q_0 +\frac{1}{\sum_{i=1}^m (q_i-1)^{-1}}, \qquad
%s_1(q) = \frac{\sum_{i=1}^m(q_i+q_0-2)(q_i-1)^{-1}}{\sum_{i=1}^m (q_i-1)^{-1}} 
%\end{equation}
%and $s_0(q)$ is a $\varpi$-periodic function of $t$ where $\varpi$ is the 
%denominator of $s(q)$. 

{\bf Step 2:} Next it follows that 
$Q_0(t)$ is positive definite, degenerate, or negative definite if and 
only if $s(q)>0$, $s(q)=0$, or $s(q)<0$, respectively.  

{\bf Case 1:} $s(q)<0$. Then $Q_0(t)$ is negative definite and the minimum
is achieved at the boundary $t=n$. It follows that 
$$
\min_{\substack{k_i \geq 0 \\ \sum_i k_i \leq n}} \delta(k) 
= s(q) n^2 + s_1(q) n +O(1) \,.
$$

{\bf Case 2a:} $s(q)=0$, $s_1(q)\neq 0$. Then $Q_0(t)$ is a linear function of $t$ and
the minimum is achieved at $t=0$ or $t=n$ depending on whether $s_1(q) > 0$ or
$s_1(q)<0$, so we have
$$
\min_{\substack{k_i \geq 0 \\ \sum_i k_i \leq n}} \delta(k) 
= \begin{cases} O(1) & \text{if} \,\, s_1(q) > 0 \\
s_1(q)n+O(1) & \text{if} \,\, s_1(q) < 0 \,.
\end{cases} 
$$

{\bf Case 2b:} $s(q)=0=s_1(q)$. Now $t=0$ and $t=n$ both contribute equally so cancellation may occur. It does not because
the sign of the leading term is constant due to the parities of the $q_i$'s.

{\bf Case 3:} $s(q)>0$. Then $Q_0(t)$ is positive definite and 
Proposition~\ref{prop.QIP} implies that the lattice minimizers are near
$-s_1(q)/(2s(q))$ or at $0$, when $s_1(q)<0$ or $s_1(q) \geq 0$ and
the minimum value is given by:
$$
\min_{\substack{k_i \geq 0 \\ \sum_i k_i \leq n}} \delta(k) 
= \begin{cases} -\frac{s_1(q)^2}{4s(q)} +O(1) & \text{if} \,\, s_1(q) < 0 \\
O(1) & \text{if} \,\, s_1(q) \geq 0 \,.
\end{cases} 
$$
Note that cancellation of multiple lattice minimizers is ruled out because the signs of the leading terms are always the same due to the assumption on the parities of the $q_i$'s. Note also that the uncomputed $O(1)$ term above does not affect the proof of Theorem~\ref{thm.0}.

\begin{remark} It may be of interest to note that there are very few pretzel knots with $s(q)\geq 0$ and $s_1(q) = 0$.
These are cases 2b and 3 above where cancellations might occur if we had no control on the sign of the leading coefficients.
The case $P(-3,5,5)$ is mentioned in \cite{Manx} for its colored Jones polynomial with growing leading coefficient.
\end{remark} 

\begin{lemma}(Exceptional Pretzel knots)\\
\label{lem.exceptional}
The only pretzel knots with $q_0\leq -2<3\leq q_1,\ldots, q_m$ for which $s(q)\geq 0$ and $s_1(q) = 0$ are
\begin{enumerate}
\item $P(-3,5,5)$, $P(-3,4,7)$, $P(-2,3,5,5)$, with $s(q)=0$.
\item $P(-2,3,7)$, with $s(q)=\frac{1}{2}$.
\end{enumerate}
\end{lemma}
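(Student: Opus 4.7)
The plan is to first turn $s_1(q)=0$ into an explicit Diophantine equation in the $q_i$, then use $s(q)\geq 0$ together with $q_0\leq -2$ to pin $m$ and $q_0$ into a small finite range, and finally enumerate.

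First I would observe that $s_1(q)=0$ is equivalent to the vanishing of $\sum_{i=1}^m (q_i+q_0-2)/(q_i-1)$. Writing the numerator as $(q_i-1)+(q_0-1)$ splits this sum as $m+(q_0-1)\sum_{i=1}^m (q_i-1)^{-1}$, so the condition becomes
\[
\sum_{i=1}^m \frac{1}{q_i-1}\,=\,\frac{m}{1-q_0}.
\]
Substituting this value of $\sum_i (q_i-1)^{-1}$ into the defining formula for $s(q)$ gives
\[
s(q)\,=\,1+q_0+\frac{1-q_0}{m}\,=\,\frac{m+1+(m-1)q_0}{m},
\]
so $s(q)\geq 0$ is equivalent to $-q_0\leq (m+1)/(m-1)$. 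Combined with $q_0\leq -2$ this forces $(m+1)/(m-1)\geq 2$, hence $m\leq 3$, with $q_0\in\{-2,-3\}$ when $m=2$ and $q_0=-2$ when $m=3$.

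With only three cases left I would enumerate the integer solutions of the Diophantine equation above with $q_i-1\geq 2$. For $m=2,\,q_0=-2$ it clears to $(2q_1-5)(2q_2-5)=9$, giving $\{q_1,q_2\}\in\{\{3,7\},\{4,4\}\}$. For $m=2,\,q_0=-3$ it becomes $(q_1-3)(q_2-3)=4$, yielding $\{q_1,q_2\}\in\{\{4,7\},\{5,5\}\}$. For $m=3,\,q_0=-2$ it reduces to the classical unit-fraction equation $\sum_{i=1}^3 1/(q_i-1)=1$, whose solutions with $q_i-1\geq 2$ are $(q_i-1)\in\{(2,3,6),(2,4,4),(3,3,3)\}$ up to permutation.

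The final step is to discard the tuples that give links rather than knots, using the criterion recalled in the introduction that a pretzel $P(q_0,\ldots,q_m)$ is a knot iff exactly one $q_i$ is even, or all $q_i$ are odd with $m$ even. This eliminates $P(-2,4,4)$, $P(-2,3,4,7)$, and $P(-2,4,4,4)$, leaving the four knots in the statement, whose $s(q)$-values are read off from the formula in the first paragraph. The only substantive step is the algebraic simplification that reduces $s_1(q)=0$ to a single sum of reciprocals; after that the proof is a routine finite check, so I do not anticipate a real obstacle.
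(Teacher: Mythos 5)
Your proposal is correct and follows essentially the same route as the paper. Both reduce $s_1(q)=0$ to the unit-fraction equation $\sum_{i=1}^m (q_i-1)^{-1} = m/(1-q_0)$, substitute back into $s(q)$ to obtain a bound forcing $m\le 3$ and $q_0\in\{-2,-3\}$, and then enumerate the resulting Diophantine equations. The paper achieves the same bound by changing variables to $f_i=q_i-1$ and parameterizing by the value $c=s(q)$ (yielding $f_0=(c-2)\tfrac{m}{m-1}$ and $0\le c\le 2-3\tfrac{m-1}{m}$), whereas you solve directly for the inequality $-q_0\le (m+1)/(m-1)$; these are equivalent. The one place where you go a bit further than the paper's written proof is in explicitly invoking the knot-vs-link parity criterion to eliminate $P(-2,4,4)$, $P(-2,3,4,7)$, and $P(-2,4,4,4)$ — the paper enumerates all seven tuples but leaves the reader to discard the three links. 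That step is needed to match the statement, so your version is the cleaner one to include.
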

\begin{proof}
Changing variables to $f_i = q_i-1$ turns the two equations $s(q)\geq 0$ and $s_1(q) = 0$ into:
$f_0(f_1^{-1}+\dots+f^{-1}_m)+m = 0$ and
$2+f_0+\frac{1}{f_1^{-1}+\dots+f^{-1}_m} = c$ for some $c\geq 0$.
Solving for $f_0$ yields $f_0 = (c-2)\frac{m}{m-1}$. Since $f_0\leq -3$ we must have
$0\leq c\leq 2-3\frac{m-1}{m}$. This means there can only be such $c$ when $m=2$ or $3$.
Suppose $m=2$ then $c=0$ or $c=\frac{1}{2}$. In the first case we find $f_2=\frac{2f_1}{f_1-2}$ 
so the positive integer solutions are $(f_1,f_2) \in \{(3,6),(4,4),(6,3)\}$. In the case $c=\frac{1}{2}$ we find
$f_2 = \frac{3f_1}{2f_1-3}$ so $(f_1,f_2) \in \{(2,6),(3,3),(6,2)\}$.
Finally the case $m=3,c=0,f_0=-3$ yields $(f_1,f_2,f_3)\in \{(2,4,4),(2,3,6),(3,3,3)\}$ and permutations.
\end{proof}

\section{The colored Jones polynomial of Montesinos knots}
\label{sec.cjmontsinos}

In this section we will extend Theorem \ref{thm.Delta} to the class of Montesinos knots. For a Montesinos knot $K = K(r_0, r_1, \ldots, r_m)$, we will always consider the \emph{standard diagram}, also denoted by $K$,  coming from the unique continued fraction expansion of even length of each rational number as in the case of pretzel knots. Recall that our restriction to Montesinos links with precisely one negative tangle and the existence of reduced diagrams for Montesinos links means that we can assume $r_i[0] = 0$ for all $0\leq i \leq m$, see Section \ref{ss.Mclassify}. 
To build the diagram from simpler diagrams we introduce the 
tangle replacement move (in short, \TR-move), and study its effect
on the state-sum formula for the colored Jones polynomial.

\subsection{The \TR-move}

A \TR-move is a local modification of a link diagram $D$. Suppose $D$ 
contains a twist region $T$. Viewing $T$ as a rational tangle $T= \frac{1}{t}$ 
for some integer $t$ we may consider a new diagram $D_1$ obtained by 
replacing $T$ by the rational tangle $T_1 = r*\frac{1}{t}$ for some non-zero 
integer $r$ with the same sign as $t$. Alternatively, viewing $T$ as an 
integer tangle $t$ we 
replace it with $T_2 = \frac{1}{r} \oplus t$, also with $r$ having the same sign. 
Collectively these two operations are 
referred to as the \emph{\TR-moves}. Recall from Section \ref{sec.RT}, Equations \eqref{e.algt1}, \eqref{e.algt2} that we can construct a diagram of any rational tangle by a combination of \TR-moves, see also Figures \ref{f.R-move} and \ref{f.TRiterate}. We extend this to $n$-cabled tangle diagrams by labeling each arc in the diagram by $n$. 

\begin{figure}[H]
\def \svgwidth{.8\columnwidth}
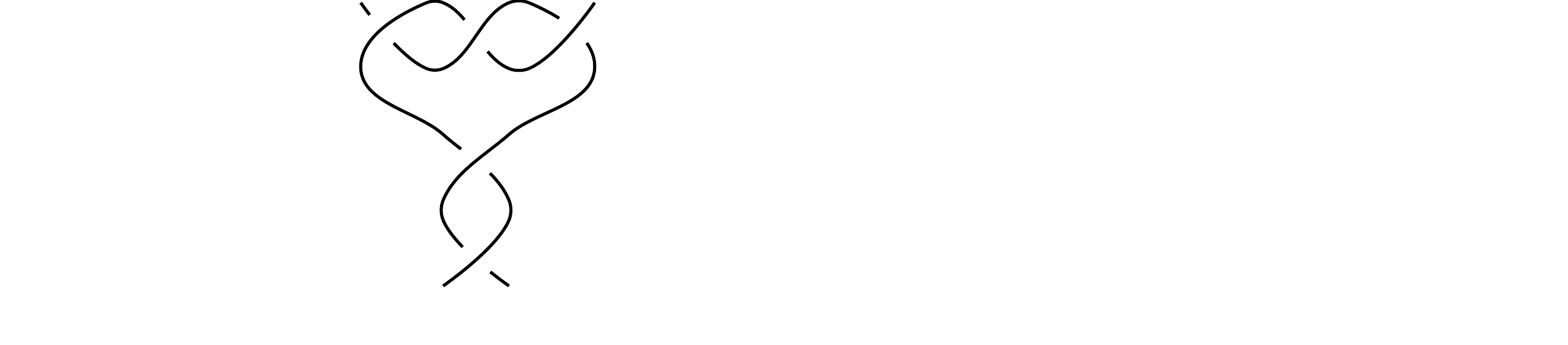
\caption{Two types of \TR-moves.}\label{f.R-move}
\end{figure}

\begin{figure}[H]
\def \svgwidth{.3\columnwidth}
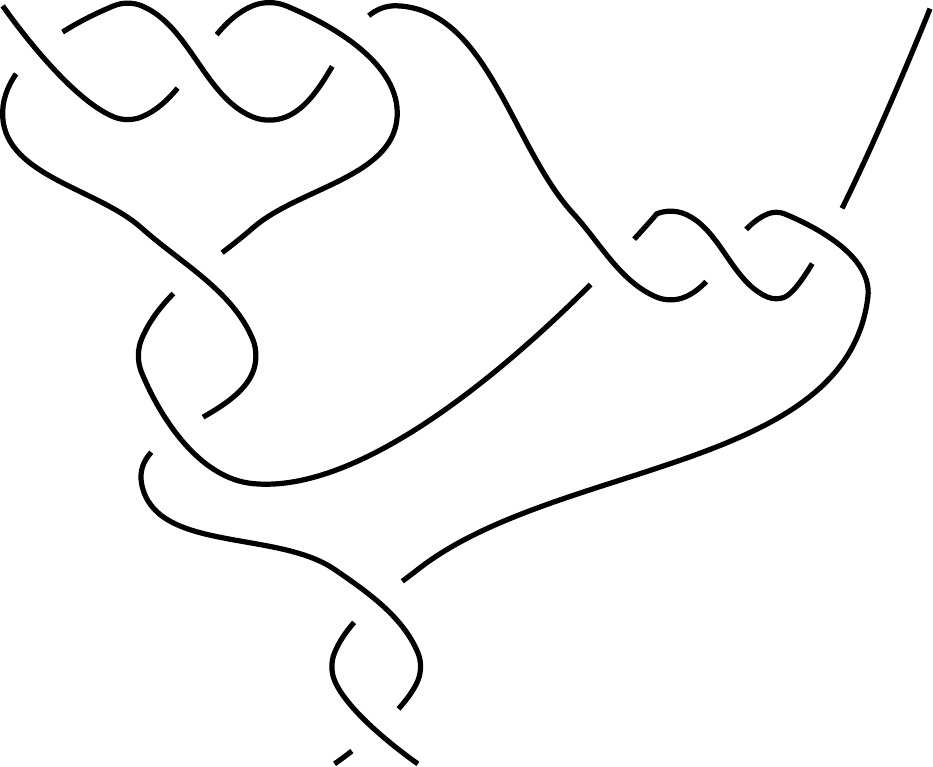
\caption{\label{f.TRiterate}Any rational tangle is produced by a combination of \TR-moves. In the picture shown, we have performed three \TR-moves: first on 
$1/t$, then $r$, then $1/r'$.  }
\end{figure}

We will use the \TR-moves to reduce a Montesinos knot to the associated pretzel knot by first reducing it to a special Montesinos knot. 

%\subsection{Montesinos state sum}
%\label{ssec.Msum}
%As in the case of pretzel knots we use a customized state sum to compute the colored Jones polynomial, splitting $K = N(K_-\oplus  K_+)$. \red{the following sentence doesn't quite make sense here}  In this case $K_-$ is the single twist region $1/r_0[1]$ and $K_+$ is the union of all other twist regions. As before we apply fusion and untwisting to $K_-$ and the usual Kauffman state sum to $K_+$ after cabling with the Jones-Wenzl idempotent of size $n$.

\subsection{Special Montesinos knot case }

We start by considering the case of Montesinos links $K(r_0, \ldots, r_m)$ where $\ell_{r_i} =2$ for all $0\leq i \leq m$. This includes the pretzel links by choosing the unique even length continued fraction expansion with $r_0[2] = -1$ and $r_i[2]=1$ for each $1 \leq i \leq m$. We call these links \emph{special Montesinos links}.
We prove the main theorem,  Theorem \ref{thm.2Delta},  for special Montesinos links. 
\begin{figure}[H]
\centering
\def \svgwidth{.7\columnwidth}
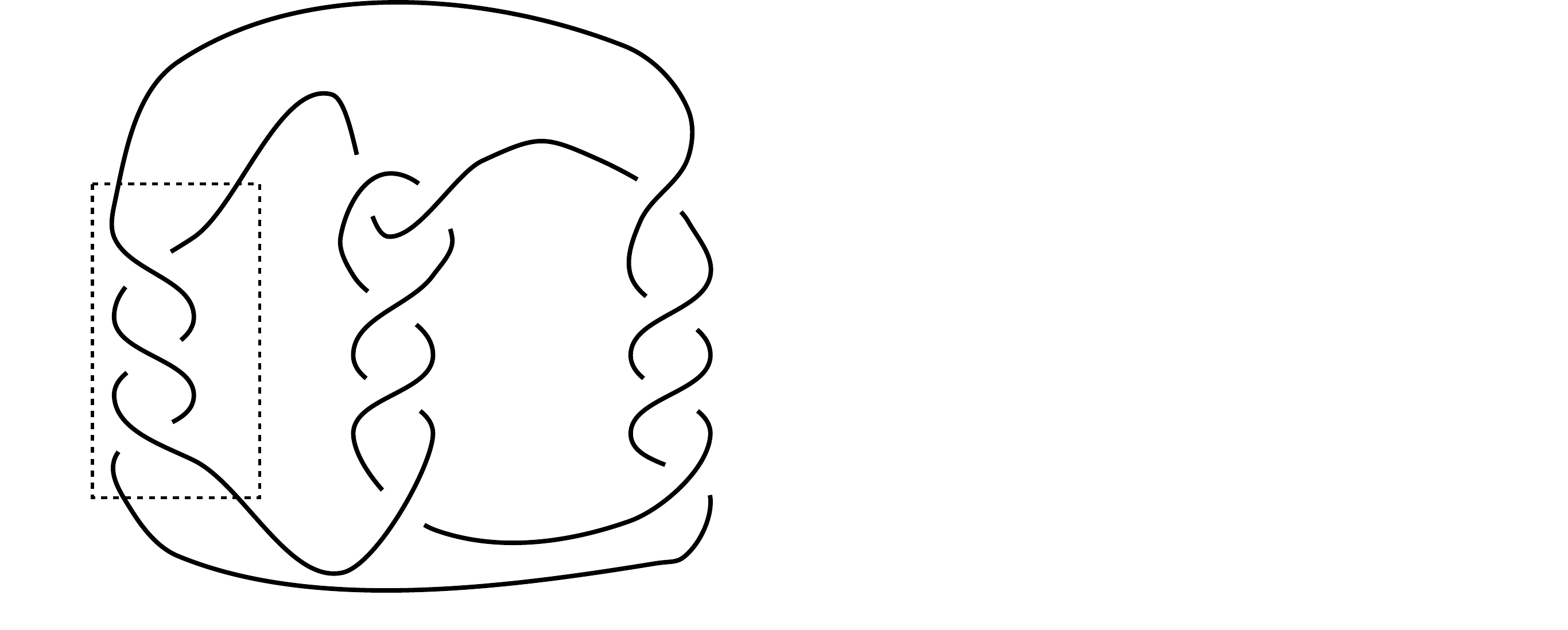
\caption{\label{f.sMontesinos} A special Montesinos knot $K = K(-\frac{1}{3}=\frac{1}{-2+\frac{1}{-1}}, \frac{2}{7} = \frac{1}{3+\frac{1}{2}}, \frac{1}{4} = \frac{1}{3+\frac{1}{1}} ) = N(K_-\oplus K_+)$, and $K^n = N(K_-^n \oplus K_+^n)$.}
\end{figure} 

As in the case of pretzel links we use a customized state sum to compute the colored Jones polynomial, splitting $K = N(K_-\oplus  K_+)$.   In this case $K_-$ is the single vertical 2-tangle $1/(r_0[1]-1)$ and $K_+$ is the 2-tangle that is the rest of the diagram. As before we apply the fusion and untwisting formulas  \eqref{eq.fusion}, \eqref{eq.untwisting}  to $K^n_-$ and the Kauffman state sum to $K^n_+$ after cabling with the $n$th Jones-Wenzl idempotent for the $n$th colored Jones polynomial. See Figure \ref{f.sMontesinos}. 

The methods used previously on the pretzel links also apply to this case with minor modifications.
In particular the notion of global through strands $c(\sigma)$ for a Kauffman state $\sigma$ on $K^n_+$ still makes sense and $k_i(\sigma)$ is still well-defined by restricting $\sigma$ to the $i$th tangle.
In this case $c_i(\sigma)$ means the number of through strands of the $i$th tangle of $K^n_+$ that are also global through strands, and as before $k_i = \lceil \frac{c_i}{2} \rceil$. Let $$\mathcal{G}_{c, k} = \sum_{k_0}\sum_{\sigma: k_i(\sigma)=k_i, c(\sigma) = c}G_{k_0}(v) v^{\sgn(\sigma)} \langle N(I_{k_0}\oplus (K^n_+)_\sigma) \rangle,$$ where as in the case of pretzel links, $I_{k_0}$ is the skein element in $\TL_{2n}$ in the sum obtained by applying the fusion and untwisting formulas to $K^n_-$, and $G_{k_0}(v)$ are the coefficients in rational functions of $v$. 

We have 
\be
\langle K^n \rangle = \sum_{(k_0,\sigma)}G_{k_0}(v)v^{\sgn(\sigma)}\langle N(I_{k_0} \oplus (K^n_+)_\sigma) \rangle = \sum_{c, k} \mathcal{G}_{c, k}. \notag
\ee
We prove the following theorem. 

\begin{theorem}
\label{thm.2Delta}
Consider $K = K(\frac{1}{r_0[1]+\frac{1}{-1}},\frac{1}{r_1[1]+\frac{1}{r_1[2]}},\dots,\frac{1}{r_m[1]+\frac{1}{r_m[2]}})$.
Assume $|r_i[1]|>1$, $|r_i[2]|>0$, $r_0[1] < 0 < r_i[1]$, and define $q_0 = r_0[1]-1$, $q_i = r_i[1]+1$ for $1\leq i \leq m$, $q_i' = r_i[2]$ for $1\leq i \leq m$. Referring to the state sum $\langle K^n\rangle= \sum_{c, k} \mathcal{G}_{c, k}$ we have the following: For a Kauffman state $\sigma$, define the parameters $c = c(\sigma), k=k(\sigma)$ to be tight if $k_0= k_1+\dots + k_m= \frac{c}{2}$.
For tight $c, k$ we have
$\mathcal{G}_{c, k} = (-1)^{q_0(n-k_0)+n+k_0
+\sum_{i=1}^{m}(n-k_i)(q_i-1)} v^{\delta(n, k)}+l.o.t.$\footnote{The abbreviation $l.o.t.$ means lower order terms in $v$.} and  $\frac{-\delta(n, k)}{2} =$
\be
\label{eq.2Deltank} (q_0+1)k_0^2 + \sum_{i=1}^m (q_i-1)k_i^2 + \sum_{i=1}^m(-2+q_0+q_i)k_i- \frac{n(n+2)}{2}\sum_{i=0}^m q_i + (m-1)n - \frac{n^2}{2} \sum_{i=1}^m (q'_i-1).
\ee
If $c,k$ are not tight then there exists a tight pair $c',k'$ (coming from some Kauffman state) such that $\deg_v \mathcal{G}_{c,  k} < \deg_v \mathcal{G}_{c',  k'}$.
\end{theorem}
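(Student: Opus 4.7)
The strategy parallels the proof of Theorem~\ref{thm.Delta}. Split $K^n = N(K_-^n \oplus K_+^n)$ where $K_-$ is the single vertical twist region at the bottom of the negative tangle (with $|r_0[1]-1|$ crossings, obtained from $\frac{1}{r_0[1]+1/(-1)}$) and $K_+$ is the rest of the diagram. Apply the fusion and untwisting formulas to $K_-^n$ to obtain $G_{k_0}(v)$ and the skein element $I_{k_0}$, and expand $K_+^n$ as a Kauffman state sum over all of its crossings, including those inside the horizontal twist regions. Group states by global through strands $c(\sigma)$ and local through strands $k_i(\sigma)$, where $k_i(\sigma)$ is defined via the vertical twist region $\frac{1}{r_i[1]}$ at the bottom of the $i$-th tangle, bypassing its horizontal twist region on top. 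This yields a state sum $\langle K^n \rangle = \sum_{c, k} \mathcal{G}_{c, k}$ formally identical to \eqref{eq.sum}.

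The key technical step will be to adapt the pyramidal-position analysis of Theorem~\ref{t.pyramid}. Extend the decomposition $K_+^n = \Sk^t \times \Sk^w \times \Sk^b$ of Figure~\ref{fig:decomp} so that the $r_i[2] = q'_i$ cabled horizontal crossings of each tangle (sitting between the outer Jones-Wenzl idempotent and the vertical region $\frac{1}{r_i[1]}$) are incorporated into $\Sk^w$. For a minimal state $\tau$ with tight $c, k$, I expect the $A$-resolutions inside each vertical twist region to form the same pyramidal configuration as in the pretzel case, producing $(r_i[1]-1)k_i^2 = (q_i-2)k_i^2$ $A$-resolutions via Lemma~\ref{l.pyramid}. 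Inside each horizontal twist region, I expect the minimal state to take the $B$-resolution at every cabled crossing: any $A$-resolution there either merges a pair of state circles (decreasing the degree by Lemma~\ref{l.split}), or creates a crossingless element adjacent to the outer Jones-Wenzl idempotent that vanishes by property~\ref{list:prop1}.

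The new degree formula then follows by bookkeeping. Each horizontal twist region contributes $+n^2 q'_i$ to $\sgn(\tau)$ from its $n^2 q'_i$ $B$-resolutions, without altering the circle count $o(A_\tau)$ or the fusion/untwisting prefactor $G_{k_0}(v)$. On the other hand, compared to the pretzel, the vertical part of each tangle has one fewer cabled crossing ($r_i[1] = q_i-1$ versus $q_i$), contributing $-n^2$ per tangle to $\sgn(\tau)$ while leaving all $k$-dependent terms unchanged (since $(r_i[1]-1)k_i^2 = (q_i-2)k_i^2$ and the $\Sk^t, \Sk^b$ top/bottom strip structure is unaffected). The net additional contribution to $d(k_0, \tau) = \delta(n,k)$ is $n^2 \sum_{i=1}^m (q'_i - 1)$, which matches the extra $-\frac{n^2}{2}\sum_{i=1}^m (q'_i-1)$ term in \eqref{eq.2Deltank}. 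With the analog of Lemma~\ref{l.mindegree} in hand, the remaining steps—constructing a minimal state for each tight $c, k$ (Lemma~\ref{l.constructm}), excluding non-tight parameters (Lemma~\ref{l.excludeodd}), enumerating taut states (Lemma~\ref{l.taut}), and handling cancellations to extract the leading term (Theorem~\ref{thm:cancelling})—transfer verbatim.

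The main obstacle is the careful verification of the pyramidal-position claim for the horizontal regions, since these crossings simultaneously interface with the outer Jones-Wenzl idempotent above and the state pattern of the vertical region below. Arguing that $B$-resolutions dominate requires tracking how a hypothetical $A$-resolution propagates through both interfaces, either closing into a merged circle (and losing degree) or producing a forbidden configuration against the idempotent. As in the pretzel case, the parity assumptions on the $q_i$ are needed to ensure that the leading coefficient has a uniform sign across all minimal states, ruling out cancellations in $\mathcal{G}_{c, k}^{taut}$.
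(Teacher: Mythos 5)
The key discrepancy between your proposal and the paper's proof lies in the description of the minimal state on the horizontal twist regions. You claim the minimal state takes the $B$-resolution at every crossing in each $(r_i[2])^n$, whereas the paper explicitly constructs the minimal state so that $A_\tau$ contains $k_i^2$ crossings in pyramidal position with $k_i$ through strands inside $(r_i[2])^n$ (this is stated in the proof of Theorem~\ref{thm.2Delta} and illustrated in Figure~\ref{f.Montesinosflowex}). These are genuinely different states. The issue is that the local through-strand count $c_i(\sigma)$ is defined for the \emph{whole} $i$-th tangle, so for a tight state with $k_i > 0$, the $2k_i$ through strands must thread the horizontal region as well as the vertical one, which forces $A$-resolutions there in pyramidal position; the all-$B$ state of the horizontal region caps off the top endpoints of the tangle and therefore yields $c_i = 0$, so your proposed state simply does not lie in $st(c,k)$ for $k_i>0$. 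Your heuristic that any $A$-resolution in the horizontal region ``merges a pair of state circles or vanishes against the idempotent'' is also not right: the pyramidal $A$-resolutions split circles and so \emph{preserve} the degree (exactly as in Lemma~\ref{l.split}), and most of the horizontal crossings are not adjacent to the idempotent at all.

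Because the structural description of the minimal state is incorrect, the claimed ``verbatim transfer'' of Lemma~\ref{l.taut} and Theorem~\ref{thm:cancelling} does not follow: those arguments depend on knowing precisely which crossings a minimal state chooses $A$-resolutions on (in order to define $F_\tau$ and run the cancellation argument). The degree bookkeeping also deserves more care: removing the top cabled crossing of the vertical region (passing from $q_i$ to $q_i-1$) does not contribute $-n^2$ to $\sgn(\tau)$ as if it had been all-$B$, since in the pretzel minimal state that cabled crossing carried $A$-resolutions, and you also implicitly assume the circle count $o(A_\tau)$ is unchanged, which needs justification. The fact that you land on the correct formula $\delta(n,k)_{M} = \delta(n,k)_{P} + n^2\sum_i(q_i'-1)$ appears to be a coincidence of compensating errors rather than the outcome of a correct state description. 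The cleanest fix is to extend a pretzel minimal state $\tau_P$ into $K_+^n$ as the paper does: restrict $A_{\tau_P}$ to the vertical region $(1/r_i[1])^n$ and adjoin a pyramidal set of $k_i^2$ crossings with $k_i$ through strands in $(r_i[2])^n$, and then carefully recount both $\sgn(\tau)$ and $o(A_\tau)$ to get the $n^2\sum_i(q_i'-1)$ shift.
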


\begin{proof}
The proof is analogous to that of Theorem \ref{thm.Delta} for pretzel links. As in the pretzel case we identify the minimal states and show that they maximize the degree and do not cancel out. Since these arguments are exactly the same we focus on describing the minimal states, one for each set of tight parameters of through strands $c, k$. 
The minimal states are produced by choosing a minimal state $\tau_P$ for the pretzel link $P = P(q_0,\dots, q_m)$ and extending it to a minimal Kauffman state $\tau$ of $\langle K^n_+\rangle$. The set $A_{\tau}$ on which $\tau$ chooses the $A$-resolution is a union of the restriction of $A_{\tau_P}$  on the twist region $(1/r_i[1])^n$ and a set of $k_i^2$ crossings in pyramidal position with $k_i$ through strands in $(r_i[2])^n$, $1\leq i \leq m$. This set exists whenever $|r_i[2]| > 0$.  We have 
\[d(k, \tau) = d(k, \tau_P) + n^2\sum_{i=1}^m (q_i'-1), \]
to account for the additional crossings from $r_i[2]$ for $1\leq i \leq m$ on which $\tau$ chooses the $B$-resolution. This gives $\delta(n, k)$ in the theorem. 
\begin{figure}[H]
\def \svgwidth{1.1\columnwidth}
%% Creator: Inkscape 1.0beta1 (32d4812, 2019-09-19), www.inkscape.org
%% PDF/EPS/PS + LaTeX output extension by Johan Engelen, 2010
%% Accompanies image file 'Montesinosflowex.pdf' (pdf, eps, ps)
%%
%% To include the image in your LaTeX document, write
%%   \input{<filename>.pdf_tex}
%%  instead of
%%   \includegraphics{<filename>.pdf}
%% To scale the image, write
%%   \def\svgwidth{<desired width>}
%%   \input{<filename>.pdf_tex}
%%  instead of
%%   \includegraphics[width=<desired width>]{<filename>.pdf}
%%
%% Images with a different path to the parent latex file can
%% be accessed with the `import' package (which may need to be
%% installed) using
%%   \usepackage{import}
%% in the preamble, and then including the image with
%%   \import{<path to file>}{<filename>.pdf_tex}
%% Alternatively, one can specify
%%   \graphicspath{{<path to file>/}}
%% 
%% For more information, please see info/svg-inkscape on CTAN:
%%   http://tug.ctan.org/tex-archive/info/svg-inkscape
%%
\begingroup%
  \makeatletter%
  \providecommand\color[2][]{%
    \errmessage{(Inkscape) Color is used for the text in Inkscape, but the package 'color.sty' is not loaded}%
    \renewcommand\color[2][]{}%
  }%
  \providecommand\transparent[1]{%
    \errmessage{(Inkscape) Transparency is used (non-zero) for the text in Inkscape, but the package 'transparent.sty' is not loaded}%
    \renewcommand\transparent[1]{}%
  }%
  \providecommand\rotatebox[2]{#2}%
  \newcommand*\fsize{\dimexpr\f@size pt\relax}%
  \newcommand*\lineheight[1]{\fontsize{\fsize}{#1\fsize}\selectfont}%
  \ifx\svgwidth\undefined%
    \setlength{\unitlength}{942.9460144bp}%
    \ifx\svgscale\undefined%
      \relax%
    \else%
      \setlength{\unitlength}{\unitlength * \real{\svgscale}}%
    \fi%
  \else%
    \setlength{\unitlength}{\svgwidth}%
  \fi%
  \global\let\svgwidth\undefined%
  \global\let\svgscale\undefined%
  \makeatother%
  \begin{picture}(1,0.4037273)%
    \lineheight{1}%
    \setlength\tabcolsep{0pt}%
    \put(0,0){\includegraphics[width=\unitlength,page=1]{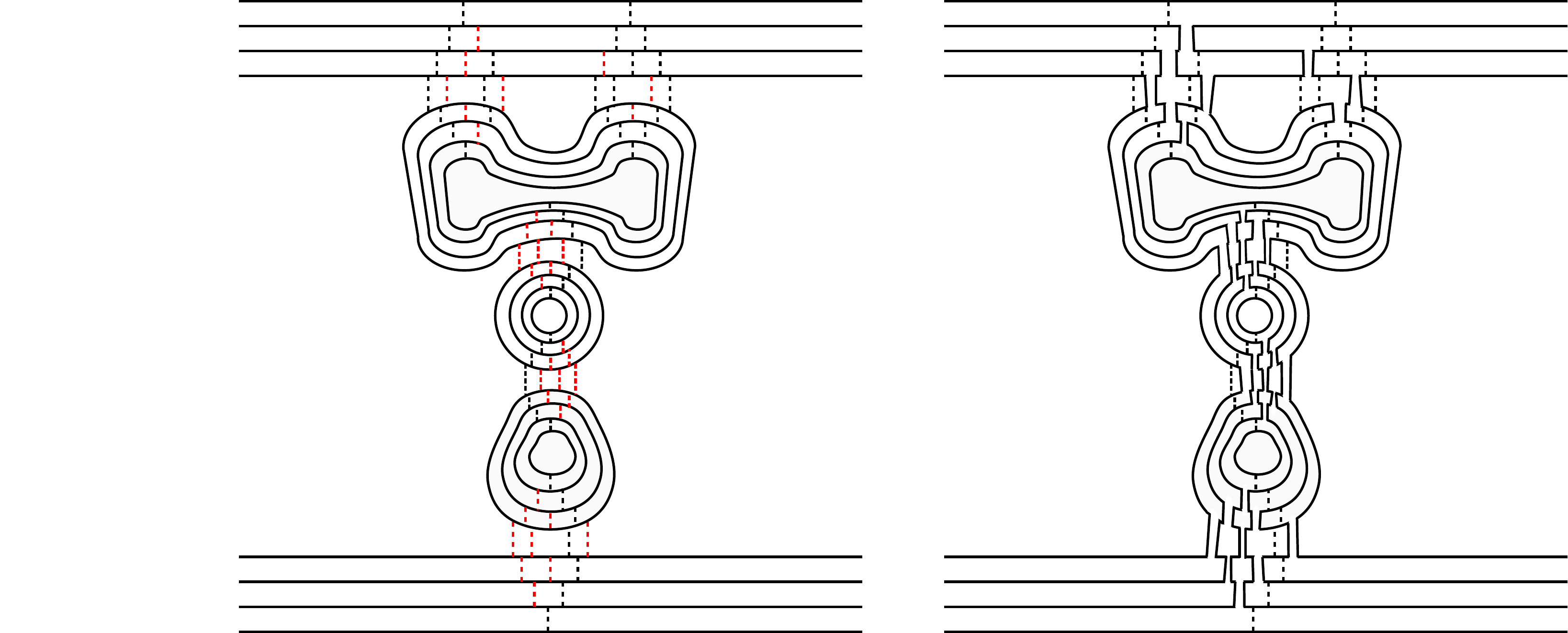}}%
    \put(0.04318163,0.25648787){\color[rgb]{0,0,0}\makebox(0,0)[lt]{\lineheight{1.25}\smash{\begin{tabular}[t]{l}$n$\end{tabular}}}}%
    \put(0.06804743,0.17327325){\color[rgb]{0,0,0}\makebox(0,0)[lt]{\lineheight{1.25}\smash{\begin{tabular}[t]{l}$n$\end{tabular}}}}%
    \put(0,0){\includegraphics[width=\unitlength,page=2]{Montesinosflowex.pdf}}%
    \put(0.48571594,0.10658766){\color[rgb]{0,0,0}\makebox(0,0)[lt]{\lineheight{1.25}\smash{\begin{tabular}[t]{l}Restriction from $\tau_P$\end{tabular}}}}%
    \put(0,0){\includegraphics[width=\unitlength,page=3]{Montesinosflowex.pdf}}%
    \put(0.02968263,0.31908838){\color[rgb]{0,0,0}\makebox(0,0)[lt]{\lineheight{1.25}\smash{\begin{tabular}[t]{l}Extension to $\tau$\end{tabular}}}}%
    \put(0,0){\includegraphics[width=\unitlength,page=4]{Montesinosflowex.pdf}}%
  \end{picture}%
\endgroup%

\caption{\label{f.Montesinosflowex} An example with $n=4$ showing a minimal state with 3 through strands through a rational tangle $r$ where $\ell_r =2$ and $r[2]>1$. One can see the extension of the minimal state on the vertical twist region $1/r[1]$. We choose $3^2 = 9$ crossings in pyramidal position in the  twist region $(r[2])^n$.  }
\end{figure} 
\end{proof}

\subsection{The general case} \label{ss.Mdegree}

Given $K = K(r_0, r_1, \ldots, r_m)$, we decompose the standard diagram $K =  N(K_-\oplus K_+)$, where $K_-$ is the single negative tangle, and $K_+$ is the rest of the diagram. We further decompose $K_- = D_- \cup V_-$ where $D_-$ consists of the negative twist region $1/r_0[1]$ if $r_0[2] \not=-1$, or $1/(r_0[1]-1)$  if $r_0[2] = -1$ and $\ell_{r_0} = 2$, while $V_-$ is the rest of $K_-$. Note $D_-$ and $V_-$ are joined as they are in the diagram $K$. For the 2-tangle diagram corresponding to $r_i$ in $K$, where $i\leq 1\leq m$, let the tangle diagram $T_i$ be the portion corresponding to the first two (with respect to the continued fraction expansion) twist regions $1/r_i[1]$ and $r_i[2]$. If $\ell_{r_i} = 2$, then $T_i = r_i[2]*\frac{1}{r_i[1]}$. Otherwise if $\ell_{r_i}>2$, then $T_i$ is a $(4, 2)$-tangle diagram obtained by joining the upper right strand of $\frac{1}{r_i[1]}$ to the lower right strand of $r_i[2]$. We decompose $K_+$ as $K_+ = D_+ \cup V_+$, where $D_+ = \cup_{i=1}^m T_i$ is the portion of the standard diagram $K$ obtained by arranging $T_i$ side by side in a row in order, and joining each pair $T_i, T_{i+1}$ for $1\leq i \leq m-1$ according to the rules as follows:
\begin{itemize} 
 \item If $\ell_{r_i} = \ell_{r_{i+1}} = 2$, then $T_i$ and $T_{i+1}$ are both $2$-tangles. The lower right strand of $T_i$ is joined to lower left strand of $T_{i+1}$, and the upper right strand of $T_i$ is joined to the upper left strand of $T_{i+1}$. 
 \item If $\ell_{r_i} = 2$ and $\ell_{r_{i+1}} >2$, or $\ell_{r_i} > 2$ and $\ell_{r_{i+1}} >2$  , only the lower right strand of $T_i$ is joined to the lower left strand of $T_{i+1}$. 
 \item If  $\ell_{r_i} > 2$ and $\ell_{r_{i+1}} =2$, then the upper right strand of $T_i$ is joined to the upper left strand of $T_{i+1}$, and the lower right strand of $T_i$ is joined to the lower left strand of $T_{i+1}$. 
 \end{itemize} 
 Define $V_+$ to be the rest of $K_+$.  See Figure \ref{f.T} for examples of $T_i$'s and Figure \ref{f.sMdecomp} for an illustration of the decomposition of a Montesinos knot $K$. The union defined extends to the $n$-cable of the tangle diagrams by decorating each strand with $n$, so $(D_+ \cup V_+)^n = D_+^n \cup V_+^n$ and $(D_- \cup V_-)^n = D_-^n \cup V_-^n$.  
\begin{figure}[H]
\def \svgwidth{.9\columnwidth}
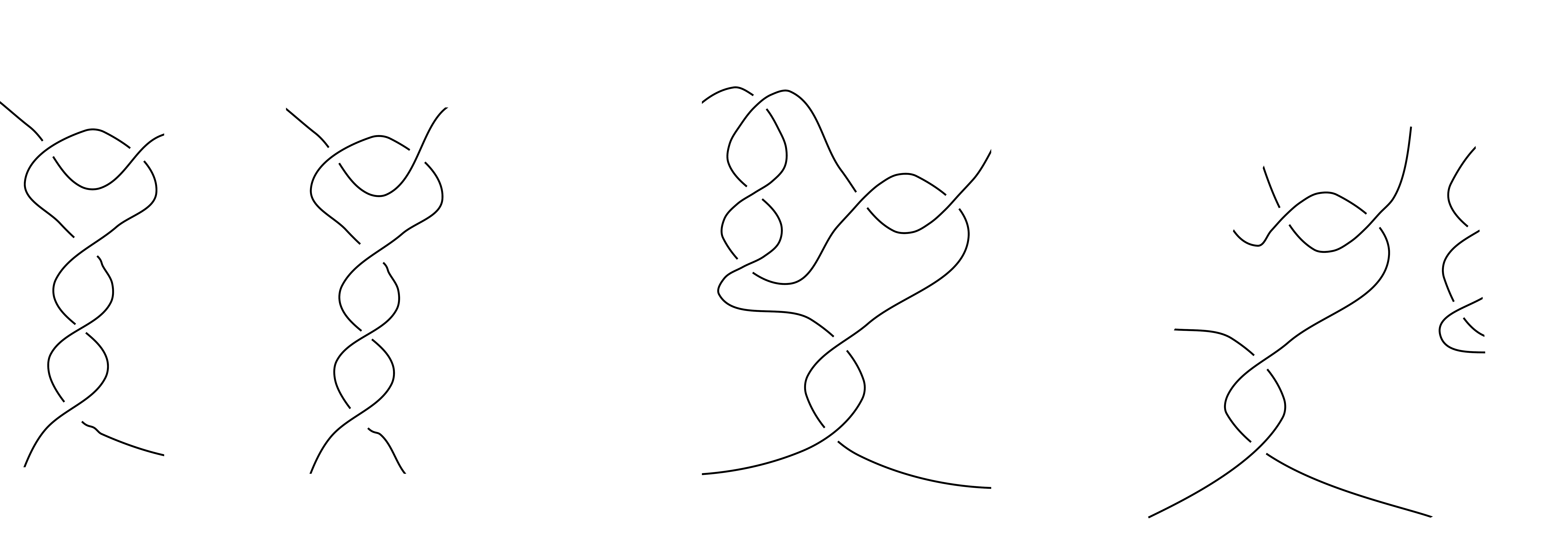
\caption{\label{f.T} Two cases of the $T_i$'s corresponding to the $r_i$'s are shown. $UL$ stands for upper left; $UR$ stands for upper right; $LL$ stands for lower left; $LR$ stands for lower right. }
\end{figure} 
\begin{figure}[H]
\def \svgwidth{\columnwidth}
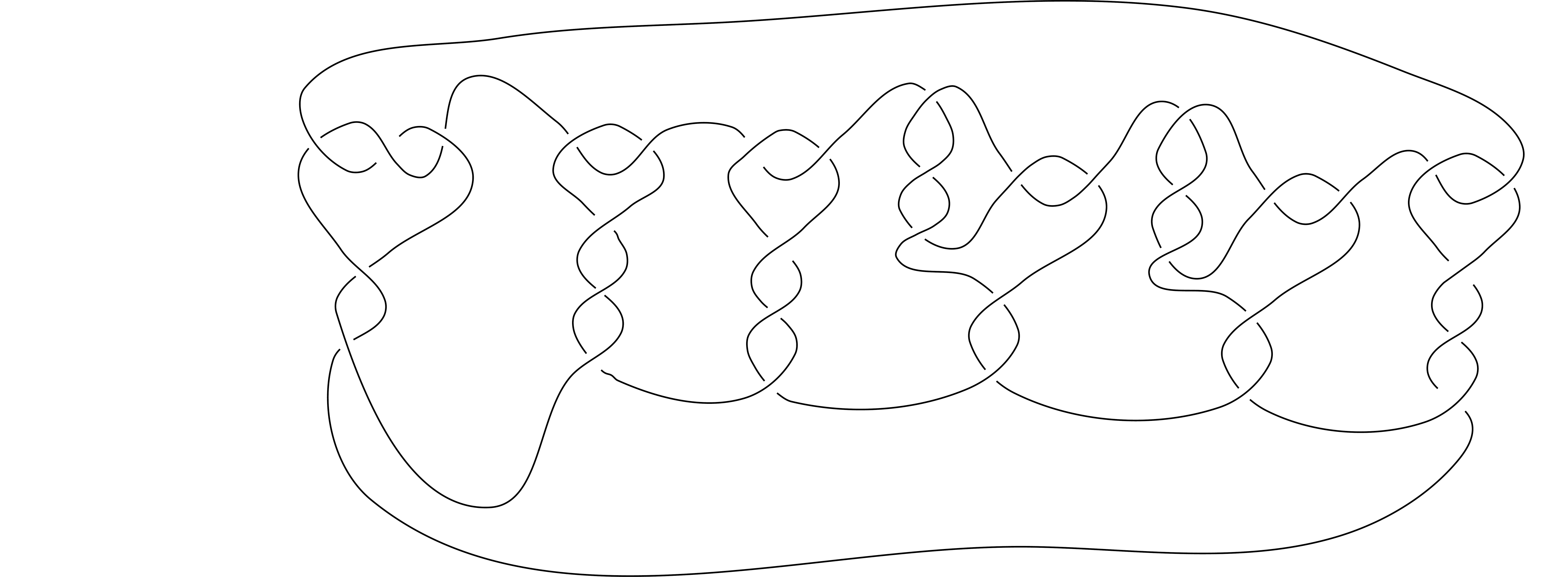
\caption{\label{f.sMdecomp} We show the decomposition $K = N(K_-\oplus K_+)$  of a Montesinos knot $K=K(-\frac{3}{7}=\frac{1}{-2+\frac{1}{-3}}, \frac{2}{7}=\frac{1}{3+\frac{1}{2}}, \frac{2}{7} = \frac{1}{3+\frac{1}{2}},  \frac{7}{17} = \frac{1}{2+\frac{1}{2+\frac{1}{2+\frac{1}{1}}}}, \frac{7}{17} = \frac{1}{2+\frac{1}{2+\frac{1}{2+\frac{1}{1}}}}, \frac{2}{7}=\frac{1}{3+\frac{1}{2}})$. In the figure, $K_-=D_-\cup V_-$ (decomposed on the left) is the $2$-tangle enclosed in the dashed rectangular box on $K$. On the right, $D_+$ is the tangle enclosed in the dashed curve on $K_+$,  and $V_+$ is the rest of the diagram $K_+$.    }
\end{figure} 

Let 
 \[q_0 = \begin{cases} &r_0[1] +\frac{1}{-1} \text{ if $\ell_{r_0} = 2$ and $r_0[2]=-1$.  } \\ 
 	& r_0[1] \text{ otherwise }  \end{cases}.   \] 
The link $L= K(\frac{1}{q_0}, \frac{1}{r_1[1]+\frac{1}{r_1[2]}}, \ldots, \frac{1}{r_m[1]+\frac{1}{r_m[2]}})$ is a special Montesinos link. We approach the general case as insertion of the union of rational tangles $V = V_- \cup V_+$ into this special Montesinos link via $\mathsf{TR}$-moves. The essential feature of $V$ is that its all-$B$ state acts like the identity on $\langle L \rangle$ plus some closed loops, see Figure \ref{f.TR2}.

\begin{lemma} \label{l.addadequate} 
Suppose we have the standard diagram of a Montesinos knot $K= K(r_0, r_1, \ldots, r_m) =N(K_- \oplus K_+) = N((D_-\cup V_-)\oplus(D_+\cup V_+))$, where $L = K(\frac{1}{q_0}, \frac{1}{r_1[1]+\frac{1}{r_1[2]}}, \ldots, \frac{1}{r_m[1]+\frac{1}{r_m[2]}})$ is a special Montesinos knot. Let $V = V_-\cup V_+$, joined as they are in the diagram $K$.
If $q_0<-1$  is odd, and $q_i= r_i[1]+1>1$ is odd for every $i>0$, then
we have
\[\deg_v \langle K^n \rangle = \deg_v  \langle L^n \rangle + c(V)n^2+2n\ o(V_B), \]
where $c(V)$ is the number of crossings in $V$ and  $o(V_B)$ is the number of disjoint circles resulting from applying the all-$B$ state to $V$.
\end{lemma}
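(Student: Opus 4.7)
The idea is to apply a Kauffman state expansion to all crossings of $V^n$ first, show that the all-$B$ resolution on $V^n$ dominates in $v$-degree, and verify that it reduces $\langle K^n\rangle$ to a scalar multiple of $\langle L^n\rangle$. Concretely, I would run the same state-sum machinery as in Sections~\ref{sec.cj} and~\ref{sec.cjmontsinos} on $K^n$: apply fusion and the untwisting formula \eqref{eq.untwisting} to $D_-^n$ (introducing the $I_{k_0}$ skein element and the rational prefactor $G_{k_0}(v)$), and expand the remaining crossings by Kauffman states $\sigma$. I would then split $\sigma=(\sigma_V,\sigma')$ according to whether a crossing lies in $V^n$ or not, so that
\begin{equation*}
\langle K^n\rangle = \sum_{k_0}\sum_{\sigma_V,\sigma'} G_{k_0}(v)\, v^{\sgn(\sigma_V)+\sgn(\sigma')}\, \langle N(I_{k_0}\oplus (D_+\cup V)^n_{(\sigma_V,\sigma')})\rangle.
\end{equation*}

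The key step is to isolate the term $\sigma_V=B$. By hypothesis, $V_B$ is the identity tangle together with $o(V_B)$ disjoint circles. Cabling by $n$ turns each original arc into $n$ parallel identity strands feeding into the Jones--Wenzl idempotents of $L^n$, and each original circle into $n$ parallel circles, which evaluate to $(-v^2-v^{-2})^n=v^{2n}+l.o.t.$ Thus the $\sigma_V=B$ contribution equals
\begin{equation*}
v^{c(V)n^2}(-v^2-v^{-2})^{n\cdot o(V_B)}\,\langle L^n\rangle,
\end{equation*}
whose top $v$-degree is $\deg_v\langle L^n\rangle + c(V)n^2 + 2n\cdot o(V_B)$, and whose leading coefficient differs from that of $\langle L^n\rangle$ only by the controllable sign $(-1)^{n\cdot o(V_B)}$.

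Next I would show that for any $\sigma_V\ne B$ and any $(k_0,\sigma')$, the corresponding term has strictly smaller $v$-degree than the $\sigma_V=B$ term with the same $(k_0,\sigma')$. The argument is an adequacy-style application of Lemma~\ref{l.split}: because $V$ is built by a sequence of $\mathsf{TR}$-moves between twist regions of alternating sign (Figure~\ref{f.TRiterate}), the all-$B$ state graph of $V^n$ has the property that at every crossing, switching a single $B$-resolution to an $A$-resolution merges two distinct components of $\overline{\Sk(k_0,(B,\sigma'))}$ rather than splitting one. Iterating along a sequence from $B$ to $\sigma_V$ and invoking Lemma~\ref{l.split}, the $v$-degree strictly drops at the first such merge. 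Combining this domination with Theorem~\ref{thm.2Delta} for $\langle L^n\rangle$, whose leading-coefficient sign is controlled by the parities of $q_0,q_1,\dots,q_m$ (and hence prevents any cancellation when summed over the minimal/taut states), yields the claimed identity.

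\textbf{Expected main obstacle.} The nontrivial content is the adequacy claim that every switch away from $\sigma_V=B$ at a crossing of $V^n$ merges two components of the state graph. This requires a careful inductive analysis of the structure of $V$ as a nested sequence of $\mathsf{TR}$-moves: one must check that successive twist regions of the continued fraction expansion $[0,r_i[1],r_i[2],\dots]$ produce an all-$B$ graph in which no arc revisits itself, even after taking the numerator closure of $K$ and after accounting for the $I_{k_0}$ contribution from fusion on $D_-$. Everything else---the degree bookkeeping, the cabling of circles to $n$ parallel circles, and the reduction to $\langle L^n\rangle$---is a direct extension of the techniques already established in Theorems~\ref{thm.Delta} and~\ref{thm.2Delta}.
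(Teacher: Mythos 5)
Your proposal matches the paper's proof almost exactly: both decompose the state sum over $(\sigma_V,\sigma')$, isolate the all-$B$ term on $V^n$ and observe it equals $v^{c(V)n^2}(-v^2-v^{-2})^{n\,o(V_B)}\langle L^n\rangle$, apply Lemma~\ref{l.split} to show every state with $\sigma_V\ne B_V$ has strictly smaller $v$-degree, and invoke the parity hypotheses together with Theorem~\ref{thm.2Delta} to rule out cancellation among the degree-maximizers. One small correction: you describe $V$ as built from \emph{twist regions of alternating sign}; this is not right --- $V_-$ consists entirely of negative twist regions and $V_+$ entirely of positive ones. The structural property actually used is that each rational subtangle is an \emph{alternating diagram} (strands pass alternately over and under), which is what guarantees the all-$B$ state of $V^n$ has no segment joining a state component to itself, so that the first $B\to A$ switch always merges. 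You are right that this adequacy claim is the only nontrivial step; the paper disposes of it in a single sentence (``$V$ being a union of alternating tangles\dots'') without the inductive check you anticipate, so your ``expected obstacle'' is not a gap in your argument relative to the paper's --- both treat it at the same level of detail.
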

\begin{proof}
Decompose the $n$-cable of the standard diagram $L^n = N(L_-^n\oplus L_+^n)$ as in Figure \ref{f.sMontesinos}.  Applying quadratic integer programming to the formula of Theorem \ref{thm.2Delta} for the degree-maximizing states of $\langle L^n \rangle$, discarding any terms that depend only on $q_i$, $q_i'$ and $n$ and not on $k_i$, we see that there are minimal states of the state sum of any special Montesinos knot that attain the maximal degree. Fix one such minimal state $\tau$. 

We decompose $K^n  = N(K_-^n \oplus K_+^n) = N((D_-^n\cup V_-^n)\oplus(D_+^n\cup V_+^n)) $ and write down a state sum for $\langle K^n \rangle$ by applying the fusion and untwisting formulas to the $n$-cable of the single negative twist region in $D_-^n$ and applying Kauffman states on the set of crossings in the rest of the diagram $K^n$.  Let $\sigma \cup \sigma'$ denote a Kauffman state on $K^n$ where $\sigma$ is a Kauffman state on $D^n_+ $ and $\sigma'$ is a Kauffman state on $V^n = (V^n_-\cup V_+^n)$. We have

\[ \langle K^n  \rangle = \sum_{(k_0, \sigma \cup \sigma')} G_{k_0}(v) v^{\sgn(\sigma \cup \sigma')} \langle N((I_{k_0} \cup (V_-^n)_{\sigma'})\oplus ((D_+^n)_\sigma\cup (V_+^n)_{\sigma'})) \rangle.  \]  

Because $V = V_-\cup V_+$ is a (possibly disjoint) union of alternating tangles, applying the all-$B$ state on $V^n$ results in a set $O(V^n_B)$ of $n \ o(V_B)$ disjoint circles, and

\[N((I_{k_0} \cup (V_-^n)_{B})\oplus ((D_+^n)_\sigma\cup (V_+^n)_{B})) = N(I_{k_0} \oplus (L_+^n)_\sigma)   \sqcup O(V_B^n)  \]
by visual inspection of the diagrams involved. Letting $B_V$ denote the all-$B$ state on $V^n$, we get
\begin{align*}
 \langle K^n  \rangle &= \sum_{(k_0, \sigma \cup \sigma'), \sigma' \not=B_V} G_{k_0}(v) v^{\sgn(\sigma \cup \sigma')} \langle N((I_{k_0} \cup (V_-^n)_{\sigma'})\oplus ((D_+^n)_\sigma\cup (V_+^n)_{\sigma'})) \rangle \\ 
 &+ \sum_{(k_0, \sigma \cup B_V)} G_{k_0}(v) v^{\sgn(\sigma \cup B_V)} \langle N((I_{k_0} \cup (V_-^n)_{B_V})\oplus ((D_+^n)_\sigma\cup (V_+^n)_{B_V})) \rangle \\
 & = \sum_{(k_0, \sigma \cup \sigma'), \sigma' \not=B_V} G_{k_0}(v) v^{\sgn(\sigma \cup \sigma')} \langle N((I_{k_0} \cup (V_-^n)_{\sigma'})\oplus ((D_+^n)_\sigma\cup (V_+^n)_{\sigma'})) \rangle \\ 
 &+\sum_{(k_0, \sigma \cup B_V)} G_{k_0}(v) v^{\sgn(\sigma \cup B_V)} \langle N(I_{k_0} \oplus (L_+^n)_\sigma)   \sqcup O(V_B^n) \rangle \\ 
 &=\sum_{(k_0, \sigma \cup \sigma'), \sigma' \not=B_V} G_{k_0}(v) v^{\sgn(\sigma \cup \sigma')} \langle N((I_{k_0} \cup (V_-^n)_{\sigma'})\oplus ((D_+^n)_\sigma\cup (V_+^n)_{\sigma'})) \rangle \\ 
 &+\langle L^n \sqcup O(V_B^n)\rangle. \\ 
\end{align*}  
Let 
\[ d(k_0, \sigma \cup \sigma') = \deg_v \left( G_{k_0}(v) v^{\sgn(\sigma \cup \sigma')} \langle N((I_{k_0} \cup (V_-^n)_{\sigma'})\oplus ((D_+^n)_\sigma\cup (V_+^n)_{\sigma'})) \rangle  \right).  \]
The diagram  $V$ being a union of alternating tangles also implies that a state on $V^n$ that is not the all-$B$ state merges a circle from $O(V_B^n)$. Therefore, by an application of Lemma \ref{l.split},
\[ d(k_0, \sigma \cup B_V) > d(k_0, \sigma \cup \sigma'), \]
for any $\sigma' \not= B_V$.

  Thus for a pair $(k_0, \tau)$ where $d(k_0, \tau)$ maximizes the degree in the state sum of $\langle L^n\rangle$, the term 
  \[ G_{k_0}(v) v^{\sgn(\tau \cup B_V)} N(I_{k_0} \oplus (L_+^n)_\tau)   \sqcup  \ O(V_B^n)  \] 
   also maximizes the degree in the state sum for $\langle K^n \rangle$. The leading terms all have the same sign because of the assumption on the parities of the $q_i$'s and Theorem \ref{thm.2Delta}. Thus there is no cancellation of these maximal terms, and we can determine $\deg_v\langle K^n \rangle$ relative to  $\deg_v\langle L^n \rangle$  by counting the number of disjoint circles in $O(V_B)$, giving the formula in the lemma. 
\end{proof}

\begin{comment}
For now the precise structure of $V$ is unimportant. Its main feature is that it is a strongly $B$-adequate tangle diagram in the sense of the definition below.

\begin{definition}
A $B$-adequate tangle diagram is a tangle diagram in which none of the components of the all $B$ state has a segment connecting to itself. 
The tangle diagram is strongly $B$-adequate if in addition no two arcs in the all $B$ state are connected by a segment.
\end{definition}

\begin{figure}[H]
\def \svgwidth{.8\columnwidth}

\caption{A $B$-adequate tangle that is not strongly $B$-adequate and a strongly $B$-adequate tangle.}
\end{figure} 

Adding a strongly $B$-adequate tangle diagram will increase the degree of our state sum in a controllable way.
\end{comment}

It is useful to reformulate Lemma \ref{l.addadequate} in a more relative sense, pinpointing how the degree changes as a result of applying a $\mathsf{TR}$-move. Let $\mathsf{TR}_1^-$ denote the $\mathsf{TR}$-move that sends $\frac{1}{t}$ to $r*\frac{1}{t}$. We define two  composite moves $\mathsf{TR}^-_2(T) = (\frac{1}{r_1}\oplus r_2)*T$, and
$\mathsf{TR}^+(T) = (r_1*\frac{1}{r_2})\oplus T$.

\begin{figure}[H]
\def \svgwidth{.9\columnwidth}
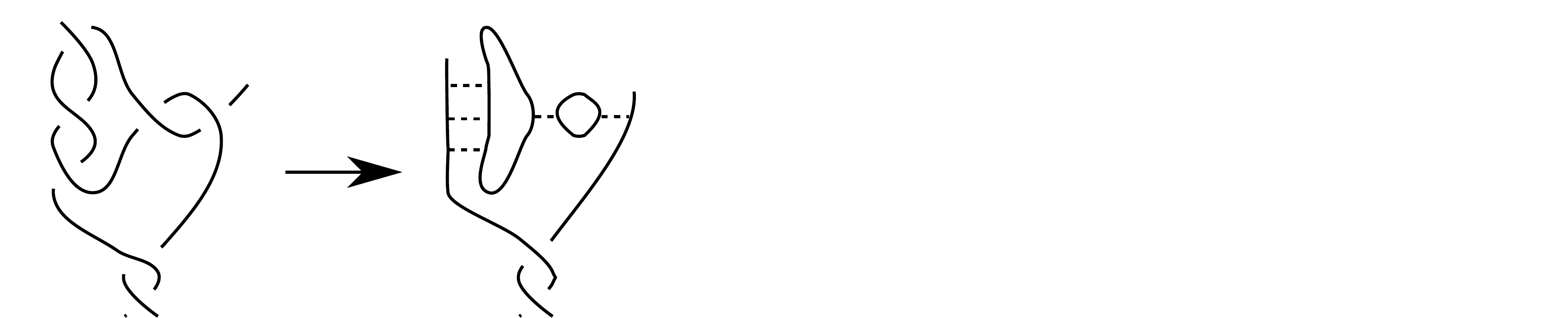 
\caption{\label{f.TR2} Examples of applying the all-$B$ state to $V$ and the 
resulting disjoint circles for moves sending tangles $\frac{1}{t}$ to 
$(\frac{1}{r_1}\oplus r_2)*\frac{1}{t}$ and sending $t$ to $(r_1*\frac{1}{r_2})\oplus t$. The tangles that form part of $V$ are encircled.}
\end{figure}

\begin{lemma} 
\label{l.rmove} 
Suppose two standard diagrams $K,L$ of Montesinos knots satisfy the conditions of Lemma \ref{l.addadequate}, where $K$ is obtained from $L$ by applying one of the moves $\mathsf{TR}^-_{1},\mathsf{TR}^-_{2},\mathsf{TR}^+$,
locally replacing tangle $(T)^n$ by $(T')^n$, then the degree of the colored Jones polynomial changes as follows. See Figure \ref{f.TR2} for examples of the moves $\mathsf{TR}^-_2$, $\mathsf{TR}^+$.

\begin{itemize}
\item[$\mathsf{TR}^-_1$-move:] 
Suppose $r, t <0$,  and  $T = \frac{1}{t}$ is a vertical 
twist region, and $T' = r*\frac{1}{t}$, then 
\[ 
\deg \langle K^{n} \rangle 
= \deg \langle L^{n}\rangle-rn^2 + 2(-r-1)n.  
\] 
\item[$\mathsf{TR}^-_2$-move:] 
Suppose $r_1, r_2, t <0$, $T = \frac{1}{t}$ is a 
vertical twist region,  and $T' = (\frac{1}{r_1}\oplus r_2)*\frac{1}{t}$, then
\[ 
\deg \langle K^{n} \rangle  
= \deg \langle L^{n} \rangle-(r_1+r_2)n^2 - 2r_2n.
\] 
\item[$\mathsf{TR}^+$-move:] 
Suppose $r_1, r_2, t >0$, $T = t$ is a horizontal twist 
region, and $T'=(r_1*\frac{1}{r_2})\oplus t$, then 
\[ 
\deg \langle K^{n} \rangle  
= \deg \langle L^{n}\rangle+(r_1+r_2)n^2 + 2r_2n.
\] 
\end{itemize}
\end{lemma}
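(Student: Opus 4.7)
The plan is to apply Lemma~\ref{l.addadequate} to the pair of standard diagrams $L$ and $K$ differing by a single \TR-move, with $V$ taken to be the tangle inserted by the move. In each of the three cases, $V$ is a (possibly disconnected) union of alternating twist regions all of the same sign, so that $V$ is adequate in the sense used in the proof of Lemma~\ref{l.addadequate}: any state on $V^n$ other than the all-$B$ state merges at least one circle. Moreover, the sign and parity hypotheses on the $q_i$'s for the ambient Montesinos diagram are preserved under each \TR-move, so the leading terms do not cancel. Consequently the identity
\[
\deg_v\langle K^n\rangle - \deg_v\langle L^n\rangle = c(V)\, n^2 + 2n\cdot o(V_B)
\]
holds, and it remains to compute $c(V)$ and $o(V_B)$ in each case.

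For the $\mathsf{TR}_1^-$-move, $V$ is a single horizontal twist region with $|r|=-r$ negative crossings, so $c(V)=-r$; and its all-$B$ state contains a small bigon circle between each pair of adjacent crossings, giving $o(V_B)=-r-1$. Substituting yields $\deg\langle K^n\rangle=\deg\langle L^n\rangle-rn^2+2(-r-1)n$.

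For the $\mathsf{TR}_2^-$-move, $V=\tfrac{1}{r_1}\oplus r_2$ has $c(V)=|r_1|+|r_2|=-(r_1+r_2)$ crossings. The circle count is more delicate: one traces the outer arcs of the all-$B$ state of the vertical twist region $\tfrac{1}{r_1}$ and of the horizontal twist region $r_2$ through the $\oplus$-joining and counts which close up inside $V$ versus which exit and close through the neighboring structure in $K$. The analysis, illustrated by Figure~\ref{f.TR2}, produces $o(V_B)=-r_2$. For the $\mathsf{TR}^+$-move, an entirely analogous analysis for $V=r_1*\tfrac{1}{r_2}$ gives $c(V)=r_1+r_2$ and $o(V_B)=r_2$. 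Substituting these values into the identity above yields the last two formulas.

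The main obstacle is the circle count for the composite tangles $V=\tfrac{1}{r_1}\oplus r_2$ and $V=r_1*\tfrac{1}{r_2}$: one must carefully keep track of how the outer arcs from the all-$B$ states of the two constituent twist regions are paired up by the $\oplus$ or $*$ operation and by the neighboring strands inside $K$, and verify that the $n$-cabled all-$B$ state produces exactly $n\cdot o(V_B)$ disjoint circles (so that Lemma~\ref{l.addadequate} applies with the stated values). Once this is done, substitution is immediate.
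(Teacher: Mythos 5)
Your proposal is correct and follows exactly the paper's argument: the paper's proof of Lemma \ref{l.rmove} is precisely to invoke Lemma \ref{l.addadequate} with $V$ the inserted tangle and then count $c(V)$ and $o(V_B)$ case by case, and your values ($c(V)=-r$, $o(V_B)=-r-1$; $c(V)=-(r_1+r_2)$, $o(V_B)=-r_2$; $c(V)=r_1+r_2$, $o(V_B)=r_2$) reproduce the stated formulas. The only difference is that you spell out the circle counts (which the paper leaves to Figure \ref{f.TR2} and "visual inspection"), which is a welcome addition rather than a deviation.
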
 

\begin{proof}
Applying Lemma \ref{l.addadequate} we count the number of crossings and the number of state circles from applying the all-$B$ state to the newly added tangle $V$ in each of these cases, and determine the resulting degree. 
\end{proof}

We use Lemma \ref{l.rmove} to prove the part of Theorem \ref{thm.1} concerning the degree of the colored Jones polynomial for the Montesinos knots that we consider. 
\begin{theorem} \label{t.Montesinoscjpd}
With the same definitions for $q_i$'s for $0\leq i \leq m$ as in Lemma \ref{l.addadequate}, 
let $K=K(r_0, r_1, \ldots, r_{m})$ be a Montesinos knot such that 
$r_0 <0$, $r_i >0$ for all $1 \leq i\leq m$, and $|r_i| < 1$ for all $0\leq i \leq m$ with $m\geq 2$ even.
Suppose $q_0 <-1<1<q_1,\ldots, q_m$ are all odd, and $q'_0$ is an integer that is defined to be 0 if $r_0 = 1/q_0$, and defined to be $r_0[2]$ otherwise. Let $P=P(q_0, \dots, q_m)$ be the associated pretzel knot, and let $\omega(D_K)$, $\omega(D_P)$ denote the writhe of standard diagrams $D_K$, $D_P$ with orientations. For all $n>N_K$ we have:
\begin{align*}
\label{eq.jsxM}
\js_K(n) &= \js_P(n) -q'_0 -  [r_0]-\omega(D_P) + \omega(D_K)+ \sum_{i=1}^m (r_i[2]-1) +  \sum_{i=1}^m [r_i] , \\
\jx_K(n) &= \jx_P(n) -2\frac{q'_0}{r_0[2]}
+2 [r_0]_o -2 \sum_{i=1}^m (r_i[2]-1) -2 \sum_{i=1}^m [r_i]_e. 
\end{align*}
\end{theorem}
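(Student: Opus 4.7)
The plan is to reduce $K$ to the associated pretzel $P$ in two stages, tracking the change in $\deg_v\langle K^n\rangle$ at each stage, and then passing to $\delta_K(n)$ via the writhe factor $\omega(D)n(n+2)$. Write the standard diagram as $K=N((D_-\cup V_-)\oplus (D_+\cup V_+))$ as in Section~\ref{ss.Mdegree}, where $V=V_-\cup V_+$ collects all continued-fraction data beyond the first two partial quotients (or beyond the first, in the case of $r_0$). Since every twist region in $V$ is alternating, Lemma~\ref{l.addadequate} applies and gives
\[
\deg_v\langle K^n\rangle \;=\; \deg_v\langle L^n\rangle \;+\; c(V)\,n^2 \;+\; 2\,o(V_B)\,n,
\]
where $L=K(1/q_0,\,1/(r_1[1]+1/r_1[2]),\dots,1/(r_m[1]+1/r_m[2]))$ is a special Montesinos knot. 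Then Theorem~\ref{thm.2Delta} and the quadratic integer programming of Section~\ref{sub.application} compute $\deg_v\langle L^n\rangle$: because the functions $s(q)$ and $s_1(q)$ of \eqref{eq.sqq} depend only on $\{q_i\}$ and not on $\{q_i'\}$, the optimal tight parameters $k^*$ coincide for $L$ and for $P$, and one obtains
\[
\deg_v\langle L^n\rangle \;=\; \deg_v\langle P^n\rangle \;+\; n^2\sum_{i=1}^m(r_i[2]-1) \;+\; O(1).
\]

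Extracting $\js_K$ (the $n^2$-coefficient): count the crossings of $V$ via $c(V)=\sum_{i=1}^m [r_i]-[r_0]-q_0'$, noting that $r_0<0$ forces $[r_0]<0$ and $q_0'\in\{0,r_0[2]\}$. Adding $c(V)$, the special-Montesinos correction $\sum_{i=1}^m(r_i[2]-1)$, and the writhe difference $\omega(D_K)-\omega(D_P)$ (which supplies the $n^2$ part of $\omega(D)n(n+2)$) to $\js_P(n)$ directly reproduces the stated expression for $\js_K(n)-\js_P(n)$.

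Extracting $\jx_K$ (the $n$-coefficient) is where the main combinatorial work lies, and is the principal obstacle. One must evaluate $o(V_B)$ precisely and combine with $2(\omega(D_K)-\omega(D_P))$. The key observation is that circles in the all-$B$ resolution of $V$ are created level-by-level as one moves through a rational tangle's continued fraction: passing through a horizontal twist region (an even CFE index) and a vertical twist region (an odd CFE index) produces different loop configurations, so the parity-separated sums $[r_i]_e$ and $[r_0]_o$ emerge naturally from counting the circles. The exceptional term $-2q_0'/r_0[2]\in\{0,-2\}$ records whether the first horizontal partial quotient $r_0[2]$ of the negative tangle appears in $V$ at all (i.e.\ whether $r_0$ is pretzel or not). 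After careful bookkeeping, $2\,o(V_B)+2(\omega(D_K)-\omega(D_P))$ simplifies to $-2q_0'/r_0[2]+2[r_0]_o-2\sum_{i=1}^m(r_i[2]-1)-2\sum_{i=1}^m[r_i]_e$, and adding $\jx_P(n)$ yields the claimed formula. The reason the writhe difference is absent from the final expression for $\jx_K-\jx_P$ is precisely this cancellation between $2o(V_B)$ and the writhe contribution of the alternating crossings of $V$.
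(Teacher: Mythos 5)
Your overall reduction is the same as the paper's: pass from $K$ to the special Montesinos knot $L$ by stripping off the tangle $V$ (the paper phrases this as iterated \TR-moves via Lemma \ref{l.rmove}, but that lemma is just Lemma \ref{l.addadequate} applied piecewise, so this is only an organizational difference), then from $L$ to $P$ via Theorem \ref{thm.2Delta}. Your crossing count $c(V)=\sum_{i=1}^m[r_i]-[r_0]-q_0'$ and the resulting derivation of $\js_K(n)$ are correct.

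The $\jx_K$ computation, however, contains a genuine error in the bookkeeping. The framing factor for $J_{K,n+1}$ is $((-1)^n v)^{\omega(D)(n^2+2n)}$, so after re-indexing to the color $n$ it contributes $\omega(D)(n^2-1)$ to $\delta_K(n)$ --- purely quadratic plus constant. Hence the writhe contributes \emph{nothing} to the linear coefficient, and there is no term $2(\omega(D_K)-\omega(D_P))$ to cancel against $2\,o(V_B)$; the proposed cancellation mechanism is not real (the signed writhe of the crossings of $V$ depends on strand orientations and bears no general relation to $o(V_B)$ or to $-2\sum_i(r_i[2]-1)$). The ingredient your ledger is actually missing is the shift $n\mapsto n-1$ applied to the whole difference $\deg_v\langle K^n\rangle-\deg_v\langle P^n\rangle=\alpha n^2+\beta n+O(1)$: this transfers $-2\alpha$ into the linear coefficient, so that $\jx_K-\jx_P=\beta-2\alpha$ with $\alpha=c(V)+\sum_i(r_i[2]-1)$ and $\beta=2\,o(V_B)$. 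It is exactly this transfer that produces the $-2\sum_i(r_i[2]-1)$ term and converts the parities: the circle count gives $o((V_+)_B)=\sum_i[r_i]_o$ and $o((V_-)_B)=-[r_0]_e-r_0[2]-1$, i.e.\ the \emph{opposite} parities to those appearing in the final formula, and only after subtracting $2\alpha$ does one obtain $-2\sum_i[r_i]_e$ and $+2[r_0]_o-2q_0'/r_0[2]$ via $[r]_e=[r]-[r]_o$. As written, your claim that the parity-separated sums of the final formula ``emerge naturally from counting the circles'' is backwards, and the asserted simplification of $2\,o(V_B)+2(\omega(D_K)-\omega(D_P))$ to the stated expression would fail on examples.
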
 
\begin{proof}
Suppose $K = K(r_0, r_1, \ldots, r_m)= N(K_- \oplus K_+)$ is a Montesinos knot, then $K$ is obtained from a special Montesinos knot $L= K(\frac{1}{q_0}, \frac{1}{r_1[1]+\frac{1}{r_1[2]}}, \ldots, \frac{1}{r_m[1]+\frac{1}{r_m[2]}}) = N(L_- \oplus L_+)$ by a combination of $\mathsf{TR}$-moves on the tangles in $L$ following the unique even length positive continued fraction expansions of $r_i$ for $0 \leq i \leq m$. Recall each rational tangle diagram corresponding to $r_i$ has an algebraic expression of the form 
\[ (((r_i[\ell_{r_i}] * \frac{1}{r_i[\ell_{r_i}-1]})\oplus r_i[\ell_{r_i}-2])* \cdots * \frac{1}{r_i[1]}).  \] 
The diagram $K_+$  is obtained by applying successive $\mathsf{TR}^+$-moves to $r_i[j]$ in $L_+$, $1\leq i \leq m$, sending $r_i[j]$ to $(r_i[j+2]* 1/r_i[j+1]) \oplus r_i[j]$ for each even $2 \leq j \leq \ell_{r_i}$ starting with $j=2$. 
Similarly, the rational tangle $K_-$ is obtained from $L_-$ by applying the $\mathsf{TR}^-_2$-moves to $\frac{1}{r_0[j]}$, sending $\frac{1}{r_0[j]}$ to $(\frac{1}{r_0[j+2]}\oplus r_0[j+1])*\frac{1}{r_0[j]}$, for each odd $1\leq j < \ell_{r_0}$ starting with $j=1$, with a final $\mathsf{TR}^-_1$-move sending $\frac{1}{r_0[\ell_{r_0-1}]}$ to $(r_0[\ell_{r_0}]*\frac{1}{r_0[\ell_{r_0-1}]})$.  

Recall $$
[ r ]_{e} = \sum_{3\leq j\leq \ell_r, \ j=\text{even}} r[j], \quad
[r ]_{o} = \sum_{3\leq j\leq \ell_r, \ j=\text{odd}} r[j], \quad
 [ r] =  [ r ]_{e} + [ r ]_{o} \,.
$$ We have two cases for the degree of $\langle K^n\rangle$ relative to $\langle L^n \rangle$, where $K^n$ is obtained from applying the combination of $\mathsf{TR}$-moves to $L^n$ as described above: 
\begin{itemize}
\item[(1)] $r_0 = 1/q_0$. 
By Lemma \ref{l.rmove}, each application of the $\mathsf{TR}^+$-move adds $(r_i[j+2] + r_i[j+1])n^2 + 2r_i[j+1]n$ to the degree  for each even $2\leq j \leq \ell_{r_i}$ where $1\leq i \leq m$.  
We have 
\begin{align*}
& \deg_v \langle K^n \rangle  \\ 
&= \deg_v \langle L^n \rangle +  \sum_{i=1}^m \sum_{j \text{ even}, \ 2\leq j \leq \ell_{r_i}} (r_i[j+2] + r_i[j+1])n^2 + 2r_i[j+1]n \\
&= \deg_v \langle L^n \rangle + n^2 \sum_{i=1}^m [r_i]+  2n \sum_{i=1}^m [r_i]_o.    
\intertext{Applying quadratic integer programming to \eqref{eq.2Deltank} for $\deg_v \langle L^n \rangle$ and ignoring the part of the degree function that only depends on $n, q_i$, and $q_i'$'s, we see that as long as the $q_i$'s for  $0\leq i \leq m$ satisfy the hypotheses of the theorem, }
&\deg_v \langle K^n \rangle  \\
&= \underbrace{-2s(q)(n)n^2 -2s_1(q)(n)n + \text{lower order terms}}_{\deg_v \langle L^n \rangle}   + n^2 \sum_{i=1}^m (q'_i-1) + n^2\sum_{i=1}^m [r_i] + 2n\sum_{i=1}^m [r_i ]_o.
\intertext{Gathering the coefficients multiplying $n^2$ and accounting for the writhes of standard diagrams $D_K$, $D_P$ , we get}
&\js_K(n) = \js_P(n) - \omega(D_p) + \omega(D_K)  + \sum_{i=1}^m (q'_i-1) + \sum_{i=1}^m [r_i].
\intertext{Note that $q'_i = r_i[2]$, and $q'_0 = [r_0] = 0$ for this case, and so trivially}
&\js_K(n)  =  \js_P(n)-q'_0 - [r_0]- \omega(D_p) + \omega(D_K) + \sum_{i=1}^m (r_i[2]-1) + \sum_{i=1}^m [r_i].  
\intertext{Now we compute $\jx_K(n)$ by considering $\deg_v \langle K^{n-1} \rangle$ and collecting coefficients of $n$. This gives}
&\jx_K(n) = \jx_P(n) - 2\sum_{i=1}^m (q_i' - 1)  - 2 \sum_{i=1}^m [r_i] +  2\sum_{i=1}^m [r_i]_o.\\ 
&= \jx_P(n) -2\sum_{i=1}^m (q'_i -1) - 2\sum_{i=1}^m [r_i]_e.   
\intertext{Trivially we have} 
&\jx_K(n) = \jx_P(n) -2\frac{q_0'}{r_0[2]} + 2[r_0]_o-2\sum_{i=1}^m (r_i[2] -1) - 2\sum_{i=1}^m [r_i]_e.   
\end{align*} 
\item[(2)] $r_0 \not= 1/q_0$. 
In this case, we account for the degree change for the $\mathsf{TR}^+$-moves applied to $L^n_+$ in the same way as in case (1). It remains to account for the change to the degree based on applying $\mathsf{TR}_2^-$-moves with a final $\mathsf{TR_1^-}$-move to the $n$-cabled negative tangle of the special Montesinos knot $L$. 
Each application of the $\mathsf{TR_2^-}$-move adds $-(r_0[j+2] + r_0[j+1])n^2 - 2(r_0[j+1])n$ to the degree, and the final application of the $\mathsf{TR_1^-}$-move adds $-r_0[\ell_{r_0}] n^2 + 2(-r_0[\ell_{r_0}]-1)n$.
We sum the contribution over $j$ odd from 1 to $\ell_{r_0}$.
\begin{align}
& \sum_{j \text{ odd},  1\leq j <\ell_{r_0}} -(r_0[j+2] + r_0[j+1])n^2 - 2(r_0[j+1])n  \notag \\
& = -(r_0[2] + [r_0]- r_0[\ell_{r_0}])n^2   -2([r_0]_e - r_0[\ell_{r_0}]+ r_0[2]) n.  \label{e.c1} \\ 
\intertext{We compute similarly the quadratic growth rate and the linear growth rate of the final $\mathsf{TR}_1^-$-move: }
& -r_0[\ell_{r_0}] n^2 + 2(-r_0[\ell_{r_0}]-1)n.   \label{e.c2} \\ 
\intertext{Adding \eqref{e.c1}, \eqref{e.c2}, we have} 
& \eqref{e.c1}+ \eqref{e.c2} = -(r_0[2] + [r_0])n^2 - 2([r_0]_e+r_0[2] + 1)n. \label{e.c3}
\intertext{Plugging in $n-1$ for $n$ and expanding, the result is }
& \eqref{e.c1}+ \eqref{e.c2} = -(r_0[2] + [r_0])n^2 - 2([r_0]_e+r_0[2] + 1-(r_0[2] + [r_0]))n.
\intertext{When we add the coefficients multiplying $n^2$ and the coefficients multiplying $n$ from \eqref{e.c3} from the moves on $K^n_+$, we get in this case}
\js_K(n) &=\left( \js_P(n)-\omega(D_p) + \omega(D_K) + \sum_{i=1}^m (r_i[2]-1) + \sum_{i=1}^m [r_i] \right)-(r_0[2] + [r_0] ), 
\intertext{and }
\jx_K(n) &= \left(\jx_P(n) - 2\sum_{i=1}^m(r_i[2] -1) - 2\sum_{i=1}^m [r_i]_e \right)+ 2[r_0]_o -2\frac{q'_0}{r_0[2]}, 
\end{align} 
as in the statement of the theorem. 
\end{itemize}
\end{proof}

\section{Essential surfaces of Montesinos knots}
\label{sec.incompressible}

Let $\Sigma$ be a compact, connected, non boundary-parallel, and properly embedded surface in a compact, orientable 3-manifold $Y$ with torus boundary. We say that $\Sigma$ is \emph{essential} if the map on fundamental groups $\iota^*: \pi_1 (\Sigma) \rightarrow \pi_1 (Y)$ induced by inclusion of $\Sigma$ into $Y$ is injective. The surface $\Sigma$ is \emph{incompressible} if for each disk $D \subset Y$ with $D\cap \Sigma = \partial D$, there is a disk $D'\subset \Sigma$ with $\partial D' = \partial D$. The surface $\Sigma$ is called \emph{$\partial$-incompressible} if for each disk $D\subset Y$ with $D\cap \Sigma = \alpha$, $D\cap \partial Y = \beta$ ($\alpha$ and $\beta$ are arcs), $\alpha \cup \beta = \partial D$, and $\alpha \cap \beta = S^0$, there is a disk $D'\subset \Sigma$ with $\partial D' = \alpha' \cup \beta'$ such that $\alpha'  = \alpha$ and $\beta' \subset \partial \Sigma$.  

Orient the torus boundary $\partial Y$ with the choice of the canonical meridian-longitude basis $\mu$, $\lambda$ from the standard framing (so the linking number of the longitude and the knot is 0) given an orientation on the knot. The boundary curves $\partial \Sigma$ of an essential surface $\Sigma$ with boundary in $\partial Y$ are homologous and thus determines a homology class $[p \mu + q\lambda]$ in $H_1(\partial Y)$. The \emph{boundary slope} of $\Sigma$ is the fraction $p/q\in \mathbb{Q} \cup \{1/0 \} $, reduced to lowest terms. Hatcher showed that the set of boundary slopes of a compact orientable irreducible 3-manifold with torus boundary (in particular a knot exterior) is finite \cite{Ha}. 

An orientable surface is essential if and only if it is incompressible. On the other hand, a non-orientable surface is essential if and only if its orientable double cover in the ambient manifold is incompressible. In an irreducible orientable 3-manifold whose boundary consists of tori (such as a link complement), an orientable incompressible surface is either $\partial$-incompressible or a $\partial$-parallel annulus \cite{Wal}. Therefore, the problem of finding boundary slopes for Montesinos knots may be reduced to the problem of finding orientable incompressible and $\partial$-incompressible surfaces, and we will only consider such surfaces for the rest of the paper. 

In this section, we summarize the Hatcher-Oertel algorithm for finding all boundary slopes of Montesinos knots \cite{HO}, based on the classification of orientable incompressible and $\partial$-incompressible surfaces of rational (also known as 2-bridge) knots in \cite{HT}. For every Jones slope that we find in Sections \ref{sub.application} and \ref{ss.Mdegree}, we will use the algorithm to produce an orientable, incompressible and $\partial$-incompressible surface, whose boundary slope, number of boundary components, and Euler characteristic realize the strong slope conjecture. This completes the proof of Theorem \ref{thm.0} and Theorem \ref{thm.1}. 

We will follow the conventions of \cite{HO} and \cite{HT}. For further exposition of the algorithm, the reader may also consult \cite{IM}.  It will be useful to introduce the negative continued fraction 
expansion~\cite[Ch.13]{BS}
\begin{equation} 
\label{eq:negcfen} 
[[a_0, a_1, \ldots, a_\ell]] = [a_0, -a_1, \ldots, (-1)^\ell a_\ell] =
a_0-\cfrac{1}{a_1-\cfrac{1}{a_2-\cfrac{1}{a_3-\cdots -\cfrac{1}{a_\ell}}}} \,. 
\end{equation} 
with $a_i \in \mathbb{Z}$ and $a_i\not=0$ for $i>0$. 

\subsection{Incompressible and $\partial$-incompressible surfaces for a rational knot} \label{sec.rational}
A notion originally due to Haken~\cite{Haken}, a \emph{branched surface} $B$
in a 3-manifold $Y$ is a subspace locally modeled on the space as shown on the left in 
the Figure \ref{f.branch}. This means every point has a neighborhood diffeomorphic to the neighborhood of a point in the model space. A properly embedded surface $\Sigma$ in $Y$ is \emph{carried by $B$} if $\Sigma$ can be isotoped so that it runs nearly parallel to $B$, i.e., $S$ lies in a fibered regular neighborhood $N(B)$ of $B$, and such that $S$ meets every fiber of $N(B)$ \footnote{In \cite{FO84} and \cite{Ha}, a surface is carried by a branched surface if it lies in a fibered regular neighborhood of the branched surface. A surface is carried by a branched surface with \emph{positive weights} if in addition the surface intersects every fiber of the fibered regular neighborhood of the branched surface. Here we add the condition that the surface meets every fiber of the fibered regular neighborhood of the branched surface to simplify the summary of results from \cite{FO84}, \cite{Ha}.  }.

\begin{figure}[!htpb]
\def \svgwidth{.7\columnwidth}
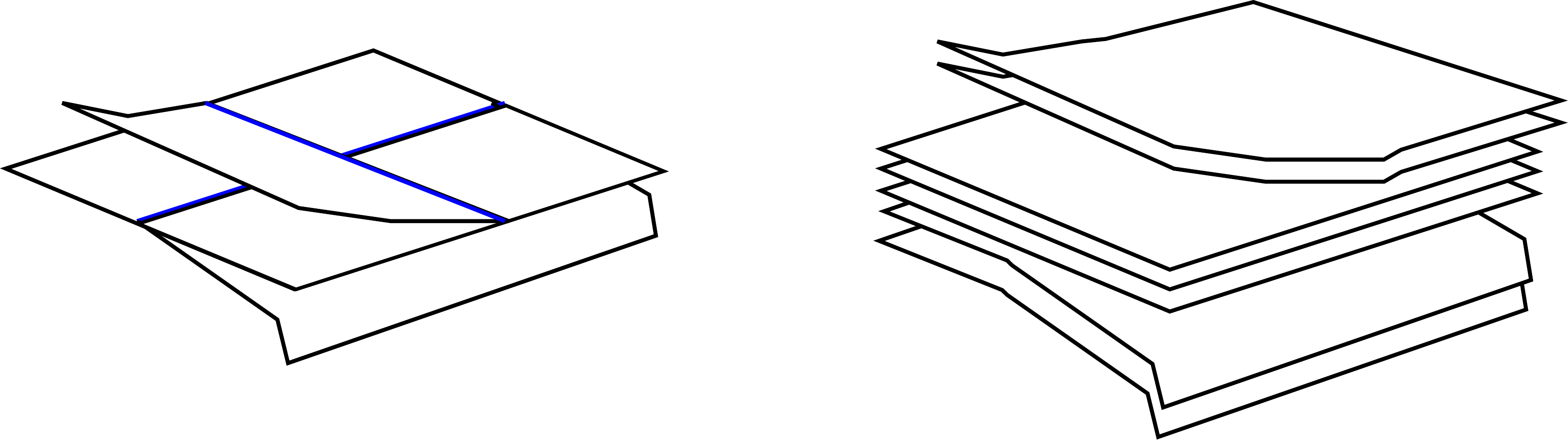
\caption{\label{f.branch} Left: local picture of a branched surface, with the blue lines 
indicating the singularities. Right: a surface carried by the branch surface.}
\end{figure}

Using branched surfaces, Hatcher and Thurston \cite{HT} classify all orientable,  incompressible and $\partial$-incompressible surfaces with nonempty boundary for a 
rational knot $K_r = K(1/r)$ where $r\in \mathbb{Q}\cup \{1/0\}$ in terms of negative continued 
fraction expansions of $r$. For each negative continued fraction expansion $[[b_0, b_1, \ldots, b_k]]$ of $r$ as in \eqref{eq:negcfen} they construct a branched surface $\Sigma(b_1, \ldots, b_k)$ and associated surfaces $S_M(M_1, \ldots, M_k)$ carried by $\Sigma(b_1, \ldots, b_k)$, where $M\geq 1$ and $0 \leq M_j \leq M$.

We will now describe their representation of a surface $S_M(M_1, \ldots, M_k)$ carried by a branched surface $\Sigma(b_1, \ldots, b_k)$ in terms of an \emph{edge-path} on a one-complex $\mathcal{D}$.  Here, $\mathcal{D}$ is the Farey ideal 
triangulation of $\mathbb{H}^2$ on which $\mathrm{PSL}_2(\mathbb{Z})$ is 
the group of orientation-preserving symmetries, see Figure \ref{fig:D}. 
Recall that the vertices (in the natural compactification)
of $\mathcal{D}$ are $\BQ \cup \infty$ and we set $\infty = \frac{1}{0}$ in projective coordinates. A typical vertex of $\mathcal{D}$ will be denoted by $\langle \frac{p}{q} \rangle$ for coprime integers 
$p,q$ with $q \geq 0$. There is an edge between two vertices $\langle \frac{p}{q} \rangle$ and $\langle \frac{r}{s} \rangle$, denoted by $\langle \frac{p}{q} \rangle \pdash \langle \frac{r}{s} \rangle$, whenever $|ps-rq| = 1$. An \emph{edge-path} is a path on the 1-skeleton of $\mathcal{D}$ which may have endpoints on an edge
rather than on a vertex.

Given a negative continued fraction expansion $[[b_0, \ldots, b_k]]$ of $r$, the vertices of the 
corresponding edge-path are the sequence of partial sums 
\[ 
[[b_0, b_1, \ldots, b_k]], [[b_0, b_1, \ldots, b_{k-1}]],  \ldots, [[b_0, b_1]], [[b_0]], \infty. 
\] 
Given a choice of integers $M\geq 1$ and $0\leq M_j \leq M$, we construct a surface $S_M(M_1, \ldots, M_k)$   in the exterior of $K_r$ from this edge-path as 
follows. We isotope the 2-bridge knot presentation of $K_r$ so that it 
lies in $S^2 \times [0, 1]$, with the two bridges intersecting $S^2\times \{1\}$ 
in two arcs of slope $\infty$, and the arcs of slope $r$ lying in 
$S^2 \times \{0\}$. See \cite[p.1 Fig. 1(b)]{HT}. The slope here is 
determined by the lift of those arcs to $\R^2$, where 
$S^2 \times \{i\} \setminus K$ is identified with the orbit space of $\Gamma$, 
the isometry group of $\R^2$ generated by $180^{\circ}$-degree rotation about 
the integer lattice points. 

Given an edge-path with vertices $\{ \la v \ra \}$, choose heights $\{i_v\}$, $i_v \in [0, 1]$ respecting the ordering of the vertices in the path. At $S^2\times\{0\}$, we have $2M$ arcs of slope $r$, and at $S^2\times \{1\}$ we have $2M$ arcs of slope $\infty$. 
For a fixed $M$, each vertex $\langle v \rangle$ of an edge-path determines a curve system on $S^2\times \{i_v\}$, consisting of $2M$ arcs of slope $v$ with ends on the four punctures representing the intersection with the knot. 
The surface $S_M(M_1, \ldots, M_k)$ is constructed by having its intersections with $S^2 \times \{ i_v \}$ coincide 
with the curve system at $\langle v \rangle$. Between one vertex $\langle v \rangle$ to another, say $\langle v' \rangle$ connected by an edge, $M$ saddles are added to change all $2M$ arcs of slope $v$ to $2M$ arcs of slope $v'$, with $M_j$ indicating one of the two possible choices of such saddles. At the end of the edge-path, $2M$ disks are added to the slope $\infty$ curve system, which corresponds to closing the knot by the two bridges.

\begin{figure}[!htpb] 
\def \svgwidth{.4\columnwidth}
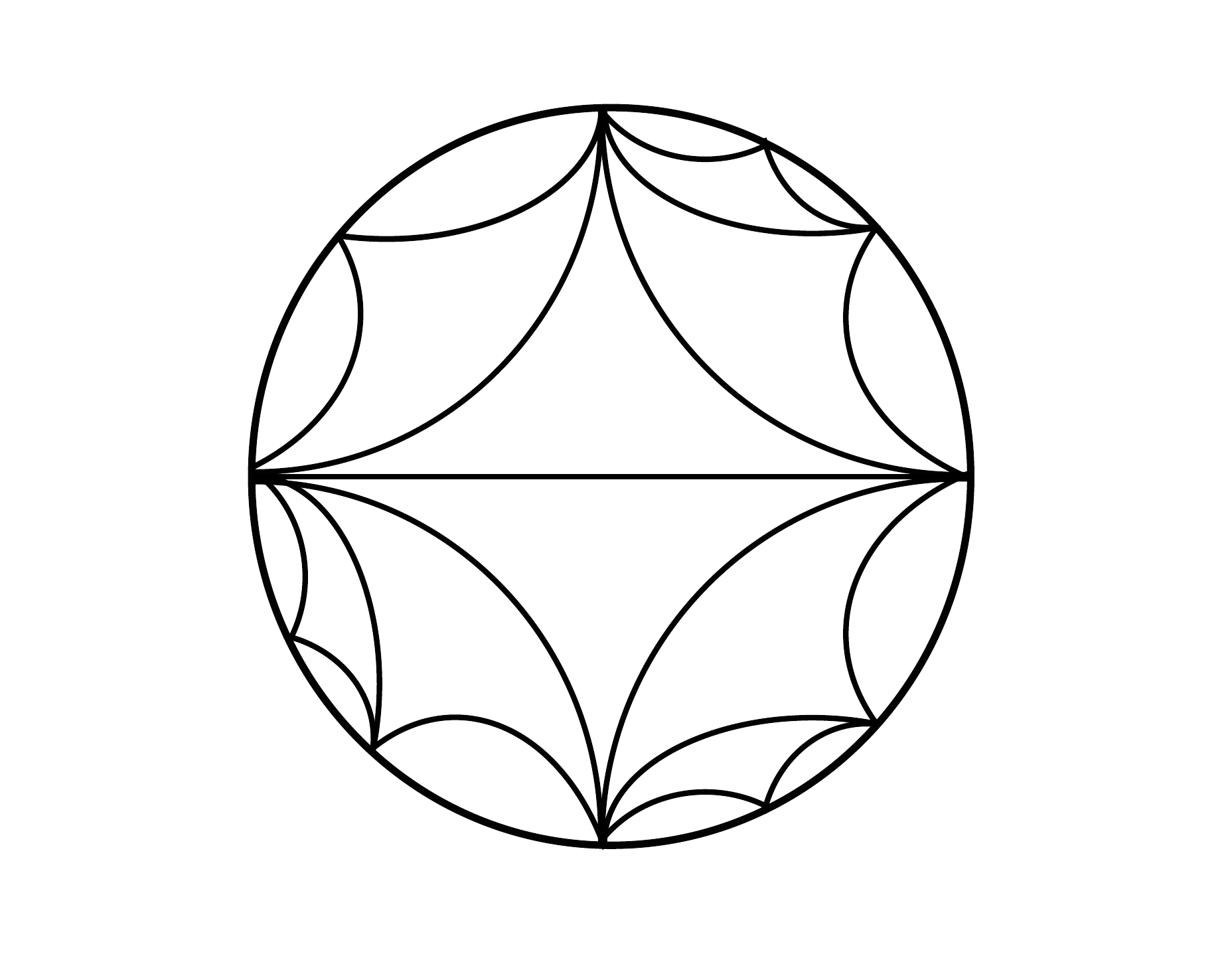
\caption{Some edges of the 1-complex $\mathcal{D}$. }
\label{fig:D} 
\end{figure} 

Hatcher and Thurston have  shown that every non-closed incompressible, $\partial$-incompressible surface in $S^3\setminus K_r$ is isotopic to $S_M(M_1, \ldots, M_k)$ for some $M$ and $M_j$'s . Furthermore, a surface $S_M(M_1, \ldots, M_k)$ carried by $\Sigma(b_1, \ldots, b_k)$ is incompressible and $\partial$-incompressible if and only if $|b_j| \geq 2$ for each $1\leq j\leq k$ \cite[Theorem 1.(b) and (c)]{HT}.  For more details on the construction of the branched surface $\Sigma (b_1, \ldots, b_k)$, and how it is used in the proof, see \cite{HT}. Floyd and Oertel have shown that there is a finite, constructible set of branched surfaces for every Haken 3-manifold with incompressible boundary, which carries all the two-sided, incompressible and $\partial$-incompressible surfaces \cite{FO84}. For the general theory of branched surfaces applied to the question of finding boundary slopes in a 3-manifold, interested readers may consult these references. We will continue to specialize to the case of  knots. 

Let $B$ be a branched surface in a 3-manifold $Y$ with torus boundary, and let $S$ be a properly embedded surface in $Y$ carried by $B$. 
There is an orientation on $\partial B$ such that all the boundary circles of $S$, oriented with the induced orientations from the orientation on $\partial B$, are homologous in the torus boundary of $Y$.  See \cite[p.375, Lemma]{Ha} for the full statement and a proof of this result that generalizes to the case where an orientable, compact, and irreducible 3-manifold has boundary the union of multiple tori.  Thus to compute a boundary slope it suffices to specify a branched surface, and hence the edge-path representing the surface as described above in the case of rational knots. This is how we will describe the surfaces we consider for computing the boundary slopes of a Montesinos knot for the rest of this paper. 

\subsection{Edge-paths and candidate surfaces for Montesinos knots} 
\label{subsec.hoalg}

Hatcher and Oertel \cite{HO} give an algorithm that provides a complete 
classification of boundary slopes of Montesinos knots by decomposing 
$K(r_0, r_1, \ldots, r_{m})$ via a system of Conway spheres 
$\{S^2_i\}_{i=1}^m$, each of which contains a rational tangle $T_{r_i}$. 
Their algorithm determines the conditions under which the incompressible and the $\partial$-incompressible  
surfaces in the complement of each rational tangle, as classified by 
\cite{HT} and put in the form in terms of edge-paths as discussed in Section \ref{sec.rational}, may be glued together across the system of Conway spheres to 
form an incompressible surface in $S^3 \setminus K(r_0, r_1, \ldots, r_{m})$.

To describe the algorithm, it is now necessary to give coordinates to 
curve systems on a Conway sphere. The 
curve system $S \cap S^2_i$ for a connected surface 
$S\subset S^3 \setminus K(r_0, r_1, \ldots, r_m)$ may be described by 
homological coordinates $A_i$, $B_i$, and $C_i$ as shown in 
Figure \ref{f.curvesystem} \cite{Hat88}. 

\begin{figure}[ht]
\def \svgwidth{.2\columnwidth}
%% Creator: Inkscape 1.0beta1 (32d4812, 2019-09-19), www.inkscape.org
%% PDF/EPS/PS + LaTeX output extension by Johan Engelen, 2010
%% Accompanies image file 'CurveSystems.pdf' (pdf, eps, ps)
%%
%% To include the image in your LaTeX document, write
%%   \input{<filename>.pdf_tex}
%%  instead of
%%   \includegraphics{<filename>.pdf}
%% To scale the image, write
%%   \def\svgwidth{<desired width>}
%%   \input{<filename>.pdf_tex}
%%  instead of
%%   \includegraphics[width=<desired width>]{<filename>.pdf}
%%
%% Images with a different path to the parent latex file can
%% be accessed with the `import' package (which may need to be
%% installed) using
%%   \usepackage{import}
%% in the preamble, and then including the image with
%%   \import{<path to file>}{<filename>.pdf_tex}
%% Alternatively, one can specify
%%   \graphicspath{{<path to file>/}}
%% 
%% For more information, please see info/svg-inkscape on CTAN:
%%   http://tug.ctan.org/tex-archive/info/svg-inkscape
%%
\begingroup%
  \makeatletter%
  \providecommand\color[2][]{%
    \errmessage{(Inkscape) Color is used for the text in Inkscape, but the package 'color.sty' is not loaded}%
    \renewcommand\color[2][]{}%
  }%
  \providecommand\transparent[1]{%
    \errmessage{(Inkscape) Transparency is used (non-zero) for the text in Inkscape, but the package 'transparent.sty' is not loaded}%
    \renewcommand\transparent[1]{}%
  }%
  \providecommand\rotatebox[2]{#2}%
  \newcommand*\fsize{\dimexpr\f@size pt\relax}%
  \newcommand*\lineheight[1]{\fontsize{\fsize}{#1\fsize}\selectfont}%
  \ifx\svgwidth\undefined%
    \setlength{\unitlength}{269.59997222bp}%
    \ifx\svgscale\undefined%
      \relax%
    \else%
      \setlength{\unitlength}{\unitlength * \real{\svgscale}}%
    \fi%
  \else%
    \setlength{\unitlength}{\svgwidth}%
  \fi%
  \global\let\svgwidth\undefined%
  \global\let\svgscale\undefined%
  \makeatother%
  \begin{picture}(1,0.925816)%
    \lineheight{1}%
    \setlength\tabcolsep{0pt}%
    \put(0,0){\includegraphics[width=\unitlength,page=1]{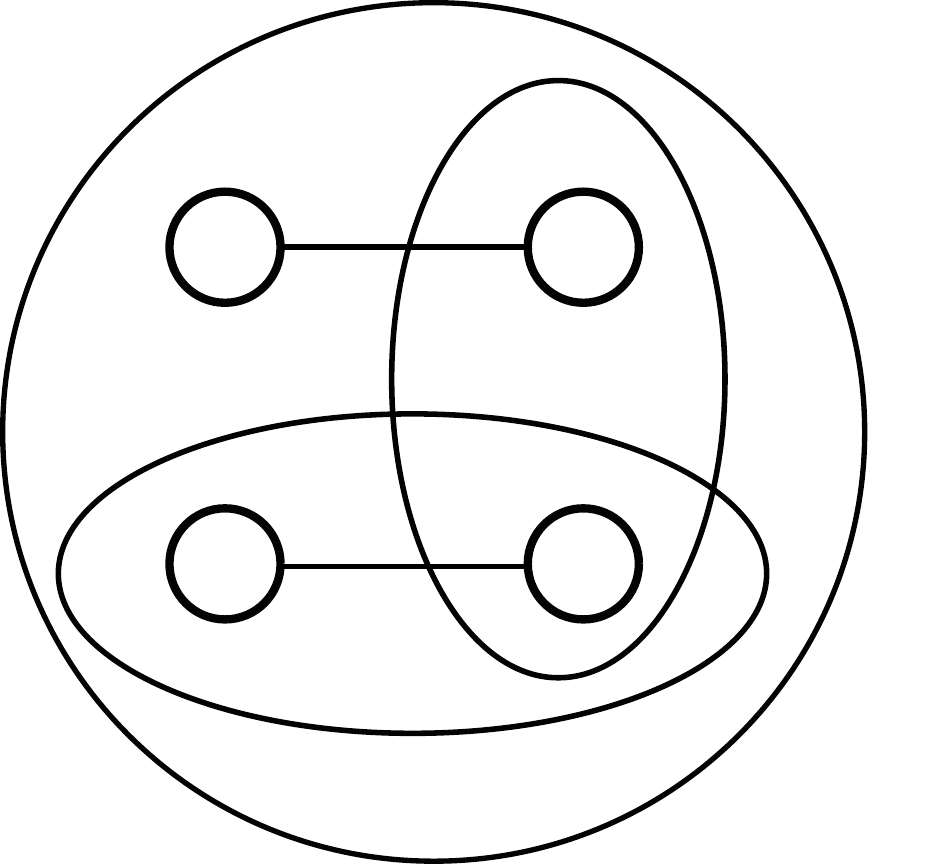}}%
    \put(0.27410983,0.55489615){\color[rgb]{0,0,0}\makebox(0,0)[lt]{\lineheight{0}\smash{\begin{tabular}[t]{l}$A_i$\end{tabular}}}}%
    \put(0.28411901,0.36798261){\color[rgb]{0,0,0}\makebox(0,0)[lt]{\lineheight{0}\smash{\begin{tabular}[t]{l}$A_i$\end{tabular}}}}%
    \put(0.3738873,0.04154292){\color[rgb]{0,0,0}\makebox(0,0)[lt]{\lineheight{0}\smash{\begin{tabular}[t]{l}$B_i$\end{tabular}}}}%
    \put(0.79123815,0.44814646){\color[rgb]{0,0,0}\makebox(0,0)[lt]{\lineheight{0}\smash{\begin{tabular}[t]{l}$C_i$\end{tabular}}}}%
  \end{picture}%
\endgroup%

\caption{\label{f.curvesystem}The Conway sphere containing the tangle 
corresponding to $r_i$ and the curve system on it.}
\end{figure} 

Since an incompressible surface $S$ must also be incompressible when restricting to a tangle inside a Conway sphere, the classification of \cite{HT} applies, and the representation by Hatcher-Thurston of such a surface in terms of an edge-path also carries over. However, the edge-paths lie instead in an augmented 1-complex $\hat{\mathcal{D}}$ in the plane
obtained by splitting open $\mathcal{D}$ along the slope $\infty$ edge and adjoining constant edge-paths $\langle \frac{p}{q} \rangle \pdash \langle \frac{p}{q}\rangle$. See \cite[Fig. 1.3]{HO}. The additional edges in $\hat{\mathcal{D}}$ incorporate the new possibilities of curve systems that arise when gluing the surfaces following the tangle sum.

Again, an edge-path in $\hat{\mathcal{D}}$ is a path in the 1-skeleton of $\hat{\mathcal{D}}$ which may or may not end on a vertex. It describes a surface in the complement of a rational tangle in $K(r_0, r_1, \ldots, r_m)$ consisting of saddles joining curve systems corresponding to vertices, as described in the last paragraph of Section \ref{sec.rational}. The main adjustment is that the endpoint of an edge-path may not end at $\langle \infty \rangle$. In order for the endpoint representing the curve system to come from the intersection with an incompressible and $\partial$-incompressible surface, it must be on an edge
$\langle \frac{p}{q} \rangle \pdash \langle \frac{r}{s} \rangle$ and has the form 
\[
\frac{K}{M}\langle \frac{p}{q} \rangle 
+  \frac{M-K}{M} \langle\frac{r}{s}\rangle, 
\] 
for integers $K \in \mathbb{Z}$, $M > 0$. If $\frac{p}{q} \not= \frac{r}{s}$, this describes a curve system on a Conway sphere consisting of $K$ arcs of slope $p/q$, of $(A, B, C)$-coordinates $K(1, q-1, p)$, and $M-K$ arcs of slope $r/s$, of $(A, B, C)$-coordinates $(M-K)(1, s-1, r)$.  The coordinate of the point is the sum: $(M, K(q-1)+(M -K)(s-1), Kp+(M-K)r)$. If $\frac{p}{q} = \frac{r}{s}$, this describes a curve system on a Conway sphere consisting of $(M-K)$ arcs of slope $p/q$, of $(A, B, C)$-coordinates $(M-K)(1, q-1, p) = (M-K, (M-K)(q-1), (M-K)p)$,  and $K$ circles of slope $p/q$, of $(A, B, C)$-coordinates $K(0, q, p) = (0, Kq, Kp)$. The coordinate of the point is again the sum: $(M-K, (M-K)(q-1)+Kq, Mp)$.

The algorithm is as follows. 

\begin{enumerate}
\item 
For each fraction $r_i$, pick an edge-path $\gamma_i$ in the 1-complex
$\hat{\mathcal{D}}$ corresponding to a continued fraction expansion 
\[
r_i = [[b_0, b_1, \ldots, b_k]], b_j \in \mathbb{Z}, |b_j| \geq 2 \text{ for } 1\leq j \leq k. 
\]
As discussed in Section \ref{sec.rational}, these continued fraction expansions correspond to essential surfaces in the complement of the rational knot $K_{r_i}$. For example, for $1/3$ the choices are either $[[0, -3]]$ or $[[1, 2, 2]]$.  Or, choose the constant edge-path $\langle r_i \rangle \pdash \langle r_i \rangle$. 
\item 
For each edge $\langle \frac{p}{q}\rangle \pdash \langle \frac{r}{s} \rangle$ 
in $\gamma_i$, determine the integer parameters $\{K_i \}_{i=0}^m$, $\{M_i\}_{i=0}^m$ 
satisfying the following constraints.  
\begin{enumerate}
\item 
$A_i = A_j$ and $B_i = B_j$ for all the $A$-coordinates 
$A_i$ and the $B$-coordinates $B_i$ of the point 
\[ 
\frac{K_i}{M_i} \langle \frac{p}{q} \rangle + \frac{M_i-K_i}{M_i} 
\langle  \frac{r}{s} \rangle. 
\] 
\item 
$\sum_{i=0}^{m} C_i = 0$ where $C_i$ is the $C$-coordinate of the point 
\[  
\frac{K_i}{M_i} \langle \frac{p}{q} \rangle + \frac{M_i-K_i}{M_i} 
\langle  \frac{r}{s} \rangle.
\] 
\end{enumerate}
The edge-paths chosen in (1) with endpoints specified by the solutions to 
(a) and (b) of (2) determine a candidate edge-path system $\{\gamma_i\}_{i=0}^m$, 
corresponding to a connected and properly embedded surface $S$ in $S^3 \setminus 
K(r_0, r_1, \ldots, r_m)$. We call this the \emph{candidate surface} associated 
to a candidate edge-path system. 
\item 
Apply incompressibility criteria \cite[Prop. 2.1, Cor. 2.4, and Prop. 2.5-2.9]{HO} 
to determine if a candidate surface is an 
incompressible surface and thus actually gives a boundary slope.
\end{enumerate}

\begin{remark}
We would like to remark that Dunfield~\cite{Dunfield:table} has written a computer program implementing the Hatcher-Oertel algorithm, which will output the set of boundary slopes given a Montesinos knot and give other information like the set of edge-paths representing an incompressible, $\partial$-incompressible surface, Euler characterstic, number of sheets, etc. The program has provided most of the data we use in our examples in this paper. Interested readers may download the program at his website \url{https://faculty.math.illinois.edu/~nmd/montesinos/index.html}. 
\end{remark}

We will write $S(\{ \gamma_i\}_{i=0}^{m})$ to indicate a candidate 
surface associated to a candidate edge-path system $\{ \gamma_i\}_{i=0}^{m}$. 
Note that for a candidate edge-path system without constant edge-paths, $M_i$ is identical for 
$i=0, \ldots, m$ by condition (2a) in the algorithm. We will only consider this type of edge-path systems for the rest of this paper, and simply write $M$ for $M_i$ for a candidate surface $S$. 

We will mainly be applying \cite[Corollary 2.4]{HO}, which we restate here. 
Note that for an edge 
$\langle \frac{p}{q}\rangle \pdash \frac{K}{M}\langle \frac{p}{q} \rangle  + \frac{M-K}{M}\langle \frac{r}{s} \rangle$,
the $\rv$-value (called the ``$r$-value" in \cite{HO}) is 0 if $\frac{p}{q} = \frac{r}{s}$ or if the edge is 
vertical, and the $\rv$-value is $|q-s|$ when $\frac{p}{q} \not= \frac{r}{s}$. 

\begin{theorem}{\cite[Corollary 2.4]{HO}} \label{thm:incompressible}
A candidate surface $S(\{ \gamma_i\}_{i=0}^{m})$
is incompressible unless the cycle of $\rv$-values for 
the final edges of the $\gamma_i$'s is of one of the following types: 
$\{0, \rv_1, \ldots, \rv_{m} \}$, 
$\{1, 1, \ldots, 1, \rv_{m} \}$, or 
$\{1, \ldots, 1, 2, \rv_{m} \}$. 
\end{theorem}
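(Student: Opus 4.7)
The plan is a contradiction argument: assume the candidate surface $S(\{\gamma_i\}_{i=0}^{m})$ is compressible while its cycle of final-edge $\rv$-values avoids the three listed patterns, and extract an impossible simplification of one of the edge-paths $\gamma_i$.

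First I would take a compressing disk $D$ for $S$, put it in general position with the system of Conway spheres $\{S_i^2\}_{i=0}^{m}$, and among all such disks choose one minimizing $|D\cap(\cup_i S_i^2)|$. The intersection $D \cap(\cup_i S_i^2)$ consists of arcs and circles; standard innermost-circle and outermost-arc arguments, combined with irreducibility of the knot complement and of each tangle ball, reduce me to a disk $D_0$ which either lies inside a single tangle ball $B_i$ or sits in the region between Conway spheres, and whose boundary consists of an arc on $S$ and an arc on a single $S_i^2$. Such a $D_0$ is a compression of the curve system $S\cap S_i^2$ carrying the final edge of $\gamma_i$.

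Next, using the Hatcher--Thurston classification for the rational tangle complement $B_i\setminus T_{r_i}$, I would show that if $D_0$ lies inside $B_i$ then it can be used to simplify $\gamma_i$ by retracting its final edge along $\hat{\mathcal{D}}$: the combinatorics of which simplifications are permitted is governed precisely by the $\rv$-value of that final edge, since $\rv$ measures the denominator jump across the last edge and hence the number and slope of new arcs introduced on $S_i^2$. When $D_0$ lies outside the tangle balls, the compression is a disk in the product region between Conway spheres, which translates into a matching-across-spheres condition on the $(A,B,C)$-coordinates along the final edges.

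Finally I would combine the local conclusions around the cycle: the matching requirement $A_i=A_j$, $B_i=B_j$ and $\sum_i C_i=0$ from step (2) of the Hatcher--Oertel algorithm forces the local compressions on adjacent spheres to glue, and a case analysis shows this gluing is possible exactly when the final-edge $\rv$-cycle is of one of the forms $\{0,\rv_1,\dots,\rv_m\}$, $\{1,1,\dots,1,\rv_m\}$, or $\{1,\dots,1,2,\rv_m\}$; an $\rv$-value $\geq 2$ in the presence of another neighbor with $\rv\geq 2$ prevents the curve systems from matching. The main obstacle will be this last case analysis: carefully enumerating the ways an outermost arc on one Conway sphere can extend across an adjacent sphere, and verifying that the exceptional patterns listed are precisely those for which extension is unobstructed, while all other patterns force a mismatch of $(A,B,C)$-coordinates and hence contradict compressibility.
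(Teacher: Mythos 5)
This theorem is not proved in the paper at all: it is quoted verbatim as \cite[Corollary 2.4]{HO}, and the authors explicitly restate it only in order to apply it to their candidate surfaces. So there is no internal proof to compare your attempt against; what you have written is an attempt to reprove a result of Hatcher--Oertel. Your general strategy (put a compressing disk in minimal position with respect to the Conway spheres, reduce by innermost-circle/outermost-arc arguments to a compression seen on a single sphere or in a single tangle ball, analyze it locally via the Hatcher--Thurston classification, then assemble the local conclusions around the cycle) is indeed the shape of the original argument, so the skeleton is reasonable.

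However, the proposal has a genuine gap at exactly the point where the theorem's content lives. The entire force of the statement is the identification of the three exceptional patterns $\{0,\rv_1,\dots,\rv_m\}$, $\{1,\dots,1,\rv_m\}$, $\{1,\dots,1,2,\rv_m\}$, and you dispose of this with ``a case analysis shows this gluing is possible exactly when\dots''. Worse, the mechanism you offer for that case analysis cannot be right. First, the matching conditions $A_i=A_j$, $B_i=B_j$, $\sum_i C_i=0$ are imposed in Step (2) of the algorithm as part of the \emph{definition} of a candidate surface, so every candidate surface already satisfies them; they cannot be the source of a contradiction with compressibility. Second, your heuristic that ``an $\rv$-value $\geq 2$ in the presence of another neighbor with $\rv\geq 2$ prevents the curve systems from matching'' does not reproduce the listed patterns: the exceptional cycle $\{1,\dots,1,2,\rv_m\}$ contains a $2$ together with an unrestricted $\rv_m$ (which may well be $\geq 2$), and the cycle $\{0,\rv_1,\dots,\rv_m\}$ places no restriction at all on the remaining entries. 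The correct analysis (in Hatcher--Oertel) tracks how a compression forces a ``reversal'' of the final saddles of the edge-paths and how the $\rv$-values, which count arcs of intersection of adjacent curve systems, control whether such a reversal can propagate around the cycle of tangles; none of that bookkeeping appears in your sketch. As written, the proposal is an outline of a strategy rather than a proof, and the one concrete claim it makes about the combinatorics is false.
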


\subsection{The boundary slope of a candidate surface} \label{subsec.bdslopecs}

The \emph{twist number} $\tw(S)$ for a candidate surface 
$S(\{ \gamma_i\}_{i=0}^{m})$ is defined as 
\be
\tw(S) :=  
\frac{2}{M} \sum_{i=0}^{m} (s^-_i - s^+_i) = 2\sum_{i=0}^m (e^-_i-e^+_i), 
\ee
where $s^-_i$ is the number of slope-decreasing saddles of $\gamma_i$,
$s^+_i$ is the number of slope-increasing saddles of $\gamma_i$, and $M$ 
is the number of sheets of $S$ \cite[p.460]{HO}. Let an edge be given by $\langle \frac{p}{q} \rangle \pdash \langle \frac{r}{s} \rangle$, we say that the edge decreases slope if $\frac{r}{s} < \frac{p}{q}$, and that the edge increases slope if $\frac{r}{s} > \frac{p}{q}$. In terms of edge-paths, $\tw(S)$ can be 
written in terms of the number 
$e^-_i$ of edges of $\gamma_i$ that decreases slope and $e^+_i$, the number 
of edges of $\gamma_i$ that increases slope as shown. 
If $\gamma_i$ has a final edge
$$  
\langle \frac{p}{q}\rangle \pdash \frac{K_i}{M}  \langle \frac{p}{q}  \rangle 
+ \frac{M-K_i}{M}   \langle \frac{r}{s}  \rangle, 
$$ 
then the final edge of $\gamma_i$ is called a fractional edge and counted as a fraction $\frac{M-K_i}{M}$. 
Finally, the boundary slope {$\bs(S)$ of a candidate surface $S$ is given by
\begin{equation} \label{eq.twist}
\bs(S) = \tw(S) - \tw(S_0) \,  
\end{equation} 
where $S_0$ is a Seifert surface that is a candidate surface from the 
Hatcher-Oertel algorithm. For the relation
of the twist numbers to boundary slopes, see~\cite[p.460]{HO}.  

\subsection{The Euler characteristic of a candidate surface} \label{subsec:echarjs}

We compute the Euler characteristic of a candidate surface $S = S(\{\gamma_i\}_{i=0}^m)$, where none of the $\gamma_i$'s are 
constant or end in $\la \infty \ra$ as follows. $M$ is again the number of sheets of the surface $S$. We begin with $2M$ disks which intersect $S^2_i\times \{0\}$ in slope 
$r_i$ arcs in each 3-ball $B_i^3$ containing the rational tangle corresponding to $r_i$. 

\begin{itemize}
\item 
From left to right in an edge-path $\gamma_i$, each non-fractional edge $\langle \frac{p}{q} \rangle \text{\pdash} 
\langle \frac{r}{s} \rangle$ is constructed by gluing $M$ 
saddles that change $2M$ arcs of slope $\frac{p}{q}$ (representing the intersections with $S_i^2\times \{i_{\frac{p}{q}}\}$) to slope 
$\frac{r}{s}$ (representing the intersections with $S_i^2\times \{i_{\frac{r}{s}}\}$), therefore decreasing the Euler characteristic  by $M$. 
\item 
A fractional final edge of $\gamma_i$ of the form $\langle \frac{p}{q} \rangle$\pdash 
$\frac{K}{M}\langle \frac{p}{q} \rangle + \frac{M-K}{M}\langle 
\frac{r}{s} \rangle$ changes $2(M-K)$ out of $2M$ arcs of slope 
$\frac{p}{q}$ to $2(M-K)$ arcs of slope $\frac{r}{s}$ via $M-K$ saddles, 
thereby decreasing the Euler characteristic by $M-K$. 
\end{itemize} 

This takes care of the individual contribution to the Euler characteristic of an edge-path 
$\{\gamma_i\}$. 
Now the identification of the surfaces on each of the 4-punctured spheres will also affect the Euler characteristic of the resulting surface. In terms of the common $(A, B, C)$-coordinates of each edge-path, there are two cases: 

\begin{itemize}
\item 
The identification of hemispheres between neighboring balls $B^3_i$  and 
$B^3_{i+1}$ identifies $2M$ arcs and $B_i$ half circles.  Thus it subtracts 
$2M+B_i$ from the Euler characteristic for each identification. 
\item 
The final step of identifying hemispheres from $B^3_0$ and $B^3_m$ on a single 
sphere adds $B_i$ to the Euler characteristic. 
\end{itemize}

\subsection{Matching the growth rate to topology for pretzel knots} 
\label{sec.jslopem}
We consider two candidate surfaces from the Hatcher-Oertel algorithm whose boundary slopes and the ratios of  Euler characteristic to the number of sheets will be shown to match the growth rate of the degree of the colored Jones polynomial from the previous sections as predicted by the strong slope conjecture.

\subsubsection{The surface $S(M, x^*)$} 
For $1\leq i \leq m$ write
\begin{equation} \label{e.x}
x^*_{i} = \frac{(q_i-1)^{-1}}{\sum_{j=1}^m (q_j-1)^{-1}}. 
\end{equation}   

The $x^*_{i}$'s come from the coefficients of $t$ in \eqref{eq.x*} in the real maximizers $x^*_i(t)$ of the degree function $\delta(n, k)$ from the state sum of the $n$th colored Jones polynomial.  Let $M$ be the least common multiple of the denominators of $\{x^*_{i}\}_{i=1}^m$, reduced to lowest terms.  For example, suppose we have the pretzel knot $P(-11, 7, 9)$, then 
\[ x^*_{1} = \frac{\frac{1}{7-1}}{\frac{1}{7-1} + \frac{1}{9-1}} = \frac{4}{7} \text{, }x^*_{2} = \frac{\frac{1}{8}}{\frac{1}{7-1} + \frac{1}{9-1}} = \frac{3}{7},  \]\
and $M$ is 7.  We show that $\{x_i^*\}$ and $M$ determine a candidate surface from the Hatcher-Oertel algorithm. 

Recall for $q = (q_0, q_1, \ldots, q_m)$, 
\[ s(q) =1 +q_0 +\frac{1}{\sum_{i=1}^m (q_i-1)^{-1}} .\] 
\begin{lemma} 
\label{l.HOm} Suppose $q = (q_0, q_1, \ldots, q_m)$ is such that $s(q) \leq 0$. 
There is a candidate surface 
$S(M, x^*)$ from the Hatcher-Oertel algorithm with $M>0$ sheets and 
$C$-coordinates \[ \{-M, Mx^*_{1}, Mx^*_{2}, \ldots, Mx^*_{m}\}.\]  
\end{lemma}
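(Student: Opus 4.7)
The plan is to exhibit an explicit candidate edge-path system $\{\gamma_i\}_{i=0}^m$ in the augmented complex $\hat{\mathcal{D}}$ whose endpoint coordinates satisfy the Hatcher-Oertel constraints of Section~\ref{subsec.hoalg} with common value $A_i = M$, common value $B^* = M/\sum_{j=1}^m(q_j-1)^{-1}$, and the prescribed $C$-coordinates.

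For each positive tangle $r_i = 1/q_i$ with $i\geq 1$, I would take $\gamma_i$ to be the single fractional edge starting at $\la 1/q_i\ra$ and running along $\la 1/q_i\ra \pdash \la 0\ra$ to the endpoint $\tfrac{Mx_i^*}{M}\la 1/q_i\ra + \tfrac{M - Mx_i^*}{M}\la 0\ra$. Here $Mx_i^*$ is an integer in $[0,M]$ by the choice of $M$ as the lcm of the denominators of the $x_i^*$ and the bound $0\leq x_i^* \leq 1$. A direct application of the coordinate formulas gives $A_i = M$, $B_i = Mx_i^*(q_i-1) = M/\sum_j(q_j-1)^{-1} = B^*$, independent of $i$, and $C_i = Mx_i^*$, as required.

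The more delicate step is constructing $\gamma_0$ so that $C_0 = -M$ while matching $B_0 = B^*$. The key observation is that, writing $\la 1/q_0\ra = \la -1/(-q_0)\ra$, every vertex $\la 1/(q_0+j)\ra = \la -1/(-q_0-j)\ra$ with $0\leq j \leq -q_0-2$ has numerator $p=-1$ and hence $C$-coordinate $-M$, while consecutive such vertices are adjacent in $\mathcal{D}$ since $|(-1)(-q_0-j-1)-(-1)(-q_0-j)|=1$. I would therefore take
\[
\gamma_0 = \la 1/q_0\ra \pdash \la 1/(q_0+1)\ra \pdash \cdots \pdash \la 1/(q_0+k)\ra \pdash \tfrac{K_0}{M}\la 1/(q_0+k)\ra + \tfrac{M-K_0}{M}\la 1/(q_0+k+1)\ra,
\]
so that $C_0 = -M$ is automatic for any choice of $k$ and $K_0$. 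A short computation of the endpoint $B$-coordinate gives $B_0 = M(-q_0-k-2)+K_0$, so matching $B_0 = B^*$ forces $K_0 = M(s(q)+k+1)$, using $s(q) = 1+q_0+B^*/M$.

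Under the hypothesis $s(q)\leq 0$, choosing $k = \lfloor -s(q)\rfloor$ yields $s(q)+k+1 \in [0,1]$, so $K_0 \in [0,M]$; the upper bound $k \leq -q_0-2$ needed for the walk to remain on the negative-slope side follows from $-s(q) < -1-q_0$, a direct consequence of $1/\sum_j(q_j-1)^{-1} > 0$. Integrality $K_0\in\BZ$ follows from $Ms(q) = M(1+q_0)+Mx_i^*(q_i-1)\in \BZ$. The global constraint $\sum_{i=0}^m C_i = -M + \sum_{i=1}^m Mx_i^* = 0$ holds automatically because $\sum_{i=1}^m x_i^* = 1$, so all Hatcher-Oertel conditions are satisfied and the construction produces the desired candidate surface $S(M,x^*)$. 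The main obstacle, really bookkeeping, is the degenerate case where $s(q) = 0$ or $-s(q) \in \BZ$, in which the last fractional edge collapses to the vertex $\la 1/(q_0+k)\ra$ or $\la 1/(q_0+k+1)\ra$; one handles this by truncating $\gamma_0$ to end at that vertex, which does not alter the resulting curve system or its coordinates.
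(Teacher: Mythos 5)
Your proposal is correct and follows essentially the same route as the paper: the same edge-path system (a single fractional edge $\la 1/q_i\ra \pdash \la 0\ra$ ending at $\tfrac{K_i}{M}\la 1/q_i\ra + \tfrac{M-K_i}{M}\la 0\ra$ with $K_i = Mx_i^*$ for the positive tangles, and a walk through the vertices $\la -1/(-q_0)\ra, \la -1/(-q_0-1)\ra, \dots$ with a fractional final edge for the negative tangle), the same matching conditions on the $A$-, $B$-, $C$-coordinates, and the same use of $s(q)\leq 0$ to place the endpoint parameter $K_0$ in $[0,M]$. The only difference is cosmetic: you solve for $K_0 = M(s(q)+k+1)$ with $k=\lfloor -s(q)\rfloor$ in closed form, where the paper argues existence of the pair $(q,K_0)$ by division with remainder.
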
 

\begin{proof}
 Directly from the proof of Lemma \ref{lem.prog1}, the elements of the set $\{x^*_i\}_{i=1}^m$ satisfy the following equations.
\begin{align}
x^*_{i}(q_i-1) &= x^*_{j}(q_j-1), \text{ for } i\not=j, \text{ and }\notag \\
\sum_{i=1}^{m} x^*_{i} &= 1. \label{eq.lins}
\end{align}

 Consider the edge-path systems determined by the following choice of continued fraction expansions for $\{1/q_i\}_{i=0}^m.$
\begin{align*}
1/q_0 &= [[-1, \underbrace{ -2, -2, \ldots, -2}_{-q_0-1}]], \text{and} \\
1/q_i &= [[0, -q_i]] \text{, \ for $1\leq i \leq m$.}\\ 
\end{align*} Note that they represent locally incompressible surfaces since $|-2|\geq 2$ and $q_i \geq 2$ for $1\leq i \leq m$ as discussed in Section \ref{sec.rational}. 
Let $K_i = Mx^*_{i}$ for $1\leq i \leq m$, and  suppose we have $0\leq K_0\leq M$,  $2\leq q \leq -q_0$ such that
\begin{equation} 
K_0 + M(q-2) = K_1(q_1-1).  \label{eq.HO1}
\end{equation}
This condition is the same as requiring $B_0 = B_1$ from $(a)$ of Step (2) of the Hatcher-Oertel algorithm. 
We specify a candidate surface $S(M, x^*)$ in terms of edge-paths $\{\gamma_i\}_{i=0}^m$: 

The edge-path $\gamma_0$ for $q_0$ is
\begin{align}
&\langle \frac{-1}{-q_0} \rangle \pdash \langle \frac{-1}{-q_0-1} \rangle \pdash \cdots \pdash \frac{K_0}{M} \langle\frac{-1}{q}\rangle + \frac{M-K_0}{M}\langle \frac{-1}{q-1} \rangle. \notag
\intertext{For $1\leq i \leq m$, we have the edge-path $\gamma_i$: }
&\langle \frac{1}{q_i} \rangle \pdash \frac{K_i}{M}\langle \frac{1}{q_i} \rangle  + \frac{M-K_i}{M} \langle \frac{0}{1}\rangle. \label{e.HOgluing}
\end{align}
%Note that there could be different pairs $K_0, q$ satisfying \eqref{eq.HO1}, but the resulting edge-path systems all have the same boundary slope. 

Provided that $K_0, q$ satisfying \eqref{eq.HO1} exist, together with \eqref{eq.lins} this edge-path system satisfies the equations coming from (a) and (b) of Step (2) of the algorithm. Thus, there is a candidate surface with $\{-M, Mx^*_{1}, Mx^*_{2}, \ldots, Mx^*_{m}\}$ as the $C$-coordinates in the tangles corresponding to $r_i$'s. 

It remains to show that the assumption  $s(q)\leq 0$ implies the existence of $K_0, q$ satisfying \eqref{eq.HO1}. 

Write 
\[x^*_{i} = \frac{(q_i-1)^{-1}}{\sum_{j=1}^m (q_j-1)^{-1}} = \frac{\sum_{j=1}^m (q_j-1)}{(q_i-1)\Pi_{j=1}^m (q_j-1)}. \]
 Recall $s(q) \leq 0$  means 
\begin{align*} 
&1+q_0 + \frac{1}{\sum_{i=1}^m (q_i-1)^{-1}}  = 1+ q_0 + \frac{\sum_{i=1}^m (q_i-1)}{\Pi_{i=1}^m (q_i-1)} \leq 0. \\ 
\intertext{Multiply both sides by $M$, we get }
&M(1+q_0) +  M\frac{\sum_{i=1}^m (q_i-1)}{\Pi_{i=1}^m (q_i-1)} \leq 0.\\
\intertext{ This implies that a pair of integers $K_0$, $q$ such that $0\leq K_0 \leq M$, $q_0 \leq q \leq -2$ exist such that \eqref{eq.HO1} is satisfied, since by definition }
& M\frac{\sum_{i=1}^m (q_i-1)}{\Pi_{i=1}^m (q_i-1)} = Mx^*_1(q_1-1) =  K_1(q_1-1), 
\intertext{which is the same as saying}
& M(1+q_0) +  K_1(q_1-1) \leq 0.
\intertext{So if $M >  K_1(q_1-1)$, we can choose $q = 2$ and $K_0 = K_1(q_1 - 1)$. Otherwise, we choose some $q_0 \leq -q \leq -2$ such that}
& 0 \leq K_1(q_1-1) - M(q-2) \leq M. 
\end{align*} 
Let $K_0$ be the difference $ K_1(q_1-1) - M(q-2)$. 
\end{proof}

\paragraph{\bf{The twist number of $S(M, x^*)$}}
With the given edge-path system \eqref{e.HOgluing} in the proof of Lemma \ref{l.HOm} and applying the formula for computing the boundary slope in Section \ref{subsec.bdslopecs}, we compute the twist number of $S(M, x^*)$. 
For the edge-path $\gamma_0$ of $q_0$, since $q_0 < 0$, each edge of the edge-path is slope-decreasing. Similarly, each edge in $\gamma_i$ for $q_i$ is slope-decreasing (since $q_i > 0$, the edge $\left \langle 1/q_i \right \rangle \pdash  \left \langle 0/1 \right \rangle$ is decreasing in slope). Each non-fractional path contributes 1, and then the single fractional edge at the end contributes $(M-K_i)/M$ for $0\leq i \leq m$.  Thus
\begin{align} \label{tw:S} 
\frac{\tw(S(M, x^*))}{2} &=  \underbrace{(-q-q_0)}_{\text{contribution of the non-fractional edges of $\gamma_0$}} + \underbrace{\frac{M-K_0}{M}}_{\text{contribution of the single fractional edge at the end of $\gamma_0$}}  \notag \\ 
&+ \underbrace{\sum_{i=1}^{m} \frac{M-K_i}{M} }_{\text{contribution of the single fractional edge for each of the $\gamma_i$'s for $1\leq i \leq m$.}} \notag 
\intertext{By construction, $\sum_{i=1}^m \frac{K_i}{M} = 1$ and $-q  -\frac{K_0}{M} = -\frac{K_1}{M}(q_1-1)-2$ from \eqref{eq.HO1}, so} 
\tw(S(M, x^*)) &= 2(-q_0-x^*_1(q_1-1)+m-2). 
\end{align}

\paragraph{\bf{The Euler characteristic of $S(M, x^*)$}}
With the given edge-path system and applying the formula for computing the Euler characteristic in Section \ref{subsec:echarjs}, we compute the Euler characteristic over the number of sheets for $S(M, x^*)$. We start with $2M$ disks for each tangle. Each non-fractional edge of an edgepath in $\{\gamma_i\}_{i=0}^m$ subtracts $M$ from the Euler characteristic, while the final fractional edges subtract $\sum_{i=0}^m M-K_i$ from the Euler characteristic. 
At the final step of gluing surfaces across Conway spheres, we subtract $2M+B_i$ for each identification out of $m$ identifications, then add a single $B_i$ back.  We have, since $B_i = K_i(q_i-1)  = B_j$, 
\be \label{e.bi}
\sum_{i=1}^m B_i = m K_i(q_i - 1).  \ee
All together, the Euler characteristic over the number of sheets of $S(M, x^*)$ is given by
\begin{align} \label{echar:S}
\frac{2\chi(S(M, x^*))}{\#S(M, x^*)} &=2( \underbrace{\frac{2M(m+1)}{M}}_{\text{contribution from $2M$ disks in each tangle}} - \underbrace{\frac{(-q-q_0)M+ ( \sum_{i=0}^m M- K_i )}{M}}_{\text{contribution from the edges of the edge-path system}}  \\ \notag
&-\underbrace{\left( \sum_{i=1}^m \frac{(2M + B_i)}{M} \right)}_{\text{contribution from the $m$ identifications}}+ \underbrace{\frac{B_i}{M}}_{\text{contribution from the final identification}}) \\
\intertext{Using \eqref{eq.lins}, \eqref{tw:S},  and \eqref{e.bi} to simplify, we get}
\frac{2\chi(S(M, x^*))}{\#S(M, x^*)}  &= 4-\tw(S(M, x^*))-2(m-1)x^*_i(q_1-1) \notag \\ 
&= 4-2(-q_0-x^*_i(q_1-1)+m-2)  - 2(m-1)x^*_i(q_1 -1) \notag  \\ 
& = 8-2m +2q_0 -2(m-2) x^*_i(q_1-1). 
\end{align} 

\paragraph{\bf{The cycle of $\rv$-values of  $S(M, x^*)$}}
For $i = 0$, the last edge of the edge-path $\gamma_0$ is  
\[\langle \frac{-1}{q} \rangle \pdash \frac{K_0}{M} \langle\frac{-1}{q}\rangle + \frac{M-K_0}{M}\langle \frac{-1}{q-1} \rangle, \]
so the $\rv$-value for this edge-path is $|q-(q-1)| = 1$.   
For $1\leq i \leq m$, the final edge of the edge-path $\gamma_i$ is of the form 
\[\langle \frac{1}{q_i} \rangle \pdash \frac{K_i}{M}\langle \frac{1}{q_i} \rangle  + \frac{M-K_i}{M} \langle \frac{0}{1}\rangle.  \] 
So the value of each $1\leq i\leq m$ is $q_i - 1$ following the discussion preceding Theorem \ref{thm:incompressible}. 

The cycle of $\rv$-values for the edge-path system is $(1, q_1-1, q_2-1, \ldots, q_m-1)$. 
\subsubsection{The reference surface $R$}

Note that the sequence of parameters $(0)_{i=0}^m$ also trivially satisfy the equations from Step 2(a) and 2(b) of the Hatcher-Oertel algorithm with the choice of continued fraction expansion $1/q_i = [[0, -q_i]]$ for $0\leq i \leq m$, and therefore defines a connected candidate surface in the complement of $K(1/q_0, \ldots, 1/q_{m})$. We will call this surface the \emph{reference} surface $R$. 
\begin{comment}
This is a state surface \cite{} for $K(1/q_0, \ldots, 1/q_{m})$ obtained from a 
Kauffman state $\sigma$, where for each twist region consisting of adjacent 
bigons, $\sigma$ chooses the resolution on each crossing in the twist 
region such that the bigons become state circles.
\end{comment} 

In the framework of the Hatcher-Oertel algorithm, the edge-path 
corresponding to the reference surface has the following form for each $q_i$, $0 \leq i \leq m$:
$$
\langle \frac{1}{q_i} \rangle \ \pdash \ \langle 0 \rangle \,. 
$$ 

\paragraph{\bf{The twist number of $R$}}

With the exception of $\gamma_0$, which has a single slope-increasing edge (reading from left to right, the edge increases in slope from $1/q_0<0$ to $0$), each edge-path $\gamma_i$ is slope-decreasing (the edge decreases in slope from $1/q_i > 0$ to 0) of 
length 1, thus the twist number of the reference surface $R$ is
\begin{equation}
\label{eq.twR}
\tw(R) = 2(m-1) \,. 
\end{equation}
\paragraph{\bf{The Euler characteristic of $R$}}

The surface $R$ has $1$ sheet. The Euler characteristic is computed similarly as for $S(M, x^*)$, \eqref{echar:S}, except that there are only non-fractional edges in the edge-path system. 

\begin{align*} 
\frac{2\chi(R)}{\#R} &= 2(\frac{2M(m+1)}{M}-\sum_{i=0}^m \frac{M}{M} - \left(\sum_{i=1}^m \frac{2M+0}{M}\right)  + \frac{0}{M}) \\ 
&= 2(1-m).  
\end{align*}

\paragraph{\bf{The cycle of $\rv$-values of  $R$}}
The cycle of $\rv$-values of $R$ is $(-q_0 -1 , q_1-1, \ldots, q_m - 1)$.

\subsubsection{Matching the Jones slope}
The results of Section \ref{sub.application} applied to the class of pretzel knots we consider gives the degree of the $n$th colored Jones polynomial. We show that the quadratic growth rate with respect to $n$ matches the boundary slope of an incompressible surface. The claim is that the Jones slope is either realized by the surface $S(M, x^*)$ or the reference surface $R$ in Section \ref{sec.jslopem} depending on $s(q)$ and $s_1(q)$. Note that both $S(M, x^*)$ (if $s(q) \leq 0$) and $R$ are incompressible by an immediate application of Theorem \ref{thm:incompressible}, since $m \geq 2$ and $|q_i| >2$ for all $i$. By the Hatcher-Oertel algorithm, the reference surface is incompressible for a Montesinos knot except  $K(-\frac{1}{2}, \frac{1}{3}, \frac{1}{3}), K(-\frac{1}{2}, \frac{1}{3}, \frac{1}{4})$, and $K(-\frac{1}{2}, \frac{1}{3}, \frac{1}{5})$. 

\begin{lemma} 
\label{l.match} Suppose $s(q) \leq 0$. 
Let $R$ be the reference surface and 
$S(M, x^*)$ the surface by Lemma \ref{l.HOm} with boundary slopes $\bs(R)$ and $\bs(S(M, x^*))$, respectively. If 
$$ 
-2s(q)= \tw(S(M, x^*)) - \tw(R), 
$$
then $-2s(q)$ equals the boundary slope of the surface $S(M, x^*)$. 
\end{lemma}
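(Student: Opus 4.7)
The approach is to apply the master formula \eqref{eq.twist} simultaneously to the two candidate surfaces at hand, $S(M, x^*)$ and $R$, and then use the hypothesis to eliminate the dependence on the auxiliary Seifert surface. Indeed, if $S_0$ denotes any fixed Seifert surface realized as a Hatcher--Oertel candidate, then \eqref{eq.twist} gives
\[
\bs(S(M, x^*)) = \tw(S(M, x^*)) - \tw(S_0), \qquad \bs(R) = \tw(R) - \tw(S_0).
\]
Subtracting these two identities and substituting the hypothesis $\tw(S(M, x^*)) - \tw(R) = -2s(q)$ yields
\[
\bs(S(M, x^*)) = \bs(R) + \bigl(\tw(S(M, x^*)) - \tw(R)\bigr) = \bs(R) - 2s(q).
\]
Consequently, the lemma is reduced to the single claim that $\bs(R) = 0$, i.e., that the reference surface realizes the Seifert slope of $P = P(q_0, \ldots, q_m)$.

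To establish $\bs(R) = 0$, the plan is to exhibit $R$ itself as a Seifert surface for $P$. Under the standing hypotheses --- $q_0 < -1 < 1 < q_1, \ldots, q_m$, all $q_i$ odd, and $m$ even, which together guarantee that $P$ is a knot with the standard orientation --- the two strands running through each rational tangle of the standard diagram carry parallel orientations. In particular, at every crossing of $P$ the ``horizontal'' smoothing (which is precisely the one encoded by the edge-path $\langle 1/q_i \rangle \pdash \langle 0 \rangle$ defining $R$ in Section \ref{sec.jslopem}) coincides with the Seifert smoothing. Consequently the state circles underlying $R$ can be coherently oriented compatibly with the orientation of $P$, the surface $R$ is orientable, and $\partial R$ represents the Seifert longitude of $P$. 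This forces $\bs(R) = 0$.

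The main obstacle is the combinatorial verification of orientability of $R$ under the parity hypotheses; this step relies essentially on the oddness of each $q_i$ (so that the two strands in each tangle end up in the ``parallel'' rather than ``anti-parallel'' configuration) and on $m$ being even (so that the global strand matching across the Conway spheres closes up into a single oriented knot). Once orientability is confirmed, $\bs(R) = 0$ follows, and combining with the display above gives $\bs(S(M, x^*)) = -2s(q)$ as desired.
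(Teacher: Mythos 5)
Your approach is essentially the same as the paper's, which simply notes that $R$ is a Seifert surface (hence $\bs(R)=0$) and applies \eqref{eq.twist}; you add the extra step of justifying why $R$ is a Seifert surface. However, that justification contains a small slip: tracing the orientation of $P(q_0,\dots,q_m)$ through its standard numerator closure with all $q_i$ odd and $m$ even shows that the two strands passing through each vertical twist region are \emph{anti-parallel}, not parallel as you assert. It is precisely the anti-parallel case in which the Seifert smoothing of a vertical twist yields the horizontal ($0$-tangle) resolution; were the strands parallel, the Seifert smoothing would be the vertical ($\infty$) resolution, and your claim that $\langle 1/q_i\rangle\pdash\langle 0\rangle$ encodes the Seifert state would fail. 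As stated, your premise and conclusion are incompatible; fortunately the correct premise (anti-parallel strands) does deliver the conclusion you want, so the argument goes through once the orientations are corrected, and $\bs(R)=0$ follows as in the paper.
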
  

\begin{proof} 
Note that $R$ is a Seifert surface from the Hatcher-Oertel algorithm, so $\bs(R) = 0$,  and 
$$ \bs(S(M, x^*)) = \tw(S(M, x^*)) - \tw(R) $$
by \eqref{eq.twist}. 
\end{proof} 
\begin{theorem} \label{t.jslopem} Suppose $s(q) \leq 0$, we have: 
\[-2s(q) = \tw(S(M, x^*)) - \tw(R). \] 
\end{theorem}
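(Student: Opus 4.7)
The plan is to verify the identity by direct substitution of the twist number formulas \eqref{tw:S} and \eqref{eq.twR} computed in the preceding paragraphs, using only the definition \eqref{e.x} of $x^*_i$ and the definition of $s(q)$ from \eqref{eq.sqq}. There is no topological content here beyond what has already been set up; the main point is that the real maximizer of the degree function coming from quadratic programming matches exactly the coordinates of the edge-path system producing $S(M,x^*)$.

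First I would subtract the two twist numbers:
\[
\tw(S(M,x^*)) - \tw(R) = 2\bigl(-q_0 - x^*_1(q_1-1) + m - 2\bigr) - 2(m-1) = -2\bigl(1 + q_0 + x^*_1(q_1-1)\bigr).
\]
Next I would compute $x^*_1(q_1-1)$ using \eqref{e.x}:
\[
x^*_1(q_1-1) = \frac{(q_1-1)^{-1}}{\sum_{j=1}^m (q_j-1)^{-1}}\cdot (q_1-1) = \frac{1}{\sum_{j=1}^m (q_j-1)^{-1}}.
\]
Substituting this into the previous display gives
\[
\tw(S(M,x^*)) - \tw(R) = -2\Bigl(1 + q_0 + \frac{1}{\sum_{j=1}^m (q_j-1)^{-1}}\Bigr) = -2s(q),
\]
by the definition of $s(q)$ in \eqref{eq.sqq}, which is the desired identity.

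There is essentially no obstacle: the calculation depends only on the key observation that the normalizing property \eqref{eq.lins} of the $x^*_i$ makes the common value $x^*_i(q_i-1)$ equal to the harmonic-mean-type quantity appearing in $s(q)$. The only subtlety I would flag is to double-check that the hypothesis $s(q)\leq 0$ (used in Lemma \ref{l.HOm} to produce $S(M,x^*)$) is what guarantees the existence of the integers $K_0,q$ used in the construction, so that $S(M,x^*)$ is actually a legitimate candidate surface; this is already handled in the proof of Lemma \ref{l.HOm}, so once that surface exists the twist number calculation above completes the proof.
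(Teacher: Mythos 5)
Your proposal is correct and matches the paper's own proof essentially verbatim: both subtract \eqref{eq.twR} from \eqref{tw:S} and use the definition \eqref{e.x} of $x^*_1$ to identify $x^*_1(q_1-1)$ with $\bigl(\sum_{j=1}^m (q_j-1)^{-1}\bigr)^{-1}$, recovering $-2s(q)$. Your extra remark about the hypothesis $s(q)\leq 0$ guaranteeing the existence of $S(M,x^*)$ via Lemma \ref{l.HOm} is a sensible sanity check but adds nothing beyond what the paper already establishes there.
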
 

\begin{proof}
From Equations \eqref{tw:S} and \eqref{eq.twR} we have 
\begin{align*}
 \tw(S(M, x^*)) - \tw(R) &= 2(-q_0-x^*_1(q_1-1)+m-2)  - 2(m-1).  \\
 \intertext{By the definition of $x_i^*$, }
 \tw(S(M, x^*)) - \tw(R) &= -2(q_0 + \frac{(q_i-1)^{-1}}{\sum_{j=1}^m  (q_j -1)^{-1}} (q_1-1) +1) \\
 &= -2s(q). 
\end{align*} 
\end{proof} 

\subsubsection{Matching the Euler characteristic} \label{subsec:mechar}
Recall that for $q = (q_0, q_1, \ldots, q_m)$,  
\[ s_1(q) =  \frac{\sum_{i=1}^m (q_i + q_0 -2) (q_i -1)^{-1}}{\sum_{i=1}^m (q_i-1)^{-1}}.\] 
\begin{lemma} \label{l.jxm}
We have  
\[
-2s_1(q) + 4s(q) - 2(m-1)  = 2\frac{\chi(S(M, x^*))}{\#S(M, x^*)},
\]
where $\chi(S(M, x^*))$ is the Euler characteristic and $\#S(M, x^*)$ is the 
number of sheets $M$ of the surface $S(M, x^*)$. 
\end{lemma}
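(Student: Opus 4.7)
This is a direct algebraic identity once we unpack the definitions, so the plan is to simplify both sides to a common expression.

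First, I would rewrite the right-hand side using the formula \eqref{echar:S} derived above, namely
\[
\frac{2\chi(S(M,x^*))}{\#S(M,x^*)} = 8 - 2m + 2q_0 - 2(m-2)\,x^*_1(q_1-1).
\]
By the definition of $x^*_i$ in \eqref{e.x}, the product $x^*_i(q_i-1) = \bigl(\sum_{j=1}^m(q_j-1)^{-1}\bigr)^{-1}$ is independent of $i$, and from the definition of $s(q)$ in \eqref{eq.sqq} this common value equals $s(q)-1-q_0$. Substituting and expanding, the right-hand side becomes
\[
8 - 2m + 2q_0 - (2m-4)\bigl(s(q)-1-q_0\bigr) = 4 + 2(m-1)q_0 - (2m-4)s(q).
\]

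Next I would reduce $s_1(q)$ to the same pool of quantities. Splitting the numerator as $q_i+q_0-2 = (q_i-1)+(q_0-1)$ gives
\[
s_1(q) = \frac{\sum_{i=1}^m 1 + (q_0-1)\sum_{i=1}^m (q_i-1)^{-1}}{\sum_{i=1}^m (q_i-1)^{-1}}
= m\bigl(s(q)-1-q_0\bigr) + (q_0-1).
\]
Plugging this into the left-hand side of the claimed identity yields
\[
-2s_1(q) + 4s(q) - 2(m-1) = -2m\bigl(s(q)-1-q_0\bigr) - 2(q_0-1) + 4s(q) - 2(m-1),
\]
which simplifies to $4 + 2(m-1)q_0 - (2m-4)s(q)$, matching the expression obtained from $2\chi(S(M,x^*))/\#S(M,x^*)$.

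There is no real obstacle here; the only thing to be careful about is bookkeeping with the combinatorial quantity $x^*_i(q_i-1)$ and the observation that it coincides with $s(q)-1-q_0$. Once that equivalence is in hand, the lemma follows from two lines of algebra.
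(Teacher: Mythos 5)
Your proof is correct and follows essentially the same route as the paper: start from the formula $\frac{2\chi(S(M,x^*))}{\#S(M,x^*)} = 8-2m+2q_0-2(m-2)x^*_1(q_1-1)$, observe that $x^*_i(q_i-1)=\bigl(\sum_j(q_j-1)^{-1}\bigr)^{-1}$, rewrite $s_1(q)$ via the splitting $q_i+q_0-2=(q_i-1)+(q_0-1)$, and compare. Your version is in fact slightly cleaner than the paper's, since you reduce both sides explicitly to $4+2(m-1)q_0-(2m-4)s(q)$ rather than leaving the final comparison as "easily seen."
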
 

\begin{proof}
We have by \eqref{echar:S} and substituting for $x_i^*$ by definition, 
\begin{align*} 
\frac{2\chi(S(M, x^*))}{\# S(M, x^*)} &= 8-2m +2q_0 -2(m-2) x^*_i(q_1-1) \\
&=  8-2m +2q_0 -2(m-2) \frac{(q_i-1)^{-1}}{\sum_{j=1}^m  (q_j -1)^{-1}} (q_1-1)\\
&= 8-2m +2q_0 -2(m-2) \frac{1}{\sum_{j=1}^m  (q_j -1)^{-1}}. \tag{*}
\intertext{On the other hand, also substituting $s(q)$, $s_1(q)$ by definition,}
-2s_1(q) + 4s(q) - 2(m-1)  &= -2(\frac{\sum_{i=1}^m (q_i + q_0 -2)(q_i-1)^{-1}}{\sum_{i=1}^m (q_i -1)^{-1}}) + 4(1+q_0+ \frac{1}{\sum_{j=1}^m  (q_j -1)^{-1}}) \\
&-2(m-1) \\ 
& = -2(\frac{m}{\sum_{j=1}^m  (q_j -1)^{-1}} + q_0 -1 ) + 4(1+q_0+ \frac{1}{\sum_{j=1}^m  (q_j -1)^{-1}}) \\ 
& -2(m-1). 
\end{align*} 
The last line is easily seen to be equal to (*) by expanding and gathering like terms. 
\end{proof}

\subsection{Proof of Theorem \ref{thm.0}} \label{subsec.jslopemp}
Now we prove Theorem \ref{thm.0}. Fix odd integers $q_0, \ldots, q_m$ with $q_0 < -1 < 1 < q_1, \ldots, q_m$. Let $P = P(q_0, \ldots, q_m)$ denote the pretzel knot.
By Theorem \ref{thm:incompressible},  both of the surfaces $S(M, x^*)$ (if $s(q) \leq 0$) and $R$  are incompressible by examining their edge-paths and computing their $\nabla$-values. Lemma \ref{l.match}, Theorem \ref{t.jslopem}, and Lemma \ref{l.jxm} show that $-2s(q) =  \bs(S(M, x^*))$ and $-2s_1(q) + 4s(q) - 2(m-1) = 2\frac{\chi(S(M, x^*))}{\#S(M, x^*)}$. For the reference surface $R$, it is immediate also from the Hatcher-Oertel algorithm that its boundary slope $\bs(R) = 0$ and  $2\frac{\chi(R)}{\#R} = -2(m-1)$. 

From Section \ref{sub.application},  we have the following cases for the degree of the colored Jones polynomial $J_{P, n}(v)$. The choice of the surface detected by the Jones slope swings between the surface $S(M, x^*)$ and the reference surface $R$.

{\bf Case 1:} $s(q) < 0$. We have that the maximum of $\delta(n, k)$ is given by \[  -2s(q)n^2 -2 s_1(q)n -2(m-1)n +(n^2+2n) \sum_{i=0}^m q_i+O(1), \] where recall that $s(q)$ and $s_1(q)$ are explicitly defined by \eqref{eq.sqq}. We see that $s(q)$ and $s_1(q)$ for any $n \gg 0$ are actually constant in $n$. The fact that $\js_P = -2s(q)= \bs(S(M, x^*))$ and $\jx_P  =-2s_1(q) + 4s(q) - 2(m-1) =2\frac{\chi(S(M, x^*))}{\#S(M, x^*)}$ (by considering $J_{K, n} = (-1)^{n-1}((-1)^{n-1}v)^{\omega(K)(n^2-1)}\langle K^{n-1} \rangle $) verifies the strong slope conjecture in this case.

{\bf Case 2:} $s(q) = 0$, $s_1(q) \not=0$. If $s_1(q) \geq 0$, the maximum of $\delta(n, k)$  has no quadratic term, but its  linear term is $-2(m-1)n$,  so the reference surface $R$ verifies the conjecture. If $s_1(q) < 0$, then the maximum 
\[ -2 s_1(q)n -2(m-1)n +(n^2+2n) \sum_{i=0}^m q_i+O(1)\] of $\delta(n, k)$ is found 
 at maximizers $\tau^*$ with parameters $n, k^*$, again all satisfying $n = k_0^* = k_1^*+\cdots + k_m^*$. Thus the surface $S(M, x^*)$ verifies the conjecture.

{\bf Case 3:} $s(q) > 0$. In this case the maximum of $\delta(n, k)$ also does not have quadratic term but has a linear term $-2(m-1)n$, and the reference surface $R$ verifies the conjecture.

%%%%%%%%%%%%%%%%%%%%%%%%%%%%%%%%%%%%%%%%%%%%%%%%%%%%%%%%%%%%%%%%%%%%%%%%%%%
%%%%%%%%%%%%%%%%%%%%%%%%%%%%%%%%%%%%%%%%%%%%%%%%%%%%%%%%%%%%%%%%%%%%%%%%%%%

\subsection{Matching the growth rate to topology for Montesinos knots}
\label{sec.jslopemm}

Let $K(r_0, \ldots, r_m)$ be a Montesinos knot satisfying the assumptions of Theorem \ref{thm.1}, and let $P(q_0, \ldots, q_m)$ be the associated pretzel knot. Similar to the case of pretzel knots, we define a surface  $S(M, x^*)$ where 

\begin{equation} \label{e.xm}
x^*_{i} = \frac{(q_i-1)^{-1}}{\sum_{j=1}^m (q_j-1)^{-1}}. 
\end{equation}   
 We give the explicit description of the surface in terms of an edge-path system from the Hatcher-Oertel algorithm below. We will see that these surfaces are built from extending the surfaces of the associated pretzel knots. 

%The edge-path systems of $S(M, x^*)$ and $R$ are explicitly given below. Note both of these surfaces are also incompressible by a direct application of Proposition \ref{thm:incompressible}. 

\subsubsection{The surface $S(M,  x^*)$}

The edge-path system of $S(M, x^*)$ is described as follows. 
\begin{align}
\intertext{For $i=0$, say $r_0 =  [0, a_1, a_2, \ldots, a_{\ell_{r_0}}]$ the unique even length continued fraction expansion for 
$a_j < 0, 1\leq j \leq \ell_{r_0}$, we take the following continued fraction expansion}
r_0&= 
[[-1, \underbrace{-2, \ldots, -2}_{-a_1-1 \text{ times }}, a_2-1-1, 
\underbrace{-2, \ldots, -2}_{-a_3-1 \text{ times }}, a_{2j}-1-1, 
\underbrace{-2, \ldots, -2}_{-a_{2j+1}-1 \text{ times }}, \ldots, a_{\ell_{r_0}}-1]],  
\label{eq.negcfe} 
\intertext{with corresponding edge-path (reading backwards from the continued fraction expansion)}
&\left \langle [[-1, -2, \ldots, a_{\ell_{r_0}}-1]] \right \rangle 
\text{\pdash} \cdots \text{\pdash} \left \langle [[-1, -2, -2] ]
\right \rangle \text{\pdash} \left \langle [[-1, -2]]\right \rangle  
\text{\pdash} \left \langle -1 \right \rangle. \notag 
\intertext{For $1\leq i \leq m$, say $r_i 
= [0, a_1, a_2, \ldots, a_{\ell_{r_i}}]$ for $a_j>0$, $1\leq j \leq \ell_{r_0}$, we take the following 
continued fraction expansion }
r_i&= 
[[0, -a_1-1, \underbrace{-2, \ldots, -2}_{a_2-1 \text{ times }}, -a_3-1-1, 
\underbrace{-2, \ldots, -2}_{a_4-1 \text{ times }}, -a_{2j+1}-1-1,  
\underbrace{-2, \ldots, -2}_{a_{2j+2}-1 \text{ times }}, \ldots, \underbrace{-2, \ldots, -2}_{a_{\ell_{r_i}}-1 \text{ times }}]],  
\label{eq.poscfe} \\
\intertext{with corresponding edge-path (reading backwards from the continued fraction expansion)}
&\left \langle [[0, -a_1-1, \ldots, \underbrace{-2, \ldots, -2}_{a_{\ell_{r_i}}-1 \text{ times }}]] \right \rangle 
\text{\pdash} \cdots \text{\pdash} \left \langle 
[[0,-a_1-1, -2]]  \right \rangle
\text{\pdash} \left \langle [[0,-a_1-1]] \right  \rangle \text{\pdash}  
\left \langle 0 \right \rangle.  \notag
\end{align}  
We let $M$ be the least common multiple of the denominators of $\{x^*_{i}\}$.
We similarly have 
\begin{lemma} 
\label{l.HOm2} Let $q_0, q_1, \ldots, q_m$ be defined as they are for Theorem \ref{thm.1} for a Montesinos knot $K = K(r_0, \ldots, r_m)$. Suppose $q = (q_0, q_1, \ldots, q_m)$ is such that $s(q) \leq 0$. 
There is a candidate surface 
$S(M, x^*)$ for $K$ from the Hatcher-Oertel algorithm with $M$ sheets and 
$C$-coordinates \[ \{-M, Mx^*_{1}, Mx^*_{2}, \ldots, Mx^*_{m}\}.\]  
\end{lemma}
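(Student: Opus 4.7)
My plan is to reduce Lemma~\ref{l.HOm2} to Lemma~\ref{l.HOm} by extending the edge-path system of the pretzel case. For each $r_i$ I would construct $\gamma_i$ as the edge-path in $\hat{\mathcal{D}}$ whose sequence of vertices is the reverse list of partial truncations of the negative continued fraction expansions~\eqref{eq.negcfe} and~\eqref{eq.poscfe}, terminating with the same \emph{fractional} final edge as in the pretzel case. Concretely, $\gamma_0$ would traverse the partial sums of~\eqref{eq.negcfe} from $\langle r_0\rangle$ to $\langle -1/q\rangle$ and then end with the fractional edge $\langle -1/q\rangle \pdash \frac{K_0}{M}\langle -1/q\rangle + \frac{M-K_0}{M}\langle -1/(q-1)\rangle$; for $1\leq i\leq m$, $\gamma_i$ would traverse the partial sums of~\eqref{eq.poscfe} from $\langle r_i\rangle$ to $\langle 1/q_i\rangle$ and end with $\langle 1/q_i\rangle \pdash \frac{K_i}{M}\langle 1/q_i\rangle + \frac{M-K_i}{M}\langle 0\rangle$, with $K_i = M x_i^*$ as in~\eqref{e.xm}.

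The crucial observation is that the $(A,B,C)$-coordinates of an edge-path in the Hatcher--Oertel formalism are determined by its endpoint alone, which I have arranged to be identical to the corresponding endpoint in Lemma~\ref{l.HOm}. Consequently, the Hatcher--Oertel constraints $A_i=A_j$, $B_i=B_j$, and $\sum_iC_i=0$ reduce verbatim to the linear equations~\eqref{eq.lins} and~\eqref{eq.HO1} that were analyzed in the pretzel case. The former holds by the definition of $x_i^*$, while the hypothesis $s(q)\leq 0$ guarantees the existence of integers $0\leq K_0\leq M$ and $2\leq q\leq -q_0$ solving the latter, by the identical arithmetic estimate carried out at the end of the proof of Lemma~\ref{l.HOm}. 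A direct computation then yields the claimed $C$-coordinates $\{-M, Mx_1^*, \ldots, Mx_m^*\}$ and confirms that the surface has $M$ sheets.

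The main obstacle, and the only genuinely new piece of bookkeeping, is to verify that the prepended portions of the $\gamma_i$ are \emph{legitimate} edge-paths in $\hat{\mathcal{D}}$ carrying locally essential surfaces. This amounts to checking (i) that~\eqref{eq.negcfe} and~\eqref{eq.poscfe} are correct negative continued fraction expansions of $r_0$ and $r_i$, which is a standard conversion from their positive even-length expansions, and (ii) that every coefficient $b_j$ satisfies $|b_j|\geq 2$, so that consecutive partial sums differ by an edge of $\hat{\mathcal{D}}$ and each local restriction is incompressible and $\partial$-incompressible by the Hatcher--Thurston classification reviewed in Section~\ref{sec.rational}. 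For (ii) the non-trivial check is on the terminal coefficient $a_{\ell_{r_0}}-1$ in~\eqref{eq.negcfe}: if $|a_{\ell_{r_0}}|\geq 2$ then clearly $|a_{\ell_{r_0}}-1|\geq 2$, while if $a_{\ell_{r_0}}=-1$ (the value forced by the even-length conversion when $\ell'$ is odd and $r_0<0$), then $a_{\ell_{r_0}}-1=-2$; the analogous check for~\eqref{eq.poscfe} is that when $a_{\ell_{r_i}}=1$ the trailing block of $-2$'s is empty and the preceding term $-a_{\ell_{r_i}-1}-2$ has $|b_j|\geq 3$. The interior coefficients $a_{2j}-2$ and $-a_{2j+1}-2$ satisfy $|b_j|\geq 3$ since $|a_j|\geq 1$ throughout. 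Once these verifications are in hand, the remainder of the argument is formally identical to that of Lemma~\ref{l.HOm}, and the existence of the claimed candidate surface follows.
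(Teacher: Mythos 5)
Your proposal is correct and follows essentially the same route as the paper: extend the pretzel edge-path system by prepending the paths determined by the negative continued fraction expansions \eqref{eq.negcfe}--\eqref{eq.poscfe}, observe that the Hatcher--Oertel constraints depend only on the endpoints (which are unchanged), and invoke the existence argument for $K_0,q$ from Lemma~\ref{l.HOm}. Your explicit check that all interior and terminal coefficients satisfy $|b_j|\geq 2$ is a welcome piece of verification that the paper leaves implicit.
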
 

\begin{proof}
Let $K_i = Mx^*_{i}$ for $1\leq i \leq m$, and  $0\leq K_0\leq M$, $2 \leq q \leq -q_0$ such that
\begin{equation} \label{eq.HO1m}
K_0 + M(q-2) = K_1(q_1-1), 
\end{equation}
We specify a candidate surface $S(M, x^*)$ in terms of edge-paths $\{\gamma_i\}_{i=0}^m$, by tacking onto the existing edge-path system for the associated pretzel knot $P(q_0, q_1, \ldots, q_m)$:  

The edge-path $\gamma_0$ for $r_0$ from \eqref{eq.negcfe} is
\begin{align*}
&\left \langle [[-1, -2, \ldots, a_{\ell_{r_0}}-1]] \right \rangle 
\text{\pdash} \cdots \pdash\langle \frac{-1}{-q_0} \rangle \pdash \langle \frac{-1}{-q_0-1} \rangle \pdash \cdots \pdash \frac{K_0}{M} \langle\frac{-1}{q}\rangle + \frac{M-K_0}{M}\langle \frac{-1}{q-1} \rangle.
\intertext{For $i\not=0$, we have the edge-path $\gamma_i$ from \eqref{eq.poscfe}: }
&\left \langle [[0, -a_1-1, \ldots, \underbrace{-2, \ldots, -2}_{a_{\ell_{r_i}}-1 \text{ times }}]]  \right \rangle 
\text{\pdash} \cdots \text{\pdash}\langle \frac{1}{q_i} \rangle \pdash \frac{K_i}{M}\langle \frac{1}{q_i} \rangle  + \frac{M-K_i}{M} \langle \frac{0}{1}\rangle. 
\end{align*}
%Note that there could be different pairs $K_0, q$ satisfying \eqref{eq.HO1}, but the resulting edge-path systems all have the same boundary slope. 

Provided that $K_0, q$ satisfying \eqref{eq.HO1m} exist, this edge-path system satisfies the equations coming from (a) and (b) of Step (2) of the algorithm. We have already verified that $K_0, q$ exist when $s(q) \leq 0$ in Lemma \ref{l.HOm}. Thus, there is a candidate surface with $\{-M, Mx^*_{1}, Mx^*_{2}, \ldots, Mx^*_{m}\}$ as the $C$-coordinates in the tangle corresponding to $r_i$. 
\end{proof}

We also define a reference surface $R$ 
for $K(r_0, r_1, \ldots, r_m)$. 

\subsubsection{The reference surface $R$}

For the reference surface $R$, we have for each $r_i$, the edge-path system 
corresponding to the following  continued fraction expansion \\
For $r_0=[0, a_1, a_2, \ldots, a_{\ell_{r_0}} ]$ for $a_j < 0$, $1\leq j \leq \ell_{r_0}$, we take the following continued fraction expansion. 
\begin{equation} 
\label{eq.MRcfe}
r_0 = [[0, -a_1, a_2-1,\underbrace{-2, \ldots, -2}_{-a_3-1 \text{ times}}, a_4-1-1, \underbrace{-2, 
\ldots, -2}_{-a_5-1 \text{ times}}, a_{2j}-1-1, \underbrace{-2, \ldots, -2}_{-a_{2j+1}-1 \text{ times}}, \ldots, a_{\ell_{r_0}} -1]], 
\end{equation} 
with corresponding edge-path
\begin{equation*}
\left \langle [[0, -a_1, \ldots, a_{\ell_{r_0}} -1]] \right \rangle 
\text{\pdash} \cdots \text{\pdash} \left \langle [[0, -a_1]] \right \rangle \pdash \left \langle 0 \right \rangle.
\end{equation*}

For $1\leq i \leq m$, say $r_i 
= [0, a_1, a_2, \ldots, a_{\ell_{r_i}}]$ for $a_j>0$, $1\leq j \leq \ell_{r_i}$, we take the following 
continued fraction expansion.
\begin{equation} \label{eq.MRpcfe}
r_i = [[0, -a_1-1, \underbrace{-2, \ldots, -2}_{a_2-1 \text{ times }}, -a_3-1-1, 
\underbrace{-2, \ldots, -2}_{a_4-1 \text{ times }}, -a_{2j+1}-1-1,  
\underbrace{-2, \ldots, -2}_{a_{2j+2}-1 \text{ times }}, \ldots, \underbrace{-2, \ldots, -2}_{a_{\ell_{r_i}}-1 \text{ times }}]],  
\end{equation}
with corresponding edge-path
\begin{equation*} 
\left \langle [[0, -a_1-1, \ldots, \underbrace{-2, \ldots, -2}_{a_{\ell_{r_i}}-1 \text{ times }}]] \right \rangle 
\text{\pdash} \cdots \text{\pdash} \left \langle 
[[0,-a_1-1, -2]]  \right \rangle
\text{\pdash} \left \langle [[0,-a_1-1]] \right  \rangle \text{\pdash}  
\left \langle 0 \right \rangle. 
\end{equation*}

Again, both $R$ and $S(M, x^*)$ are incompressible by a direct application of Proposition \ref{thm:incompressible}. 

\subsection{Proof of Theorem \ref{thm.1}}
\label{subsec.jslopemmp}

Putting everything together we prove Theorem \ref{thm.1}.
\begin{proof} Let $K = K(r_0, \ldots, r_m)$. Recall $q = (q_0,\dots,q_m) \in \BZ^{m+1}$ denotes the
associated tuple of integers to $(r_0, \ldots, r_m)$ where $q_i=r_i[1]+1$ for $1\leq i \leq m$ and \[q_0 = \begin{cases} &r_0[1] -1 \text{ if $\ell_{r_0} = 2$ and $r_0[2]=-1$, } \\ 
 	& r_0[1] \text{ otherwise }  \end{cases} .\]  from the unique even length positive continued fraction expansions of $r_i$'s, and $q'_0$ is an integer that is defined to be 0 if $r_0 = 1/q_0$, and defined to be $r_0[2]$ otherwise.
Theorem \ref{t.Montesinoscjpd} gives $\js_K$ and $\jx_K$ in terms of the Jones slope $\js_P$ and the normalized Euler characteristic $\jx_P$ of the associated pretzel knot $P=P(q_0, q_1, \ldots, q_m)$.  Depending on the signs of $s(q)$ and $s_1(q)$ we have three cases by Theorem \ref{thm.0}. 
\begin{itemize}
\item[(1)] If $s(q)<0$, then 
\be
\js_P(n) = -2s(q), \qquad 
\jx_P(n) = -2s_1(q)+4s(q)-2(m-1). \notag
\ee 
\item[(2)] If $s(q)=0$, then 
\be 
\js_P(n) = 0, \qquad
\jx_P(n) =\begin{cases} -2(m-1) & \text{if} \,\, s_1(q) \geq 0 \\
-2s_1(q)-2(m-1) & \text{if} \,\, s_1(q) < 0
\end{cases} \,. \notag
\ee 
\item[(3)] If $s(q)>0$, then 
\be 
\js_P(n) = 0, \qquad
\jx_P(n) =-2(m-1).  \notag
\ee
\end{itemize} 

When $s(q) = 0$ and $s_1(q) \geq 0$, or $s(q) > 0$, applying Theorem \ref{t.Montesinoscjpd} we get
\[ \js_K(n) = -q'_0- [r_0] -\omega(D_P) + \omega(D_K)+ \sum_{i=1}^m (r_i[2]-1) +  \sum_{i=1}^m [ r_i ]   \]  
 and 
 \[ \jx_K(n) = -2(m-1)-2\frac{q'_0}{r_0[2]}+2 [r_0]_o -2 \sum_{i=1}^m (r_i[2]-1) -2 \sum_{i=1}^m [r_i ]_e. \] The reference surface $R$ is easily seen to verify the strong slope conjecture using \cite{FKP}, by viewing it as a \emph{state surface}. Since this material is well-known, we will briefly describe what a state surface is and indicate the state surface corresponding to the reference surface $R$.
 
A state surface from a Kauffman state $\sigma$ on a link diagram $D$ is a surface that comes from filling in the state circles of the $\sigma$-state graph $D_{\sigma}$ by disks and replacing the segments recording the original locations of the crossings by twisted bands. 

With the standard diagram that we are using for a Montesinos knot $K(r_0, \ldots, r_m)$ with $r_0 < 0 < r_1, \ldots, r_m$, the reference surface $R$ is the state surface that comes from the Kauffman state which chooses the $A$-resolution on the negative twist region $1/r_0[1]$ (or $1/(r_0[1]-1)$ if $r_0 = 1/q_0$) in the negative tangle corresponding to $r_0$, and the $B$-resolution everywhere else. Using \cite{FKP}, \cite{L} shows that

\[\bs(R) = -q'_0- [r_0] -\omega(D_P) + \omega(D_K)+ \sum_{i=1}^m (r_i[2]-1) +  \sum_{i=1}^m [ r_i ], \] and  
\[ 2\frac{\chi(R)}{\#R} = -2(m-1) -2\frac{q'_0}{r_0[2]}+2 [r_0]_o -2 \sum_{i=1}^m (r_i[2]-1) -2 \sum_{i=1}^m [r_i ]_e. \]
We use this fact to prove that $S(M, x^*)$ realizes the strong slope conjecture when the reference surface $R$ does not realize the Jones slope.

When $s(q) < 0$ or $s(q) = 0$ and $s_1(q) < 0$, the candidate surface $S(M, x^*)$ exists by Lemma \ref{l.HOm}. It suffices to 
verify that 
\[ 
\js_K - \bs(R) = \tw(S(M, x^*)) - \tw(R)
\] 
for the part of the strong slope conjecture concerning relationship of $\js_K$ to boundary slopes. This is because if the equation is true, then 
\begin{align*}
&\js_K - (\tw(R) - \tw(S_0)) = \tw(S(M, x^*)) - \tw(R), \\
\intertext{where $S_0$ is a Seifert surface from the Hatcher-Oertel algorithm, by \eqref{eq.twist}. Rearranging terms in the equation gives }
&\js_K = \tw(S(M, x^*)) - \tw(S_0) = \bs(S(M, x^*)). 
\end{align*} 

By Theorem \ref{t.Montesinoscjpd}, 
\[ 
\js_K - \underbrace{\left( -q'_0- [r_0] -\omega(D_P) + \omega(D_K)+ \sum_{i=1}^m (r_i[2]-1) +  \sum_{i=1}^m [ r_i ]\right)}_{\bs(R)} = \js_P.
\] 

 Notice that the edge-path systems of the two surfaces $S(M, x^*)$ (from \eqref{eq.negcfe}, \eqref{eq.poscfe}) and $R$ (from \eqref{eq.MRcfe}, \eqref{eq.MRpcfe}) coincide beyond the first segments of their edge-path systems, which define candidate surfaces $S_P(M, x^*)$ and the reference surface $R_P$ for the associated pretzel knot $P$. Now by Theorem \ref{t.jslopem},  we have 
\[\js_P  = \tw(S_P(M, x^*)) - \tw(R_P). \]
Since $S(M, x^*)$ and $R$ are identical beyond the first edges of their edge-path systems, we get
\[ \tw(S_P(M, x^*)) - \tw(R_P) = \tw(S(M, x^*)) - \tw(R), \]
and we are done.  
 
The proof that $\jx_K = \frac{2\chi(S(M, x^*))}{\#S(M, x^*)}$ is similar.
\end{proof}

%%%%%%%%%%%%%%%%%%%%%%%%%%%%%%%%%%%%%%%%%%%%%%%%%%%%%%%%%%%%%%%%%%%%%%%%%%%
%%%%%%%%%%%%%%%%%%%%%%%%%%%%%%%%%%%%%%%%%%%%%%%%%%%%%%%%%%%%%%%%%%%%%%%%%%%
%Background material from Lickorish

%\appendix

%\section{Background on the Temperley-Lieb algebra and the Jones-Wenzl idempotent }

%%%%%%%%%%%%%%%%%%%%%%%%%%%%%%%%%%%%%%%%%%%%%%%%%%%%%%%%%%%%%%%%%%%%%%%%%%%
%%%%%%%%%%%%%%%%%%%%%%%%%%%%%%%%%%%%%%%%%%%%%%%%%%%%%%%%%%%%%%%%%%%%%%%%%%%

\section*{Acknowledgments}

S.G. wishes to thank Shmuel Onn for enlightening conversations on quadratic
integer programming and for the structure of the lattice optimizers in 
Proposition~\ref{prop.QIP}. C.L. would like to thank the Max-Planck Institute for Mathematics in Bonn for excellent working conditions where the bulk of this work was conceived. 

%%%%%%%%%%%%%%%%%%%%%%%%%%%%%%%%%%%%%%%%%%%%%%%%%%%%%%%%%%%%%%%%%%%%%%%%%%%
%%%%%%%%%%%%%%%%%%%%%%%%%%%%%%%%%%%%%%%%%%%%%%%%%%%%%%%%%%%%%%%%%%%%%%%%%%%

\bibliographystyle{hamsalpha}
\bibliography{biblio}

\providecommand{\bysame}{\leavevmode\hbox to3em{\hrulefill}\thinspace}
\providecommand{\href}[2]{#2}
\providecommand{\eprint}{\begingroup \urlstyle{rm}\Url}
\begin{thebibliography}{GHHR16}

\bibitem[Arm13]{Arm13}
Cody Armond, \emph{The head and tail conjecture for alternating knots}, Algebr.
  Geom. Topol. \textbf{13} (2013), no.~5, 2809--2826.

\bibitem[BMT18]{BMT18}
Kenneth~L Baker, Kimihiko Motegi, and Toshie Takata, \emph{The strong slope
  conjecture for graph knots}, arXiv:1809.01039, 2018.

\bibitem[Bon79]{Bon79}
Francis Bonahon, \emph{{Involution et fibr\'{e}s de Seifert dans les
  vari\`{e}t\`{e}s de dimension 3}}, Ph.D. thesis, Orsay, 1979.

\bibitem[BS]{BS}
Francis Bonahon and L.~C. Siebenmann, \emph{New geometric splittings of
  classical knots, and the classification and symmetries of arborescent knots},
  \eprint{http://www-bcf.usc.edu/~fbonahon/Research/Preprints/BonSieb.pdf},
  Preprint 2011.

\bibitem[BZ03]{BZ}
Gerhard Burde and Heiner Zieschang, \emph{Knots}, second ed., De Gruyter
  Studies in Mathematics, vol.~5, Walter de Gruyter \& Co., Berlin, 2003.

\bibitem[Con70]{Cn}
John Conway, \emph{An enumeration of knots and links, and some of their
  algebraic properties}, Computational {P}roblems in {A}bstract {A}lgebra
  ({P}roc. {C}onf., {O}xford, 1967), Pergamon, Oxford, 1970, pp.~329--358.

\bibitem[Dun01]{Dunfield:table}
Nathan Dunfield, \emph{A table of boundary slopes of {M}ontesinos knots},
  Topology \textbf{40} (2001), no.~2, 309--315.

\bibitem[FKP11]{FKP}
David Futer, Efstratia Kalfagianni, and Jessica~S. Purcell, \emph{Slopes and
  colored {J}ones polynomials of adequate knots}, Proc. Amer. Math. Soc.
  \textbf{139} (2011), no.~5, 1889--1896.

\bibitem[FKP13]{FKP13}
David Futer, Efstratia Kalfagianni, and Jessica Purcell, \emph{Guts of surfaces
  and the colored {J}ones polynomial}, Lecture Notes in Mathematics, vol. 2069,
  Springer, Heidelberg, 2013.

\bibitem[FO84]{FO84}
W.~Floyd and U.~Oertel, \emph{Incompressible surfaces via branched surfaces},
  Topology \textbf{23} (1984), no.~1, 117--125.

\bibitem[Gar11a]{Ga2}
Stavros Garoufalidis, \emph{The degree of a {$q$}-holonomic sequence is a
  quadratic quasi-polynomial}, Electron. J. Combin. \textbf{18} (2011), no.~2,
  Paper 4, 23.

\bibitem[Gar11b]{Ga:slope}
\bysame, \emph{The {J}ones slopes of a knot}, Quantum Topol. \textbf{2} (2011),
  no.~1, 43--69.

\bibitem[GHHR16]{DDG13}
Stavros Garoufalidis, Craig~D. Hodgson, Neil~R. Hoffman, and J.~Hyam
  Rubinstein, \emph{The 3{D}-index and normal surfaces}, Illinois J. Math.
  \textbf{60} (2016), no.~1, 289--352.

\bibitem[GvdV16]{GV}
Stavros Garoufalidis and Roland van~der Veen, \emph{Quadratic integer
  programming and the slope conjecture}, New York J. Math. \textbf{22} (2016),
  907--932.

\bibitem[Hak61]{Haken}
Wolfgang Haken, \emph{Theorie der {N}ormalfl\"achen}, Acta Math. \textbf{105}
  (1961), 245--375.

\bibitem[Hat82]{Ha}
Allen Hatcher, \emph{On the boundary curves of incompressible surfaces},
  Pacific J. Math. \textbf{99} (1982), no.~2, 373--377.

\bibitem[Hat88]{Hat88}
A.~E. Hatcher, \emph{Measured lamination spaces for surfaces, from the
  topological viewpoint}, Topology Appl. \textbf{30} (1988), no.~1, 63--88.

\bibitem[HO89]{HO}
Allen Hatcher and Ulrich Oertel, \emph{Boundary slopes for {M}ontesinos knots},
  Topology \textbf{28} (1989), no.~4, 453--480.

\bibitem[How]{Howie}
Joshua Howie, \emph{Coiled surfaces and slope conjectures}, in preparation.

\bibitem[HT85]{HT}
Allen Hatcher and William Thurston, \emph{Incompressible surfaces in
  {$2$}-bridge knot complements}, Invent. Math. \textbf{79} (1985), no.~2,
  225--246.

\bibitem[IM07]{IM}
Kazuhiro Ichihara and Shigeru Mizushima, \emph{Bounds on numerical boundary
  slopes for {M}ontesinos knots}, Hiroshima Math. J. \textbf{37} (2007), no.~2,
  211--252.

\bibitem[Jon87]{Jo}
Vaughan Jones, \emph{Hecke algebra representations of braid groups and link
  polynomials}, Ann. of Math. (2) \textbf{126} (1987), no.~2, 335--388.

\bibitem[Kau87]{Kau87}
Louis~H. Kauffman, \emph{State models and the {J}ones polynomial}, Topology
  \textbf{26} (1987), no.~3, 395--407.

\bibitem[KL94]{Kau:TL}
Louis~H. Kauffman and S\'ostenes~L. Lins, \emph{Temperley-{L}ieb recoupling
  theory and invariants of {$3$}-manifolds}, Annals of Mathematics Studies,
  vol. 134, Princeton University Press, Princeton, NJ, 1994.

\bibitem[KL04]{KL}
Louis~H. Kauffman and Sofia Lambropoulou, \emph{On the classification of
  rational tangles}, Adv. in Appl. Math. \textbf{33} (2004), no.~2, 199--237.

\bibitem[KT15]{KT15}
Efstratia Kalfagianni and Anh~T. Tran, \emph{Knot cabling and the degree of the
  colored {J}ones polynomial}, New York J. Math. \textbf{21} (2015), 905--941.

\bibitem[Lee]{L}
Christine Ruey~Shan Lee, \emph{{Jones slopes and coarse volume of
  near-alternating links}}, \eprint{arXiv:1708.04900}, to appear in
  Communications in Analysis and Geometry.

\bibitem[Lic97]{Lic97}
W.~B.~Raymond Lickorish, \emph{An introduction to knot theory}, Graduate Texts
  in Mathematics, vol. 175, Springer-Verlag, New York, 1997.

\bibitem[LT88]{LT}
W.~B.~R. Lickorish and M.~B. Thistlethwaite, \emph{Some links with nontrivial
  polynomials and their crossing-numbers}, Comment. Math. Helv. \textbf{63}
  (1988), no.~4, 527--539.

\bibitem[LvdV]{Manx}
Christine Ruey~Shan Lee and Roland van~der Veen, \emph{Colored {J}ones
  polynomials without tails}, Preprint 2018, \eprint{arXiv:1806.04565}.

\bibitem[LvdV16]{LV:3pretzel}
\bysame, \emph{Slopes for pretzel knots}, New York J. Math. \textbf{22} (2016),
  1339--1364.

\bibitem[LYL19]{LLY}
Xudong Leng, Zhiqing Yang, and Ximin Liu, \emph{The slope conjectures for
  3-string {M}ontesinos knots}, New York J. Math. \textbf{25} (2019), 45--70.

\bibitem[Mon73]{Montesinos}
Jos{\'e}~M. Montesinos, \emph{Seifert manifolds that are ramified two-sheeted
  cyclic coverings}, Bol. Soc. Mat. Mexicana (2) \textbf{18} (1973), 1--32.

\bibitem[MT17]{MT}
Kimihiko Motegi and Toshie Takata, \emph{The slope conjecture for graph knots},
  Math. Proc. Cambridge Philos. Soc. \textbf{162} (2017), no.~3, 383--392.

\bibitem[MV94]{MV}
Gregor Masbaum and Pierre Vogel, \emph{{$3$}-valent graphs and the {K}auffman
  bracket}, Pacific J. Math. \textbf{164} (1994), no.~2, 361--381.

\bibitem[Oht01]{Oh01}
Tomotada Ohtsuki, \emph{Quantum invariants}, World Scientific, 2001.

\bibitem[Onn10]{Onn}
Shmuel Onn, \emph{Nonlinear discrete optimization}, Zurich Lectures in Advanced
  Mathematics, European Mathematical Society (EMS), Z\"urich, 2010, An
  algorithmic theory.

\bibitem[Prz91]{Pr91}
J\'ozef~H. Przytycki, \emph{Skein modules of {$3$}-manifolds}, Bull. Polish
  Acad. Sci. Math. \textbf{39} (1991), no.~1-2, 91--100.

\bibitem[TL71]{TL71}
H.~N.~V. Temperley and E.~H. Lieb, \emph{Relations between the ``percolation''
  and ``colouring'' problem and other graph-theoretical problems associated
  with regular planar lattices: some exact results for the ``percolation''
  problem}, Proc. Roy. Soc. London Ser. A \textbf{322} (1971), no.~1549,
  251--280.

\bibitem[Tur88]{Tu}
Vladimir Turaev, \emph{The {Y}ang-{B}axter equation and invariants of links},
  Invent. Math. \textbf{92} (1988), no.~3, 527--553.

\bibitem[Wal67]{Wal}
Friedhelm Waldhausen, \emph{Eine {K}lasse von {$3$}-dimensionalen
  {M}annigfaltigkeiten. {I}, {II}}, Invent. Math. 3 (1967), 308--333; ibid.
  \textbf{4} (1967), 87--117.

\bibitem[Wen87]{Wenzl}
Hans Wenzl, \emph{On sequences of projections}, C. R. Math. Rep. Acad. Sci.
  Canada \textbf{9} (1987), no.~1, 5--9.

\bibitem[Zie84]{Zieschang}
Heiner Zieschang, \emph{Classification of {M}ontesinos knots}, Topology
  ({L}eningrad, 1982), Lecture Notes in Math., vol. 1060, Springer, Berlin,
  1984, pp.~378--389.

\end{thebibliography}
\end{document}